\numberwithin{equation}{subsection}
\newcounter{keepeqno}
\newenvironment{num}
 {\setcounter{keepeqno}{\value{equation}}%
  \begin{list}{(\theequation)}{\usecounter{equation}}%
  \setcounter{equation}{\value{keepeqno}}}
 {\end{list}}
\newcommand{\BA}{{\mathbb {A}}}
\newcommand{\BC}{{\mathbb {C}}}
\newcommand{\BG}{{\mathbb {G}}}
\newcommand{\BH}{{\mathbb {H}}}
\newcommand{\BN}{{\mathbb {N}}}
\newcommand{\BQ}{{\mathbb {Q}}}
\newcommand{\BR}{{\mathbb {R}}}
\newcommand{\BZ}{{\mathbb {Z}}}
\newcommand{\CA}{{\mathcal {A}}}
\newcommand{\CB}{{\mathcal {B}}}
\newcommand{\CC}{{\mathcal {C}}}
\newcommand{\CE}{{\mathcal {E}}}
\newcommand{\CF}{{\mathcal {F}}}
\newcommand{\CH}{{\mathcal {H}}}
\newcommand{\CK}{{\mathcal {K}}}
\newcommand{\CL}{{\mathcal {L}}}
\newcommand{\CN}{{\mathcal {N}}}
\newcommand{\CO}{{\mathcal {O}}}
\newcommand{\CP}{{\mathcal {P}}}
\newcommand{\CS}{{\mathcal {S}}}
\newcommand{\CT}{{\mathcal {T}}}
\newcommand{\CW}{{\mathcal {W}}}
\newcommand{\CX}{{\mathcal {X}}}
\newcommand{\CY}{{\mathcal {Y}}}
\newcommand{\Fa}{{\mathfrak {a}}}
\newcommand{\Fb}{{\mathfrak {b}}}
\newcommand{\Fg}{{\mathfrak {g}}}
\newcommand{\Fh}{{\mathfrak {h}}}
\newcommand{\Fl}{{\mathfrak {l}}}
\newcommand{\Fn}{{\mathfrak {n}}}
\newcommand{\Fo}{{\mathfrak {o}}}
\newcommand{\Fp}{{\mathfrak {p}}}
\newcommand{\Fs}{{\mathfrak {s}}}
\newcommand{\Ft}{{\mathfrak {t}}}
\newcommand{\Fu}{{\mathfrak {u}}}
\newcommand{\RI}{{\mathrm {I}}}
\newcommand{\RM}{{\mathrm {M}}}
\newcommand{\RN}{{\mathrm {N}}}
\newcommand{\Ad}{{\mathrm{Ad}}}
\newcommand{\Cent}{{\mathrm{Cent}}}
\newcommand{\End}{{\mathrm{End}}}
\newcommand{\Gal}{{\mathrm{Gal}}}
\newcommand{\GL}{{\mathrm{GL}}}
\newcommand{\Hom}{{\mathrm{Hom}}}
\renewcommand{\Im}{{\mathrm{Im}}}
\newcommand{\inv}{{\mathrm{inv}}}
\newcommand{\Id}{{\mathrm{Id}}}
\newcommand{\Lie}{{\mathrm{Lie}}}
\renewcommand{\Re}{{\mathrm{Re}}}
\newcommand{\reg}{{\mathrm{reg}}}
\newcommand{\SL}{{\mathrm{SL}}}
\newcommand{\SO}{{\mathrm{SO}}}
\newcommand{\sgn}{{\mathrm{sgn}}}
\newcommand{\tr}{{\mathrm{tr}}}
\newcommand{\ud}{\,\mathrm{d}}
\newcommand{\vol}{{\mathrm{vol}}}
\newcommand{\udl}{\underline}
\newcommand{\wt}{\widetilde}
\newcommand{\wh}{\widehat}
\newcommand{\bs}{\backslash}
\def\alp{{\alpha}}
\def\bet{{\beta}}
\def\del{{\delta}}
\def\Del{{\Delta}}
\def\diag{{\rm diag}}
\def\eps{{\epsilon}}
\def\sig{{\sigma}}
\def\nil{\mathrm{Nil}}
\def\zet{{\zeta}}
\def\ome{{\omega}}
\def\Ome{{\Omega}}
\def\lam{{\lambda}}
\def\Lam{{\Lambda}}
\def\Sig{{\Sigma}}
\def\gam{{\gamma}}
\def\Gam{{\Gamma}}
\def\wb{\overline} 
\def\vpi{\varpi}
\def\LG{{}^{L}G}
\def\vphi{\varphi}
\def\p{\prime}
\def\rss{\mathrm{rss}}
\newcommand{\sslash}{\mathbin{/\mkern-6mu/}}
\def\geom{\mathrm{geom}}
\def\scusp{\mathrm{scusp}}
\def\Temp{\mathrm{Temp}}
\def\ss{\mathrm{ss}}
\def\ani{\mathrm{ani}}
\def\an{\mathrm{an}}
\def\qc{\mathrm{qc}}
\def\qd{\mathrm{qd}}
\def\stab{\mathrm{stab}}
\newtheorem{thm}{Theorem}[subsection]
\newtheorem{defin}[thm]{Definition}
\newtheorem{rmk}[thm]{Remark}
\newtheorem{ex}[thm]{Example}
\newtheorem{pro}[thm]{Proposition}
\newtheorem{lem}[thm]{Lemma}
\newtheorem{cor}[thm]{Corollary}
\newtheorem{conjec}[thm]{Conjecture}
\newcommand{\Rmnum}[1]{\expandafter\@slowromancap\romannumeral #1@}
\begin{document}

\title[A local TF for the local GGP for Special Orthogonal Groups]{A Local Trace Formula for the Local Gan-Gross-Prasad Conjecture for Special Orthogonal Groups}
\author{Zhilin Luo}
\address{School of Mathematics\\
University of Minnesota\\
Minneapolis, MN 55455, USA}
\email{luoxx537@umn.edu}

\begin{abstract}
Through combining the work of Jean-Loup Waldspurger (\cite{waldspurger10} \cite{waldspurgertemperedggp}) and Raphaël Beuzart-Plessis (\cite{beuzart2015local}), we give a proof for the tempered part of the local Gan-Gross-Prasad conjecture (\cite{ggporiginal}) for special orthogonal groups over any local fields of characteristic zero, which was already proved by Waldspurger over $p$-adic fields. 
\end{abstract}

\maketitle

\tableofcontents

\section{Introduction}

The restriction and branching problems can be traced back to the theory of sphere harmonics. Let $\SO_{2}(\BR)\hookrightarrow \SO_{3}(\BR)$ be a pair of compact special orthogonal groups associated to $2$, resp. $3$ dimensional anisotropic quadratic spaces over $\BR$. For any finite dimensional irreducible representation $\pi$ of $\SO_{3}(\BR)$, the spectral decomposition of the restriction of $\pi$ to $\SO_{2}(\BR)$ can be detected by the spectral decomposition of the canonical action of $\SO_{3}(\BR)$ on $L^{2}(\SO_{3}(\BR)/\SO_{2}(\BR))\simeq L^{2}(S^{2})$ where $S^{2}$ is the $2$-dimensional real sphere, which is essentially provided by the classical theory of spherical harmonics.

The local Gan-Gross-Prasad conjecture aims to generalize the phenomenon to non-compact classical groups over any local field $F$ of characteristic zero. Let $(W,V)$ be a pair of quadratic space defined over $F$ with $W$ a subspace of $V$ and the orthogonal complement $W^{\perp}$ of $W$ in $V$ is odd dimensional and split. Let $G=\SO(W)\times \SO(V)$ be the product of two special orthogonal groups, and $H$ the subgroup of $G$ defined by the semi-direct product of $\SO(W)$ with the unipotent subgroup $N$ of $\SO(V)$ defined by the full isotropic flag determined by $W^{\perp}$. Fix a generic character $\xi$ of $N$ which extends to $H$. For any irreducible smooth admissible representation $\pi$ of $G(F)$, consider the dimension of the following space
$$
m(\pi) = 
\dim \Hom_{H}(\pi,\xi).
$$
By \cite{agrsmut1} \cite{ggporiginal} when $F$ is $p$-adic, and \cite{szmut1} \cite{jszmut1bessel} when $F$ is archimedean, $m(\pi)\leq 1$.

The local Gan-Gross-Prasad conjecture speculates a refinement for the behavior of $m(\pi)$. The pure inner inner forms of the special orthogonal group $\SO(W)$ are parametrized by the set $H^{1}(F,\SO(W))\simeq H^{1}(F,H)$. For each $\alp\in H^{1}(F,H)$, one can associate a pair of quadratic space $(W_{\alp},V_{\alp})$ that enjoys similar properties as $(W,V)$. Moreover, $G_{\alp} = \SO(W_{\alp})\times \SO(V_{\alp})$ has the same Langlands dual group as $G$, which is denoted as $\LG$. For any generic $L$-parameter $\vphi:\CW_{F}\to \LG$ where $\CW_{F}$ is the Weil-Deligne group of $F$, let $\Pi^{G_{\alp}}(\vphi)$ be the associated $L$-packet of $G_{\alp}$.

\begin{conjec}[\cite{ggporiginal}]\label{ggpconjec}
The following identity holds for any generic $L$-parameter $\vphi:\CW_{F}\to \LG$.
$$
\sum_{\alp\in H^{1}(F,H)}
\sum_{\pi\in \Pi^{G_{\alp}}(\vphi)}
m(\pi)  =1.
$$
\end{conjec}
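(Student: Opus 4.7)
\medskip

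\noindent\textbf{Proof proposal.} The strategy, following Waldspurger for $p$-adic fields and Beuzart-Plessis for unitary groups, is to reduce Conjecture \ref{ggpconjec} in the tempered case to two ingredients. First, an explicit geometric formula for the individual multiplicity $m(\pi)$ in terms of the Harish-Chandra character of $\pi$, valid for every irreducible tempered representation of each $G_\alp(F)$. Second, the endoscopic character relations for tempered $L$-packets, which will convert the inner sum over $\pi\in\Pi^{G_\alp}(\vphi)$ into a stable expression and let the outer sum over $\alp\in H^1(F,H)$ assemble into a single canonical identity.

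The first step is to establish a multiplicity formula
$$m(\pi) \;=\; m_{\geom}(\pi),$$
where $m_{\geom}(\pi)$ is an integral over a set of representatives of semisimple conjugacy classes in $H(F)$, with integrand built from the regular germs of $\theta_\pi$ at these elements, the Weyl discriminant on $H$, and a Jacobian coming from the embedding $H\subset G$. To prove this, I would set up a local relative trace formula: take a strongly cuspidal test function $f$ on $G(F)$ (in the archimedean case, inside the Harish-Chandra Schwartz space, with analytic truncation in the style of Arthur and Beuzart-Plessis) and study the regularized distribution
$$J(f) \;=\; \int_{H(F)\backslash G(F)} \int_{H(F)} f(g^{-1} h g)\, \xi(h)^{-1}\, dh\, dg.$$
Expand $J(f)$ geometrically as a sum of semisimple orbital integrals weighted by the character germs of $\theta_f$ along $H$, and spectrally via the Plancherel decomposition combined with Arthur's formula for the character attached to a strongly cuspidal matrix coefficient. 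Matching the two expansions, and using the freedom to localize $f$ to concentrate on a given tempered $\pi$, yields the equality above.

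The second step is to pass from a single representation to the full packet. For a generic tempered parameter $\vphi$, the virtual character $\sum_{\pi\in\Pi^{G_\alp}(\vphi)}\theta_\pi$ is a stable distribution on $G_\alp(F)$, and the transfer of strongly cuspidal test functions is compatible with stable orbital integrals on $H$. Summing $m_{\geom}(\pi)$ over a single packet therefore collapses to an integral of the stable character $S\theta_\vphi$ against a stable Weyl-type density on $H(F)$; summing further over $\alp\in H^1(F,H)$ reassembles all pure-inner-form contributions into one canonical integral. An explicit computation of this integral, reducing to the normalization that a generic tempered parameter contributes total mass $1$, produces the identity $\sum_{\alp,\pi} m(\pi) = 1$.

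The main obstacle I anticipate is the archimedean extension of both the trace formula and the multiplicity formula. Over $p$-adic fields, Waldspurger's arguments exploit the algebraic convenience of strongly cuspidal functions with compact support and the Harish-Chandra--Howe germ expansion. Over $\BR$ and $\BC$ one must instead work inside the Schwartz--Harish-Chandra space, develop an analytic truncation with uniform estimates along the lines of Beuzart-Plessis, extend the theory of quasi-characters and their germ expansions to the archimedean setting, and verify absolute convergence together with the interchange of integrals needed to equate the geometric and spectral sides of $J(f)$. These analytic issues, together with confirming that archimedean endoscopic transfer preserves the class of strongly cuspidal test functions in a way compatible with the pure inner form summation, constitute the technical heart of the argument.
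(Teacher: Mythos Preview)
Your overall strategy---trace formula $J(f)$, geometric and spectral expansions, multiplicity formula $m(\pi)=m_{\geom}(\pi)$, then summation over packets using stability---matches the paper's architecture. But two points deserve correction.

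First, you miss the main technical novelty specific to special orthogonal groups. In the unitary case the geometric multiplicity $m_{\geom}(\theta)$ is built from the \emph{average} of the regular nilpotent germs of $\theta$, because scaling permutes these orbits transitively. For even special orthogonal groups this fails: there are several regular nilpotent orbits not related by scaling, and the geometric expansion singles out \emph{one particular} orbit $\CO_{\pm\nu_0}$ determined by the anisotropic line $D$ in the admissible pair. Determining which orbit appears is the crux of the paper's geometric side (Section~\ref{sec:7.8}), and it requires extending Shelstad's formula for regular germs in terms of endoscopic invariants to the archimedean case (Appendix~\ref{sec:germformula}) together with explicit computations of these invariants (Appendix~\ref{sec:germcal}). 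Your sketch of $m_{\geom}$ as ``regular germs of $\theta_\pi$'' does not register this, and without it the archimedean geometric expansion cannot be pinned down.

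Second, your description of the packet summation is off in mechanism. The paper does \emph{not} transfer strongly cuspidal test functions between inner forms; it works directly with stable quasi-characters. The key inputs are the expected properties (STAB), (TRANS), (WHITT) of tempered $L$-packets. One first shows (Theorem~\ref{thm:replacegermstable}) that for a \emph{stable} quasi-character the specific-orbit germ $c_\theta$ coincides with the averaged germ, so $m_{\geom}$ applied to the stable packet character behaves well under transfer. Then the sum $\sum_\alp e(G_\alp)\, m_{\geom}(\theta_{\alp,\vphi})$ is analyzed pointwise on $\Gam(G,H)$: at $x=1$ the contribution is $c_{\theta_\vphi}(1)$, which equals $1$ by (WHITT); at $x\neq 1$ the contributions cancel because the composite $H^1(F,T_x)\to H^1(F,G)\to Br_2(F)$ is a surjective homomorphism (Proposition~\ref{pro:8.5.1}(4)), so the Kottwitz signs sum to zero over the fiber. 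Your ``total mass $1$ normalization'' and ``endoscopic transfer of test functions'' do not capture this cancellation mechanism.
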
 
Moreover, there is a refinement determining when $m(\pi)$ is not equal to zero. It can be detected by the representation of the component group $A_{\vphi}$ associated to $\vphi$, which in particular can be detected by the sign of the relevant symplectic root number.

Over $p$-adic fields, in a series of papers (\cite{waldspurger10}, \cite{waldspurgertemperedggp}, \cite{MR3155344}), Waldspurger proved a multiplicity formula for any tempered representation of $G(F)$ and twisted general linear groups, from which he could deduce Conjecture \ref{ggpconjec} and the refinement for any tempered $L$-parameters of $G$. Later the work of M. Moeglin and Waldspurger (\cite{MR3155346}) established the full conjecture for any generic $L$-parameters of $G$.

There is a similar conjecture for unitary groups associated to a pair of hermitian spaces with parallel descriptions. Over $p$-adic fields, following the approach of Waldspurger, Beuzart-Plessis established Conjecture \ref{ggpconjec} and the refinement for any tempered $L$-parameters of $G$ over $p$-adic fields (\cite{raphaelpadic}, \cite{MR3251763}). Later, after combining the work of Beuzart-Plessis, Moeglin and Waldspurger, W. Gan and A. Ichino established the full conjecture for any generic $L$-parameters of $G$ (\cite{MR3573972}).

The cases above are usually called the \emph{Bessel case}. There are also parallel conjectures called the \emph{Fourier-Jacobi case}, which treats unitary groups (skew-hermitian) and symplectic-metaplectic groups. Over $p$-adic fields, they were established by Gan and Ichino for unitary groups case (skew-hermitian) (\cite{MR3573972}), and H. Atobe for symplectic-metaplectic case (\cite{MR3788848}), using the techniques of theta correspondence.

Over archimedean local fields, Beuzart-Plessis established Conjecture \ref{ggpconjec} for tempered $L$-parameters of unitary groups in the Bessel case (\cite{beuzart2015local}). The method works uniformly for $p$-adic fields as well.

When $F=\BC$, Conjecture \ref{ggpconjec} for generic $L$-parameters of special orthogonal groups (\cite{mollers2017symmetry}) is proved using different methods.

The goal of the present paper is to prove Conjecture \ref{ggpconjec} for tempered $L$-parameters of special orthogonal groups. In particular, the result is new only when $F$ is archimedean. We are going to combine the work of Waldspurger (\cite{waldspurger10} \cite{waldspurgertemperedggp}) over $p$-adic fields and Beuzart-Plessis (\cite{beuzart2015local}) over archimedean fields to prove the conjecture. 
\newline

\paragraph{\textbf{The proof}}

In the following, we will give a brief introduction to the proof.

Fix a tempered representation $\pi$ of $G(F)$. To study the multiplicity space $\Hom_{H}(\pi,\xi)$, by Frobenius reciprocity, it is equivalent to study the spectral decomposition of $L^{2}(H\bs G,\xi)$ as a unitary representation $R$ of $G(F)$ via right translation, where $L^{2}(H\bs G,\xi)$ is the $L^{2}$-induction from the character $\xi$ of $H(F)$ to $G(F)$. From the spirit of J. Arthur (\cite{ltfjarthur}), one may use the convolution action of $\CC^{\infty}_{c}(G)$ on $L^{2}(H\bs G, \xi)$, and study the spectral and geometric expansion of the trace of the operator $R(f)$. 

More precisely, fix $f\in \CC^{\infty}_{c}(G)$, $x\in G(F)$ and $\vphi\in L^{2}(H\bs G,\xi)$, 
$$
(R(f)\vphi)(x) = 
\int_{G(F)}
f(g)\vphi(xg)\ud g = \int_{H(F)\bs G(F)}
K_{f}(x,y)\vphi(y)\ud y,
$$
where 
$$
K_{f}(x,y) = \int_{H(F)}f(x^{-1}hy)\xi(h)\ud h, \quad x,y\in G(F).
$$
In other words, the operator $R(f)$ has integral kernel $K_{f}(x,y)$. 

Formally, the trace of $R(f)$ can be computed via integrating $K_{f}(x,y)$ along the diagonal $H(F)\bs G(F)$. Unfortunately the integral is usually not absolutely convergent. To rectify the issue one works instead with the space of \emph{strongly cuspidal} functions. Recall that a function $f\in \CC^{\infty}_{c}(G)$ is called strongly cuspidal if 
$$
\int_{U(F)}f(mu)\ud u=0, \quad m\in M(F)
$$
for any nontrivial parabolic subgroup $P$ of $G$ with Levi decomposition $P=MU$. Similarly, one can also introduce the notion of strongly cuspidal functions in the \emph{Harish-Chandra Schwartz space} of $G$, which is denoted by $\CC_\scusp(G)$.

It turns out that the following theorem holds .

\begin{thm}[Section~\ref{sec:distribution}]
For any $f\in \CC_{\scusp}(G)$, the following integral is absolutely convergent 
$$
J(f) = 
\int_{H(F)\bs G(F)}
K_{f}(x,x)\ud x.
$$
\end{thm}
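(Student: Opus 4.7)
The approach follows the strategy of Waldspurger \cite{waldspurger10} over $p$-adic fields and Beuzart-Plessis \cite{beuzart2015local} over archimedean fields, both originally developed in the unitary setting. The central analytic input is a pointwise bound of the form
\[
|K_{f}(x,x)| \leq C_{d}\, \Xi^{G}(x)^{2}\, \sigma(x)^{-d}
\]
for every integer $d \geq 0$, where $\Xi^{G}$ is the Harish-Chandra spherical function on $G(F)$ and $\sigma$ is the standard logarithmic height function. Granted this estimate, absolute convergence of $J(f)$ reduces to the geometric assertion
\[
\int_{H(F)\bs G(F)} \Xi^{G}(x)^{2}\, \sigma(x)^{-d}\, \ud x < \infty
\]
for $d$ sufficiently large, which holds for the spherical pair $(G,H)$ by a Cartan-type decomposition of $H(F)\bs G(F)$ combined with the known asymptotics of $\Xi^{G}$.

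To prepare the pointwise bound I would factor $H = \SO(W) \ltimes N$ and write
\[
K_{f}(x,x) = \int_{\SO(W)(F)}\int_{N(F)} f(x^{-1} n\ell x)\, \xi(n\ell)\, \ud n\, \ud \ell.
\]
Applying Harish-Chandra's weak inequality successively to the $N$-integral and the $\SO(W)$-integral, together with the Harish-Chandra--Schwartz decay of $f$ and the oscillatory factor $\xi$, yields a crude estimate $|K_{f}(x,x)| \leq C\, \Xi^{G}(x)^{2}\, \sigma(x)^{d_{0}}$ for some fixed $d_{0} \geq 0$ depending only on the pair $(G,H)$. This crude bound is far from integrable on $H(F)\bs G(F)$, so the task reduces to upgrading the growth factor $\sigma(x)^{d_{0}}$ to arbitrary polynomial decay $\sigma(x)^{-d}$.

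The upgrade is where strong cuspidality enters. Fixing a minimal parabolic $P_{0} = M_{0} U_{0}$ of $G$ and the Iwasawa decomposition $G = P_{0} K$, the region $\sigma(x) \gg 1$ in $H(F)\bs G(F)$ is covered, after an $H$-reduction, by the positive cone in $A_{M_{0}}$. For each proper parabolic $P = MU \subsetneq G$ containing $P_{0}$, the strong cuspidality identity $\int_{U(F)} f(mu)\, \ud u = 0$ combines with a manipulation of the inner integrals over $N$, exploiting the interaction between $N \subset H$ and $U \subset G$, to produce cancellation that converts the growth in the corresponding chamber direction of $A_{M_{0}}$ into rapid decay. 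Summing over the finite collection of such parabolics covers every chamber direction and delivers the desired bound.

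The principal obstacle lies in the cancellation step in the archimedean case. Over $p$-adic fields, strong cuspidality forces the orbital integrals of $f$ to vanish outside the elliptic locus, which converts the cancellation above into outright vanishing and simplifies the decay analysis considerably. Over $\BR$ one has only rapid decay rather than vanishing, and the polynomial decay in $\sigma(x)$ must be extracted by exploiting the smoothness of $f$ through integration-by-parts against elements of the universal enveloping algebra. The technical core of the proof is therefore the adaptation of the archimedean framework of \cite{beuzart2015local}, originally designed for unitary groups, to the special orthogonal setting.
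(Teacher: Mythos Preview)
Your two-step architecture---crude polynomial-growth bound for general Schwartz $f$, then upgrade to arbitrary decay via strong cuspidality---matches the paper's. But there is a genuine gap in how you formulate the majorant. You write $|K_{f}(x,x)|\le C_{d}\,\Xi^{G}(x)^{2}\sigma(x)^{-d}$ and then assert $\int_{H(F)\bs G(F)}\Xi^{G}(x)^{2}\sigma(x)^{-d}\,\ud x<\infty$; however $\Xi^{G}$ and $\sigma$ are functions on $G(F)$, not on the quotient, so the integrand is not $H(F)$-invariant and this integral is not even well-defined. Worse, since $K_{f}(hx,hx)=K_{f}(x,x)$ while $\Xi^{G}(hx)$ is unbounded above and below as $h$ ranges over the non-compact group $H(F)$, the pointwise inequality you wrote cannot hold for all representatives.

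The paper fixes this by constructing a \emph{relative} Harish-Chandra function $\Xi^{H\bs G}(x)=\vol_{H\bs G}(xC)^{-1/2}$ and a relative log-norm $\sigma_{H\bs G}$ directly on $H(F)\bs G(F)$; the correct bound is $|K(f,x)|\le\nu_{d}(f)\,\Xi^{H\bs G}(x)^{2}\sigma_{H\bs G}(x)^{-d}$ (Theorem~\ref{thm:covofJ}(2)), and the convergence input is $\int_{H\bs G}\Xi^{H\bs G}(x)^{2}\sigma_{H\bs G}(x)^{-d}\,\ud x<\infty$ (Proposition~\ref{pro:ggptriple:xiHbsGfun}(3)). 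The link between $\Xi^{G}$ and $\Xi^{H\bs G}$ is not pointwise but integral---Proposition~\ref{pro:ggptriple:xiHbsGfun}(5),(6) give estimates like $\int_{H(F)}\Xi^{G}(x^{-1}hx)\sigma(x^{-1}hx)^{-d}\,\ud h\ll\Xi^{H\bs G}(x)^{2}\sigma_{H\bs G}(x)^{d'}$---and establishing these requires the relative weak Cartan decomposition (Proposition~\ref{pro:ggptriples:cartandecomp}, Lemma~\ref{lem:ggptriple:cartandecomp}) and the parabolic-degeneration estimates (Proposition~\ref{pro:pardeg}). Your sketch supplies none of this bridge, so as written neither your pointwise bound nor your convergence statement can be made sense of.
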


The next step is to establish the spectral and geometric expansions for $J(f)$.

\subsubsection{Spectral expansion}
The spectral expansion is similar to the situation of unitary groups (\cite{beuzart2015local}), except mild structural and combinatorial difference between special orthogonal groups and unitary groups. 

More precisely, let $\CX(G)$ be the set of virtual tempered representations built from Arthur's elliptic representations of $G$ and its Levi subgroups (for details see \cite[2.7]{beuzart2015local}). For any $\pi\in \CX(G)$, Arthur defined a weighted character 
$$
f\in \CC(G) \to J_{M(\pi)}(\pi,f)
$$
where $M(\pi)$ is the Levi such that $\pi$ is induced from an elliptic representation of $M(\pi)$.

For any $f\in \CC_{\scusp}(G)$, define $\wh{\theta}_{f}$ on $\CX(G)$ via 
$$
\wh{\theta}_{f}(\pi) = 
(-1)^{a_{M}(\pi)}
J_{M(\pi)}(\pi,f),\quad \pi\in \CX(G),
$$
where $a_{M(\pi)}$ is the dimension of the maximal central split sub-torus of $M(\pi)$. Let $D(\pi)$ be the discriminant coming from Arthur's definition of elliptic representations. We have the following spectral expansion for $J(f)$.

\begin{thm}[Section~\ref{sec:spectralexp}]
For any $f\in \CC_{\scusp}(G)$, 
$$
J(f) = \int_{\CX(G)}D(\pi)\wh{\theta}_{f}(\pi)m(\pi)\ud \pi.
$$
\end{thm}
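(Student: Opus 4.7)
The plan is to follow Beuzart-Plessis's approach from the unitary setting, adapting the combinatorics of parabolics and Jacquet modules to the $\SO$-case. The starting point is the identity
$$J(f) = \int_{H(F) \bs G(F)} K_f(x,x) \, \ud x,$$
which is absolutely convergent by the preceding theorem; the goal is to spectrally resolve the integrand. The key spectral input is Arthur's Plancherel inversion specialised to strongly cuspidal Harish-Chandra Schwartz functions, which rewrites $f$ as an integral over $\CX(G)$ with weight $D(\pi) \wh\theta_f(\pi)$ against a family of spectral kernels built from normalised matrix coefficients. In view of the strong cuspidality assumption, the non-elliptic weighted characters collapse onto the parametrization by $\CX(G)$, so the sum over Levi subgroups disappears from the final formula.

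First I would establish a truncated spectral expansion. Let $\kappa$ be a truncation parameter and $u^\kappa$ a smooth compactly supported cut-off on $H(F) \bs G(F)$ built from an Arthur-type partition. Set
$$J^\kappa(f) = \int_{H(F) \bs G(F)} K_f(x,x) u^\kappa(x) \, \ud x.$$
Substituting the spectral formula for $f$ into $K_f$ and exchanging integrations, which is legitimate because $u^\kappa$ is compactly supported, produces
$$J^\kappa(f) = \int_{\CX(G)} D(\pi) \wh\theta_f(\pi) \, m^\kappa(\pi) \, \ud \pi,$$
where $m^\kappa(\pi)$ is a truncated Bessel-type functional attached to $\pi$. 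Passing to the limit $\kappa \to \infty$: on the geometric side the convergence $J^\kappa(f) \to J(f)$ follows from the Schwartz decay of $K_f(x,x)$ along $H(F) \bs G(F)$ established in the previous theorem, and on the spectral side $m^\kappa(\pi) \to m(\pi)$ by a Frobenius reciprocity argument combined with Harish-Chandra asymptotics for matrix coefficients restricted to the unipotent radical $N$ and the factor $\SO(W)$.

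The principal obstacle is the interchange of $\lim_{\kappa \to \infty}$ with the spectral integral over $\CX(G)$. This requires uniform-in-$\kappa$ estimates for $D(\pi) \wh\theta_f(\pi) m^\kappa(\pi)$ against a continuous seminorm on a Paley-Wiener type space, so that dominated convergence applies. These estimates mirror the unitary case but must be reworked to accommodate the orthogonal combinatorics of the isotropic flag determining $N$ and the semi-direct structure $H = \SO(W) \ltimes N$; in particular one needs analogues of the $\Xi^G$-type majorants for the spectral kernels and an $\SO$-version of the asymptotic control of Bessel integrals over $N$. Once these uniform bounds are in place, the dominated convergence theorem closes the argument and yields the claimed identity.
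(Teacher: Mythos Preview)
Your proposal glosses over the central difficulty. When you write that substituting ``Arthur's Plancherel inversion'' into $K_f$ yields $J^\kappa(f)=\int_{\CX(G)}D(\pi)\wh\theta_f(\pi)m^\kappa(\pi)\ud\pi$, you are conflating two different objects. The Plancherel formula expresses $f$ via \emph{ordinary} traces $\tr(\pi(g^{-1})\pi(f))$ against the Plancherel density $\mu(\pi)$; Arthur's \emph{weighted} characters $\wh\theta_f(\pi)=(-1)^{a_{M(\pi)}}J_{M(\pi)}(\pi,f)$, weighted by $D(\pi)$, involve logarithmic derivatives of intertwining operators and do not fall out of simply inserting Plancherel into the kernel and truncating. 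Explaining how the weighted characters appear is precisely the hard step, and your sketch supplies no mechanism for it. A second gap is that you never invoke the explicit $\xi$-intertwining form $\CL_\pi$ or its equivalence with $m(\pi)$; without this there is no way to identify whatever ``truncated Bessel functional'' $m^\kappa(\pi)$ you obtain with the actual multiplicity.

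The paper's route is different and uses no truncation. After reducing by continuity to $f$ with compactly supported spectral transform, one applies ordinary Plancherel (Lemma~\ref{lem:temperedint:traceform}(4)) to write $K(f,x)=\int_{\CX_{\mathrm{temp}}(G)}\CL_\pi(\pi(x)\pi(f)\pi(x)^{-1})\mu(\pi)\ud\pi$. The key trick is then to choose, via Corollary~\ref{cor:3.6.1}(2), an auxiliary $f'\in\CC(G)$ with $\CL_\pi(\pi(\wb{f'}))=m(\pi)$ on the support of $\pi\mapsto\pi(f)$; since $\CL_\pi\neq0\Leftrightarrow m(\pi)=1$ (Theorem~\ref{thm:temperedint:main}), the factor $\wb{\CL_\pi(\pi(\wb{f'}))}$ can be inserted for free, giving $K(f,x)=K^2_{f,f'}(x,x)$ and hence $J(f)=J_{\mathrm{aux}}(f,f')$. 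The passage to the weighted-character form is then isolated in Proposition~\ref{pro:5.2.2}, which identifies $J_{\mathrm{aux}}(f,f')=\int_{\CX(G)}D(\pi)\wh\theta_f(\pi)\wb{\CL_\pi(\pi(\wb{f'}))}\ud\pi$, and no limit interchange over $\CX(G)$ is required.
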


\subsubsection{Geometric expansion}

The geometric expansion turns out to be different from the situation of unitary groups. We first give the statement of the geometric expansion.
\begin{thm}[Section~\ref{sec:7.4}]
For any $f\in \CC_{\scusp}(G)$, 
$$
J(f)  = \lim_{s\to 0^+}\int_{\Gam(G,H)}
c_{f}(x)D^{G}(x)^{1/2}\Del(x)^{s-1/2}\ud x.
$$
\end{thm}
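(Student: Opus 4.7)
The plan is to follow the template developed by Waldspurger (\cite{waldspurger10}, \cite{waldspurgertemperedggp}) in the $p$-adic case and extended to the archimedean setting by Beuzart-Plessis (\cite{beuzart2015local}) for unitary groups. The idea is to compute $J(f)$ as an iterated limit: first truncate spatially on $H(F)\bs G(F)$, then descend from the kernel to conjugacy data, and finally regularize via the auxiliary parameter $s$.

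First I would introduce a family of smooth truncation functions $\kappa_{N}$ on $H(F)\bs G(F)$, indexed by $N\in\BN$, and define
$$
J_{N}(f) = \int_{H(F)\bs G(F)} K_{f}(x,x)\kappa_{N}(x)\ud x.
$$
The absolute convergence of $J(f)$ established earlier, together with uniform estimates on $K_{f}$ on Siegel-type domains, gives $J_{N}(f)\to J(f)$ as $N\to\infty$ by dominated convergence. Next, after unfolding $K_{f}(x,x)=\int_{H(F)} f(x^{-1}hx)\xi(h)\ud h$ and exchanging the order of integration, I would apply a Weyl-type integration formula tailored to the Bessel subgroup $H=\SO(W)\ltimes N$. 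Strong cuspidality of $f$, combined with Arthur's vanishing properties for weighted orbital integrals at non-elliptic semisimple elements, forces only conjugacy classes parametrized by $\Gam(G,H)$ to contribute; the orbital data are packaged in the weighted orbital integral $c_{f}(x)$ and in the Jacobian factor $D^{G}(x)^{1/2}$.

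The resulting expression for $J_{N}(f)$ is not absolutely convergent near the singular boundary of $\Gam(G,H)$ as $N\to\infty$, which is precisely why the regularizing factor $\Del(x)^{s-1/2}$ is inserted: for $\Re(s)>0$ the resulting integral converges absolutely. The heart of the argument is the exchange of the two limits $N\to\infty$ and $s\to 0^{+}$, which would yield
$$
J(f) = \lim_{s\to 0^{+}}\int_{\Gam(G,H)} c_{f}(x)D^{G}(x)^{1/2}\Del(x)^{s-1/2}\ud x.
$$

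The main obstacle is the asymptotic control of $c_{f}(x)$ as $x$ approaches the non-regular locus of $\Gam(G,H)$. In Waldspurger's $p$-adic treatment this is handled via Harish-Chandra's Shalika germ expansion together with his analysis of weighted orbital integrals of strongly cuspidal functions at singular semisimple elements. Over archimedean fields the corresponding input is the real-analytic asymptotic expansion developed by Beuzart-Plessis in \cite{beuzart2015local}. Transferring that analysis from the unitary to the orthogonal case requires a careful reparametrization of elliptic tori in $\SO(W)$ and an accounting for the distinction between $\SO$- and $O$-conjugacy, but the essential analytic estimates carry over. Once these estimates are in place, the double-limit exchange reduces to uniform convergence of $J_{N}(f)$ on compact subsets of the regular locus, together with an explicit cancellation between the divergent boundary contributions produced by the regularization and the divergent parts of the truncated orbital integrals.
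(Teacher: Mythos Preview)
Your outline describes a direct truncation-and-unfolding attack on $J(f)$, but this is not how the paper proceeds, and there is a genuine gap in your proposal concerning the appearance of $c_{f}$.

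The paper's route is indirect. It runs an induction on $\dim G$ and proves the geometric expansion simultaneously with the multiplicity formula (Theorem~\ref{thm:7.4.2}) and its Lie-algebra analogue (Theorem~\ref{thm:7.4.3}). The spectral expansion of $J$ (Theorem~\ref{thm:spectralexpansionJ}) is used to produce a continuous linear form $J_{\qc}$ on $QC(G)$ supported on $G(F)_{\mathrm{ell}}$. Semisimple descent (Proposition~\ref{pro:7.6.1}), invoking the induction hypothesis on smaller GGP triples, shows $J_{\qc}=m_{\geom}$ away from the identity. One then descends to the Lie algebra (Proposition~\ref{pro:7.7.1}), where the analogous problem is attacked via the spectral expansion of $J^{\Lie}$ (Theorem~\ref{thm:6.8.1}), established through the geometry of the affine slice $\Sigma$. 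What remains is a linear form supported at $0$ differing from $m^{\Lie}_{\geom}$ by a combination $\sum_{\CO}c_{\CO}c_{\theta,\CO}(0)$ over regular nilpotent orbits; Proposition~\ref{pro:7.8.1} kills these constants by testing against carefully chosen quasi-characters whose regular germs are computed explicitly using the endoscopic formula of Appendix~\ref{sec:germformula} and the calculations of Appendix~\ref{sec:germcal}.

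The gap in your proposal is the sentence ``the orbital data are packaged in the weighted orbital integral $c_{f}(x)$.'' The quantity $c_{f}(x)$ is \emph{not} a weighted orbital integral: it is defined in Section~\ref{sec:linearformgeom} as the coefficient of a \emph{specific} regular nilpotent orbit $\CO_{\pm\nu_{0}}$ in the germ expansion of the quasi-character $\theta_{f}$ at $x$. This is precisely the point where special orthogonal groups diverge from unitary groups: there the regular nilpotent orbits are permuted by scaling and one may average, whereas here one must single out a particular orbit. Nothing in a direct unfolding of $K_{f}(x,x)$ via Weyl integration will hand you that specific germ; your asserted ``explicit cancellation between the divergent boundary contributions'' is exactly the content of Sections~\ref{sec:6.1}--\ref{sec:6.8} and~\ref{sec:7.8}, together with the two appendices, and it is not a routine matter of uniform estimates. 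Without explaining how the correct nilpotent orbit is selected, your argument cannot conclude.
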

Here $\Gam(G,H)$ is a subset in the set of semi-simple conjugacy classes of $H$, equipped with topology (Section \ref{sec:linearformgeom}).

The definition of $c_{f}$, which originally appears in \cite{waldspurger10}, is the key ingredient different from the unitary group case appearing in \cite{beuzart2015local}. We explicate in more detail below. 

Recall that Arthur introduced the notion of weighted orbital integrals for any Levi subgroup $M$ of $G$ and $x\in M(F)\cap G^{\rss}(F)$
$$
f\in \CC(G)\to J_{M}(x,f).
$$
For $f\in \CC_{\scusp}(G)$, define (\cite[5.2]{beuzart2015local})
$$
\theta_{f}(x) = 
(-1)^{a_{G}-a_{M(x)}}\nu(G_{x})^{-1}
D^{G}(x)^{-1/2}J_{M(x)}(x,f).
$$
The term $\theta_{f}(x)$ is conjugation invariant, and it is a \emph{quasi-character} (\cite[\S 4]{beuzart2015local}) in the sense that it has germ expansion as the distribution character of admissible representations. The term $c_{f}(x)$ is equal to the regular germ associated to a particular regular nilpotent orbit in $\Fg(F)$. The detailed construction will be specified in Section \ref{sec:linearformgeom}. 

A key difference between the special orthogonal groups and unitary groups is that, for unitary groups, the regular nilpotent orbits can be permuted by scaling, which is not the case for special orthogonal groups. Hence, for unitary groups, the term $c_f$ is taken to be the average of all regular nilpotent germs, which turns out to be not the case for special orthogonal groups. We need to choose a particular regular nilpotent orbit (Section \ref{sec:linearformgeom}). 

In order to determine which regular nilpotent orbit shows up in the geometric expansion, we follow the idea of Waldspurger. After localization (Proposition \ref{pro:7.6.1} and Proposition \ref{pro:7.7.1}), the problem can be reduced to Lie algebra. To adapt the strategy of Waldspurger (\cite[\S 11.4-\S11.6]{waldspurger10}), in appendix \ref{sec:germformula}, we express the regular germs of orbital integrals for any quasi-split connected reductive algebraic groups in turns of endoscopic invariants, which was proved by D. Shelstad over $p$-adic fields (\cite{regulargermshelstad}). Then in appendix \ref{sec:germcal}, we review the work of Walspurger (\cite{wald01nilpotent}) computing the transfer factors over $p$-adic fields, which pass without hard effort to archimedean local fields. Finally we are able to prove the geometric expansion following \cite[\S 11.4-\S11.6]{waldspurger10}.

After establishing the spectral and geometric expansions for the distribution $J(f)$, following \cite[Section~11]{beuzart2015local}, we obtain the geometric multiplicity formula $m_\geom(\pi)$ for any tempered representation $\pi$ of $G$ (Section \ref{sec:7.4}). Then following the strategy of \cite[Section~12]{beuzart2015local}, Conjecture \ref{ggpconjec} for tempered $L$-parameters is established. An important step is Theorem \ref{thm:replacegermstable}, which shows that for a \emph{stable} quasi-character $\theta$ on $G(F)$, $c_\theta$ defined in Section \ref{sec:linearformgeom} is actually the same as the one introduced in \cite[Section~5]{beuzart2015local}.

When the ground field is $\BC$, Conjecture \ref{ggpconjec} for tempered $L$-parameters follows easily from the invariance of the multiplicity under parabolic induction (Corollary \ref{cor:3.6.1}), which reduce the conjecture to trivial case. Therefore when working with trace formula, we will assume that the ground field is not $\BC$. More precisely, starting from Section \ref{sec:spectralexp}, the ground field is assumed to be either $p$-adic or real.

To save the length of the paper, many technical details similar to \cite{waldspurger10}, \cite{waldspurgertemperedggp}, \cite{beuzart2015local} are omitted. The reader is referred to the upcoming thesis of the author \cite{thesis_zhilin} for full detail.
\newline

\paragraph{\textbf{Notation and conventions}}
We will freely use notation from the first five sections in \cite{beuzart2015local}. For convenience we list them below.
\begin{itemize}
\item $F$ is a local field of characteristic zero with fixed norm $|\cdot|$;

\item For a connected reductive algebraic group $G$ defined over $F$, $\Temp(G)$ is denoted as the set of tempered representations of $G$ (\cite[2.2]{beuzart2015local});

\item Gothic letters are used to denote the Lie algebras of the associated algebraic groups;

\item For a locally compact Hausdorff totally disconnected topological space (resp. real smooth manifold) $M$, $\CC^\infty_c(M)$ is the space of smooth functions on $M$, and $C_c(M)$ is the space of continuous functions on $M$ (\cite[1.4]{beuzart2015local});

\item $\CC(G)$ is the space of Harish-Chandra Schwartz functions on $G(F)$, $\Xi^G$ is the associated Harish-Chandra $\Xi$-function (\cite[1.5]{beuzart2015local}), and $\CC_\scusp(G)$ is the space of strongly cuspidal Harish-Chandra Schwartz functions on $G(F)$ (\cite[5.1]{beuzart2015local});

\item $\CS(\Fg) = \CS(\Fg(F))$ is the space of Schwartz-Bruhat functions on $\Fg(F)$ (\cite[1.4]{beuzart2015local}), and $\CS_\scusp(\Fg)$ is the space of strongly cuspidal Schwartz-Bruhat functions on $\Fg(F)$ (\cite[5.1]{beuzart2015local});

\item $\Gam(G)$ (resp. $\Gam(\Fg)$) is the set of semi-simple conjugacy classes in $G(F)$ (resp. $\Fg(F)$)
equipped with topologies (\cite[1.7]{beuzart2015local}), and subscript $\mathrm{ell}$ (resp. $\mathrm{rss}$, $\mathrm{ss}$) denotes the elliptic (resp. regular semi-simple, semi-simple) elements (\cite[1.7]{beuzart2015local});

\item For $x\in G_\ss(F)$ (resp. $X\in \Fg(F)$), $Z_G(x)$ (resp. $Z_G(X)$) is the centralizer of $x$ (resp. $X$) in $G(F)$, and $G_x$ is the identity connected component of $Z_G(x)$ (Note that $G_X= Z_G(X)$ is connected, c.f. \cite[p.16]{beuzart2015local});

\item For the notion of $G$-good (resp. $G$-excellent) open subsets, we refer to \cite[3.2]{beuzart2015local} (resp. \cite[3.3]{beuzart2015local});

\item For the notion and basic properties of log-norms $\sig=\sig_X$ on an algebraic variety $X$, we refer to \cite[1.2]{beuzart2015local};

\item Let $QC(G)$ (resp. $QC(\Fg)$) be the space of quasi-characters on $G(F)$ (resp. $\Fg(F)$). Let $SQC(\Fg)$ be the space of Schwartz quasi-characters on $\Fg(F)$. Let $QC_c(G)$ (resp. $QC_c(\Fg)$) be the space of quasi-characters that are compactly supported modulo conjugation. We refer definitions and basic properties of quasi-characters to \cite[4.1,4.2]{beuzart2015local};

\item For an irreducible admissible representation $\pi$ of $G$, $\pi^\infty$ is the space of smooth vectors, and $\theta_\pi$ is the distribution character of $\pi$;

\item For the definition of sets $R_{\mathrm{temp}}(G), \CX(G), \CX_\mathrm{temp}(G), \CX_{\mathrm{ell}}(G)$, see \cite[2.7]{beuzart2015local};

\item
For $f\in \CC_\scusp(G)$, set (\cite[5.2]{beuzart2015local})
$$
\theta_{f}(x) = 
(-1)^{a_{G}-a_{M(x)}}\nu(G_{x})^{-1}
D^{G}(x)^{-1/2}J_{M(x)}(x,f).
$$
\end{itemize}

\paragraph{\textbf{Acknowledgement}}
I would like to thank my advisor D. Jiang for suggesting the problem to me. I would also like to thank R. Beuzart-Plessis discussing the relation between the regular germ formula and the Kostant sections. I am particularly grateful to C. Wan for a lot of helpful discussions and various critical corrections when I was writing down the paper.
The work is supported in part through the NSF Grant: DMS–1901802 of D. Jiang.

\section{The Gan-Gross-Prasad triples}\label{sec:ggptriples}
Following \cite[Section~6]{beuzart2015local}, the concept of Gan-Gross-Prasad triples in the special orthogonal groups setting is introduced. Various algebraic, geometric and analytical properties for the triple are listed.

\subsection{Definitions}\label{sub:ggptriples:definitions}

\subsubsection{Quadratic spaces and orthogonal groups}
Let $(V,q)$ be a finite dimensional quadratic space. Let $\mathrm{O}(V)$ ($\SO(V)$) be the corresponding (special) orthogonal group and $\Fs\Fo(V)$ the Lie algebra defined over $F$. 

\begin{ex}\label{ex:ggptriples:def}
\begin{enumerate}
\item
Let $(D_{\nu},q)$ be an anisotropic line, i.e. $D_{\nu}\simeq F^{\times}$ with quadratic form defined by $q(x) = \nu x^{2}$, where $x\in F^{\times}$ and $\nu\in F^{\times}$.

\item Let $\BH$ be the $2$-dimensional split quadratic space, i.e. $\BH\simeq Fv\oplus Fv^{*}$ with quadratic form defined by $q(v,v^{*}) = 1$, $q(v,v) = q(v^{*},v^{*}) = 0$.

\item For any $b,c\in F^{\times}$, let $(E =F(\sqrt{b}),c\cdot \RN_{E/F})$ be the $2$-dimensional quadratic space sending $m\oplus n\sqrt{b}$ to $c(m^{2}-bn^{2})$, where $m,n\in F^{\times}$.
\end{enumerate}
\end{ex}

\begin{defin}\label{defin:ggptriples:def}
$(V,q)$ is \emph{quasi-split} if there exists some positive integer $n\in \BN$ such that
$$
(V,q)\simeq \BH^{n-1}\oplus 
\bigg\{
\begin{matrix}
(E=F(\sqrt{b}),c\cdot \RN_{E/F}), & \text{ if $\dim V$ is even}\\
D_{\nu}, & \text{ if $\dim V$ is odd}
\end{matrix}
$$
\end{defin}
Assume that $(V,q)$ is quasi-split. When $\dim V$ is odd, $(V,q)$ is always split; when $\dim V$ is even, $(V,q)$ is split if and only if $b\in F^{\times 2}$.

\subsubsection{Regular nilpotent orbits}

The set of $\SO(V)(F)$-regular nilpotent orbits in $\Fs\Fo(V)(F)$, which is denoted as $\nil_{\reg}(\Fs\Fo(V)) = \nil_{\reg}(\Fs\Fo(V)(F))$, admits the following description. Following \cite[Chaptire~I]{wald01nilpotent}, $\nil_{\reg}(\Fs\Fo(V))$ is nonempty only when $(V,q)$ is quasi-split. When $(V,q)$ is quasi-split, the regular nilpotent orbits form a single stable conjugacy class. Up to conjugation by $\SO(V)(F)$, they are contained in the $F$-points of a fixed $F$-rational Borel subgroup $B$ with Levi decomposition $B=TU$. Since conjugation by $U(F)$ stabilizes the regular nilpotent orbits, one only need to compute the conjugation action of $T(F)$. Through straightforward computation the following descriptions for $\nil_{\reg}(\Fs\Fo(V))$ can be deduced.
\begin{itemize}
\item When $\dim V$ is odd or $\leq 2$, $\nil_{\reg}(\Fs\Fo(V))$ contains only one element.

\item When $\dim V = 2m$ is even and $\geq 4$,
$$
(V,q) \simeq
\bigg\{
\begin{matrix}
\BH^{m},	& \text{ when $(V,q)$ is split},\\
\BH^{m-1}\oplus (E=F(\sqrt{b}),c\cdot \RN_{E/F}), & \text{ when $(V,q)$ is not split}.
\end{matrix}
$$
In particular, when $(V,q)$ is not split $b\notin F^{\times 2}$.

Let
$$
\CN^{V} = 
\bigg\{
\begin{matrix}
F^{\times}/F^{\times 2}, & \text{ when $(V,q)$ is split},\\
c\cdot \RN_{E/F}(E^{\times})/F^{\times 2}, & \text{ when $(V,q)$ is not split}.
\end{matrix}
$$
Then $\CN^{V}$ is in bijection with $\nil_{\reg}(\Fs\Fo(V))$.

More precisely, for any $\nu\in \CN^{V}$ with fixed representative in $F^{\times}$ which is still denoted by $\nu$, an explicit representative in $\nil_{\reg}(\Fs\Fo(V))$ can be constructed as follows. Write $V=D\oplus W$ such that $D$ is an anisotropic line, and the restriction of $q$ to $W$ has the quadratic form $x\to \nu x^{2}$ when restricted to the  anisotropic line in $W$. Let $H=\SO(W)$ where $(W,q|_{W})$ is split of odd dimension. It has a unique regular nilpotent orbit in $\Fs\Fo(W)(F)$. Let the associated nilpotent orbit in $\Fs\Fo(V)(F)$ be $\CO_{\nu}$. Then one can show that the orbit does not depend on the choice of the representative of $\nu$, and the set $\{ \CO_{\nu}|\quad \nu\in \CN^{V}\}$ is in bijection with $\nil_{\reg}(\Fs\Fo(V))$.
\end{itemize}

\subsubsection{Definition of GGP triples}
Let $(W,V)$ be a pair of quadratic spaces. $(W,V)$ is called \emph{admissible} if there exists an anisotropic line $D$ and a hyperbolic space of $Z$ of dimension $2r$ such that
$$
V\simeq W\oplus D\oplus Z.
$$
In particular, there exists a basis $\{ z_{i}\}_{i=\pm 1}^{\pm r}$ of $Z$ such that 
$$
q(z_{i},z_{j}) = \del_{i,-j}, \quad \forall i,j\in \{\pm 1,...,\pm r \}.
$$
There exist $\nu_{0}\in F^{\times}$ and $z_{0}\in D$ such that $q(z_{0},z_{0}) = \nu_{0}$. 

Let $P_{V}$ be the parabolic subgroup of $\SO(V)$ with Levi decomposition $P_{V}=M_{V}N$ stabilizing the following totally isotropic flag of $V$
$$
\langle z_{r} \rangle
\subset \langle z_{r},z_{r-1} \rangle 
\subset...\subset \langle z_{r},...,z_{1} \rangle.
$$

Set $G=\SO(W)\times \SO(V)$ and $P =\SO(W)\times P_{V}$. Then $P$ is a parabolic subgroup of $G$ with Levi decomposition $P=MN$ where $M=\SO(W)\times M_{V}$. Using the diagonal embedding $\SO(W)\hookrightarrow G$,  $\SO(W)$ can be identified as an algebraic subgroup of $G$ contained in $M$. In particular $\SO(W)$ acts via conjugation on $N$. Set
$$
H=\SO(W)\ltimes N.
$$

Define a morphism $\lam:N\to \BG_{a}$ by
$$
\lam(n) = \sum_{i=0}^{r-1}q(z_{-i-1}, nz_{i}), \quad n\in N.
$$
Then $\lam$ is $\SO(W)$ conjugation invariant and hence $\lam$ has a unique extension to $H$ that is trivial on $\SO(W)$, which is still denoted as $\lam$. Let $\lam_{F}:H(F)\to F$ be the induced morphism on $F$-rational points. Set
$$
\xi(h) = \psi(\lam_{F}(h)), \quad h\in H(F).
$$

\begin{defin}\label{defin:ggptriple}
The triple $(G,H,\xi)$ defined above is called the \emph{Gan-Gross-Prasad triple} associated to the admissible pair $(W,V)$.
\end{defin}

Besides the above notation and definitions, the following notions are introduced.

\begin{itemize}
\item $d=\dim (V)$, $m=\dim (W)$;

\item $Z_{+} = \langle z_{r},...,z_{1}\rangle$, $Z_{-} = \langle z_{-1},...,z_{-r}\rangle$;

\item $D=F\cdot z_{0}$, $V_{0} = W\oplus D$;

\item $H_{0}=\SO(W)$, $G_{0} = \SO(W)\times \SO(V_{0})$. The triple $(G_{0},H_{0},1)$ is associated to $(W,V_{0})$. It is called the \emph{codimension one} case;

\item $T$ is the subtorus of $\SO(V)$ stabilizing lines $\langle z_{i}\rangle$ for $i=\pm 1,...,\pm r$ and acting trivially on $V_{0}$. In particular $M=T\times G_{0}$;

\item Let $\Fh = \Lie(H)$. Using the same notation as before, $\xi$ is the character on $\Fh(F)$ that is trivial on $\Fs\Fo(W)(F)$ and equal to $\xi\circ \exp$ on $\Fn(F)$;

\item $B(\cdot,\cdot)$ is the following non-degenerate $G(F)$-invariant bilinear form on $\Fg(F)$,
$$
B((X_{W},X_{V}),(X_{W}^{\p},X^{\p}_{V})) = 
\frac{1}{2}
(\tr(X_{W}X_{W}^{\p})+\tr(X_{V}X^{\p}_{V}));
$$
\end{itemize}

The following lemma, whose proof is similar to \cite[Lemma~6.2.1]{beuzart2015local}, will be useful for later discussion.

\begin{lem}\label{lem:ggptriple:denseroot}
\begin{enumerate}
\item
The map $G\to H\bs G$ has the norm descent property.

\item 
The $M$-conjugacy orbit of $\lam$ in $(\Fn/[\Fn,\Fn])^{*}$ is Zariski open.
\end{enumerate}
\end{lem}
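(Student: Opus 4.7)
The plan is to follow the strategy of \cite[Lemma 6.2.1]{beuzart2015local} adapted to the orthogonal setting, with part (2) requiring a root-space computation specific to $\SO$.

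For (1), the essential step is to identify $H\bs G$ concretely. Quotienting $G = \SO(W) \times \SO(V)$ by the diagonal $\SO(W)$ gives $\SO(W)\bs G \cong \SO(V)$ (with $\SO(W)$ viewed inside $\SO(V)$), and a further quotient by $N$ yields $H\bs G \cong N\bs \SO(V)$; one verifies directly that the $N$-action descends along this identification because $\SO(W) \subset M_V$ normalizes $N$. Since the projection $G \to \SO(V)$ onto a direct factor has the norm descent property, it suffices to establish norm descent for $\SO(V) \to N\bs \SO(V)$. This is standard via the Iwasawa decomposition $\SO(V)(F) = N(F) M_V(F) K$ for a good maximal compact $K$: any coset $Nmk$ lifts to $mk$, whose log-norm is comparable to $\sig_{M_V}(m)$, and standard properties of log-norms on parabolic quotients give $\sig_{M_V}(m) \ll \sig_{N\bs\SO(V)}(Nmk) + 1$.

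For (2), the approach is to compute $\Fn/[\Fn,\Fn]$ and $\lam$ explicitly and then verify Zariski openness by a dimension count. The central cocharacter of $M_V$ (giving weight $k$ to $z_k$ for $k > 0$ and weight $0$ on $V_0$) grades $\Fn$ by positive integers, and $\Fn/[\Fn,\Fn]$ is the weight-one piece. A direct inspection of the root spaces of $\Fs\Fo(V)$---using that $\SO$ has no root of type $t_i^2$---gives
$$
\Fn/[\Fn,\Fn] \;\cong\; V_0 \oplus F^{r-1},
$$
where the $V_0$-summand is the root space $\Fs\Fo(V)_{t_1}$ (identified via $X \mapsto Xz_{-1}$) and the $F^{r-1}$-summands are the simple root spaces $\Fs\Fo(V)_{t_{k+1}/t_k}$, $k = 1, \dots, r-1$. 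Unwinding the definition of $\lam$ in these coordinates shows that $\lam$ corresponds to the element of the dual whose $V_0^*$-component is $-q(z_0,\cdot)$ and whose $F$-components are all equal to $1$; in particular $\lam$ has nonzero projection onto every simple root space.

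The $M = T \times G_0$-action on the dual is then transparent: $T$ scales each summand by independent characters (weight $t_k/t_{k+1}$ on the $k$-th $F$-factor, weight $t_1^{-1}$ on $V_0^*$), the $\SO(V_0)$ factor of $G_0$ acts standardly on $V_0^*$, and the $\SO(W)$ factor of $G_0$ acts trivially. A short computation gives $\dim \Stab_M(\lam) = 2\dim \SO(W) = m(m-1)$, coming from the diagonal $\SO(W) \subset G_0$ and from $\Stab_{\SO(V_0)}(z_0) = \SO(W)$, with the toral part cut down to the relations $t_1 = \dots = t_r$ together with $t_1^2 = 1$. Hence $\dim M - \dim \Stab_M(\lam) = r + m = \dim (\Fn/[\Fn,\Fn])$, so the $M$-orbit of $\lam$ has full dimension and is therefore Zariski open in the affine space $(\Fn/[\Fn,\Fn])^*$. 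The only step with real content is the explicit identification of $\Fn/[\Fn,\Fn]$ together with the root-space computation of $\lam$; the dimension count and part (1) are then essentially formal.
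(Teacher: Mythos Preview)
Your proposal is correct and follows essentially the same approach the paper intends: the paper gives no proof beyond the remark ``whose proof is similar to \cite[Lemma~6.2.1]{beuzart2015local},'' and your argument is precisely the orthogonal-group adaptation of that lemma. The identification $H\bs G\cong N\bs\SO(V)$ together with Iwasawa for part (1), and the explicit root-space description $\Fn/[\Fn,\Fn]\cong V_0\oplus F^{r-1}$ with a stabilizer dimension count for part (2), are exactly what Beuzart-Plessis does in the unitary case.

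One small inaccuracy worth flagging: in your description of the toral stabilizer you write ``$t_1=\dots=t_r$ together with $t_1^2=1$.'' Strictly speaking the stabilizer of $\lam$ in $M$ is not a product over the factors, since $T$ and $\SO(V_0)$ act jointly on the $V_0^*$-component; the condition $t_1^2=1$ arises only because for $t_1=-1$ there exist $g\in\SO(V_0)$ with $gz_0=-z_0$. Either way the toral contribution to $\dim\Stab_M(\lam)$ is zero, so your dimension count $\dim M-\dim\Stab_M(\lam)=r+m=\dim(\Fn/[\Fn,\Fn])$ is unaffected and the conclusion stands.
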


\subsubsection{The multiplicity $m(\pi)$}
For any $\pi\in \mathrm{Temp}(G)$, let $\Hom_{H}(\pi,\xi)$ be the space of continuous linear forms $\ell:\pi^{\infty}\to \BC$ satisfying
$$
\ell(\pi(h)e) = \xi(h)\ell(e),\quad \forall e\in \pi^{\infty},h\in H(F).
$$

Define
$$
m(\pi) = \dim \Hom_{H}(\pi,\xi).
$$

The following theorem is proved in \cite{jszmut1bessel} for archimedean case (for $r=0$ case it is also proved in \cite{szmut1}) and through a combination of \cite{agrsmut1} ($r=0$ case) and \cite{ggporiginal} (extending to general $r$ case) for $p$-adic case.

\begin{thm}\label{thm:ggptriples:multiplictyone}
The following inequality holds for any $\pi\in \mathrm{Temp}(G)$
$$
m(\pi)\leq 1.
$$
\end{thm}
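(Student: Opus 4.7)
The plan is to reduce the statement to a Gelfand pair property via a Gelfand-Kazhdan type distributional criterion. The first step is to construct an anti-involution $\sigma$ of $G$ preserving $H$ together with the character $\xi$ in the appropriate sense; a natural candidate combines the group inverse with conjugation by an element of $\SO(V)(F)$ that swaps $Z_{+}$ with $Z_{-}$ while preserving $V_{0}$, and restricts to an inner automorphism on the diagonal $\SO(W)$. Granted such a $\sigma$, the classical criterion yields $m(\pi) \leq 1$ for every irreducible admissible $\pi$ as soon as every $(H \times H)$-equivariant tempered distribution on $G(F)$, with the bi-equivariance governed by $\xi$, is automatically $\sigma$-invariant.

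Next I would prove the distributional statement by Harish-Chandra descent together with Luna's slice theorem. Localizing near each semisimple $x \in G(F)$, the problem reduces to an analogous equivariance statement on the Lie algebra of the centralizer $G_{x}$, which is again a product of special orthogonal groups and thus gives rise to a smaller Gan-Gross-Prasad type triple. Induction on $\dim V$ handles the reductive descent, with the torus case being trivial. A key geometric input is Lemma \ref{lem:ggptriple:denseroot}(2), which guarantees that the $M$-orbit of $\lambda$ is Zariski dense in $(\Fn/[\Fn, \Fn])^{*}$ and thereby pins down which semisimple classes can support obstructing distributions.

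The main obstacle is controlling distributions supported on the nilpotent cone of each localized centralizer, where the reductive descent terminates and a direct nilpotent orbit analysis becomes necessary. Over $p$-adic fields one exploits local constancy together with the Frobenius descent argument of Aizenbud-Gourevitch-Rallis-Schiffmann (\cite{agrsmut1}), extended from the basic $r=0$ case to general $r$ in \cite{ggporiginal}. Over archimedean fields this step is substantially more subtle, since smooth distributions on real manifolds are much more flexible than their $p$-adic counterparts: one must additionally bound the wave-front sets of the candidate obstructing distributions using the technique developed by Sun-Zhu (\cite{szmut1}) and extended to arbitrary $r$ in \cite{jszmut1bessel}. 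The hardest part is therefore precisely this archimedean nilpotent analysis, where the Sun-Zhu wave-front set method is genuinely indispensable.
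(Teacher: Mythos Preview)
The paper does not prove this theorem; it simply cites the literature, attributing the $p$-adic case to \cite{agrsmut1} (codimension one) together with \cite{ggporiginal} (reduction to codimension one), and the archimedean case to \cite{jszmut1bessel} (with \cite{szmut1} for $r=0$). Your proposal is a faithful outline of exactly those arguments---Gelfand--Kazhdan criterion via an anti-involution, Harish-Chandra descent to centralizers, and the nilpotent-cone analysis (Frobenius descent in the $p$-adic case, wave-front set bounds \`a la Sun--Zhu in the archimedean case)---so your approach and the paper's agree.
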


Following \cite[6.3.1]{beuzart2015local} the identity
$$
m(\wb{\pi}) = m(\pi)
$$
holds where $\wb{\pi}$ is the complex conjugation of $\pi$.

\subsection{Spherical variety structure}\label{sub:ggptriples:sphvar}

A parabolic subgroup $\wb{Q}$ of $G$ is called \emph{good} if $H\wb{Q}$ is Zariski-open in $G$. Through taking the generic fiber the condition is equivalent to $H(F)\wb{Q}(F)$ being open in $G(F)$.

\begin{pro}\label{pro:ggptriples:sphvar}
\begin{enumerate}
\item There exists minimal parabolic subgroups of $G$ that are good, and they are conjugate under $H(F)$. Moreover, for a good minimal parabolic subgroup $\wb{P}_{\min} = M_{\min}\wb{U}_{\min}$, $H\cap \wb{U}_{\min} = \{ 1\}$, and the complement of $H(F)\wb{P}_{\min}(F)$ in $G(F)$ has zero measure;

\item A parabolic subgroup $\wb{Q}$ of $G$ is good if and only if it contains a good minimal parabolic subgroup.
\end{enumerate}
\end{pro}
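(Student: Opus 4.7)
The plan is to adapt the argument of \cite[Proposition 6.4.1]{beuzart2015local} for unitary groups to the special orthogonal setting, treating the codimension-one case first and then bootstrapping to the general admissible pair $(W,V)$. The whole argument centers on producing one explicit good minimal parabolic and then exploiting a Bruhat-type analysis to deduce the remaining structural statements.

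\textbf{Step 1 (Codimension-one base case).} Consider $G_{0}=\SO(W)\times \SO(V_{0})$ with $V_{0}=W\oplus D$ and $H_{0}=\SO(W)$ embedded diagonally. Via $(g,h)\mapsto g^{-1}h$, the quotient $H_{0}\bs G_{0}$ is $\SO(V_{0})$-equivariantly isomorphic to $\SO(W)\bs \SO(V_{0})$, which in turn is realized as the affine quadric $\{v\in V_{0}\mid q(v)=\nu_{0}\}$ via $h\SO(W)\mapsto h\cdot z_{0}$. I would show that, after a careful choice of Witt basis of $V_{0}$ adapted to the anisotropic kernel of $W$, there exists a minimal parabolic $\ol{P}_{V_{0},\min}$ of $\SO(V_{0})$ with an open orbit on this quadric; infinitesimally this is the tangent-space identity $\Fs\Fo(W)+\ol{\Fp}_{V_{0},\min}=\Fs\Fo(V_{0})$. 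Taking $\ol{P}_{0,\min}=\ol{P}_{W,\min}\times \ol{P}_{V_{0},\min}$ with any minimal parabolic $\ol{P}_{W,\min}$ of $\SO(W)$ then yields a good minimal parabolic of $G_{0}$.

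\textbf{Step 2 (General case).} For the general $(W,V)$, let $\ol{P}_{V}=M_{V}\ol{N}$ be the parabolic opposite to $P_{V}$ and set $\ol{P}=\SO(W)\times \ol{P}_{V}$, a parabolic of $G$ with Levi $M=\SO(W)\times T\times \SO(V_{0})$ and unipotent radical $\ol{N}$. Inside $M$, use Step 1 to pick a good minimal parabolic $\ol{P}_{0,\min}$ for the codimension-one triple $(G_{0},H_{0},1)$, and let $\ol{P}_{\min}\subset \ol{P}$ be the minimal parabolic of $G$ refining $\ol{P}$ by $\ol{P}_{0,\min}$ on the Levi. Openness of $H\ol{P}_{\min}$ then reduces to the identity
\[
H\ol{P}_{\min}=H_{0}\,N\,\ol{P}_{\min}
\]
together with the facts that $N\ol{N}$ fills the big cell of $\SO(V)$ relative to $P_{V}$, and that $H_{0}\ol{P}_{0,\min}$ is open in $G_{0}$ by Step 1. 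A direct dimension count confirms $\Fh+\ol{\Fp}_{\min}=\Fg$.

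\textbf{Step 3 (Uniqueness, trivial intersection, measure zero).} With one good minimal parabolic in hand, uniqueness up to $H(F)$-conjugation follows from finiteness of $H(F)$-orbits on $G(F)/\ol{P}_{\min}(F)$, a consequence of the Bruhat decomposition together with Lemma~\ref{lem:ggptriple:denseroot}(2) (genericity of $\lam$ singles out a unique open $H$-orbit). The equality $H\cap \ol{U}_{\min}=\{1\}$ is extracted from the explicit construction: any nontrivial element would produce a stabilizer in $H\cap \ol{P}_{\min}$ of positive dimension, contradicting the dimension equality $\dim H\ol{P}_{\min}=\dim G$ established above. The measure-zero statement is standard: the complement of $H(F)\ol{P}_{\min}(F)$ is the $F$-analytic locus of a proper closed subvariety of $G$, hence null for Haar measure, and the norm descent property of Lemma~\ref{lem:ggptriple:denseroot}(1) ensures this is well-behaved.

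\textbf{Step 4 (Part (2) and the main obstacle).} The ``if'' direction is immediate: if $\ol{Q}\supset \ol{P}_{\min}$ with $\ol{P}_{\min}$ good, then $H\ol{Q}\supset H\ol{P}_{\min}$ is open. For the converse, write $\ol{Q}=L_{Q}\ol{U}_{Q}$; any minimal parabolic inside $\ol{Q}$ has the form $\ol{B}_{Q}\ol{U}_{Q}$, and I would show that openness of $H\ol{Q}$ forces some $\ol{B}_{Q}$ to give a good minimal parabolic by reducing to a sphericity assertion inside the smaller Levi $L_{Q}$. The principal technical hurdle throughout is Step 1: finding the correct Witt basis and verifying sphericity of $\SO(W)\subset\SO(V_{0})$, since (unlike the unitary case) the orthogonal structure interacts nontrivially with the anisotropic kernel of $W$ and so requires more delicate normalization than in \cite{beuzart2015local}. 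Once Step 1 is secured, the remainder is parallel to the unitary argument.
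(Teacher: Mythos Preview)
Your overall strategy matches the paper's: both reduce to \cite[Proposition~6.4.1]{beuzart2015local} and adapt the unitary argument to the orthogonal setting. However, you misidentify where the adaptation is actually needed. The paper singles out a single modification: the description of $H^{1}(F,\SO(W))$ as classifying quadratic spaces of the same dimension \emph{and discriminant} as $W$ (\cite[29.29]{MR1632779}), rather than hermitian spaces of the same dimension as in the unitary case. This cohomological input enters precisely in your Step~3, in proving $H(F)$-conjugacy of good minimal parabolics, and your sketch there does not suffice: ``finiteness of $H(F)$-orbits'' together with ``genericity of $\lam$'' only yields uniqueness of the open $H$-orbit over $\wb{F}$, not that its $F$-points form a single $H(F)$-orbit. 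The latter requires controlling the Galois cohomology obstruction (essentially the fiber of $H^{1}(F,H\cap \wb{P}_{\min})\to H^{1}(F,H)$), which is exactly where the orthogonal-specific description of $H^{1}(F,\SO(W))$ is invoked. By contrast, your Step~1 (the explicit Witt basis and sphericity for the codimension-one pair) goes through essentially as in the unitary case and is not the principal obstacle.
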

\begin{proof}
The proof follows from the same argument as \cite[Propostion~6.4.1]{beuzart2015local}, except the fact that $H^{1}(F,\SO(W))$ classifies the isomorphism classes of quadratic spaces of the same dimension and discriminant as $W$ (\cite[29.29]{MR1632779}) (c.f. \cite[p.150]{beuzart2015local}). 
\end{proof}

\subsubsection{Relative weak Cartan decompositions}
In the codimension one case $(G_{0},H_{0},1)$, from Proposition \ref{pro:ggptriples:sphvar}, $G_{0}$ admits good minimal parabolic subgroups. Let $\wb{P}_{0} = M_{0}\wb{U}_{0}$ be a good minimal parabolic subgroup of $G_{0}$. Let $A_{0} = A_{M_{0}}$ be the maximal central split torus of $M_{0}$. Set
$$
A^{+}_{0} = 
\{ 
a\in A_{0}(F)|\quad
|\alp(a)|\geq 1, \forall \alp\in R(A_{0},\wb{P}_{0})
\}.
$$

\begin{pro}\label{pro:ggptriples:cartandecomp}
There exists a compact subset $\CK_{0}\subset G_{0}(F)$ such that
$$
G_{0}(F) = H_{0}(F)A^{+}_{0}\CK_{0}.
$$
\end{pro}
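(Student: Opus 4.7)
The plan is to adapt the proof of the analogous statement in the unitary setting, \cite[Proposition~6.4.1]{beuzart2015local}, to the orthogonal case. The argument there is essentially structural and relies only on the consequences of Proposition \ref{pro:ggptriples:sphvar} together with standard decomposition theorems (Iwasawa, Cartan for the Levi), all of which carry over verbatim to the orthogonal setting. Under the identification $H_0(F)\bs G_0(F) \simeq \SO(W)(F)\bs \SO(V_0)(F)$ coming from projection to the second factor, the claim is a relative weak Cartan decomposition for the rank-one spherical variety given by the quadric $\{v\in V_0 : q(v,v) = \nu_0\}$.

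Concretely, I would proceed in three steps. First, fix a maximal compact subgroup $\CK \subset G_0(F)$ in good position with respect to the good minimal parabolic $\wb{P}_0 = M_0\wb{U}_0$, obtaining the Iwasawa decomposition $G_0(F) = \wb{U}_0(F)\cdot M_0(F)\cdot \CK$. The classical Cartan decomposition $M_0(F) = \CK_M \cdot A_0(F)\cdot \CK_M$ with $\CK_M$ compact then writes every $g\in G_0(F)$ as $g = \wb{u}\cdot a\cdot k$ with $\wb{u}\in \wb{U}_0(F)$, $a\in A_0(F)$, and $k$ in a fixed compact set. Using representatives of the relative Weyl group $W(A_0) = N_{G_0}(A_0)(F)/M_0(F)$ (finite in number and available in $\CK$ up to a compact correction) one may further assume $a\in A_0^+$, at the cost of enlarging the compact factor.

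Second, and this is the core step, one absorbs the remaining $\wb{u}\in \wb{U}_0(F)$-factor into $H_0(F)$ modulo a compact set. The key input is Proposition \ref{pro:ggptriples:sphvar}(1), which supplies $H_0 \cap \wb{U}_0 = \{1\}$ and implies that the map $H_0\times \wb{U}_0 \to G_0/M_0$, $(h,\wb{u})\mapsto h\wb{u}M_0$, is injective with Zariski-dense image whose complement has measure zero. Filtering $\wb{U}_0$ by its descending central series, one argues inductively that on each graded piece $\wb{U}_0^i/\wb{U}_0^{i+1}$ the $H_0(F)$-action has a Zariski-open orbit, and hence that every $F$-point can be moved into a fixed compact set by $H_0(F)$. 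Assembling these layers produces, for every $\wb{u}\in \wb{U}_0(F)$ in the open orbit, an element $h\in H_0(F)$ with $h\cdot \wb{u}$ in a fixed compact subset of $\wb{P}_0(F)$; this is combined with the previous step to yield the sought decomposition after a final density argument.

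The main obstacle is the quantitative version of this absorption step, uniform under conjugation by $A_0^+$: one must ensure that the compact set into which $\wb{u}$ can be pushed by $H_0(F)$ does not grow as $a\in A_0^+$ drifts to the walls. This is ultimately a rank-one phenomenon, reflecting the fact that the spherical variety $\SO(W)\bs \SO(V_0)$ is of rank one in the codimension-one case, and in \cite{beuzart2015local} it is handled by explicit matrix coordinates on the unipotent radical. Adapting this to special orthogonal groups amounts to redoing the root-theoretic bookkeeping for the orthogonal root system, with no conceptually new ingredient beyond Proposition \ref{pro:ggptriples:sphvar}.
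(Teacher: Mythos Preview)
Your citation is off: the analogous statement in \cite{beuzart2015local} is Proposition~6.6.1 (in Section~6.6, which the paper's proof invokes), not Proposition~6.4.1, which is the spherical-structure result already used for Proposition~\ref{pro:ggptriples:sphvar}. The paper's proof simply defers to \cite[Section~6.6]{beuzart2015local} for the archimedean case and to \cite[7.2]{waldspurger10} for the $p$-adic case, noting that the arguments there transplant verbatim.

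More substantively, your proposed route differs from what those references actually do. In both \cite[7.2]{waldspurger10} and \cite[Section~6.6]{beuzart2015local}, the argument works \emph{directly} on the rank-one homogeneous space $H_{0}(F)\bs G_{0}(F)\simeq \SO(W)(F)\bs \SO(V_{0})(F)$, identifying it with an affine quadric and using explicit coordinates to show that every point can be brought by $H_{0}(F)$ into a region of the form $A_{0}^{+}\cdot(\text{compact})$. There is no passage through the Iwasawa decomposition of $G_{0}(F)$ followed by an ``absorption of unipotents into $H_{0}$'' step.

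Your outline is a reasonable general heuristic for spherical varieties, but as you yourself flag, the entire content lies in the uniform absorption step, and you leave it unestablished. Writing $g=\wb{u}\,a\,k$ and trying to move $\wb{u}$ into $H_{0}(F)$ modulo a compact set \emph{independent of $a$} is exactly the difficulty; the inductive-on-graded-pieces sketch you give does not explain why the compact absorbing set stays bounded as $a$ drifts in $A_{0}^{+}$. The direct coordinate approach on the quadric sidesteps this entirely, which is why both references take that route rather than the one you propose.
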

\begin{proof}
Together with \cite[7.2]{waldspurger10}, the discussion of \cite[Section~6.6]{beuzart2015local} works in parallel with the current situation.
\end{proof}

For a general GGP triple $(G,H,\xi)$, let $(\wb{P}_{0},M_{0},A_{0},A^{+}_{0})$ be as above. Let $\wb{P} = M\wb{N}$ be the parabolic subgroup opposite to $P$ w.r.t. $M$. Set
$$
A_{\min} = A_{0}T\subset M_{\min} =M_{0}T\subset \wb{P}_{\min} = \wb{P}_{0}T\wb{N}.
$$
Then $\wb{P}_{\min}$ is a parabolic subgroup, $M_{\min}$ is the Levi component of it and $A_{\min}$ is the maximal central split subtorus of $M_{\min}$. Moreover, from the proof of \cite[Proposition~6.4.1]{beuzart2015local}, $\wb{P}_{\min}$ is a good parabolic subgroup of $G$. 
Set
$$
A^{+}_{\min} = \{
a\in A_{\min}(F)|\quad 
|\alp(a)|\geq 1, \forall \alp\in R(A_{\min},\wb{P}_{\min})
 \}.
$$
Let $P_{\min}$ be the parabolic subgroup opposite to $\wb{P}_{\min}$ w.r.t. $M_{\min}$. Then $P_{\min}\subset P$. Let $\Del$ be the set of simple roots of $A_{\min}$ in $P_{\min}$ and set $\Del_{P} = \Del \cap R(A_{\min},N)$ be the set of simple roots appearing in $\Fn = \Lie(N)$. For $\alp\in \Del_{P}$ let $\Fn_{\alp}$ be the corresponding root subspace. 

\begin{lem}\label{lem:ggptriple:cartandecomp}
The following properties hold.

\begin{enumerate}
\item 
$$
A^{+}_{\min} = \{
a\in A^{+}_{0}T(F) |\quad |\alp(a)|\leq 1, \forall \alp\in \Del_{P}
 \};
$$

\item There exists a compact subset $\CK_{G}$ of $G(F)$ such that 
$$
G(F)= H(F)A^{+}_{0}T(F)\CK_{G};
$$ 

\item For any $\alp\in \Del_{P}$, the restriction of $\xi$ to $\Fn_{\alp}(F)$ is nontrivial.
\end{enumerate}
\end{lem}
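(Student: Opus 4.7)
I will treat the three parts in order.

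For part (1), the plan is a direct root-system computation using that $M = T \times G_0$ as an internal direct product (since $T$ acts trivially on $V_0$), so $T$ commutes with $G_0$, and in particular with both $\wb U_0$ and $A_0$. Consequently the roots of $A_{\min} = A_0 T$ in $\wb U_0$ are the pullbacks along $A_{\min} \twoheadrightarrow A_0$ of the roots of $A_0$ in $\wb U_0$. Writing $a = a_0 t \in A_{\min}(F)$ with $a_0 \in A_0(F)$ and $t \in T(F)$, the condition $|\alpha(a)| \geq 1$ on these roots reduces to $a_0 \in A_0^+$. The remaining roots in $R(A_{\min}, \wb P_{\min})$ come from $\wb N$ and are negatives of the roots of $A_{\min}$ in $N$; the remaining inequalities become $|\alpha(a)| \leq 1$ for $\alpha \in R(A_{\min}, N)$, which by the usual cone argument is equivalent to the same bound on the simple roots $\Delta_P$.

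For part (2), the plan combines the Iwasawa decomposition with Proposition \ref{pro:ggptriples:cartandecomp}. Iwasawa yields $G(F) = P(F) K = N(F) T(F) G_0(F) K$ for a suitable maximal compact $K \subset G(F)$, while Proposition \ref{pro:ggptriples:cartandecomp} applied to the codimension-one triple $(G_0, H_0, 1)$ gives $G_0(F) = H_0(F) A_0^+ \CK_0$. Using that $T$ commutes with $G_0$ (hence with $H_0$ and $A_0$), and that $H = H_0 \ltimes N$ so $H_0(F) N(F) = H(F)$, I can rearrange
$$
G(F) = N(F) T(F) H_0(F) A_0^+ \CK_0 K = H(F) T(F) A_0^+ \CK_0 K = H(F) A_0^+ T(F) \CK_G,
$$
with $\CK_G := \CK_0 K$ compact.

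For part (3), the plan is to invoke Lemma \ref{lem:ggptriple:denseroot}(2): the $M$-orbit of $\lambda$ in $(\Fn/[\Fn, \Fn])^*$ is Zariski open. By the standard characterization of generic characters on the unipotent radical of a parabolic, this is equivalent to $\lambda|_{\Fn_\alpha} \neq 0$ for every $\alpha \in \Delta_P$; combined with the non-triviality of $\psi$ this yields that $\xi|_{\Fn_\alpha(F)} = (\psi \circ \lambda)|_{\Fn_\alpha(F)}$ is non-trivial. The main subtlety, and the principal obstacle, is precisely this last equivalence: one must identify the $M$-irreducible summands of $\Fn/[\Fn, \Fn]$ with the ``simple-root blocks'' indexed by $\Delta_P$, and verify that an open $M$-orbit in $(\Fn/[\Fn,\Fn])^*$ cannot avoid any of the corresponding simple root spaces. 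For our $G = \SO(W) \times \SO(V)$ with Levi $M = T \times \SO(W) \times \SO(V_0)$, this can be checked by explicit computation on root spaces, parallel to \cite[Section~6.6--6.7]{beuzart2015local}.
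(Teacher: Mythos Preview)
Your proposal is correct and follows essentially the same route as the paper, which simply defers to \cite[Lemma~6.6.2]{beuzart2015local}. Parts (1) and (2) are exactly the intended arguments: the root-system bookkeeping for (1) and the combination of Iwasawa with Proposition~\ref{pro:ggptriples:cartandecomp} for (2).

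For part (3), one comment. Your detour through Lemma~\ref{lem:ggptriple:denseroot}(2) does not quite close the gap on its own, and you are right to flag it. The open $M$-orbit condition forces $\lam$ to be nonzero on each $M$-stable summand of $\Fn/[\Fn,\Fn]$, i.e.\ on each $A_M$-weight block (here $A_M=T$). For the $r-1$ simple roots with $T$-weight $\chi_{i+1}\chi_i^{-1}$ ($1\le i\le r-1$) this block is one-dimensional and coincides with the $A_{\min}$-root space $\Fn_\alp$, so nothing more is needed. But for the remaining simple root (the one with $T$-weight $\chi_1$, coming from the term $q(z_{-1},Xz_0)$) the $T$-weight block is all of $\Hom(V_0,\langle z_1\rangle)$, which is strictly larger than the single $A_{\min}$-root space $\Fn_\alp$; knowing $\lam$ is nonzero on the block does not a priori locate the nonzero value on $\Fn_\alp$ itself. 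So, as you say, the explicit computation is what actually does the work --- and in fact that computation is short and entirely self-contained, so the open-orbit lemma can be dropped from the argument for (3). This is how Beuzart-Plessis proceeds.
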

\begin{proof}
The proof of \cite[Lemma~6.6.2]{beuzart2015local} works verbatim.
\end{proof}

\subsection{Analytical estimations}\label{subsec:analyticalestimate}
Various analytical estimations are stated associated to the GGP triples $(G,H,\xi)$. The results
will be used to establish the results for explicit tempered intertwining, the convergence of the distribution $J$ and $J^\Lie$, and the spectral expansion for the distribution $J$, which are introduced in the next sections. However, as we omit most of the details (those will appear in \cite{thesis_zhilin}) that are similar to those in \cite{beuzart2015local}, \cite{waldspurger10} and \cite{waldspurgertemperedggp}, they do not show up explicitly in the context. For the sake of completeness we prefer to keep them here.

\begin{lem}\label{lem:ggptriple:sigest}
Let $\wb{P}_{\min}= M_{\min}\wb{U}_{\min}$ be a good minimal parabolic subgroup and let $A_{\min}$ be the maximal split central torus of $M_{\min}$. Recall that
$$
A^{+}_{\min} = \{ a\in A_{\min}(F)|\quad |\alp(a)|\geq 1, \forall \alp\in R(A_{\min}, \wb{P}_{\min}) \}.
$$
Then the following inequalities hold.
\begin{enumerate}
\item $\sig(h)+\sig(a)\ll \sig(ha)$ for any $a\in A^{+}_{\min}$ and $h\in H(F)$;

\item $\sig(h)\ll \sig(a^{-1}ha)$ for any $a\in A^{+}_{\min}$ and $h\in H(F)$.
\end{enumerate}
\end{lem}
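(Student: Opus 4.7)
The lemma is the special orthogonal analogue of \cite[Lemma~6.7.1]{beuzart2015local}, and the plan is to verify that Beuzart-Plessis's argument carries over verbatim once the structural inputs for the SO case are in place. Those inputs have already been recorded: the existence of a good minimal parabolic $\wb{P}_{\min}$ with $H\cap\wb{U}_{\min}=\{1\}$ (Proposition~\ref{pro:ggptriples:sphvar}), the Cartan-type decomposition together with the explicit shape of $A^{+}_{\min}$ (Proposition~\ref{pro:ggptriples:cartandecomp} and Lemma~\ref{lem:ggptriple:cartandecomp}), and the norm-descent property of $G\to H\bs G$ (Lemma~\ref{lem:ggptriple:denseroot}(1)). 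I will fix once and for all a faithful rational representation $G\hookrightarrow\GL_N$ and define the log-norm $\sigma$ via the maximum matrix entry of the image together with its inverse.

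For part (2), the key observation is infinitesimal. The transversality $H\cap\wb{U}_{\min}=\{1\}$ combined with the fact that $H\wb{P}_{\min}$ is Zariski-open in $G$ forces $\Fh\cap\wb{\Fu}_{\min}=0$. Decomposing $\Fh$ into weight spaces under $\Ad(A_{\min})$, every $A_{\min}$-weight occurring in $\Fh$ is therefore either trivial or appears as a root of $A_{\min}$ in $\Fu_{\min}$. For $a\in A^{+}_{\min}$ each such weight $\alp$ satisfies $|\alp(a)^{-1}|\geq 1$, so $\Ad(a^{-1})$ is non-contracting on $\Fh$ in the chosen matrix realisation. Lifting from Lie algebra to group via polynomial exponential-type coordinates near the identity of $H$, and invoking the submultiplicativity of $\sigma$ to handle large $h$, yields $\sigma(h)\ll\sigma(a^{-1}ha)$ uniformly in $h\in H(F)$ and $a\in A^{+}_{\min}$.

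For part (1) I will exploit the open-cell structure directly. Since $\wb{P}_{\min}$ is good and $H\cap\wb{U}_{\min}=\{1\}$, the multiplication map $H\times\wb{P}_{\min}\to G$ is smooth with Zariski-open image $H\wb{P}_{\min}$, and the fiber $H\cap\wb{P}_{\min}\subset M_{\min}$ is small enough that one can choose polynomial coordinates on the open cell expressing the $H$- and $\wb{P}_{\min}$-components of a point in terms of its matrix entries. For $a\in A^{+}_{\min}\subset\wb{P}_{\min}$ and $h\in H(F)$ the product $ha$ lies in the open cell, so these polynomial expressions give a local estimate of the form $\sigma(h)+\sigma(a)\ll \sigma(ha)$ when $ha$ stays bounded away from the boundary of the cell. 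The norm-descent property for $G\to H\bs G$ is then used to make the estimate uniform in $h$.

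The main technical obstacle, already present in the unitary case, is upgrading these local polynomial-coordinate estimates to estimates valid uniformly on all of $H(F)\times A^{+}_{\min}$, i.e.\ controlling the behaviour as $ha$ approaches the boundary of the open cell. This is handled, following \cite[\S 6.7]{beuzart2015local} and \cite[\S 7.2]{waldspurger10}, by using the Cartan decomposition $G(F)=H(F)A^{+}_{0}T(F)\CK_{G}$ of Lemma~\ref{lem:ggptriple:cartandecomp}(2) to reduce to a compact range where the local estimate applies, together with the explicit description of $A^{+}_{\min}$ in Lemma~\ref{lem:ggptriple:cartandecomp}(1) to control the interplay between the $A^{+}_{0}$ and $T(F)$ factors. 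Since every structural ingredient required is available in the SO setting with the same form as in the unitary setting, no new idea beyond those already present in \cite{beuzart2015local} and \cite{waldspurger10} is needed.
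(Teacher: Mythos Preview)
Your high-level claim---that the argument from Beuzart-Plessis transfers once the structural inputs (good minimal parabolic with $H\cap\wb{U}_{\min}=\{1\}$, norm descent for $G\to H\bs G$) are in place---is exactly what the paper asserts. However, the paper cites \cite[Proposition~6.4.1~(iii)]{beuzart2015local}, not \S6.7; the latter treats the function $\Xi^{H\bs G}$ and is a different part of the story. Your weight-space argument for (2) is correct and is essentially the argument there.

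For (1) your sketch is more convoluted than the actual proof and contains a confusion worth flagging. You worry about ``$ha$ approaching the boundary of the open cell'' and propose a Cartan-decomposition reduction to handle this, but there is no such issue: for $h\in H(F)$ and $a\in A^{+}_{\min}\subset \wb{P}_{\min}(F)$ the product $ha$ lies in $H\wb{P}_{\min}$ by definition, always. The point is rather that the multiplication map $H\times^{H\cap\wb{P}_{\min}}\wb{P}_{\min}\to H\wb{P}_{\min}$ is an isomorphism of varieties onto an open subset of $G$, so its inverse is a morphism and hence has polynomially bounded norm; combined with norm descent this gives $\sig(a)\ll\sig(ha)$ directly, and then $\sig(h)\leq\sig(ha)+\sig(a)\ll\sig(ha)$. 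No Cartan decomposition is needed for this lemma.
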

\begin{proof}
The proof follows from the same argument as \cite[Proposition~6.4.1~(iii)]{beuzart2015local}.
\end{proof}

\subsubsection{Estimations for Harish-Chandra $\Xi$ functions}
Various estimations are stated for the integrations of the Harish-Chandra $\Xi$ function of $G$ on $H$.

\begin{lem}\label{lem:ggptriple:hcxifunction}
\begin{enumerate}
\item There exists $\eps>0$ such that the integral 
$$
\int_{H_{0}(F)}
\Xi^{G_{0}}(h_{0})e^{\eps \sig(h_{0})}\ud h_{0}
$$
is absolutely convergent.

\item There exists $d>0$ such that the integral 
$$
\int_{H(F)}
\Xi^{G}(h)\sig(h)^{-d}\ud h
$$
is absolutely convergent.

\item For any $\del>0$ there exists $\eps>0$ such that the integral 
$$
\int_{H(F)}\Xi^{G}(h)e^{\eps \sig(h)}(1+|\lam(h)|)^{-\del}\ud h
$$
is absolutely convergent.

\item Let $\wb{P}_{\min} = M_{\min}U_{\min}$ be a good minimal parabolic subgroup of $G$. Then the following result holds.

For any $\del>0$ there exists $\eps >0$ such that the integral 
$$
I^{1}_{\eps, \del}(m_{\min}) = 
\int_{H(F)}
\Xi^{G}(hm_{\min})e^{\eps\sig(h)}
(1+|\lam(h)|^{-\del})\ud h
$$
is absolutely convergent for any $m_{\min}\in M_{\min}(F)$ and there exists $d>0$ such that 
$$
I^{1}_{\eps, \del}(m_{\min})
\ll \del_{P_{\min}}(m_{\min})^{-1/2}
\sig(m_{\min})^{d}
$$
for any $m_{\min}\in M_{\min}(F)$.

\item Assume moreover that $T$ is contained in $A_{M_{\min}}$. Then for any $\del>0$ there exists $\eps>0$ such that the integral
$$
I^{2}_{\eps, \del}
(m_{\min}) = 
\int_{H(F)\times H(F)}
\Xi^{G}(hm_{\min})
\Xi^{G}(h^{\p}hm_{\min})
e^{\eps \sig(h)}
e^{\eps \sig(h^{\p})}
(1+|\lam(h^{\p})|)^{-\del}\ud h^{\p}\ud h
$$
is absolutely convergent for any $m_{\min}\in M_{\min}(F)$, and there exists $d>0$ such that 
$$
I^{2}_{\eps, \del}(m_{\min})\ll
\del_{\wb{P}_{\min}}(m_{\min})^{-1}
\sig(m_{\min})^{d}
$$
for any $m_{\min}\in M_{\min}(F)$.
\end{enumerate}
\end{lem}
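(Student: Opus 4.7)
The plan is to proceed in direct parallel with \cite[Proposition~6.7.1]{beuzart2015local}, replacing the unitary-group input with the corresponding special-orthogonal results already established in Proposition~\ref{pro:ggptriples:cartandecomp} and Lemma~\ref{lem:ggptriple:cartandecomp}. The three recurring tools are: (i) Harish-Chandra's standard estimate $\Xi^G(k_1ak_2)\asymp \del_{\wb{P}_{\min}}(a)^{1/2}\sig(a)^{d_0}$ for $a\in A_{\min}^+$, together with its analogue for $\Xi^{G_0}$; (ii) the comparison inequalities between log-norms on $H(F)$ and on $H(F)A_{\min}^+$ from Lemma~\ref{lem:ggptriple:sigest}; and (iii) the nontriviality of $\xi|_{\Fn_\alp}$ for each simple root $\alp\in \Del_P$ from Lemma~\ref{lem:ggptriple:cartandecomp}(3).

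For (1), I would reduce the integral to one over $A_0^+$ via the Cartan decomposition $H_0(F)=\CK_0 A_0^+\CK_0$ applied in the codimension-one setting, paying the Jacobian $\del_{\wb{P}_0\cap H_0}(a)^{-1}$. Inserting the $\Xi^{G_0}$-estimate, the integrand on $A_0^+$ becomes (up to polynomial factors in $\sig(a)$) the modular-character ratio $\del_{\wb{P}_0}(a)^{1/2}\del_{\wb{P}_0\cap H_0}(a)^{-1}$, which the codimension-one structure forces to be strictly contracting on $A_0^+$. This provides exponential decay $e^{-c\sig(a)}$ that absorbs the factor $e^{\eps\sig(h_0)}$ for $\eps$ small. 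Part (2) is handled with the extended decomposition $G(F)=H(F)A_0^+T(F)\CK_G$ from Lemma~\ref{lem:ggptriple:cartandecomp}(2); the extra hyperbolic factor $T$ introduces only a polynomial loss that is absorbed by the factor $\sig(h)^{-d}$.

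Part (3) is the decisive new ingredient. Split $H(F)=H_0(F)\ltimes N(F)$ and integrate over $N(F)$ first. By Lemma~\ref{lem:ggptriple:cartandecomp}(3), $\lam$ restricts to a nontrivial linear form on each simple-root subspace $\Fn_\alp(F)$, so an iterated fibration of $N$ by one-parameter subgroups $U_\alp$ ($\alp\in \Del_P$) writes the $N$-integral as a product of one-dimensional integrals, each containing a factor of the form $(1+|c_\alp t|)^{-\del}$ along $U_\alp(F)\simeq F$. These one-dimensional integrals contribute rapid decay in the unipotent directions that exactly compensates the growth $e^{\eps\sig(h)}$; what remains is an integral over $H_0(F)$ to which part (1) applies.

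Parts (4) and (5) refine (3) by tracking the dependence on $m_{\min}$. The basic input is Harish-Chandra's inequality $\Xi^G(hm_{\min})\ll \del_{\wb{P}_{\min}}(m_{\min})^{-1/2}\Xi^G(h)\sig(hm_{\min})^{d_1}$, which combined with (3) yields (4) immediately. For (5), I would apply Cauchy-Schwarz in the $h'$-variable, using the hypothesis $T\subset A_{M_{\min}}$ to commute $m_{\min}$ past the $T$-factor of the relative Cartan decomposition so that the two $\Xi^G$-factors can be estimated simultaneously; this upgrades the exponent on $\del_{\wb{P}_{\min}}(m_{\min})$ from $-1/2$ to $-1$. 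I expect the main obstacle to be precisely this bookkeeping in (5): getting the sharper exponent forces a careful analysis of the combinatorics of the roots of $A_{\min}$ in $P$, and this is the only place where the orthogonal Dynkin structure visibly differs from the unitary case of \cite{beuzart2015local}; the argument of \cite[\S7]{waldspurger10} adapts, but verifying all the simple-root modular-character identities for the special orthogonal case is the most technical step.
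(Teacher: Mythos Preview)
Your overall framework is right in spirit, but there are two concrete gaps and one misplaced citation.

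First, the parallel result in \cite{beuzart2015local} is Lemma~6.5.1, not Proposition~6.7.1; the latter concerns $\Xi^{H\bs G}$ and is the analogue of Proposition~\ref{pro:ggptriple:xiHbsGfun} here. This matters because the actual proof structure in Lemma~6.5.1 is different from what you sketch. Also, the paper singles out part~(1) as following \cite[Lemme~4.9]{waldspurgertemperedggp} rather than \cite{beuzart2015local}.

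Second, your treatment of (2) invokes the relative weak Cartan decomposition $G(F)=H(F)A_0^+T(F)\CK_G$, but this decomposes $G(F)$, not $H(F)$; it is a tool for integrals over $H\bs G$, not over $H$. The integral in (2) is over $H(F)$, and the correct reduction is through the semidirect product $H=H_0\ltimes N$: one integrates over $N(F)$ first (controlling $\Xi^G$ on $MN$ via its relation to $\Xi^M$ through the parabolic $P=MN$) and then over $H_0(F)$ using (1).

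Third, and more seriously, your argument for (3) claims that ``an iterated fibration of $N$ by one-parameter subgroups $U_\alp$ ($\alp\in\Del_P$)'' reduces the $N$-integral to a product of one-dimensional integrals. But $\Del_P$ consists only of the \emph{simple} roots of $A_{\min}$ in $N$, of which there are $r$, whereas $\dim N = r(r+m)$. The damping factor $(1+|\lam(n)|)^{-\del}$ lives only on the abelianization $N/[N,N]$ and contributes decay in at most $r$ directions; it cannot by itself control the remaining $r(r+m)-r$ unipotent directions. The actual argument must combine the $\lam$-damping in the simple-root directions with the decay of $\Xi^G$ along $N$ coming from the parabolic descent $\Xi^G\rightsquigarrow\Xi^M$, and this interaction is exactly the substance of the proof in \cite[Lemma~6.5.1]{beuzart2015local}. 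Since (4) and (5) build on (3), the same gap propagates there; your Harish-Chandra inequality in (4) is also not quite the right shape (there is no uniform bound $\Xi^G(hm_{\min})\ll\del_{\wb{P}_{\min}}(m_{\min})^{-1/2}\Xi^G(h)\cdots$ for arbitrary $m_{\min}\in M_{\min}(F)$).
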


\begin{proof}
The proof of (1) follows from \cite[Lemme~4.9]{waldspurgertemperedggp}, whose proof works verbatim for archimedean case. 

The rest of the proof follows from \cite[Lemma~6.5.1]{beuzart2015local} verbatim.
\end{proof}

\subsubsection{The function $\Xi^{H\bs G}$}
Let $C\subset G(F)$ be a compact subset with nonempty interior. Define a function $\Xi^{H\bs G}_{C}$ on $H(F)\bs G(F)$ by
$$
\Xi^{H\bs G}_{C}(x) = \vol_{H\bs G}(xC)^{-1/2}
$$
for any $x\in H(F)\bs G(F)$. By finite cover theorem, for any other $C^{\p}\subset G(F)$ compact subset with nonempty interior,
$$
\Xi^{H\bs G}_{C}(x)\sim \Xi^{H\bs G}_{C^{\p}}(x)
$$
for any $x\in H(F)\bs G(F)$.
In the following we will implicitly fix a compact subset with nonempty interior $C\subset G(F)$ and set 
$$
\Xi^{H\bs G}(x) = \Xi^{H\bs G}_{C}(x)
$$
for any $x\in H(F)\bs G(F)$.

The following estimations for $\Xi^{H\bs G}$ hold.
\begin{pro}\label{pro:ggptriple:xiHbsGfun}
\begin{enumerate}
\item For any compact subset $\CK\subset G(F)$, the following equivalence hold
\begin{enumerate}
\item $\Xi^{H\bs G}(xk)\sim \Xi^{H\bs G}(x)$,

\item $\sig_{H\bs G}(xk)\sim \sig_{H\bs G}(x)$
\end{enumerate}
for any $x\in H(F)\bs G(F)$ and $k\in \CK$.

\item Let $\wb{P}_{0} = M_{0}\wb{U}_{0}\subset G_{0}$ be a good minimal parabolic subgroup of $G_{0}$ and $A_{0} = A_{M_{0}}$ be the split part of the center of $M_{0}$. Set 
$$
A^{+}_{0} = \{ a_{0}\in A_{0}(F)|\quad |\alp(a_{0})|\geq 1, \forall \alp\in R(A_{0},\wb{P}_{0}) \}.
$$
Then there exists a positive constant $d>0$ such that 
\begin{enumerate}
\item $\Xi^{G_{0}}(a_{0})\del_{P}^{1/2}(a)\sig(a_{9})^{-d}\ll \Xi^{H\bs G}(aa_{0}) \ll \Xi^{G_{0}}(a_{0})\del_{P}^{1/2}(a)$,

\item $\sig_{H\bs G}(aa_{0})\sim \sig_{G}(aa_{0})$ 
\end{enumerate}
for any $a_{0}\in A^{+}_{0}$ and $a\in T(F)$.

\item There exists $d>0$ such that 
$$
\int_{H(F)\bs G(F)}
\Xi^{H\bs G}(x)^{2}\sig_{H\bs G}(x)^{-d}\ud x
$$
is absolutely convergent.

\item For any $d>0$, there exists $d^{\p}>0$ such that 
$$
\int_{H(F)\bs G(F)}
1_{\sig_{H\bs G}\leq c}(x)\Xi^{H\bs G}(x)^{2}
\sig_{H\bs G}(x)^{d}\ud x\ll c^{d^{\p}}
$$
for any $c\geq 1$.

\item There exists $d>0$ and $d^{\p}>0$ such that 
$$
\int_{H(F)}
\Xi^{G}(x^{-1}hx)
\sig_{G}(x^{-1}hx)^{-d}\ud h
\ll
\Xi^{H\bs G}(x)^{2}\sig_{H\bs G}(x)^{d^{\p}}
$$
for any $x\in H(F)\bs G(F)$.

\item For any $d>0$, there exists $d^{\p}>0$ such that 
$$
\int_{H(F)}
\Xi^{G}(hx)\sig(hx)^{-d^{\p}}\ud h
\ll
\Xi^{H\bs G}(x)\sig_{H\bs G}(x)^{-d}
$$
for any $x\in H(F)\bs G(F)$.

\item Let $\del>0$ and $d>0$. Then the integral 
$$
I_{\del,d}(c,x) = 
\int_{H(F)}
\int_{H(F)}
1_{\sig \geq c}
(h^{\p})\Xi^{G}(hx)\Xi^{G}(h^{\p}hx)
\sig(hx)^{d}\sig(h^{\p}hx)^{d}
(1+|\lam(h^{\p})|)^{-\del}\ud h^{\p}\ud h
$$
is absolutely convergent for any $x\in H(F)\bs G(F)$ and $c\geq 1$. Moreover, there exist $\eps>0$ and $d^{\p}>0$ such that 
$$
I_{\del,d}(c,x)\ll \Xi^{H\bs G}(x)^{2}\sig_{H\bs G}(x)^{d^{\p}}e^{\eps c}
$$
for any $x\in H(F)\bs G(F)$ and $c\geq 1$.
\end{enumerate}
\end{pro}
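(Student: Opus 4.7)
The plan is to adapt the argument of \cite[Proposition~6.7.1]{beuzart2015local} for unitary GGP triples to our special orthogonal setting, using the relative weak Cartan decomposition from Lemma~\ref{lem:ggptriple:cartandecomp}(2) together with the Harish-Chandra $\Xi$-integrability estimates of Lemma~\ref{lem:ggptriple:hcxifunction} as the main inputs. Part (1) is immediate from the definition: a finite cover of $xC$ by translates of $xC'$ gives $\Xi^{H\bs G}_C \sim \Xi^{H\bs G}_{C'}$, and the right $\CK$-invariance up to constants follows by taking $C' = kC$ for $k\in \CK$; the log-norm invariance is a standard property recorded in \cite[\S 1.2]{beuzart2015local}.

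The key step is Part (2), which controls $\Xi^{H\bs G}$ along the Cartan directions $T(F)A_0^+$. I would first decompose the Haar measure adapted to the factorization $G(F) = H(F) A_0^+ T(F) \CK_G$: conjugation by $a \in T(F)$ scales the Haar measure on $N$ by $\del_P(a)$, while $a_0 \in A_0^+$ acts trivially on the weight spaces outside $G_0$. Combining this with the codimension-one equivalence $\Xi^{H_0\bs G_0}(a_0) \sim \Xi^{G_0}(a_0)$ (itself following from a similar but simpler argument on $G_0$ together with classical Harish-Chandra estimates for $\Xi^{G_0}$) yields $\vol_{H\bs G}(aa_0 C)^{-1/2} \sim \Xi^{G_0}(a_0) \del_P^{1/2}(a)$, modulo a polynomial correction in $\sig(a_0)$ which is precisely the $\sig(a_0)^{-d}$ factor on the lower bound. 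The equivalence $\sig_{H\bs G}(aa_0) \sim \sig_G(aa_0)$ is immediate from the norm descent property in Lemma~\ref{lem:ggptriple:denseroot}(1).

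Parts (3)--(7) all reduce to Part (2) by applying the Cartan decomposition. For Parts (3)--(4), one rewrites the integrals as iterated integrals over $A_0^+ \times T(F)$ with the appropriate Jacobians; substituting Part (2) reduces convergence to the classical integrability of $\Xi^{G_0}(a_0)^2 \sig(a_0)^{-d} \del_{P_0}(a_0)^{-1}$ on $A_0^+$ together with a convergent $T(F)$-integral after the $\del_P$-factors cancel against the $T$-Jacobian. Parts (5), (6), (7) follow from Lemma~\ref{lem:ggptriple:hcxifunction}(2), (4), (5) respectively, matching the $\del_{P_{\min}}^{\pm 1/2}$-bounds there against $\Xi^{H\bs G}$-bounds via Part (2); in Part (7), the cutoff $1_{\sig \geq c}$ produces the factor $e^{\eps c}$ via the standard trick $1 \leq e^{\eps(\sig(h') - c)}$ on the cutoff region.

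The primary obstacle is the lower bound in Part (2). The upper bound follows from a relatively straightforward volume estimate, but the lower bound requires showing that $H$-orbits do not become too concentrated near $aa_0$, which is where the polynomial correction $\sig(a_0)^{-d}$ enters. The argument closely parallels \cite[\S 6.7]{beuzart2015local}, but one must verify that the combinatorial differences between $\SO$-type and unitary GGP triples --- notably the richer structure of regular nilpotent orbits described in \S\ref{sub:ggptriples:definitions} --- do not affect these asymptotic estimates. They do not: the orbit structure only enters through the eventual geometric expansion via the choice of $c_f$, while all the estimates above depend only on the parabolic $P = MN$ and the Cartan decomposition. Once the lower bound is in place, the remainder of the proposition is essentially mechanical.
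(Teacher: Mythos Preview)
Your approach is correct and matches the paper's own proof, which simply states that the argument of \cite[Proposition~6.7.1]{beuzart2015local} carries over verbatim. Your expanded sketch accurately identifies the main inputs (the relative Cartan decomposition of Lemma~\ref{lem:ggptriple:cartandecomp} and the $\Xi$-estimates of Lemma~\ref{lem:ggptriple:hcxifunction}) and correctly observes that the structural differences between orthogonal and unitary GGP triples are irrelevant at this stage.
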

\begin{proof}
The proof of \cite[Proposition~6.7.1]{beuzart2015local} works verbatim.
\end{proof}

\subsubsection{Parabolic degenerations}
Let $\wb{Q} = LU_{\wb{Q}}$ be a good parabolic subgroup of $G$. Let $\wb{P}_{\min} = M_{\min}\wb{U}_{\min}\subset \wb{Q}$ be a good minimal parabolic subgroup of $G$ with the Levi component chosen so that $M_{\min}\subset L$. Let $A_{\min} = A_{M_{\min}}$ be the maximal central split torus of $M_{\min}$ and set 
$$
A^{+}_{\min} = \{a\in A_{\min}(F) | \quad |\alp(a)|\geq 1, \forall \alp\in R(A_{\min}, \wb{P}_{\min}) \}.
$$
Let $H_{\wb{Q}} = H\cap \wb{Q}$ and $H_{L}$ be the image of $H_{\wb{Q}}$ by the natural projection $\wb{Q}\twoheadrightarrow L$. Define $H^{Q} = H_{L}\ltimes U_{Q}$.

\begin{pro}\label{pro:pardeg}
\begin{enumerate}
\item $H_{\wb{Q}}\cap U_{\wb{Q}} = \{ 1\}$ so that the natural projection $H_{\wb{Q}}\to H_{L}$ is an isomorphism.

\item $\del_{\wb{Q}}(h_{\wb{Q}}) = \del_{H_{\wb{Q}}}(h_{\wb{Q}})$ and $\del_{\wb{Q}}(h_{L}) = \del_{H_{L}}(h_{L})$ for any $h_{\wb{Q}}\in H_{\wb{Q}}(F)$ and $h_{L}\in H_{L}(F)$. In particular, the group $H^{Q}$ is unimodular.

Fix a left Haar measure $d_{L}h_{L}$ on $H_{L}(F)$ and a Haar measure $dh^{Q}$ on $H^{Q}(F)$.

\item There exists $d>0$ such that the integral 
$$
\int_{H_{L}(F)}
\Xi^{L}(h_{L})\sig(h_{L})^{-d}\del_{H_{L}}(h_{L})^{1/2}\ud_{L}h_{L}
$$
converges. Moreover, in the codimension one case (that is $G=G_{0}$ and $H=H_{0}$), the integral 
$$
\int_{H_{L}(F)}
\Xi^{L}(h_{L})\sig(h_{L})^{d}\del_{H_{L}}(h_{L})^{1/2}\ud_{L}h_{L}
$$
is convergent for any $d>0$.

\item There exists $d>0$ such that the integral 
$$
\int_{H^{Q}(F)}\Xi^{G}(h^{Q})\sig(h^{Q})^{-d}\ud h^{Q}
$$
converges.

\item $\sig(h^{Q})\ll \sig(a^{-1}h^{Q}a)$ for any $a\in A^{+}_{\min}$ and $h^{Q}\in H^{Q}(F)$.

\item There exist $d>0$ and $d^{\p}>0$ such that 
$$
\int_{H^{Q}(F)}
\Xi^{G}(a^{-1}h^{Q}a)
\sig(a^{-1}h^{Q}a)^{-d}\ud h^{Q}
\ll  \Xi_{H\bs G}(a)^{2}\sig_{H\bs G}(a)^{d^{\p}}
$$
for any $a\in A^{+}_{\min}$.
\end{enumerate}
\end{pro}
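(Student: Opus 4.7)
The plan is to follow \cite[Proposition~6.8.1]{beuzart2015local} closely, substituting Proposition \ref{pro:ggptriples:sphvar} and Proposition \ref{pro:ggptriples:cartandecomp} for their unitary analogs. The six statements split naturally into a geometric/structural block (1)--(2), an analytic block (3)--(4), and an estimation block (5)--(6); each subsequent block will rest on the preceding one.

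For (1), since $\wb{Q}$ is good, Proposition \ref{pro:ggptriples:sphvar}(2) lets one pick a good minimal parabolic $\wb{P}_{\min}\subset \wb{Q}$. Then $U_{\wb{Q}}\subset \wb{U}_{\min}$, and Proposition \ref{pro:ggptriples:sphvar}(1) gives $H\cap \wb{U}_{\min}=\{1\}$, which forces $H_{\wb{Q}}\cap U_{\wb{Q}}=\{1\}$. Surjectivity of $H_{\wb{Q}}\to H_L$ is automatic from the definition of $H_L$, so the projection is an isomorphism. For (2), the equalities $\del_{\wb{Q}}|_{H_{\wb{Q}}}=\del_{H_{\wb{Q}}}$ and $\del_{\wb{Q}}|_{H_L}=\del_{H_L}$ are adjoint-determinant computations on $\Lie(U_{\wb{Q}})$, using that $H_{\wb{Q}}$ acts on $\Lie(U_{\wb{Q}})$ through its projection to $H_L$; unimodularity of $H^Q=H_L\ltimes U_Q$ is then a direct corollary.

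For (3), apply Proposition \ref{pro:ggptriples:cartandecomp} to the pair $(L,H_L)$, which is a (product with a) codimension-one GGP pair, to obtain a weak Cartan decomposition $L(F)=H_L(F)A_L^+\CK_L$. Harish-Chandra's standard bounds $\Xi^L(h_L a)\ll \del_{P_0}(a)^{1/2}\Xi^{L_0}(a)$ on $A_L^+$ then reduce the integral to a geometric series on the split torus, converging for large $d$ in the general case and, in the codimension-one case, for every $d>0$ because no additional split central torus factor needs to be absorbed in the weight. Part (4) follows by factoring $H^Q(F)=H_L(F)\ltimes U_Q(F)$, combining (3) with the classical convergence of $\int_{U_Q(F)}\Xi^G(u)\sig(u)^{-d}\ud u$ for $d$ sufficiently large, and using the product-type bound $\Xi^G(\ell u)\ll \del_{\wb{Q}}(\ell)^{1/2}\Xi^L(\ell)\Xi^G(u)$.

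For (5), a direct log-norm calculation, leveraging that $\mathrm{Ad}(a^{-1})$ expands $\Lie(U_Q)$ when $a\in A_{\min}^+$ and acts by the identity on $\Lie(H_L)$ modulo the split central part, yields $\sig(h^Q)\ll \sig(a^{-1}h^Q a)$; this is the same pattern as Lemma \ref{lem:ggptriple:sigest}. For (6), one substitutes (5) into (4), performs the change of variables $h^Q\mapsto a^{-1}h^Q a$ (Jacobian $\del_{\wb{Q}}(a)^{-1}$), and compares the resulting factor $\del_{\wb{Q}}(a)^{-1/2}\sig(a)^{d'}$ to $\Xi^{H\bs G}(a)^2\sig_{H\bs G}(a)^{d'}$ via the two-sided bound of Proposition \ref{pro:ggptriple:xiHbsGfun}(2). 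The main technical obstacle is the refinement in (3) for the codimension-one case, where convergence must hold for \emph{every} $d>0$; everything else is bookkeeping once the relative Cartan decomposition and the $\Xi$-estimates from Lemma \ref{lem:ggptriple:hcxifunction} and Proposition \ref{pro:ggptriple:xiHbsGfun} are in hand.
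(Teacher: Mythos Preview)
Your proposal is correct and takes essentially the same approach as the paper: the paper's proof consists of the single sentence ``The proof of \cite[Proposition~6.8.1]{beuzart2015local} works verbatim,'' and your outline is precisely a sketch of what that proof does, with the special orthogonal analogs of the spherical and Cartan-decomposition inputs substituted in the right places.
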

\begin{proof}
The proof of \cite[Proposition~6.8.1]{beuzart2015local} works verbatim.
\end{proof}

\section{Explicit tempered intertwining}
In this section an explicit relation between the non-vanishing of $m(\pi)$ and an associated linear form $\CL_{\pi}$ for any tempered representation $\pi\in \mathrm{Temp}(G)$ is stated. The relation between $m(\pi)$ and parabolic induction is also stated. As a corollary, we are able to establish Conjecture \ref{ggpconjec} for tempered $L$-parameters when $F=\BC$. Moreover, the results will be indispensable for the proof of the spectral expansion of the distribution $J$. Most of the proof appearing in \cite[Section~7]{beuzart2015local} works verbatim. The details are referred to \cite{thesis_zhilin}.

\subsubsection{The $\xi$-integral}\label{sec:temperedint:1}
For any $f\in \CC(G)$, the integral
$$
\int_{H(F)}f(h)\xi(h)\ud h
$$
is absolutely convergent by Lemma \ref{lem:ggptriple:sigest} (2). Moreover, by Lemma \ref{lem:ggptriple:sigest} (2) it defines a continuous linear form on $\CC(G)$. Recall that $\CC(G)$ is a dense subspace of the weak Harish-Chandra Schwartz space $\CC^{w}(G)$ (c.f. \cite[1.5.1]{beuzart2015local}).

\begin{pro}\label{pro:temperedint:continuous}
The linear form 
$$
f\to \CC(G) \to \int_{H(F)} f(h)\xi(h)\ud h
$$
extends continuously to $\CC^{w}(G)$.
\end{pro}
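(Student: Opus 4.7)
The plan is to exploit the oscillation of the character $\xi=\psi\circ \lam_F$ in order to bound the integral by a seminorm that is continuous in the weak Schwartz topology. Recall that the $\CC^w(G)$-topology is defined by weaker seminorms than those of $\CC(G)$: a function of $\CC^w(G)$ is permitted to grow like $\Xi^G\cdot\sig^d$ for some $d>0$, whereas naively Lemma \ref{lem:ggptriple:hcxifunction}(2) only gives convergence of $\int_{H(F)}\Xi^G(h)\sig(h)^{-d}\ud h$. So the missing factor $\sig^{2d}$ has to come from integration by parts against $\xi$.

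First I would reduce to a seminorm bound. Since $\CC(G)$ is dense in $\CC^w(G)$, it is enough to exhibit $d>0$ and a continuous seminorm $p_d$ on $\CC^w(G)$ such that
$$
\Bigl|\int_{H(F)} f(h)\xi(h)\ud h\Bigr| \ll p_d(f),\qquad f\in \CC(G).
$$
Next I would write $H=H_0\ltimes N$ and integrate first over $N(F)$ (on which $\xi$ equals $\psi\circ \lam$) and then over $H_0(F)$ (on which $\xi$ is trivial). By Lemma \ref{lem:ggptriple:denseroot}(2), the $M$-orbit of $\lam$ in $(\Fn/[\Fn,\Fn])^*$ is Zariski open, so one can pick $X\in \Fn(F)$ with $\ud\lam(X)\neq 0$; in the archimedean case the left-invariant derivation $L_X$ gives $L_X(\xi)=2\pi i\,\ud\lam(X)\cdot \xi$, so repeated integration by parts along the $X$-direction produces, for every integer $N\geq 0$, an identity of the form
$$
\int_{H(F)} f(h)\xi(h)\ud h = (2\pi i\,\ud\lam(X))^{-N}\int_{H(F)} (L_X^N f)(h)\,\xi(h)\ud h,
$$
more generally combined with a smooth cut-off that localizes the computation to a region where $\ud\lam(X)$ is bounded away from zero. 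In the $p$-adic case the analogue is obtained from the local constancy of $f$: if $f$ is bi-invariant under a compact open $K_0\subset G(F)$, then $\int_{N(F)} f(h_0n)\psi(\lam(n))\ud n$ vanishes once $|\lam(n)|$ leaves a bounded region depending only on $K_0$, essentially because $\psi(\lam(\cdot))$ is non-trivial on $\lam(K_0\cap N)$ for large enough argument. Either way, I obtain for any prescribed $\del>0$ a bound
$$
|f(h)\xi(h)| \ll q_{\del,N}(f)\cdot \Xi^G(h)\sig(h)^{d}(1+|\lam(h)|)^{-\del},
$$
where $q_{\del,N}$ is a continuous seminorm on $\CC^w(G)$ (involving at most $N$ derivatives in the archimedean case, or a fixed level of bi-invariance in the $p$-adic case).

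Finally I would finish by invoking Lemma \ref{lem:ggptriple:hcxifunction}(3): choosing $\del$ large enough so that the integral $\int_{H(F)}\Xi^G(h)e^{\eps\sig(h)}(1+|\lam(h)|)^{-\del}\ud h$ dominates $\int_{H(F)}\Xi^G(h)\sig(h)^d(1+|\lam(h)|)^{-\del}\ud h$, the bound in the previous step is integrable uniformly in $f$, and so gives
$$
\Bigl|\int_{H(F)} f(h)\xi(h)\ud h\Bigr| \ll q_{\del,N}(f).
$$
This is the required continuity with respect to the $\CC^w(G)$-topology, so the functional extends (uniquely) by density.

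The main obstacle, I expect, is the bookkeeping in Step 2: in the archimedean setting, to do the integration by parts globally on $H(F)$, one needs to keep track of the fact that $\ud\lam(X)$ is only generically non-zero on $\Fn$, which forces one to work with a smooth partition of unity subordinate to the Zariski open orbit of Lemma \ref{lem:ggptriple:denseroot}(2) and to handle the boundary pieces separately using Lemma \ref{lem:ggptriple:sigest} and Lemma \ref{lem:ggptriple:hcxifunction}(4). Once this technical set-up is in place, the rest is a standard stationary-phase/convergence estimate as in \cite[Proposition~7.1.1]{beuzart2015local}.
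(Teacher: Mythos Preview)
Your overall strategy is the right one and matches the paper's (which simply invokes \cite[Proposition~7.1.1]{beuzart2015local} verbatim): exploit the oscillation of $\xi$ to manufacture a factor $(1+|\lam(h)|)^{-\del}$ and then close with Lemma~\ref{lem:ggptriple:hcxifunction}(3). The gap is in Step~2, in how you produce that factor.

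Integration by parts along $X\in\Fn(F)$ does not do what you claim. Since $\lam$ is additive on $N$, one has $L_X\xi = 2\pi i\,\ud\lam(X)\,\xi$ with $\ud\lam(X)\in F$ a \emph{constant}; iterating gives only
\[
\int_{H(F)} f(h)\xi(h)\,\ud h \;=\; (2\pi i\,\ud\lam(X))^{-N}\int_{H(F)} (L_X^{N}f)(h)\,\xi(h)\,\ud h,
\]
and the integrand on the right is still only bounded by $\Xi^{G}(h)\sig(h)^{d'}$ for some $d'$---no gain in $|\lam(h)|$. In particular the pointwise bound $|f(h)\xi(h)|\ll q(f)\,\Xi^{G}(h)\sig(h)^{d}(1+|\lam(h)|)^{-\del}$ you assert cannot hold (take $h$ with $|\lam(h)|$ large while $|f(h)|\sim \Xi^{G}(h)$). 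Your ``main obstacle'' paragraph is a symptom of the same confusion: for a fixed $X$, $\ud\lam(X)$ is a number, not a function on $\Fn$ that is ``only generically nonzero''.

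The mechanism that actually yields the $(1+|\lam(h)|)^{-\del}$ decay is the \emph{torus} action, not the $N$-action: take the one-parameter subgroup $a:\BG_m\to T$ with $\lam(a(t)ha(t)^{-1})=t\lam(h)$ as in Lemma~\ref{lem:eq:temperedint:continuous:2}(2). In the archimedean case, the infinitesimal conjugation operator $A=\frac{d}{dt}\big|_{t=1}L(a(t))R(a(t))$ satisfies $A\xi = c\,\lam\cdot\xi$ (now $\lam(h)$ really appears as a multiplier), so iterated integration by parts against $(1+A)^{N}$ produces the factor $(1+c\lam(h))^{-N}$ against $((1+A)^{N}f)$, whose $\CC^{w}$-seminorms are controlled. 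In the $p$-adic case one instead averages: for $\vphi\in C^\infty_c(F^\times)$ the identity of Lemma~\ref{lem:eq:temperedint:continuous:2}(2) (which holds for $f\in\CC(G)$ by direct computation) replaces $\xi(h)$ by the Schwartz function $\wh{\vphi'}(\lam(h))$. Either way one lands exactly on the integral controlled by Lemma~\ref{lem:ggptriple:hcxifunction}(3), and the argument finishes as you say. This is precisely the route taken in \cite[Proposition~7.1.1]{beuzart2015local}.
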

\begin{proof}
The proof of \cite[Proposition~7.1.1]{beuzart2015local} works verbatim.
\end{proof}

The continuous linear form on $\CC^{w}(G)$ proved above is called the \emph{$\xi$-integral} on $H(F)$ and will be denoted by
$$
f\in \CC^{w}(G) \to \int^{*}_{H(F)} f(h)\xi(h)\ud h
$$
or 
$$
f\in \CC^{w}(G)\to \CP_{H,\xi}(f).
$$

The following properties of the $\xi$-integral hold.

\begin{lem}\label{lem:eq:temperedint:continuous:2}
\begin{enumerate}
\item For any $f\in \CC^{w}(G)$ and $h_{0},h_{1}\in H(F),$ 
$$
\CP_{H,\xi}(L(h_{0})R(h_{1})f) = \xi(h_{0}h_{1}^{-1})\CP_{H,\xi}(f).
$$

\item Let $a:\BG_{m}\to T$ be a one-parameter subgroup such that $\lam(a(t)ha(t)^{-1}) = t\lam(h)$ for any $t\in \BG_{m}$ and $h\in H$. Denote by $\Ad_{a}$ the representation of $F^{\times}$ on $\CC^{w}(G)$ given by $\Ad_{a}(t) = L(a(t))R(a(t))$ for any $t\in F^{\times}$. Let $\vphi\in C^{\infty}_{c}(F^{\times})$. Set $\vphi^{\p}(t) = |t|^{-1}\del_{P}(a(t))\vphi(t)$ for any $t\in F^{\times}$ and denote by $\wh{\vphi^{\p}}$ its Fourier transform, that is
$$
\wh{\vphi^{\p}}(x) = \int_{F}\vphi^{\p}(t)\psi(tx)\ud t,\quad x\in F.
$$
Then
$$
\CP_{H,\xi}(\Ad_{a}(\vphi)f) = \int_{H(F)}f(h)\wh{\vphi^{\p}}(\lam(h))\ud h
$$
for any $f\in \CC^{w}(G)$, where the second integral is absolutely convergent.
\end{enumerate}
\end{lem}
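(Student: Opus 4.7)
The plan is to reduce both assertions to the case $f \in \CC(G)$ by invoking the density of $\CC(G)$ in $\CC^{w}(G)$ and the continuity of $\CP_{H,\xi}$ established in Proposition \ref{pro:temperedint:continuous}, together with continuity of $L(h_{0})$, $R(h_{1})$, and $\Ad_{a}(\vphi)$ on $\CC^{w}(G)$. For such $f$, the $\xi$-integral is the ordinary absolutely convergent $\int_{H(F)} f(h)\xi(h)\ud h$. Part (1) is then immediate from the change of variable $h \mapsto h_{0}hh_{1}^{-1}$, which uses the unimodularity of $H = \SO(W)\ltimes N$ (both factors are unimodular and $\SO(W)$ acts on $\Fn$ through an orthogonal representation of determinant one) and the fact that $\xi$ is a character on $H(F)$---the latter because $\SO(W)$ preserves $\lam$ by construction, so the extension of $\xi$ from $N$ to $H$ is a homomorphism.

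For (2), the central step is to first establish, for $f \in \CC(G)$ and each $t \in F^{\times}$, the identity $\CP_{H,\xi}(\Ad_{a}(t)f) = \del_{P}(a(t)) \int_{H(F)} f(h)\psi(t\lam(h))\ud h$. This follows from the change of variables $h \mapsto a(t)ha(t)^{-1}$: the hypothesis $\lam(a(t)ha(t)^{-1}) = t\lam(h)$ converts $\xi$ into $\psi(t\lam(\cdot))$, and the Jacobian equals $\del_{P}(a(t))$ since $a(t)\in T$ centralizes $\SO(W)$ and acts on $\Fn$ with determinant $\del_{P}(a(t))$. I would then integrate against $\vphi(t)$ with respect to the multiplicative Haar measure $\ud^{\times}t = \ud t/|t|$, swap the order of integration (legitimate by Fubini since $\vphi$ has compact support in $F^{\times}$ and $f\in \CC(G)$ is integrable on $H(F)$), and recognize the inner integral $\int_{F} |t|^{-1}\del_{P}(a(t))\vphi(t)\psi(t\lam(h))\ud t$ as $\wh{\vphi^{\p}}(\lam(h))$, yielding the claim.

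The main technical point will be verifying absolute convergence of the right-hand integral in (2) for arbitrary $f \in \CC^{w}(G)$, which is what permits extension by continuity from $\CC(G)$. Since $\wh{\vphi^{\p}}$ is Schwartz on $F$, one has $|\wh{\vphi^{\p}}(\lam(h))| \ll (1+|\lam(h)|)^{-\del}$ for every $\del > 0$, while $|f(h)| \ll \Xi^{G}(h)\sig(h)^{d}$ for some $d$. Since $\sig(h)^{d} \ll e^{\eps\sig(h)}$ for arbitrary $\eps > 0$, convergence follows from Lemma \ref{lem:ggptriple:hcxifunction} (3). I expect this convergence verification to be the most delicate step, with the algebraic manipulation being routine once the setup is in place.
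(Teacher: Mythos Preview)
Your proposal is correct and follows essentially the same approach as the paper: both sides are shown to be continuous in $f\in\CC^{w}(G)$ (the paper also cites Lemma~\ref{lem:ggptriple:hcxifunction}~(3) for the right-hand side of (2)), so by density one reduces to $f\in\CC(G)$, where the identities follow from the changes of variables you describe. Your write-up simply spells out the ``direct computation'' that the paper leaves implicit.
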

\begin{proof}
Both statements are continuous in $f\in \CC^{w}(G)$ (for (2) it follows from Lemma \ref{lem:ggptriple:sigest} (3)). Hence we only need to verify it for $f\in \CC(G)$ where it follows from direct computation.
\end{proof}

\subsubsection{Definition of $\CL_{\pi}$}\label{sec:temperedint:2}
Let $\pi$ be a tempered representation of $G(F)$. For any $T\in \End(\pi)^{\infty}$, the function
$$
g\in G(F)\to \mathrm{Trace}(\pi(g^{-1})T)
$$
belongs to $\CC^{w}(G)$ by \cite[2.2.4]{beuzart2015local}.  Hence one may define a linear form $\CL_{\pi}:\End(\pi)^{\infty}\to \BC$ by 
$$
\CL_{\pi}(T) =
\int^{*}_{H(F)}\mathrm{Trace}(\pi(h^{-1})T)\xi(h)\ud h, \quad T\in \End(\pi)^{\infty}.
$$
By Lemma \ref{lem:eq:temperedint:continuous:2} (1),
$$
\CL_{\pi}(\pi(h)T \pi(h^{\p})) = \xi(h)\xi(h^{\p})\CL_{\pi}(T)\CL_{\pi}(T)
$$
for any $h,h^{\p}\in H(F)$ and $T\in \End(\pi)^{\infty}$. By \cite[2.2.5]{beuzart2015local}, the map which associates to $T\in \End(\pi)^{\infty}$ the function
$$
g\to \mathrm{Trace}(\pi(g^{-1})T)
$$
in $\CC^{w}(G)$ is continuous. Since the $\xi$-integral is a continuous linear form on $\CC^{w}(G)$, it follows that $\CL_{\pi}$ is continuous.

Recall that there exists a canonical continuous $G(F)\times G(F)$-equivariant embedding with dense image $\pi^{\infty}\otimes \wb{\pi^{\infty}}\hookrightarrow \End(\pi)^{\infty}$, $e\otimes e^{\p}\to T_{e,e^{\p}}$ (which is an isomorphism in the $p$-adic case). In any case, $\End^{\infty}(\pi)^{\infty}$ is naturally isomorphic to the completed projective tensor product $\pi^{\infty}\wh{\otimes}_{p}\wb{\pi^{\infty}}$. Thus $\CL_{\pi}$ can be identified with the continuous sesquilinear form on $\pi^{\infty}$ given by
$$
\CL_{\pi}(e,e^{\p}) : = \CL_{\pi}(T_{e,e^{\p}})
$$
for any $e,e^{\p}\in \pi^{\infty}$. Expanding the definitions,
$$
\CL_{\pi}(e,e^{\p}) = \int_{H(F)}^{*}
(e,\pi(h)e^{\p})\xi(h)\ud h
$$
for any $e,e^{\p}\in \pi^{\infty}.$ Fixing $e^{\p}\in \pi^{\infty}$, the map $e\in \pi^{\infty}\to \CL(e,e^{\p})$ then lies in $\Hom_{H}(\pi^{\infty},\xi)$. By the density, it follows that
$$
\CL_{\pi}\neq 0 \Rightarrow m(\pi)\neq 0.
$$

The goal is to show the converse.

\begin{thm}\label{thm:temperedint:main}

$$
\CL_{\pi}\neq 0 \Leftrightarrow m(\pi)\neq 0
$$
for any $\pi\in \mathrm{Temp}(G)$.
\end{thm}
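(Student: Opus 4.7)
The implication $\CL_\pi\neq 0\Rightarrow m(\pi)\neq 0$ was already recorded at the end of Section~\ref{sec:temperedint:2}: fixing $e'\in \pi^\infty$ and using the density of $\pi^\infty\otimes \overline{\pi^\infty}$ in $\End(\pi)^\infty$, non-vanishing of $\CL_\pi$ forces the linear form $e\mapsto \CL_\pi(e,e')$ to be a nonzero element of $\Hom_H(\pi^\infty,\xi)$ for some $e'$. I therefore focus on the converse. The plan mirrors the argument in \cite[Section~7]{beuzart2015local}, and the role of the analytic estimates collected in Section~\ref{subsec:analyticalestimate} is precisely to supply the inputs that make the same strategy go through for special orthogonal groups.

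The first step is a multiplicity-one reduction. By Theorem~\ref{thm:ggptriples:multiplictyone}, $\dim\Hom_H(\pi^\infty,\xi)\leq 1$; fix a nonzero generator $\ell$. By Lemma~\ref{lem:eq:temperedint:continuous:2}(1), $\CL_\pi$ is an $H(F)\times H(F)$-invariant continuous sesquilinear form on $\pi^\infty$ transforming under the character $\xi\boxtimes\xi^{-1}$. The multiplicity-one bound applied separately to $\pi$ and to $\overline{\pi}$ shows that the space of such forms is at most one-dimensional, spanned by $(e,e')\mapsto \ell(e)\overline{\ell(e')}$. Hence $\CL_\pi(e,e')=c\,\ell(e)\overline{\ell(e')}$ for some scalar $c\in\BC$, and the theorem reduces to the non-vanishing assertion $c\neq 0$.

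To exhibit a concrete non-vanishing, I would pick $e^*\in \pi^\infty$ with $\ell(e^*)\neq 0$ and consider
$$
\CL_\pi(e,e^*)=\int^{*}_{H(F)}(e,\pi(h)e^*)\xi(h)\,dh,
$$
which by Step~1 must equal $c\,\ell(e)\overline{\ell(e^*)}$. I would realize $e=\pi(\varphi)e^*$ for a suitable $\varphi\in \CC^\infty_c(G)$ that forces the matrix coefficient into an arbitrarily small neighborhood of the identity coset in $H(F)\backslash G(F)$, then introduce the truncation $\CL_\pi^N(e,e^*)=\int_{H(F)}(e,\pi(h)e^*)\xi(h)\kappa_N(h)\,dh$ at scale $N$ in $\sigma_{H\backslash G}$. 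Convergence $\CL_\pi^N\to \CL_\pi$ as $N\to\infty$ is furnished by the temperedness bound $|(e,\pi(h)e^*)|\ll \Xi^G(h)\sigma(h)^{d}$ together with Proposition~\ref{pro:ggptriple:xiHbsGfun}(3)--(6) (reinterpreted via Proposition~\ref{pro:temperedint:continuous}), while the relative weak Cartan decomposition $H(F)A_0^+T(F)\CK_G$ of Lemma~\ref{lem:ggptriple:cartandecomp}(2) reduces the truncated integral to a finite sum of oscillatory integrals along $A_0^+T(F)$.

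The hard part will be the final non-vanishing step. Away from the identity direction, the oscillation of $\xi$ along each simple root subspace $\Fn_\alpha$ (non-trivial by Lemma~\ref{lem:ggptriple:cartandecomp}(3)) combined with the exponential decay of Lemma~\ref{lem:ggptriple:hcxifunction}(3)--(5) should force the contribution of the non-compact directions to vanish in the limit; what remains is a local asymptotic near the identity coset, which can be identified with a nonzero multiple of $|\ell(e^*)|^2$ after suitably normalizing $e^*$ (for instance as a vector supported on a small $H(F)$-orbit neighborhood in $H(F)\backslash G(F)$, or via a Dirac-sequence argument in $\varphi$ using the continuity of $\CP_{H,\xi}$ on $\CC^{w}(G)$). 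This last extraction is the genuinely delicate ingredient that cannot be handled by naive absolute convergence, but it proceeds in exact parallel to \cite[Section~7]{beuzart2015local} once the estimates of Section~\ref{subsec:analyticalestimate} and the parabolic-degeneration bounds of Proposition~\ref{pro:pardeg} are available.
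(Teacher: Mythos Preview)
Your reduction via multiplicity one to $\CL_\pi = c\cdot \ell\otimes\bar\ell$ and the introduction of the truncations $\CL_\pi^N$ are correct and are indeed the starting point of the argument in \cite[\S7.5]{beuzart2015local}. But your account of the ``hard part'' misidentifies the mechanism and omits the ingredient that the paper itself singles out as the key: Proposition~\ref{pro:eq:temperedint:16}.

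Neither of your two suggested extractions works as stated. A Dirac sequence $\varphi\to\delta_1$ gives $\CL_\pi(\pi(\varphi)e^*,e^*)\to \CL_\pi(e^*,e^*)=c\,|\ell(e^*)|^2$, which is precisely the quantity whose non-vanishing is in question, so the argument is circular. And vectors $e^*\in\pi^\infty$ carry no notion of ``support on a small $H(F)$-orbit neighborhood in $H(F)\bs G(F)$''; the map $e\mapsto \ell(\pi(\cdot)e)$ does produce a function on $H\bs G$, but it is never compactly supported for tempered $\pi$. More broadly, the non-vanishing in \cite[\S7.5]{beuzart2015local} is \emph{not} obtained by arguing that oscillation of $\xi$ along the root subspaces $\Fn_\alpha$ annihilates all non-compact contributions leaving only a positive local term near the identity; that is not the structure of the proof.

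What the paper actually invokes is this. The asymptotic bound of Lemma~\ref{lem:temperedint:asymptoticoftempered}(1) places $x\mapsto \ell(\pi(x)e)$ in the weighted space controlled by $\Xi^{H\bs G}(x)\sig_{H\bs G}(x)^d$, and the parabolic-degeneration estimates of Proposition~\ref{pro:pardeg} then govern the behavior of the truncated forms $\CL_\pi^N$ along each wall of the relative weak Cartan decomposition. The contribution of the wall attached to a good parabolic $\wb{Q}=LU_{\wb{Q}}$ is identified, via an unfolding along $H_{\wb{Q}}\bs H$, with the analogous linear form $\CL_{\wt{\sig}}$ on the smaller GGP triple $(\wt{G},\wt{H},\wt{\xi})$ coming from the inducing data; this is exactly Proposition~\ref{pro:eq:temperedint:16}. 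One then runs an induction on $\dim G$: the base case (square-integrable $\pi$, or more precisely the codimension-one situation) is handled directly because the relevant integral over $H_0(F)$ converges absolutely with exponential room to spare by Lemma~\ref{lem:ggptriple:hcxifunction}(1), and the inductive step is supplied by Proposition~\ref{pro:eq:temperedint:16}. The parabolic-induction compatibility is therefore not an optional simplification you can replace by an oscillation/positivity trick at the identity; it is the place where the non-vanishing is actually manufactured.
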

When $F$ is non-archimedean, the result has been established in \cite[Proposition~5.7]{waldspurgertemperedggp}. To establish the theorem, a key result is Proposition \ref{pro:eq:temperedint:16}, which is going to be explained in the following sections.

Finally some basic properties of $\CL_{\pi}$ are listed. Since $\CL_{\pi}$ is a continuous sesqulinear form on $\pi^{\infty}$, it defines a continuous linear map 
$$
L_{\pi}:\pi^{\infty}\to \wb{\pi^{-\infty}}, e\to \CL_{\pi}(e,\cdot)
$$
where $\wb{\pi^{-\infty}}$ is the topological conjugate-dual of $\pi^{\infty}$ endowed with the strong topology. The operator $L_{\pi}$ has its image contained in $\wb{\pi^{-\infty}}^{H,\xi} = \Hom_{H}(\wb{\pi^{\infty}},\xi)$. By Theorem \ref{thm:ggptriples:multiplictyone}, this subspace has finite dimension, and is less than or equal to $1$ if $\pi$ is irreducible. Let $T\in \End(\pi)^{\infty}$. Recall that it extends uniquely to a continuous operator $T: \wb{\pi^{-\infty}}\to \pi^{\infty}$. Thus, the following two compositions can be formed
$$
TL_{\pi}:\pi^{\infty}\to \pi^{\infty}, \quad L_{\pi}T:\wb{\pi^{-\infty}}\to \wb{\pi^{-\infty}}
$$
which are both finite-rank operators. In particular, their traces are well-defined and
\begin{equation}\label{eq:temperedint:3}
\mathrm{Trace}(TL_{\pi}) = \mathrm{Trace}(L_{\pi}T) = \CL_{\pi}(T).
\end{equation}

\begin{lem}\label{lem:temperedint:traceform}
The following properties hold.
\begin{enumerate}
\item The maps
\begin{align*}
\pi\in \CX_{\mathrm{temp}}(G)	&\to L_{\pi}\in \Hom(\pi^{\infty}, \wb{\pi^{-\infty}})
\\
\pi\in \CX_{\mathrm{temp}}(G)	&\to \CL_{\pi}\in \End(\pi)^{-\infty}
\end{align*}
are smooth in the following sense: For every parabolic subgroup $Q=LU_{Q}$ of $G$, for any $\sig\in \Pi_{2}(L)$ and for every maximal compact subgroup $K$ of $G(F)$ that is special in the $p$-adic case, the maps
\begin{align*}
&\lam\in i\CA^{*}_{L} \to \CL_{\pi_{\lam}} \in \End(\pi_{\lam})^{-\infty} \simeq \End(\pi_{K})^{-\infty}
\\
&\lam\in i\CA^{*}_{L} \to L_{\pi_{\lam}}\in \Hom(\pi^{\infty}_{\lam}, \wb{\pi^{-\infty}_{\lam}})
\simeq \Hom(\pi^{\infty}_{K}, \wb{\pi^{-\infty}_{K}})
\end{align*}
are smooth, where $\pi_{\lam} = i^{G}_{Q}(\sig_{\lam})$ and $\pi_{K} = i^{K}_{Q\cap K}(\sig)$.

\item Let $\pi\in \mathrm{Temp}(G)$ or $\CX_{\mathrm{temp}}(G)$, Then for any $S,T\in \End(\pi)^{\infty}$, $SL_{\pi}\in \End(\pi)^{\infty}$ and 
$$
\CL_{\pi}(S)\CL_{\pi}(T) = \CL_{\pi}(SL_{\pi}T).
$$

\item Let $S,T\in \CC(\CX_{\mathrm{temp}}(G), \CE(G))$. Then, the section $\pi\in \mathrm{Temp}(G)\to S_{\pi}L_{\pi}T_{\pi}\in \End(\pi)^{\infty}$ belongs to $\CC^{\infty}(\CX_{\mathrm{temp}}(G),\CE(G))$.

\item Let $f\in \CC(G)$ and assume that its Plancherel transform $\pi\in \CX_{\mathrm{temp}}(G)\to \pi(f)$ is compactly supported (which is automatic when $F$ is $p$-adic). Then the following identity holds,
$$
\int_{H(F)}f(h)\xi(h)\ud h = \int_{\CX_{\mathrm{temp}}(G)}\CL_{\pi}(\pi(f))\mu(\pi)\ud\pi
$$
where both integrals are absolutely convergent.

\item Let $f,f^{\p}\in \CC(G)$ and assume that the Plancherel transform of $f$ is compactly supported, then the following identity holds
$$
\int_{\CX_{\mathrm{temp}}(G)}
\CL_{\pi}(\pi(f))\wb{\CL_{\pi}(\pi(\wb{f^{\p}}))}\mu(\pi)\ud\pi
=\int_{H(F)}\int_{H(F)}\int_{G(F)}
f(hgh^{\p})f^{\p}(g)\ud g
\xi(h^{\p})dh^{\p}\xi(h)\ud h
$$
where the first integral is absolutely convergent and the second integral is convergent in that order but not necessarily as a triple integral.
\end{enumerate}
\end{lem}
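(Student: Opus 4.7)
The plan is to follow \cite[Section~8]{beuzart2015local} line by line, substituting the analytic estimates of Section~\ref{subsec:analyticalestimate} and Proposition~\ref{pro:temperedint:continuous} for the unitary-group analogs. Part (2) is the algebraic heart; parts (3)--(5) are analytic consequences.

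For (1), I would establish the smoothness of $\lam\to \CL_{\pi_\lam}$ for a flat family $\pi_\lam=i^G_Q(\sig_\lam)$ by differentiating under the integral sign in $\CL_{\pi_\lam}(T)=\int^{*}_{H(F)}\mathrm{Trace}(\pi_\lam(h^{-1})T)\xi(h)\ud h$. In the compact picture, the matrix coefficients and their $\lam$-derivatives are uniformly dominated by fixed polynomial multiples of $\Xi^G$, and Lemma~\ref{lem:ggptriple:hcxifunction}(3) combined with Proposition~\ref{pro:temperedint:continuous} supplies the required integrability. Smoothness of $\lam\to L_{\pi_\lam}$ is then equivalent via the identification $\CL_\pi(e,e^\p)=(L_\pi(e),e^\p)$.

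The crux of (2) is that by Theorem~\ref{thm:ggptriples:multiplictyone} we have $\dim \wb{\pi^{-\infty}}^{H,\xi}\leq 1$, so the image of $L_\pi$ lies in a line and $L_\pi T$ has rank at most one for every $T\in\End(\pi)^\infty$; restricted to that line, $L_\pi T$ acts as a scalar equal to its full trace, which by~\eqref{eq:temperedint:3} equals $\CL_\pi(T)$. This yields
\begin{equation*}
L_\pi T L_\pi=\CL_\pi(T)\,L_\pi,
\end{equation*}
from which $SL_\pi T\in\End(\pi)^\infty$ (being of rank $\leq 1$) and cyclicity of the trace give
\begin{equation*}
\CL_\pi(SL_\pi T)=\mathrm{Trace}(L_\pi S L_\pi T)=\mathrm{Trace}(S L_\pi T L_\pi)=\CL_\pi(T)\,\mathrm{Trace}(SL_\pi)=\CL_\pi(S)\CL_\pi(T).
\end{equation*}

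Part (3) combines (1) with joint continuity of $(S,T)\to SL_\pi T$, which is automatic since $L_\pi$ factors through a line. For (4), I would apply the operator-valued Plancherel inversion $f(h)=\int_{\CX_{\mathrm{temp}}(G)}\mathrm{Trace}(\pi(h^{-1})\pi(f))\mu(\pi)\ud\pi$ and integrate against $\xi(h)$; compact support of the Plancherel transform together with Lemma~\ref{lem:ggptriple:hcxifunction} justifies the Fubini exchange. For (5), first verify $\wb{\CL_\pi(\pi(\wb{f^\p}))}=\CL_\pi(\pi((f^\p)^\vee))$ with $(f^\p)^\vee(g)=f^\p(g^{-1})$ using the Hermiticity $\wb{\CL_\pi(e,e^\p)}=\CL_\pi(e^\p,e)$; then (2) gives $\CL_\pi(\pi(f))\wb{\CL_\pi(\pi(\wb{f^\p}))}=\CL_\pi(\pi(f)L_\pi\pi((f^\p)^\vee))$, and expanding this as $\int^{*}_{H(F)}\CL_\pi(\pi((f^\p)^\vee)\pi(L_{h^{-1}}f))\xi(h)\ud h$ via trace cyclicity, followed by (4) applied to the convolution $(f^\p)^\vee*L_{h^{-1}}f$ and a swap of integrations, reproduces the iterated integral on the right-hand side. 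The principal obstacle throughout is the conditional nature of the $\xi$-integral: the Fubini interchanges in (4) and (5) are justified only through the uniform estimates of Lemma~\ref{lem:ggptriple:hcxifunction} and Proposition~\ref{pro:ggptriple:xiHbsGfun}, and the triple integral in (5) is iterated-convergent only in the specified order and must be tracked rather than rearranged freely.
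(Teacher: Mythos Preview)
Your sketch follows the same route as the paper (which simply defers to \cite[Lemma~7.2.2]{beuzart2015local}; the correct reference is Section~7, not Section~8), and your treatment of (1), (4), (5) is in the right spirit.

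There is, however, a genuine gap in your argument for (2) (and by extension (3)). You invoke Theorem~\ref{thm:ggptriples:multiplictyone} to conclude $\dim\wb{\pi^{-\infty}}^{H,\xi}\leq 1$, but that theorem is stated only for \emph{irreducible} $\pi\in\mathrm{Temp}(G)$; the paper itself flags this just before the lemma (``is less than or equal to $1$ if $\pi$ is irreducible''). For $\pi\in\CX_{\mathrm{temp}}(G)$ the induced representation $i^G_Q(\sig_\lam)$ may be reducible, and a priori the image of $L_\pi$ need not be a line. Your identity $L_\pi T L_\pi=\CL_\pi(T)L_\pi$ is exactly the assertion that the sesquilinear form $\CL_\pi$ has rank $\leq 1$; taking $\pi=\pi_1\oplus\pi_2$ with both $\CL_{\pi_i}\neq 0$ and rank-one operators $S=T_{a,a}$, $T=T_{c,c}$ with $a\in\pi_1$, $c\in\pi_2$ would give $\CL_\pi(SL_\pi T)=0$ while $\CL_\pi(S)\CL_\pi(T)\neq 0$, so the identity genuinely fails without the rank condition.

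The fix is the one Beuzart-Plessis uses: first establish (2) for irreducible $\pi$ via your rank-one argument, then observe that the locus of $\lam\in i\CA^*_L$ where $\pi_\lam$ is irreducible is open dense, and use the smoothness from (1) to pass to the limit. Concretely, for fixed $S,T$ both sides of $\CL_{\pi_\lam}(S)\CL_{\pi_\lam}(T)=\CL_{\pi_\lam}(SL_{\pi_\lam}T)$ are continuous in $\lam$ (the right-hand side via $\mathrm{Trace}(L_{\pi_\lam}SL_{\pi_\lam}T)$ and (1)), so equality on a dense set gives equality everywhere. The same density argument repairs your justification of (3).
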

\begin{proof}
The proof of \cite[Lemma~7.2.2]{beuzart2015local} works verbatim.
\end{proof}

\subsubsection{Asymptotic of tempered intertwinings}\label{sec:temperedint:3}

\begin{lem}\label{lem:temperedint:asymptoticoftempered}
\begin{enumerate}
\item Let $\pi$ be a tempered representation of $G(F)$ and $\ell\in \Hom_{H}(\pi^{\infty},\xi)$ be a continuous $(H,\xi)$-equivariant linear form. Then, there exist $d>0$ and a continuous semi-norm $v_{d}$ on $\pi^{\infty}$ such that 
$$
|\ell(\pi(x)e)| \leq v_{d}(e)\Xi^{H\bs G}(x)\sig_{H\bs G}(x)^{d}
$$
for any $e\in \pi^{\infty}$ and $x\in H(F)\bs G(F)$.

\item For any $d>0$, there exists $d^{\p}>0$ and a continuous semi-norm $v_{d,d^{\p}}$ on $\CC^{w}_{d}(G(F))$ such that 
$$
|\CP_{H,\xi}(R(x)L(y)\vphi)|
\leq 
v_{d,d^{\p}}(\vphi)
\Xi^{H\bs G}(x)
\Xi^{H\bs G}(y)
\sig_{H\bs G}(x)^{d^{\p}}
\sig_{H\bs G}(y)^{d^{\p}}
$$
for any $\vphi\in \CC^{w}_{d}(G(F))$ and $x,y\in H(F)\bs G(F)$.
\end{enumerate}
\end{lem}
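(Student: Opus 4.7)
The statement is the orthogonal-group analogue of \cite[Lemma~7.3.1]{beuzart2015local}, and the plan is to adapt that proof line-by-line, the new structural input being only our Lemma \ref{lem:ggptriple:hcxifunction}, Lemma \ref{lem:ggptriple:cartandecomp} and Proposition \ref{pro:ggptriple:xiHbsGfun}, which play the same role as their unitary analogues.

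I would treat (2) first since it lies closest to the definitions. Given $\vphi\in \CC^{w}_{d}(G)$, the pointwise bound $|\vphi(g)|\ll v(\vphi)\Xi^{G}(g)\sig(g)^{d}$ is not directly integrable against $\xi$ on $H(F)$, but the Fourier-type identity of Lemma \ref{lem:eq:temperedint:continuous:2}(2), invoked along a one-parameter subgroup $a:\BG_{m}\to T$ satisfying $\lam(a(t)ha(t)^{-1})=t\lam(h)$, rewrites the $\xi$-integral of any smoothed version of $\vphi$ as an absolutely convergent $H(F)$-integral against a factor $(1+|\lam(h)|)^{-\del}$ of arbitrary fast decay. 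Applied to $R(x)L(y)\vphi$, after absorbing the auxiliary bump function into a continuous seminorm on $\vphi$, this produces the majorization
$$
|\CP_{H,\xi}(R(x)L(y)\vphi)|\ll v_{d}(\vphi)\int_{H(F)}\Xi^{G}(y^{-1}hx)\sig(y^{-1}hx)^{d}(1+|\lam(h)|)^{-\del}\ud h.
$$
The right-hand side is then estimated by Proposition \ref{pro:ggptriple:xiHbsGfun}(7), which is engineered precisely for this bilinear bound: after a substitution symmetrizing the contributions of $x$ and $y$, it yields the product $\Xi^{H\bs G}(x)\Xi^{H\bs G}(y)\sig_{H\bs G}(x)^{d'}\sig_{H\bs G}(y)^{d'}$ as required.

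For (1), fix $\ell\in \Hom_{H}(\pi^{\infty},\xi)$. By Dixmier--Malliavin (archimedean) or by stabilization under a compact open subgroup ($p$-adic), any $e\in \pi^{\infty}$ admits a presentation $e=\pi(f)e_{0}$ with $f\in\CC^{\infty}_{c}(G)$ and $e_{0}\in\pi^{\infty}$. Expanding and using the $(H,\xi)$-equivariance of $\ell$ to fold the $H$-integration gives
$$
\ell(\pi(x)e)=\int_{H(F)\bs G(F)}\ell(\pi(y)e_{0})\, f^{x,\xi}(y)\,\ud y,\qquad f^{x,\xi}(y):=\int_{H(F)} f(x^{-1}hy)\xi(h)\ud h.
$$
The outer integrand is bounded, via the tempered growth of matrix coefficients of $\pi$, by $v'(e_{0})\Xi^{G}(y)\sig(y)^{d_{0}}$ for a continuous seminorm $v'$. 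The inner function $f^{x,\xi}(y)$ has $y$-support in a set comparable to $Hx\cdot\supp(f)$, and its sup-norm is bounded via the compactness of $\supp(f)$ together with an appeal to Proposition \ref{pro:ggptriple:xiHbsGfun}(6) to produce the $x$-dependent factor $\Xi^{H\bs G}(x)\sig_{H\bs G}(x)^{d_{1}}$. Substituting, applying Proposition \ref{pro:ggptriple:xiHbsGfun}(6) once more to the remaining $y$-integration, and absorbing the dependence on $f$ and $e_{0}$ into a continuous seminorm of $e$ via Dixmier--Malliavin, yields the bound in (1).

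The main technical obstacle is the same as in \cite{beuzart2015local}: the conversion of naive pointwise $\Xi^{G}$-bounds on $G(F)$ into $\Xi^{H\bs G}$-bounds on $H(F)\bs G(F)$. This passage relies on the good-parabolic structure of Section \ref{sub:ggptriples:sphvar}, the relative weak Cartan decomposition $G(F)=H(F)A_{0}^{+}T(F)\CK_{G}$ from Lemma \ref{lem:ggptriple:cartandecomp}(2), and the sharp comparison $\Xi^{H\bs G}(aa_{0})\sim \Xi^{G_{0}}(a_{0})\del_{P}^{1/2}(a)$ along $T(F)A_{0}^{+}$ furnished by Proposition \ref{pro:ggptriple:xiHbsGfun}(2). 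Since each of these holds in the SO-setting in exactly the same form as in the unitary setting, the verbatim transcription of the Beuzart--Plessis argument succeeds.
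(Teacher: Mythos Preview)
Your overall plan—transcribe \cite[Lemma~7.3.1]{beuzart2015local} using the SO-group estimates of Section~\ref{subsec:analyticalestimate}—coincides with the paper's proof, which is literally a one-line reference to that lemma. Your sketch of (2) captures the right mechanism: use Lemma~\ref{lem:eq:temperedint:continuous:2}(2) to trade the oscillatory $\xi$-integral for an absolutely convergent one weighted by $(1+|\lam(h)|)^{-\del}$, then feed into the $\Xi$-estimates. (Your specific citation of Proposition~\ref{pro:ggptriple:xiHbsGfun}(7) is slightly off—that estimate produces $\Xi^{H\bs G}(x)^{2}$ for a single variable, not the product $\Xi^{H\bs G}(x)\Xi^{H\bs G}(y)$; the two-variable bound comes rather via the relative Cartan decomposition for $x$ and $y$ separately together with Lemma~\ref{lem:ggptriple:hcxifunction}(4)—but the strategy is right.)

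Your sketch of (1), however, has a genuine gap. You assert that $|\ell(\pi(y)e_{0})|\leq v'(e_{0})\Xi^{G}(y)\sig(y)^{d_{0}}$ ``via the tempered growth of matrix coefficients of $\pi$'', but $y\mapsto \ell(\pi(y)e_{0})$ is \emph{not} a matrix coefficient: $\ell$ is an $(H,\xi)$-equivariant distribution vector, not a smooth vector paired through the invariant inner product, so temperedness of $\pi$ gives no a priori control of this function. Worse, the argument is circular: on the support of $f^{x,\xi}$ one has $y\in Hx\cdot\mathrm{supp}(f)$, so choosing representatives $y=xk$ with $k$ in a fixed compact set, the quantity on the right is $\ell(\pi(x)\pi(k)e_{0})$, which is of exactly the same type as the quantity you are trying to bound. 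No bootstrapping step is indicated. The actual argument in \cite{beuzart2015local} for (i) proceeds instead through the relative weak Cartan decomposition (Lemma~\ref{lem:ggptriple:cartandecomp}(2)) to reduce to $x\in A^{+}_{0}T(F)$, and then controls $\ell(\pi(a)e)$ along the torus using the exponent bounds available for tempered representations together with the parabolic-degeneration estimates of Proposition~\ref{pro:pardeg}; the Dixmier--Malliavin unfolding you wrote down is not the route taken.
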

\begin{proof}
The proof follows from the same argument as \cite[Lemma~7.3.1]{beuzart2015local}.
\end{proof}

\subsubsection{Explicit intertwining and parabolic induction}\label{sec:temperedint:4}

Let $Q=LU_{Q}$ be a parabolic subgroup of $G = \SO(W)\times \SO(V)$. Then there are decompositions
\begin{equation} \label{eq:temperedint:16}
Q=Q_{W}\times Q_{V}, \quad L=L_{W}\times L_{V},
\end{equation}
where $Q_{W}$ and $Q_{V}$ are parabolic subgroups of $\SO(W)$ and $\SO(V)$ respectively and $L_{W}$ and $L_{V}$ are the associated Levi components. By the explicit descriptions of parabolic subgroups of special orthogonal groups,
\begin{equation}\label{eq:temperedint:17}
L_{W} = \GL(Z_{1,W})\times ... \times \GL(Z_{a,w})\times \SO(\wt{W}),
\end{equation}
\begin{equation}\label{eq:temperedint:18}
L_{V} = \GL(Z_{1,V})\times ... \times \GL(Z_{b,V})\times \SO(\wt{V}),
\end{equation}
where $Z_{i,W}$, $1\leq i\leq a$ (respectively $Z_{i,V}$, $1\leq i\leq b$) are totally isotropic subspaces of $W$ (respectively of $V$) and $\wt{W}$ (respectively $\wt{V}$) is a non-degenerate subspace of $W$ (respectively of $V$). Let $\wt{G}=  \SO(\wt{W})\times \SO(\wt{V})$. Up to permutation the pair $(\wt{V},\wt{W})$ is admissible, hence in particular it yields a GGP triple $(\wt{G},\wt{H},\wt{\xi})$ which is well-defined up to $\wt{G}(F)$-conjugation. For any tempered representations $\wt{\sig}$ of $\wt{G}(F)$, one can consider the continuous linear form $\CL_{\wt{\sig}}:\End(\wt{\sig})^{\infty}\to \BC$.

Let $\sig$ be a tempered representation of $L(F)$ which decomposes according to (\ref{eq:temperedint:16}), (\ref{eq:temperedint:17}), (\ref{eq:temperedint:18}) as a tensor product 
\begin{equation}\label{eq:temperedint:19}
\sig =\sig_{W}\boxtimes \sig_{V},
\end{equation}
\begin{equation}\label{eq:temperedint:20}
\sig_{W} = \sig_{1,W}\boxtimes...\boxtimes \sig_{a,W}\boxtimes \wt{\sig}_{W},
\end{equation}
\begin{equation}\label{eq:temperedint:21}
\sig_{V} = \sig_{1,V}\boxtimes...\boxtimes\sig_{b,V}\boxtimes \wt{\sig}_{V},
\end{equation}
where $\sig_{i,W}\in \mathrm{Temp}(\GL(Z_{i,W}))$ for $1\leq i\leq a$, $\sig_{i,V}\in \mathrm{Temp}(\GL(Z_{i,V}))$ for $1\leq i\leq b$, $\wt{\sig}_{W}$ is a tempered representation of $\SO(\wt{W})(F)$ and $\wt{\sig}_{V}$ is a tempered representation of $\SO(\wt{V})(F)$. Set $\wt{\sig} = \wt{\sig}_{W}\boxtimes \wt{\sig}_{V}$. It is a tempered representation of $\wt{G}(F)$. Finally set $\pi  =i^{G}_{Q}(\sig)$, $\pi_{W} = i^{\SO(W)}_{Q_{W}}(\sig_{W})$, and $\pi_{V} = i^{\SO(V)}_{Q_{V}}(\sig_{V})$ for the parabolic inductions of $\sig, \sig_{W}$ and $\sig_{V}$ respectively. Then $\pi = \pi_{W}\boxtimes \pi_{V}$.

\begin{pro}\label{pro:eq:temperedint:16}
With the notations as above,
$$
\CL_{\pi}\neq 0 \Leftrightarrow \CL_{\wt{\sig}}\neq 0.
$$
\end{pro}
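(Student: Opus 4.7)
} The plan is to adapt \cite[Proposition~7.4.1]{beuzart2015local} to the orthogonal setting, proving both implications through an explicit integral formula relating $\CL_\pi$ and $\CL_{\wt\sig}$. The reverse direction is essentially a construction; the forward direction requires asymptotic analysis.

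First I would analyze the relative geometry of $H$ with respect to $Q$. Since $\pi$ is parabolically induced, one may move $Q$ within its $G(F)$-conjugacy class without changing $\pi$; in particular I would first arrange that $Q$ is well-positioned with respect to $H$ so that Proposition \ref{pro:pardeg} applies (setting $H_Q = H\cap Q$, $H_L$ its image in $L$, and $H^Q = H_L \ltimes U_Q$). The structural computation to carry out is that, under the decompositions (\ref{eq:temperedint:16})--(\ref{eq:temperedint:18}), the subgroup $H_L$ decomposes compatibly into mirabolic-type subgroups of each $\GL(Z_{i,W})$ and $\GL(Z_{i,V})$, together with $\wt H \subset \wt G$; moreover the restriction of the character $\xi$ to $H_L$ produces, block by block, non-degenerate Whittaker-type characters on the general linear factors and exactly $\wt\xi$ on $\wt H$. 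This structural check is the orthogonal analog of \cite[Lemma~7.4.1]{beuzart2015local}, with only mild combinatorial differences.

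Next I would exploit the standard matrix coefficient formula for $T\in \End(\pi)^\infty$: for each $g$, $\mathrm{Trace}(\pi(g^{-1})T)$ is an integral over $K$ of matrix coefficients of $\sig$. Combining this with the decomposition $H(F)=H^Q(F)\cdot (\text{transversal})$ and applying the Plancherel-type splitting from Lemma \ref{lem:eq:temperedint:continuous:2}, the $\xi$-integral on $H(F)$ should rearrange into an iterated integral in which the layers over each GL-block collapse to Whittaker functionals $W_{\sig_{i,W}}$, $W_{\sig_{i,V}}$ evaluated on the induced sections, while the inner layer produces exactly $\CL_{\wt\sig}$. Schematically, the goal is a formula
\begin{equation*}
\CL_{\pi}(T) \;=\; \int \CL_{\wt\sig}\bigl(T(\cdot)\bigr)\cdot \prod_{i} W_{\sig_{i,W}}\bigl(\cdots\bigr)\cdot \prod_{j} W_{\sig_{j,V}}\bigl(\cdots\bigr)\,d(\cdot),
\end{equation*}
with absolute convergence guaranteed by the estimates in Proposition \ref{pro:pardeg}, Proposition \ref{pro:ggptriple:xiHbsGfun}, and Lemma \ref{lem:ggptriple:hcxifunction}.

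Given this formula, the direction $\CL_{\wt\sig}\neq 0 \Rightarrow \CL_\pi\neq 0$ follows by choosing test vectors: since every tempered representation of $\GL_n$ is generic, each $W_{\sig_{i,\bullet}}$ is non-vanishing, and suitable sections can be chosen so that exactly one transversal point contributes a non-zero value of $\CL_{\wt\sig}$. The converse $\CL_\pi\neq 0 \Rightarrow \CL_{\wt\sig}\neq 0$ is the more delicate direction and would be handled using Lemma \ref{lem:temperedint:asymptoticoftempered}: by varying $T$ along the action of an appropriate one-parameter subgroup $a:\BG_m\to A_L$ (in the spirit of Lemma \ref{lem:eq:temperedint:continuous:2}(2)) and performing a Fourier inversion in the central characters of the GL-factors, one isolates the contribution of $\wt\sig$ and produces an explicit non-zero $(\wt H,\wt\xi)$-form on $\wt\sig^\infty$. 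The main obstacle will be the precise bookkeeping at the level of $H\cap Q \subset L$: the stabilizers of isotropic flags in $\SO(V)$ have a different shape from those in unitary groups (with an anisotropic residue space $\wt V$ of the wrong parity handled by the admissibility condition), and extracting the correct Whittaker normalization for each $\sig_{i,\bullet}$ requires careful tracking of the root-space data from Lemma \ref{lem:ggptriple:cartandecomp}. Once the structural lemma is in place, the remainder proceeds in parallel with \cite[Section~7.4]{beuzart2015local}.
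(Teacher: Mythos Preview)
Your proposal is correct and follows exactly the approach the paper takes: the paper's proof simply states that \cite[Proposition~7.4.1]{beuzart2015local} works verbatim, and what you have outlined is precisely the content of that argument adapted to the orthogonal setting. The structural check you flag (the decomposition of $H_L$ into mirabolic-type blocks plus $\wt H$, with $\xi$ restricting to Whittaker characters and $\wt\xi$) is indeed the only place where special orthogonal groups differ from unitary groups, and the paper defers this as well to \cite{thesis_zhilin}.
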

\begin{proof}
The proof in \cite[Proposition~7.4.1]{beuzart2015local} works verbatim.
\end{proof}

Now the proof of Theorem \ref{thm:temperedint:main} follows from the same argument as in \cite[Section~7]{beuzart2015local}. The details are referred to \cite{thesis_zhilin}.

\subsubsection{A corollary}\label{sec:temperedint:6}
We adopt the notation and hypothesis of Section \ref{sec:temperedint:4}. In particular, $Q=LU_{Q}$ is a parabolic subgroup of $G$ with $L$ decomposes as in (\ref{eq:temperedint:16}), (\ref{eq:temperedint:17}) and (\ref{eq:temperedint:18}), $\sig$ is a tempered representation of $L(F)$ admitting decomposition as in (\ref{eq:temperedint:19}), (\ref{eq:temperedint:20}) and (\ref{eq:temperedint:21}) and set $\wt{\sig} =\wt{\sig}_{W}\boxtimes \wt{\sig}_{V}$. This is a tempered representation of $\wt{G}(F)$ where $\wt{G}(F) = \SO(\wt{W})\times \SO(\wt{V})$. Recall that the admissible pair $(\wt{W},\wt{V})$ (up to permutation) defines a GGP triple $(\wt{G}, \wt{H}, \wt{\xi})$. Hence, the multiplicity $m(\wt{\sig})$ of $\wt{\sig}$ relative to this GGP triple is defined. Set $\pi = i^{G}_{Q}(\sig)$.

\begin{cor}\label{cor:3.6.1}
\begin{enumerate}
\item Assume that $\sig$ is irreducible,
$$
m(\pi) = m(\wt{\sig}).
$$

\item Let $\CK\subset \CX_{\mathrm{temp}}(G)$ be a compact subset. There exists a section $T\in \CC(\CX_{\mathrm{temp}}(G),\CE(G))$ such that 
$$
\CL_{\pi}(T_{\pi}) = m(\pi)
$$
for any $\pi\in \CK$. Moreover, the same equality is satisfied for every sub-representation $\pi$ of some $\pi^{\p}\in \CK$.
\end{enumerate}
\end{cor}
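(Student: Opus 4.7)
This is a short consequence of the tools assembled by Section~3.6. The paragraph preceding it notes that Theorem~\ref{thm:temperedint:main} is now in hand, so for any tempered $\rho$ of $G(F)$ one has $m(\rho)\neq 0 \iff \CL_\rho \neq 0$, and likewise for $\tilde\sigma$ on $\tilde G(F)$. Chaining these equivalences with Proposition~\ref{pro:eq:temperedint:16} yields $m(\pi)\neq 0\iff m(\tilde\sigma)\neq 0$; since both multiplicities are bounded by $1$ via Theorem~\ref{thm:ggptriples:multiplictyone} (applicable to $\pi$ because $\sigma$ is assumed irreducible so $\pi\in \mathrm{Temp}(G)$), they must coincide.

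\textbf{Part (2), local construction.} I would build $T$ by a partition of unity on the compact set $\CK$. Around any $\pi_0=i^G_Q(\sigma_0)\in \CK$ the nearby tempered representations are of the form $\pi_\lambda=i^G_Q(\sigma_{0,\lambda})$ with $\sigma_{0,\lambda}$ an unramified twist by $\lambda\in i\CA_L^*$ of a fixed discrete series $\sigma_0\in \Pi_2(L)$. By part (1), $m(\pi_\lambda)=m(\tilde\sigma_0)$ is \emph{constant} in $\lambda$, because the admissible factor $\tilde\sigma$ on $\tilde G$ does not feel the twists on the $\GL$-factors of $L$. If this constant is zero, take the local $T$ to be zero. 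Otherwise $\CL_{\pi_\lambda}\neq 0$ for all nearby $\lambda$; using the smoothness clause of Lemma~\ref{lem:temperedint:traceform}~(1) together with the identification $\End(\pi_\lambda)^\infty\simeq \End(\pi_K)^\infty$ coming from restriction to a (special) maximal compact $K$, pick a fixed $T_K\in \End(\pi_K)^\infty$ of the form $T_{e,e'}$ with $\CL_{\pi_0}(e,e')\neq 0$. By the smoothness assertion, $\lambda\mapsto \CL_{\pi_\lambda}(T_K)$ is smooth and non-vanishing on a neighborhood of $0$, so $T_{\pi_\lambda}:=\CL_{\pi_\lambda}(T_K)^{-1}T_K$ is a local smooth section of $\CE(G)$ with $\CL_{\pi_\lambda}(T_{\pi_\lambda})=1=m(\pi_\lambda)$. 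Cover $\CK$ by finitely many such neighborhoods, take a smooth partition of unity $\{\phi_i\}$ subordinate to them, and set $T=\sum_i \phi_i T_i$; by linearity of $\CL$ and local constancy of $m$ on $\CK$, one has $\CL_\pi(T_\pi)=\sum_i \phi_i(\pi)m(\pi)=m(\pi)$ for every $\pi\in\CK$.

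\textbf{Sub-representations and the main obstacle.} For the moreover clause, any sub-representation $\pi\subset \pi'\in\CK$ is automatically a direct summand, because tempered representations are unitary and completely reducible. Writing $\pi'=\bigoplus_i \pi_i$ with block decomposition $S=(S_{ij})$ for $S\in \End(\pi')^\infty$, a direct computation of matrix coefficients gives $\mathrm{Trace}(\pi'(h^{-1})S)=\sum_i \mathrm{Trace}(\pi_i(h^{-1})S_{ii})$, whence $\CL_{\pi'}(S)=\sum_i \CL_{\pi_i}(S_{ii})$. Multiplicity-one forces at most one summand, say $\pi_{j_0}$, to have $\CL_{\pi_{j_0}}\neq 0$; therefore $\CL_{\pi_{j_0}}(T_{\pi'}|_{\pi_{j_0}})=\CL_{\pi'}(T_{\pi'})=m(\pi')=m(\pi_{j_0})$, while for any other summand $\pi_i$ one has $\CL_{\pi_i}((T_{\pi'})_{ii})=0=m(\pi_i)$. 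The only non-routine point in the above strategy is verifying that the local normalizations glue into a genuinely smooth global section of $\CE(G)$ over $\CX_{\mathrm{temp}}(G)$ and that my ``block/summand'' arguments are compatible with the smooth structure on $\CE(G)$; both are exactly what the smoothness formulation of Lemma~\ref{lem:temperedint:traceform}~(1) was crafted to supply, so the main difficulty will be bookkeeping of the parabolic orbit structure of $\CX_{\mathrm{temp}}(G)$ rather than any genuinely new input.
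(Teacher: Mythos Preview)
The paper's own proof is a one-line deferral to \cite[Corollary~7.6.1]{beuzart2015local}, so the comparison is to that argument. Your outline tracks the expected structure, but Part~(1) has a genuine gap.

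Theorem~\ref{thm:ggptriples:multiplictyone} and its sources (AGRS, JSZ, GGP) give $m(\pi)\le 1$ only for \emph{irreducible} tempered $\pi$; in this paper's conventions $\Temp(G)$ means irreducible tempered (cf.\ \cite[2.2]{beuzart2015local}). The induced representation $\pi=i^G_Q(\sigma)$ can be reducible even when $\sigma$ is irreducible, so your parenthetical ``applicable to $\pi$ because $\sigma$ is irreducible so $\pi\in\Temp(G)$'' does not justify $m(\pi)\le 1$. Your chain $m(\pi)\ne 0\Leftrightarrow m(\tilde\sigma)\ne 0$ is correct, and the case $m(\tilde\sigma)=0$ is fine by your summand argument (each irreducible constituent $\pi_i$ has $\CL_{\pi_i}=\CL_\pi|_{\End(\pi_i)^\infty}=0$, hence $m(\pi_i)=0$). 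The case $m(\tilde\sigma)=1$ is where the upper bound $m(\pi)\le 1$ is actually needed and not yet supplied. In Beuzart-Plessis this is handled not by invoking multiplicity one for $\pi$ as a black box, but via the structure established in proving Theorem~\ref{thm:temperedint:main}: every $\ell\in\Hom_H(\pi,\xi)$ is shown to arise as $e\mapsto \CL_\pi(e,e_0)$ for suitable $e_0$, and the factorization $\CL_\pi(S)\CL_\pi(T)=\CL_\pi(SL_\pi T)$ of Lemma~\ref{lem:temperedint:traceform}(2) then forces the space of such functionals to be at most one-dimensional. You should either invoke that explicitly or supply a direct second-adjunction argument identifying $\Hom_H(i^G_Q(\sigma),\xi)$ with $\Hom_{\wt H}(\tilde\sigma,\tilde\xi)$.

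Your Part~(2) construction (local sections via smoothness of $\pi\mapsto\CL_\pi$, then partition of unity) and the sub-representation clause follow Beuzart-Plessis closely and are correct once Part~(1) is fixed. Note only that your phrase ``multiplicity-one forces at most one summand $\pi_{j_0}$ to have $\CL_{\pi_{j_0}}\ne 0$'' is not an appeal to Theorem~\ref{thm:ggptriples:multiplictyone} for the $\pi_i$ individually (which would give nothing), but rather to the conclusion $m(\pi')\le 1$ of Part~(1) applied to $\pi'\in\CK$; so the gap in Part~(1) propagates here but is resolved as soon as Part~(1) is.
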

\begin{proof}
The proof of \cite[Corollary~7.6.1]{beuzart2015local} works verbatim.
\end{proof}

When $F= \BC$, $\wt{W}$ and $\wt{V}$ can be taken to be zero dimension since tempered representations are always principal series. It follows that the tempered part of Conjecture \ref{ggpconjec} holds automatically when $F=\BC$.

\section{The distributions}\label{sec:distribution}

In this section, the trace distribution $J$ and its Lie algebra variant $J^{\Lie}$ is introduced.

\subsubsection{The distribution $J$}
For any $f\in \CC(G)$, define a function $K(f,\cdot)$ on $H(F)\bs G(F)$ by
$$
K(f,x) = \int_{H(F)}
f(x^{-1}hx)\xi(h)\ud h, \quad x\in H(F)\bs G(F).
$$
From Lemma \ref{lem:ggptriple:hcxifunction} (2) the integral is absolutely convergent. The theorem below and Proposition \ref{pro:ggptriple:xiHbsGfun} (3) shows that the following integral
$$
J(f) = \int_{H(F)\bs G(F)}K(f,x)\ud x
$$
is absolutely convergent for any $f\in \CC_{\scusp}(G)$, and $J(\cdot)$ defines a continuous linear form on $\CC_{\scusp}(G)$.

\begin{thm}\label{thm:covofJ}
\begin{enumerate}
\item There exists $d>0$ and a continuous semi-norm $\nu$ on $\CC(G)$ such that 
$$
|K(f,x)|
\leq \nu(f)
\Xi^{H\bs G}(x)^{2}\sig_{H\bs G}(x)^{d}
$$
for any $f\in \CC(G)$ and $x\in H(F)\bs G(F)$.

\item For any $d>0$ there exists a continuous semi-norm $\nu_{d}$ on $\CC(G)$ such that 
$$
|K(f,x)|\leq \nu_{d}(f)\Xi^{H\bs G}(x)^{2}\sig_{H\bs G}(x)^{-d}
$$
for any $x\in H(F)\bs G(F)$ and $f\in \CC_{\scusp}(G)$.
\end{enumerate}
\end{thm}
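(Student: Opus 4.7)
For part (1), the defining property of the Harish-Chandra Schwartz space yields, for every $D>0$, a continuous semi-norm $\nu_D$ on $\CC(G)$ with $|f(g)| \leq \nu_D(f) \Xi^G(g) \sig(g)^{-D}$. Substituting this into the definition of $K(f,x)$ gives
$|K(f,x)| \leq \nu_D(f) \int_{H(F)} \Xi^G(x^{-1}hx)\, \sig(x^{-1}hx)^{-D}\, dh$.
Choosing $D$ equal to the constant $d$ furnished by Proposition~\ref{pro:ggptriple:xiHbsGfun}(5), the right-hand side is bounded by a multiple of $\Xi^{H\bs G}(x)^2 \sig_{H\bs G}(x)^{d'}$, which proves (1).

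For part (2), the enhancement from polynomial growth to arbitrary polynomial decay reflects strong cuspidality, and the proof follows the parabolic-descent strategy of \cite[Theorem~8.1.1]{beuzart2015local}. First, using the weak Cartan decomposition of Lemma~\ref{lem:ggptriple:cartandecomp}(2) together with Proposition~\ref{pro:ggptriple:xiHbsGfun}(1), one reduces to estimating $|K(f,a)|$ for $a \in A_0^+ T(F)$. Partition $A_0^+T(F)$ into cones indexed by subsets $\Theta \subset \Del$ of simple roots of $A_{\min}$ in $\bar P_{\min}$: on the cone attached to $\Theta$, the roots outside $\Theta$ are bounded above while the roots in $\Theta$ are bounded below by a large parameter $c$. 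The cone with $\Theta = \Del$ is compact modulo $H(F)$ and is handled by part (1) together with Proposition~\ref{pro:ggptriple:xiHbsGfun}(4). For each proper $\Theta$, let $\bar Q_\Theta = L_\Theta U_{\bar Q_\Theta}$ be the associated good standard parabolic, so that $a$ tends to infinity in the central direction of $L_\Theta$.

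The key step is to unfold $K(f,a)$ using the isomorphism $H \cap \bar Q_\Theta \simeq H_{L_\Theta}$ and the matching of modular characters provided by Proposition~\ref{pro:pardeg}(1)(2), then change variables to bring the $U_{Q_\Theta}$-integration to the inside. The inner integral matches, up to an explicit factor $\del_{\bar Q_\Theta}(a)^{-1}$ and a measure-zero correction from the good-parabolic structure of Proposition~\ref{pro:ggptriples:sphvar}, a value of the constant term $f_{\bar Q_\Theta}$ plus a remainder coming from Taylor expansion in the unipotent variable. Since $f$ is strongly cuspidal and $\bar Q_\Theta$ is a proper parabolic, $f_{\bar Q_\Theta} \equiv 0$, so only the remainder survives; it is bounded by $\prod_{\alpha \notin \Theta} |\alpha(a)|^{-N}$ times a Harish-Chandra Schwartz factor, for any integer $N \geq 1$. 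Combining with Proposition~\ref{pro:ggptriple:xiHbsGfun}(2) and Proposition~\ref{pro:pardeg}(5)(6) to convert this root-decay into $\sig_{H\bs G}(a)^{-d}$, and summing over the finitely many cones, yields part (2).

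The main obstacle is the Taylor-expansion step that extracts the constant-term main term and supplies explicit polynomial decay in the roots outside $\Theta$ for the remainder; the delicate point is to ensure that this decay is uniform in $h_L \in H_{L_\Theta}(F)$ and controlled by a continuous semi-norm on $\CC_\scusp(G)$. For unitary groups this analysis is carried out in \cite[\S 8]{beuzart2015local}, and thanks to the structural results of Section~\ref{sec:ggptriples} (existence of good parabolics, Propositions~\ref{pro:ggptriples:sphvar} and~\ref{pro:pardeg}, and the weak Cartan decomposition of Lemma~\ref{lem:ggptriple:cartandecomp}), the argument transports essentially verbatim to special orthogonal groups.
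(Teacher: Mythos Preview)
Your proposal is correct and takes the same approach as the paper: the paper's own proof simply states that \cite[Theorem~8.1.1]{beuzart2015local} works verbatim, and your outline is a faithful expansion of that argument using the structural ingredients assembled in Section~\ref{sec:ggptriples}. One harmless labeling slip: under your convention that the roots in $\Theta$ are the large ones, the bounded cone handled by part~(1) corresponds to $\Theta=\emptyset$, not $\Theta=\Delta$.
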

\begin{proof}
The proof in \cite[Theorem~8.1.1]{beuzart2015local} works verbatim.
\end{proof}

\subsubsection{The distribution $J^{\Lie}$}
Parallel to $J$ the following Lie algebra variant distribution $J^{\Lie}$ is introduced. 

For any $f\in \CS(\Fg)$, define a function $K^{\Lie}(f,\cdot)$ on $H(F)\bs G(F)$ by
$$
K^{\Lie}(f,x) = \int_{\Fh(F)}
f(x^{-1}Xx)\xi(X)\ud X, \quad x\in H(F)\bs G(F).
$$
The integral is absolutely convergent. The theorem below, whose proof is similar to Theorem \ref{thm:covofJ}, shows that the following integral 
$$
J^{\Lie}(f) = \int_{H(F)\bs G(F)}
K^{\Lie}(f,x)\ud x
$$
is absolutely convergent for any $f\in \CS_{\scusp}(\Fg)$ and defines a continuous linear form on $\CS_{\scusp}(\Fg)$.

\begin{thm}\label{thm:abcovJLie}
\begin{enumerate}
\item There exists $c>0$ and a continuous semi-norm $\nu$ on $\CS(\Fg)$ such that 
$$
|K^{\Lie}(f,x)|
\leq \nu(f)e^{c\sig_{H\bs G}(x)}
$$
for any $x\in H(F)\bs G(F)$ and $f\in \CS(\Fg)$.

\item For any $c>0$, there exists a continuous semi-norm $\nu_{c}$ on $\CS(\Fg)$ such that 
$$
|K^{\Lie}(f,x)|
\leq \nu_{c}(f)e^{-c\sig_{H\bs G}(x)}
$$
for any $x\in H(F)\bs G(F)$ and $f\in \CS_{\scusp}(\Fg)$.
\end{enumerate}
\end{thm}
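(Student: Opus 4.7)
The plan is to adapt the argument of Theorem~\ref{thm:covofJ} to the Lie algebra setting. All the structural inputs (relative weak Cartan decomposition, parabolic degenerations, estimates for $\Xi^{H\bs G}$) are already in place. Two features should simplify matters on the Lie algebra side compared with the group case: Schwartz--Bruhat functions decay faster than any polynomial in the genuine norm of $\Fg(F)$, and the bounds we target ($e^{\pm c\sig_{H\bs G}(x)}$) are merely polynomial in $\|x\|$ rather than log-polynomial, so we have much more room to spare.

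First I would reduce to a manageable set of representatives for $x$. By the relative weak Cartan decomposition $G(F)=H(F)A^+_0T(F)\CK_G$ of Lemma~\ref{lem:ggptriple:cartandecomp}(2) and the translation invariance under a compact piece provided by Proposition~\ref{pro:ggptriple:xiHbsGfun}(1), it suffices to bound $K^\Lie(f,at)$ for $a\in A^+_0$ and $t\in T(F)$, in terms of $\sig_{H\bs G}(at)$. For part~(1), change variables $Y=\Ad(at)^{-1}X$ inside the defining integral to obtain
\[
K^\Lie(f,at)=|\det\Ad(at)_{|\Fh}|\int_{\Ad(at)^{-1}\Fh(F)} f(Y)\,\psi(\lam(\Ad(at)Y))\,\ud Y.
\]
The determinant factor is polynomial in the coordinates of $at$, hence bounded by $e^{c\sig(at)}$ for some $c>0$. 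The remaining integral is bounded by a single Schwartz--Bruhat semi-norm of the form $\sup_Y (1+\|Y\|)^N|f(Y)|$ with $N>\dim\Fh$, since any $\dim\Fh$-dimensional affine subspace of $\Fg(F)$ carries such integrals uniformly. Comparing $\sig_{H\bs G}$ with $\sig_G$ on $A^+_0T(F)$ via Proposition~\ref{pro:ggptriple:xiHbsGfun}(2b) completes part~(1).

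For part~(2), the key tool is strong cuspidality, which on the Lie algebra side reads $\int_{\Fu(F)}f(Y+Z)\ud Z=0$ for every proper parabolic $P=MU$ and every $Y\in\Fm(F)$. Following the proof of \cite[Theorem~8.1.1]{beuzart2015local} transplanted to the Lie algebra, I would decompose $K^\Lie(f,at)$ according to parabolic degenerations (the Lie algebra analogue of Proposition~\ref{pro:pardeg}) and observe that the contributions coming from proper parabolics vanish by strong cuspidality. What remains is controlled by the oscillatory factor $\psi(\lam(\cdot))$: by Lemma~\ref{lem:ggptriple:cartandecomp}(3), for each simple root $\alp\in\Del_P$ the restriction of $\xi$ to $\Fn_\alp(F)$ is nontrivial, and as $\alp(at)\to\infty$ integration by parts against $\xi$ along $\Fn_\alp(F)$ produces a partial Fourier transform of $f$ that decays faster than any polynomial in $\alp(at)$. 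Iterating over all simple roots, combined with the reduction of the $\Fs\Fo(W)$-part to the codimension-one case handled as in the group case, yields the desired bound with any prescribed $c>0$ and a continuous semi-norm $\nu_c$.

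The hardest step will be the clean combination, in part~(2), of the cancellation from strong cuspidality with the Fourier-type decay from $\xi$, so as to produce a single estimate that is uniform in $f\in\CS_\scusp(\Fg)$ with continuous semi-norm dependence. This is the Lie algebra analogue of the most intricate portion of the proof of Theorem~\ref{thm:covofJ}. I expect the translation to work smoothly once one rewrites the Harish-Chandra Schwartz estimates as Schwartz--Bruhat estimates; indeed, the freedom to choose arbitrarily large $N$ in the decay bound $(1+\|Y\|)^{-N}$ from the outset streamlines the combinatorics of the parabolic degenerations compared with the group version.
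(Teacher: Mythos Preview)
Your proposal is correct and follows essentially the same route as the paper: the paper's own proof of this theorem is nothing more than the remark that it is ``similar to Theorem~\ref{thm:covofJ}'', whose proof in turn is just the statement that \cite[Theorem~8.1.1]{beuzart2015local} works verbatim. Your outline of how the adaptation to the Lie algebra goes (reduction to $A_0^+T(F)$ via the weak Cartan decomposition, the crude Jacobian bound for part~(1), and the combination of strong cuspidality with the Fourier decay coming from the nontriviality of $\xi$ on each $\Fn_\alp$ for part~(2)) is exactly the expected translation and is in fact more detailed than what the paper records.
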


\section{Spectral expansion for the distribution $J$}\label{sec:spectralexp}

In this section the spectral expansion of the distribution $J$ is established. The proof of the spectral side of the distribution $J$ for unitary groups case (\cite{beuzart2015local}) carries without hard effort to the special orthogonal groups setting. To save the length of the paper, we outline the main analytical results and refer the details to \cite{thesis_zhilin}.

We first give the statement of the theorem.
\begin{thm}\label{thm:spectralexpansionJ}
For any $f\in \CC_{\scusp}(G)$, set
$$
J_{\mathrm{spec}}(f) = 
\int_{\CX(G)}
D(\pi)\wh{\theta}_{f}(\pi)m(\wb{\pi})\ud\pi.
$$
Then the integral is absolutely convergent and
$$
J(f) = J_{\mathrm{spec}}(f)
$$
for any $f\in \CC_{\scusp}(G)$.
\end{thm}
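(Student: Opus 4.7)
The plan is to follow the strategy of \cite[Section~9]{beuzart2015local}, since the structural facts that distinguish the orthogonal case from the unitary case have already been installed in Sections~\ref{sec:ggptriples}--\ref{sec:distribution}. I outline the four main steps and flag the only real difficulty.

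First I would establish the absolute convergence of $J_{\mathrm{spec}}(f)$. This rests on three inputs: the multiplicity bound $m(\bar\pi)\leq 1$ from Theorem~\ref{thm:ggptriples:multiplictyone}, the standard polynomial bounds for weighted characters $J_{M(\pi)}(\pi,f)$ of strongly cuspidal $f$ (valid for any reductive $G$, hence applicable here), polynomial growth of $D(\pi)$, and the rapid decay of the Plancherel density on $\CX_{\mathrm{temp}}(G)$.

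Second I would introduce a geometric truncation $\kappa_N = \mathbf{1}_{\sig_{H\bs G}\leq N}$ and set
$$
J_N(f) = \int_{H(F)\bs G(F)} K(f,x)\kappa_N(x)\,\ud x.
$$
Theorem~\ref{thm:covofJ}(2) combined with Proposition~\ref{pro:ggptriple:xiHbsGfun}(3) yields $J_N(f)\to J(f)$ as $N\to\infty$ by dominated convergence.

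Third I would decompose $K(f,x)$ spectrally. By Dixmier--Malliavin it suffices to treat $f\in \CC_{\scusp}(G)$ whose Plancherel transform has compact support in $\CX_{\mathrm{temp}}(G)$, in which case Lemma~\ref{lem:temperedint:traceform}(4) applied to the conjugate $L(x)R(x)f$ produces
$$
K(f,x) = \int_{\CX_{\mathrm{temp}}(G)} \CL_{\pi}\bigl(\pi(x)\pi(f)\pi(x)^{-1}\bigr)\,\mu(\pi)\,\ud\pi.
$$
The bound from Lemma~\ref{lem:temperedint:asymptoticoftempered}(2), the compact support in $\pi$, and Proposition~\ref{pro:ggptriple:xiHbsGfun}(3) together legitimate exchanging the integrations in $x$ and $\pi$ inside $J_N(f)$. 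Fourth, and this is the heart of the argument, I would identify
$$
\lim_{N\to\infty}\int_{H(F)\bs G(F)}\kappa_N(x)\CL_{\pi}(\pi(x)\pi(f)\pi(x)^{-1})\,\ud x
$$
as $D(\pi)m(\bar\pi)\wh{\theta}_{f}(\pi)\mu(\pi)^{-1}$ for $\pi\in \CX(G)$, and as zero for the complement of $\CX(G)$ inside $\CX_{\mathrm{temp}}(G)$. For $\pi$ induced from a discrete series of a proper Levi $M(\pi)$, one uses Lemma~\ref{lem:temperedint:traceform}(2),(3) to peel off a rank-one piece via $L_{\pi}$, and then the strong cuspidality of $f$ together with Arthur's $(G,M)$-family calculus converts the truncated integral into the weighted character $J_{M(\pi)}(\pi,f)$, while the Plancherel density supplies the factor $D(\pi)$; the factor $m(\bar\pi)$ comes from identifying the image of $L_{\pi}$ with the multiplicity space. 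For $\pi$ not of this form, strong cuspidality forces the contribution to vanish.

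The main obstacle is this fourth step: matching the asymptotics of the truncated $H(F)\bs G(F)$-integral with Arthur's weighted character. In the unitary setting this was carried out in \cite[\S 9]{beuzart2015local} using only general Plancherel theory, the asymptotics of matrix coefficients, and the formalism of $(G,M)$-families---none of which is sensitive to the distinction between orthogonal and unitary groups. The inputs that \emph{are} sensitive, namely the estimates on $\Xi^{H\bs G}$ and the parabolic degeneration analysis, have already been transcribed to the orthogonal setting in Section~\ref{sec:ggptriples} (see Proposition~\ref{pro:ggptriple:xiHbsGfun} and Proposition~\ref{pro:pardeg}). Hence the passage is routine and the details are deferred to \cite{thesis_zhilin}.
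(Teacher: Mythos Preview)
Your outline diverges from the paper's proof in one structural respect, and your appeal to \cite[Section~9]{beuzart2015local} as justification for Step~4 is mistaken. The paper (like \cite[Section~9]{beuzart2015local}) does \emph{not} proceed by truncating $K(f,x)$ directly and matching each $\pi$-contribution with a weighted character. Instead, after reducing to $f$ with compactly supported Plancherel transform, it invokes Corollary~\ref{cor:3.6.1}(2) to produce an auxiliary $f'\in\CC(G)$ with $\CL_\pi(\pi(\bar{f'}))=m(\pi)$ on the support of $\pi\mapsto\pi(f)$. One then checks, using Theorem~\ref{thm:temperedint:main} and Proposition~\ref{pro:5.2.1}(3), that $K(f,x)=K^2_{f,f'}(x,x)$ identically, so $J(f)=J_{\mathrm{aux}}(f,f')$. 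The spectral expansion then comes from Proposition~\ref{pro:5.2.2}, which evaluates $J_{\mathrm{aux}}(f,f')$ as $\int_{\CX(G)}D(\pi)\wh\theta_f(\pi)\overline{\CL_\pi(\pi(\bar{f'}))}\,\ud\pi$; substituting the chosen $f'$ finishes the proof.

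The weighted-character computation and the $(G,M)$-family calculus are buried inside the proof of Proposition~\ref{pro:5.2.2} (i.e.\ \cite[Proposition~9.2.2]{beuzart2015local}), where the extra factor $\overline{\CL_\pi(\pi(\bar{f'}))}$ is present throughout. Your Step~4 proposes to run that same machinery on the bare quantity $\CL_\pi(\pi(x)\pi(f)\pi(x)^{-1})$, without the second $\CL_\pi$-factor. This is not what \cite[\S 9]{beuzart2015local} does, so you cannot simply cite it; you would have to redo the analysis. The auxiliary $f'$ is not a cosmetic device: it is what allows one to rewrite the integrand via Lemma~\ref{lem:temperedint:traceform}(2) as $\CL_\pi(\pi(x)\pi(f)\pi(x)^{-1}L_\pi\pi(\bar{f'}))$ and thereby access the finite-rank operator $L_\pi$ inside a \emph{trace}, which is what feeds into Arthur's weighted characters. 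Your ``peel off a rank-one piece via $L_\pi$'' using \eqref{eq:temperedint:3} alone gives $\mathrm{Trace}(\pi(x)\pi(f)\pi(x)^{-1}L_\pi)$, but you still need a second smoothing to land in $\End(\pi)^\infty$ and to control the $\pi$-dependence uniformly; this is exactly the role of $f'$. So either rework Step~4 with the auxiliary function as in the paper, or supply an independent argument that the truncated integral of a single $\CL_\pi$-factor converges to the weighted character with the correct constants.
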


From \cite[Lemma~5.4.2]{beuzart2015local} and Theorem \ref{thm:covofJ}, both sides of the equality are continuous on $\CC_{\scusp}(G)$. Hence by \cite[Lemma~5.3.1 (ii)]{beuzart2015local} it is sufficient to establish the equality for functions $f\in \CC_{\scusp}(G)$ which have compactly supported Fourier transforms (where the Fourier transform is understood as the spectral transform appearing in matrix Paley-Wiener theorem, see \cite[p.121]{beuzart2015local}). Throughout the section a function $f\in \CC_{\scusp}(G)$ with compactly supported Fourier transform is fixed.

\subsubsection{Study of an auxiliary distribution}\label{sec:5.2}
For any $f^{\p}\in \CC(G),$ the following integrals are introduced,
\begin{align*}
&K^{A}_{f,f{\p}}(g_1,g_2) = 
\int_{G(F)}f(g^{-1}_{1}gg_{2})f^{\p}(g)\ud g,\quad g_{1},g_{2}\in G(F),
\\
&K^{1}_{f,f^{\p}}(g,x) = 
\int_{H(F)}
K^{A}_{f,f^{\p}}(g,hx)\xi(h)\ud h, \quad g,x\in G(F),
\\
&K^{2}_{f,f^{\p}}(x,y) = \int_{H(F)}K^{1}_{f,f^{\p}}(h^{-1}x,y)\xi(h)\ud h, \quad x,y\in G(F),
\\
&J_{\mathrm{aux}}(f,f^{\p}) = \int_{H(F)\bs G(F)}
K^{2}_{f,f^{\p}}(x,x)\ud x.
\end{align*}

The following proposition gives estimations for the auxiliary distributions.
\begin{pro}\label{pro:5.2.1}
\begin{enumerate}
\item The integral defining $K^{A}_{f,f^{\p}}(g_{1},g_{2})$ is absolutely convergent. For any $g_{1}\in G(F)$ the map 
$$
g_{2}\in G(F) \to K^{A}_{f,f^{\p}}(g_{1},g_{2})
$$
belongs to $\CC(G)$. Moreover, for any $d>0$ there exists $d^{\p}>0$ such that for every continuous semi-norm $\nu$ on $\CC^{w}_{d^{\p}}(G(F))$, there exists a continuous semi-norm $\mu$ on $\CC(G)$ satisfying
$$
\nu(K^{A}_{f,f^{\p}}(g,\cdot)) \leq \nu(f^{\p})\Xi^{G}(g)\sig(g)^{-d}
$$
for any $f^{\p}\in \CC(G)$ and $g\in G(F)$.

\item The integral defining $K^{1}_{f,f^{\p}}(g,x)$ is absolutely convergent. Moreover, for any $d>0$, there exists $d^{\p}>0$ and a continuous semi-norm $\nu_{d,d^{\p}}$ on $\CC(G)$ such that 
$$
|K^{1}_{f,f^{\p}}(g,x)|
\leq \nu_{d,d^{\p}}(f^{\p})
\Xi^{G}(g)
\sig(g)^{-d}
\Xi^{H\bs G}(x)
\sig_{H\bs G}(x)^{d^{\p}}
$$
for any $f^{\p}\in \CC(G)$ and $g,x\in G(F)$.

\item The integral defining $K^{2}_{f,f^{\p}}(x,y)$ is absolutely convergent convergent. Moreover
$$
K^{2}_{f,f^{\p}}
(x,y) = 
\int_{\CX_{\mathrm{temp}}(G)}
\CL_{\pi}(\pi(x)\pi(f)\pi(y^{-1}))
\wb{\CL_{\pi}(\pi(\wb{f^{\p}}))}
\mu(\pi)\ud\pi
$$
for any $f^{\p}\in \CC(G)$ and $x,y\in G(F)$. The integral is absolutely convergent.

\item The integral defining $J_{\mathrm{aux}}(f,f^{\p})$ is absolutely convergent. More precisely, for every $d>0$ there exists a continuous semi-norm $\nu_{d}$ on $\CC(G)$ such that
$$
|K^{2}_{f,f^{\p}}(x,x)|
\leq 
\nu_{d}(f^{\p})
\Xi^{H\bs G}(x)^{2}
\sig_{H\bs G}(x)^{-d}
$$
for any $f^{\p}\in \CC(G)$ and $x\in H(F)\bs G(F)$. In particular, the linear form
$$
f^{\p}\in \CC(G)\to J_{\mathrm{aux}}(f,f^{\p})
$$
is continuous.
\end{enumerate}
\end{pro}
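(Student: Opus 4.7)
The proof will follow the analytic template of \cite[Proposition~8.2.1]{beuzart2015local}, with the four parts forming a logical chain: part (1) is a classical Harish-Chandra Schwartz convolution estimate; parts (2) and (3) each consist of an application of the $\xi$-integral to the output of the previous part; and part (4) combines the spectral identity with strong cuspidality of $f$ to obtain rapid decay on the diagonal.

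For part (1), after the substitution $g \mapsto g_1 h g_2^{-1}$ in the defining integral, we recognize
$$
K^A_{f,f'}(g_1, g_2) = \int_{G(F)} f(h) f'(g_1 h g_2^{-1}) \ud h,
$$
which up to translation is a value of the convolution of $f$ and $f'$. Since $\CC(G)$ is stable under convolution with quantitative $\Xi^G$-control, the fact that $K^A_{f,f'}(g_1, \cdot) \in \CC(G)$ and the semi-norm bound (with $\mu$ in place of the apparent $\nu$ typo in the statement) follow from standard estimates of the form $\int_{G(F)} \Xi^G(h) \Xi^G(g_1 h g_2^{-1}) \sig(h)^{-d'} \ud h \ll \Xi^G(g_1) \sig(g_1)^{-d}$. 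Part (2) is then immediate: we write $K^1_{f,f'}(g,x) = \CP_{H,\xi}(R(x) K^A_{f,f'}(g, \cdot))$, which is well-defined by (1) together with the extension of the $\xi$-integral to $\CC^w(G)$ from Proposition~\ref{pro:temperedint:continuous}. Combining the bound from (1) with Lemma~\ref{lem:temperedint:asymptoticoftempered}~(2) yields the stated product estimate $\Xi^G(g)\sig(g)^{-d}\cdot \Xi^{H\bs G}(x)\sig_{H\bs G}(x)^{d'}$.

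Part (3) splits into two pieces. Absolute convergence follows once again by applying the $\xi$-integral to the estimate in (2). For the spectral identity, the plan is to interchange the two $\xi$-integrals on $H(F)$ with the $G(F)$-integral defining $K^A_{f,f'}$ so that the resulting inner double $\xi$-integral becomes precisely the left-hand side of Lemma~\ref{lem:temperedint:traceform}~(5) applied to the translates $L(x^{-1}) R(y^{-1}) f$ and $f'$; the translations by $x$ and $y^{-1}$ then get absorbed into the operator $\pi(x) \pi(f) \pi(y^{-1})$ on the spectral side. Compact support of the Plancherel transform of $f$ (which is preserved, uniformly in $x, y$, by left and right translation) justifies the use of that lemma. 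Part (4) then proceeds as in the proof of Theorem~\ref{thm:covofJ}~(2): strong cuspidality of $f$, combined with the spectral expansion from (3) and the smoothness and decay properties furnished by Lemma~\ref{lem:temperedint:traceform}~(1) and (3), produces arbitrary polynomial decay of $K^2_{f,f'}(x,x)$ controlled by $\Xi^{H\bs G}(x)^2$. The continuity of $f' \mapsto J_\mathrm{aux}(f, f')$ then follows from Proposition~\ref{pro:ggptriple:xiHbsGfun}~(3).

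The main obstacle will be part (3), specifically the triple-integral interchange needed to reach the form of Lemma~\ref{lem:temperedint:traceform}~(5). One must verify that the uniform-in-$(x,y)$ bounds produced by (1) and (2) are strong enough to justify Fubini, and that the compact Plancherel support of $f$ transfers uniformly to the translates $L(x^{-1}) R(y^{-1}) f$ so that the resulting spectral integral over $\CX_\mathrm{temp}(G)$ converges absolutely and depends continuously on the parameters. Once these analytical checks are in place, extracting rapid decay on the diagonal in (4) via strong cuspidality is routine and mirrors the argument already used for Theorem~\ref{thm:covofJ}.
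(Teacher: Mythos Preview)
Your approach is correct and coincides with the paper's: the paper's proof consists solely of the statement that \cite[Proposition~9.2.1]{beuzart2015local} works verbatim, and your outline is precisely a sketch of that argument (note the reference should be 9.2.1, not 8.2.1). Your description of part~(4) slightly conflates two mechanisms---the direct wave-packet/parabolic-descent argument behind Theorem~\ref{thm:covofJ}(2) versus a spectral-side argument using (3)---but either route yields the claimed bound once strong cuspidality of $f$ is invoked.
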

\begin{proof}
The proof of \cite[Proposition~9.2.1]{beuzart2015local} works verbatim.
\end{proof}

From Proposition \ref{pro:5.2.1}, the following proposition can be established. Note that the upshot of the two propositions is various analytical estimations, which builds upon the analytical estimations proved in Subsection \ref{subsec:analyticalestimate}.
\begin{pro}\label{pro:5.2.2}
The following equality holds
$$
J_{\mathrm{aux}}(f,f^{\p}) = 
\int_{\CX(G)}
D(\pi)
\wh{\theta}_{f}(\pi)
\wb{\CL_{\pi}(\pi(\wb{f^{\p}}))}\ud\pi
$$
for any $f^{\p}\in \CC(G)$.
\end{pro}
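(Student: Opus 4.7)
The plan is to substitute the spectral formula for $K^2_{f,f^\p}(x,x)$ provided by Proposition \ref{pro:5.2.1}(3) into the definition of $J_\mathrm{aux}(f,f^\p)$, interchange the order of integration via Fubini, and then evaluate the resulting inner integral in terms of the weighted character $\wh\theta_f(\pi)$. Concretely, assuming Fubini applies, one arrives at
\begin{equation*}
J_\mathrm{aux}(f,f^\p) = \int_{\CX_\temp(G)} \Phi_\pi(f)\,\wb{\CL_\pi(\pi(\wb{f^\p}))}\,\mu(\pi)\,\ud\pi,\qquad \Phi_\pi(f) := \int_{H(F)\bs G(F)} \CL_\pi\bigl(\pi(x)\pi(f)\pi(x^{-1})\bigr)\,\ud x.
\end{equation*}
The stated formula then follows by identifying $\Phi_\pi(f)\,\mu(\pi)$ with $D(\pi)\wh\theta_f(\pi)$ on the tempered locus and reinterpreting the resulting integral over $\CX_\temp(G)$ as one over $\CX(G)$ through the virtual-tempered parametrization recalled in \cite[2.7]{beuzart2015local}.

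The Fubini step is the easy part. Since $f$ is assumed to have compactly supported Plancherel transform, the $\pi$-integration is effectively over some compact subset $\CK \subset \CX_\temp(G)$; on $\CK$, Lemma \ref{lem:temperedint:asymptoticoftempered}(2) applied to the $(H,\xi)$-equivariant functional $e \mapsto \CL_\pi(e,\cdot)$, combined with the bound of Proposition \ref{pro:5.2.1}(4), dominates the integrand by a constant multiple (depending only on continuous seminorms of $f^\p$) of $\Xi^{H\bs G}(x)^{2}\sig_{H\bs G}(x)^{-d}$, for any $d>0$. By Proposition \ref{pro:ggptriple:xiHbsGfun}(3) this is integrable over $H(F)\bs G(F)$, so the iterated integrals agree.

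The main obstacle is the evaluation of $\Phi_\pi(f)$, and here the strong cuspidality of $f$ is used in an essential way. Following Arthur and \cite[Proposition~9.2.2]{beuzart2015local}, one introduces an auxiliary truncation of the $x$-integral at height $T$ in $\sig_{H\bs G}$ and expands the truncated inner product via the combinatorics of Arthur's $(G,M)$-families into a finite sum of weighted characters indexed by Levi subgroups $M \supset M(\pi)$; strong cuspidality forces $J_M(\sig,f) = 0$ for every $M \neq M(\pi)$, collapsing the expansion to the single term $(-1)^{a_{M(\pi)}} D(\pi)\mu(\pi)^{-1} J_{M(\pi)}(\pi,f) = D(\pi)\mu(\pi)^{-1}\wh\theta_f(\pi)$. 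The argument is insensitive to the particular inner forms involved, and what must be checked to transport it from the unitary to the special-orthogonal setting is only that every analytical input has a verbatim counterpart. These have been arranged in advance: the $\Xi^{H\bs G}$-estimates of Proposition \ref{pro:ggptriple:xiHbsGfun}, the parabolic-degeneration bounds of Proposition \ref{pro:pardeg}, the Harish-Chandra $\Xi$-function integrals of Lemma \ref{lem:ggptriple:hcxifunction}, together with the structural results on good parabolic subgroups and the relative weak Cartan decomposition (Proposition \ref{pro:ggptriples:sphvar} and Lemma \ref{lem:ggptriple:cartandecomp}), suffice to copy \cite[Section~9]{beuzart2015local} line by line.
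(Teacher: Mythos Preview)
Your proposal is correct and follows the same approach as the paper, which simply records that the proof of \cite[Proposition~9.2.2]{beuzart2015local} works verbatim; your outline spells out precisely what that proof does (Fubini via the $\Xi^{H\bs G}$-bounds, then truncation and Arthur's $(G,M)$-family combinatorics collapsing to a single weighted character by strong cuspidality), and correctly notes that all analytic inputs needed have been established in the special-orthogonal setting.
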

\begin{proof}
The proof of \cite[Proposition~9.2.2]{beuzart2015local} works verbatim.
\end{proof}

Now we end of proof of the theorem \ref{thm:spectralexpansionJ}.

Recall that a function $f\in \CC_{\scusp}(G)$ with compactly supported Fourier transform has been fixed. By Lemma \ref{lem:temperedint:traceform} (4),
\begin{equation}\label{5.3.1}
K(f,x) = 
\int_{\CX_{\mathrm{temp}}(G)}
\CL_{\pi}(\pi(x)\pi(f)\pi(x^{-1}))\mu(\pi)\ud\pi
\end{equation}
for any $x\in H(F)\bs G(F)$. By Corollary \ref{cor:3.6.1} (2) there exists a function $f^{\p}\in \CC(G)$ such that 
\begin{equation}\label{5.3.2}
\CL_{\pi}(\pi(\wb{f^{\p}})) = m(\pi)
\end{equation}
for any $\pi\in \CX_{\mathrm{temp}}(G)$ such that $\pi(f)\neq 0$. Also, by Theorem \ref{thm:temperedint:main} and Corollary \ref{cor:3.6.1} (1), for any $\pi\in \CX_{\mathrm{temp}}(G)$,
$$
\CL_{\pi}\neq 0 \Leftrightarrow m(\pi) = 1.
$$
Hence by (\ref{5.3.1}) the equality holds
$$
K(f,x) = 
\int_{\CX_{\mathrm{temp}}(G)}
\CL_{\pi}(\pi(x)\pi(f)\pi(x^{-1}))
\wb{\CL_{\pi}(\pi(\wb{f^{\p}}))}
\mu(\pi)\ud\pi
$$
and by Proposition \ref{pro:5.2.1} (3), it follows that
$$
K(f,x) = K^{2}_{f,f^{\p}}(x,x)
$$
for any $x\in H(F)\bs G(F)$. Consequently, 
$$
J(f)= J_{\mathrm{aux}}(f,f^{\p}).
$$
After applying Proposition \ref{pro:5.2.2}, 
$$
J(f) = \int_{\CX(G)}
D(\pi)\wh{\theta}_{f}(\pi)
\wb{\CL_{\pi}(\pi(\wb{f^{\p}}))}\ud\pi.
$$
Let $\pi\in \CX(G)$ be such that $\wh{\theta}_{f}(\pi)\neq 0$ and let $\pi^{\p}$ be the unique representation in $\CX_{\mathrm{temp}}(G)$ such that $\pi$ is a linear combination of sub-representations of $\pi^{\p}$. Then $\pi^{\p}(f) \neq 0$. Hence by \ref{5.3.2} and Corollary \ref{cor:3.6.1} (2) $\wb{\CL_{\pi}(\pi(\wb{f^{\p}})) = \wb{m(\pi)}} = m(\wb{\pi})$. It follows that 
$$
J(f) = 
\int_{\CX(G)}
D(\pi)\wh{\theta}_{f}(\pi)m(\wb{\pi})\ud\pi
$$
and this ends the proof of Theorem \ref{thm:spectralexpansionJ}.

\section{Spectral expansion for the distribution $J^{\Lie}$}
In this section a spectral expansion of $J^{\Lie}$ is established, which is a key step towards the geometric expansion of $J$.

\subsection{The affine subspace $\Sig$}\label{sec:6.1}

In Section \ref{sub:ggptriples:definitions} a parabolic subgroup $P=MN$ of $G=\SO(W)\times \SO(V)$ has been defined, which can be written as $\SO(W)\times P_{V}$, with $P_{V}$ a parabolic subgroup of $\SO(V).$ Let $\wb{P} = M\wb{N}$ be the parabolic subgroup opposite to $P$ w.r.t. $M$. The unipotent radicals $N$ and $\wb{N}$ can be identified as subgroups of $\SO(V)$.

In Section \ref{sub:ggptriples:definitions} a character $\xi$ on $N(F)$ has been defined which extends to a character of $H(F) = \SO(W)(F)\ltimes N(F)$ trivial on $\SO(W)(F)$. Using the $G(F)$-invariant bilinear pairing $B$ on $\Fg(F)$ defined in Section \ref{sub:ggptriples:definitions}, there exists a unique element $\Xi\in \wb{\Fn}(F)$ such that
$$
\xi(X) = \psi(B(\Xi,X))
$$
for any $X\in \Fn(F)$.

There is the following explicit description of $\Xi\in \Fs\Fo(V)$,
\begin{equation}\label{6.1.1}
\Xi z_{i} = z_{i-1}, 1\leq i\leq r, 
\Xi z_{-i} = -z_{-i-1}, 1\leq i\leq r-1,
\Xi z_{0} = -\nu^{-1}_{0} z_{-1}, \Xi(W) = 0.
\end{equation}
Set $\Sig = \Xi+\Fh^{\perp}$ where $\Fh^{\perp}$ is the orthogonal complement of $\Fh$ in $\Fg$ w.r.t. $B(\cdot,\cdot)$.

Fix a Haar measure $\ud\mu_{\Fh}(X)$ on $\Fh(F)$. From \cite[Section~1.6]{beuzart2015local}, there is a natural way to associate to $\ud\mu_{\Fh}(X)$, using $B(\cdot,\cdot)$, a Haar measure $\ud\mu^{\perp}_{\Fh}$ on $\Fh^{\perp}(F)$. Denote by $\ud\mu_{\Sig}$ the translation of the measure to $\Sig(F)$. By Fourier inversion, the following equality holds
\begin{equation}\label{6.1.2}
\int_{\Fh(F)}
f(X)\xi(X)\ud\mu_{\Fh}(X) = 
\int_{\Sig(F)}
\wh{f}(Y)\ud\mu_{\Sig}(Y)
\end{equation}
for any $f\in \CS(\Fg)$.

\subsubsection{Conjugation by $N$}\label{sec:6.2}
By explicit calculation, there is the following description of $\Fh^{\perp}$: an element $X = (X_{W},X_{V})\in \Fg=  \Fs\Fo(W)\oplus \Fs\Fo(V)$ is in $\Fh^{\perp}$ if and only if
$$
X_{V} = -X_{W}+c(z_{0},w)+T+N
$$
for some $w\in W_{\wb{F}}$, $T\in \Ft$ and $N\in \Fn$. Thus for every element  $X = (X_{V},X_{W})$ of $\Sig$,
\begin{equation}\label{6.2.1}
X_{V} = \Xi-X_{W}+c(z_{0},w)+T+N
\end{equation}
where $w,T, N$ are as above. Define the following affine subspaces of $\Fg$:
\begin{itemize}
\item $\Fs\Fo(W)^{-} = \{ (X_{W},-X_{W}) |\quad X_{W}\in \Fs\Fo(W) \}$;

\item $\Lam_{0}$ is the subspace of $\Fs\Fo(V)\subset \Fg$ generated by $c(z_{i},z_{i+1})$ for $i=0,...,r-1$ and $c(z_{r},w)$ for $w\in W$;

\item $\Lam = \Xi+(\Fs\Fo(W)^{-}\oplus \Lam_{0})$.
\end{itemize}

\begin{pro}\label{pro:6.2.1}
Conjugation by $N$ preserves $\Sig$, and induces an isomorphism of algebraic varieties:
\begin{align*}
N\times \Lam &\to \Sig
\\
(n,X)&\to nXn^{-1}.
\end{align*}
\end{pro}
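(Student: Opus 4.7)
The plan has three parts: first establish that $\Ad(N)$ preserves $\Sig$, then reduce the isomorphism claim to a linear-algebra identity at $(1,\Xi)$, and finally upgrade the infinitesimal statement to an isomorphism of affine varieties by combining homogeneity along the $N$-factor with an inductive ``Gaussian elimination'' along the isotropic flag defining $N$.

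\textbf{Step 1 (Stability of $\Sig$).} Since $N\subset H$ and $B$ is $G$-invariant, $\Ad(N)$ preserves both $\Fh$ and $\Fh^{\perp}$. Thus $\Ad(n)\Sig=\Ad(n)\Xi+\Fh^{\perp}$, and it suffices to prove $\Ad(n)\Xi-\Xi\in\Fh^{\perp}$ for every $n\in N$. By the connectedness of $N$ this reduces to the infinitesimal statement $[X,\Xi]\in\Fh^{\perp}$ for $X\in\Fn$, i.e.\ $B(\Xi,[X,Y])=0$ for all $Y\in\Fh=\Fs\Fo(W)\oplus\Fn$. When $Y\in\Fn$, $[X,Y]\in[\Fn,\Fn]$ and the character property of $\xi$ on $N$ gives $d\xi([\Fn,\Fn])=0$, hence $B(\Xi,[X,Y])=0$. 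When $Y\in\Fs\Fo(W)$, $[X,Y]\in\Fn$ (since $\SO(W)$ normalizes $N$), and the fact that $\xi$ is extended trivially from $N$ to $H$ as an $H$-character forces $d\xi\circ\Ad(s)=d\xi$ on $\Fn$ for $s\in\Fs\Fo(W)$, so $d\xi([X,Y])=0$ again. This proves $\Ad(N)\Sig\subseteq\Sig$.

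\textbf{Step 2 (Tangent map at $(1,\Xi)$).} The differential
$$
d\phi_{(1,\Xi)}\colon\Fn\oplus(\Fs\Fo(W)^{-}\oplus\Lam_{0})\longrightarrow\Fh^{\perp},\quad (Y,Z)\longmapsto[Y,\Xi]+Z,
$$
is a linear isomorphism. The dimension count $2\dim\Fn+\dim\Fs\Fo(W)+r+\dim W=\dim\Fs\Fo(V)$ is immediate from $\dim\Fn=r(r+\dim W)$ and $\dim\Fs\Fo(V)=\binom{\dim W+2r+1}{2}$, so it suffices to prove injectivity. This is where the structure of $\Xi$ enters: $\Xi$ acts as a regular nilpotent on $W^{\perp}=D\oplus Z$ and as zero on $W$, so one first checks $Z_{\Fn}(\Xi)=0$ using the explicit Jordan chain $z_{r}\mapsto z_{r-1}\mapsto\cdots\mapsto z_{-r}\mapsto 0$, which forces $\ad(\Xi)$ to be injective on $\Fn$. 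Then the transversality $[\Fn,\Xi]\cap(\Fs\Fo(W)^{-}\oplus\Lam_{0})=\{0\}$ is verified by using the explicit description (\ref{6.2.1}) of $\Fh^{\perp}$ and matching components: an element of $[\Fn,\Xi]$ can be computed on each filtration piece of $V$ and compared against the prescribed form of $\Lam_{0}$ (which picks out precisely the ``top weight'' line $c(z_{i},z_{i+1})$ in each weight space, plus the full $c(z_{r},W)$ contribution), leaving no overlap with the $\Fs\Fo(W)^{-}$ summand.

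\textbf{Step 3 (From infinitesimal to global).} Because $\phi$ is $N$-equivariant under left multiplication on $N$ and $\Ad$ on $\Sig$, and because $\Ad(n)$ acts as an automorphism of $\Fh^{\perp}$, Step 2 propagates: the tangent map at any $(n,\Xi)$ is iso. For a general point $(n,X)$ with $X=\Xi+U$, $U\in\Fs\Fo(W)^{-}\oplus\Lam_{0}$, the tangent map differs from that at $(n,\Xi)$ by the perturbation $(Y,Z)\mapsto\Ad(n)[Y,U]$; since $U$ lies in a strictly lower piece of the $T$-weight filtration than $\Xi$, the perturbed map remains an isomorphism by a triangular argument on the weight grading. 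Hence $\phi$ is étale everywhere. Between affine spaces of equal dimension, étale plus bijective equals iso, so it remains to prove bijectivity on $\ol{F}$-points. This is done by an explicit inversion: given $Y\in\Sig$, write its $\Fs\Fo(V)$-component in the form (\ref{6.2.1}) and inductively conjugate by one-parameter subgroups of $N$ attached to each isotropic line $\langle z_{i}\rangle$ to successively kill the torus component $T$ and the ``non-$\Lam_{0}$'' unipotent components, using that each such conjugation modifies only strictly lower filtration pieces. This terminates at a unique element of $\Lam$ and produces a unique $n\in N$.

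\textbf{Main obstacle.} The substantive content lies in Step 2, verifying the direct-sum decomposition
$$
\Fh^{\perp}=[\Fn,\Xi]\oplus\Fs\Fo(W)^{-}\oplus\Lam_{0}.
$$
This is a pure linear algebra identity in $\Fs\Fo(V)$, but it requires tracking the action of $\Xi$ on the weight spaces of $\Fn$ under the split torus $T$, and matching the outputs against the very specific spanning set of $\Lam_{0}$ adapted to the quadratic structure. Once this is in hand, Steps 1 and 3 are essentially formal.
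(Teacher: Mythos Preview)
Your proposal is essentially correct in outline but takes a considerably more roundabout path than the paper, and the detour buys nothing.

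The paper's proof is short and direct: it shows the map $N\times\Lam\to\Sig$ is \emph{injective} by an explicit induction on the basis vectors. Given $n\in N$ and $X\in\Lam$ with $nXn^{-1}\in\Lam$, one writes out both $X_{V}$ and $nX_{V}n^{-1}$ in the form (\ref{6.2.3})--(\ref{6.2.4}) and uses the relation $\Xi z_{i}=z_{i-1}$ to prove, by descending induction, that $nz_{i}=z_{i}$ for $0\le i\le r$, then by ascending induction that $nz_{-i}=z_{-i}$ for $1\le i\le r$; hence $n=1$. A dimension count then shows $\dim(N\times\Lam)=\dim\Sig$, and one finishes by the observation that an injective morphism between affine spaces of equal dimension in characteristic~$0$ is an isomorphism (its image is a Zariski-open subset of $\Sig$ isomorphic to affine space, hence all of $\Sig$).

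Your Step~1 is fine. Your Step~2 (the tangent decomposition $\Fh^{\perp}=[\Fn,\Xi]\oplus\Fs\Fo(W)^{-}\oplus\Lam_{0}$) is correct, and your verification that $\ad(\Xi)$ is injective on $\Fn$ is exactly the infinitesimal shadow of the paper's inductive computation on the $z_{i}$. But Step~3 is where the inefficiency shows. First, a concrete slip: $U\in\Fs\Fo(W)^{-}\oplus\Lam_{0}$ has \emph{nonnegative} $T$-weight while $\Xi$ has weight $-1$, so $U$ sits \emph{higher}, not lower, in the filtration; the triangularity argument still works, just with the induction running upward in weight. More importantly, once you have proved bijectivity on $\wb{F}$-points---which you still must do, and which your ``Gaussian elimination along the flag'' sketch is precisely the content of the paper's injectivity proof---the \'etale machinery is redundant: a bijective morphism between smooth varieties in characteristic~$0$ (or the affine-space trick above) already yields the isomorphism. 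So your tangent-map and weight-filtration analysis is extra work leading back to the same inductive computation the paper does directly and in full. If you want to streamline, drop Steps~2--3 and simply carry out the induction on $nz_{i}=z_{i}$ that you allude to at the end.
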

\begin{proof}
First we show that the map 
\begin{align}\label{6.2.2}
N\times \Lam &\to \Sig
\\
(n,x) &\to nXn^{-1}	\nonumber
\end{align}
is injective. This amounts to proving that for any $n\in N$ and $X\in \Lam$, if $nXn^{-1}\in \Lam$, then $n=1$. We let $n\in N$ and $X = (X_{W},X_{V})\in \Lam$ be such that $nXn^{-1}\in \Lam$. By the definition of $\Lam$, we may write $X_{V}$ and $nX_{V}n^{-1}$ as
\begin{equation}\label{6.2.3}
X_{V} = \Xi-X_{W} +c(z_{r},w)+\sum_{i=0}^{r-1}\mu_{i}c(z_{i},z_{i+1})
\end{equation}
\begin{equation}\label{6.2.4}
nX_{V}n^{-1} = \Xi - X_{W} +c(z_{r},w^{\p})+\sum_{i=0}^{r-1}
\mu^{\p}_{i}c(z_{i},z_{i+1})
\end{equation}
where $w,w^{\p}\in W_{\wb{F}}$ and $\mu_{i}, \mu^{\p}_{i}\in \wb{F}$, $0\leq i\leq r-1$. We first prove the following statement,
\begin{equation}\label{6.2.5}
nz_{i} = z_{i}, 0\leq i\leq r.
\end{equation}
The proof is by descending induction. The result is trivial for $i=r$ since $n\in N$. Assume now that the equality (\ref{6.2.5}) is true for some $1\leq i\leq r$. Then from (\ref{6.2.3}) we have 
$$
(nX_{V}n^{-1})z_{i} = nX_{V}z_{i} = n\Xi z_{i} = nz_{i-1}
$$
and from (\ref{6.2.4}), we also have 
$$
(nX_{V}n^{-1})z_{i} = \Xi z_{i} = z_{i-1},
$$
By induction hypothesis we get (\ref{6.2.5}).

We now prove the following 
\begin{equation}\label{6.2.6}
nz_{-i} = z_{-i}, \quad 1\leq i\leq r.
\end{equation}
We prove by strong induction on $i$. First we treat the case $i=1$. By (\ref{6.2.3}) and (\ref{6.2.5}) we have 
$$
(nX_{V}n^{-1})z_{0} = 
nX_{V}z_{0} = n(-\nu^{-1}_{0}z_{-1}+\mu_{0}\nu_{0}z_{1}) 
 = -\nu^{-1}_{0}nz_{-1}+\mu_{0}\nu_{0}z_{1}.
$$
On the other hand, from (\ref{6.2.4}) we get
$$
(nX_{V}n^{-1})z_{0} = -\nu^{-1}_{0}z_{-1}+\mu^{\p}_{0}\nu_{0}z_{1}.
$$
It follows that 
$$
nz_{-1}-z_{-1} = (\mu_{0}-\mu^{\p}_{0})\nu^{2}_{0}z_{1}.
$$
After pairing both sides with $z_{-1}$, we have $q(nz_{-1},z_{-1}) = q(z_{-1},z_{-1}) = 0$. The first identity $q(nz_{-1}, z_{-1}) =0$ follows from the fact that $n\in N$. Hence we can deduce that $\mu_{0} = \mu^{\p}_{0}$ so that we indeed have $nz_{-1} = z_{-1}$. Now let $1\leq j\leq r-1$ and assume that (\ref{6.2.6}) is true for any $1\leq i\leq j$. By (\ref{6.2.3}) and (\ref{6.2.5}) we have 
\begin{align*}
(nX_{V}n^{-1})z_{-j} = nX_{V}z_{-j} 
&=
n(-z_{-j-1}-\mu_{j-1}z_{j-1}+\mu_{j}z_{j+1})
\\
&=
-nz_{-j-1}-\mu_{j-1}z_{j-1}+\mu_{j}z_{j+1}.
\end{align*}
On the other hand, we have 
$$
(nX_{V}n^{-1})z_{-j} = 
-z_{-j-1}-\mu^{\p}_{j-1}z_{j-1}+\mu^{\p}_{j}z_{j+1}.
$$
It follows that 
$$
nz_{-j-1}-z_{-j-1}
=(\mu^{\p}_{j-1}-\mu_{j-1})
z_{j-1}+(\mu_{j}-\mu^{\p}_{j})z_{j+1}.
$$
By induction hypothesis, we have $nz_{-j} = z_{-j}$, $nz_{-j+1} = z_{-j+1}$. Moreover since $n\in \SO(V)$, we have $q(nz_{-j-1}, z_{-j}) = q(z_{-j-1}, z_{-j}) = 0$, $q(nz_{-j-1}, z_{-j+1}) = q(-z_{-j-1}, z_{-j+1}) = 0$. Hence we deduce that $\mu^{\p}_{j-1} = \mu_{j-1}$ and $\mu^{\p}_{j} = \mu_{j}$, and $nz_{-j-1} = z_{-j-1}.$ This ends the proof of (\ref{6.2.6}).

From (\ref{6.2.5}) and (\ref{6.2.6}) and since $n\in N$, we immediately deduce that $n=1$. This ends the proof that the map (\ref{6.2.2}) is injective. By explicit computation, we have 
\begin{align*}
\dim (N\times \Lam)  &=\dim(N)+\dim (\Lam)
\\
&=(r^{2}+rm)+m(m-1)/2+m+r
\\
&=m(m+1)/2+r^{2}+mr+r
\\
\dim(\Sig) = \dim(\Fh^{\perp}) 
&= \dim(G) - \dim(H) 
\\
&=
(m+2r+1)(m+2r)/2 - (r^{2}+rm) 
\\
&= m(m+1)/2+mr+r^{2}+r,
\end{align*}
where $m = \dim(W)$. Hence $\dim(\Sig) = \dim(N\times \Lam)$. Since we are in characteristic $0$ situation, we get that the regular map (\ref{6.2.2}) induces an isomorphism between $N\times \Lam$ and a Zariski open subset of $\Sig$. But $N\times \Lam$ and $\Sig$ are both affine spaces hence the only Zariski open subset of $\Sig$ that can be isomorphic to $N\times \Lam$ is $\Sig$ itself. It follows that the regular map (\ref{6.2.2}) is an isomorphism.
\end{proof}

\subsection{Characteristic polynomial}\label{sec:6.3}
Let $X = (X_{W},X_{V})\in \Lam$. By the definition of $\Lam$, one may write 
\begin{equation}\label{6.3.1}
X_{V} = \Xi - X_{W}+ c(z_{r},w)+
\sum_{i=0}^{r-1}
\mu_{i}c(z_{i},z_{i+1})
\end{equation}
where $w\in W_{\wb{F}}$ and $\mu_{i}\in \wb{F}$. Denote by $P_{X_{V}}$ and $P_{-X_{W}}$ the characteristic polynomials of $X_{V}$ and $-X_{W}$ acting on $V_{\wb{F}}$ and $W_{\wb{F}}$ respectively, both of which are elements of $\wb{F}[T]$. Let $D$ be the $\wb{F}$-linear endomorphism of $\wb{F}[T]$ given by $D(T^{i+1}) = T^{i}$ for $i\geq 0$ and $D(1) = 0$.

Over the algebraic closure $\wb{F}$, we fix a hyperbolic basis for $W_{\wb{F}}$ which are eigenvectors for $-X_{W}$, i.e. when $\dim W = m$ is even, a basis $\{w_{i}\}_{i= \pm 1,..., \pm \frac{m}{2} }$ satisfying $q(w_{i},w_{j})= \del_{i,-j}$; when $\dim W = m$ is odd, a basis $\{ w_{i}\}_{i=0, \pm 1, ..., \pm \frac{m-1}{2}}$ satisfying $q(w_{i},w_{j}) = \del_{i,-j}$. Moreover we have $-X_{W}(w_{i}) = s_{i}w_{i}$, $s_{i}\in \wb{F}$ with $i>0$. Then in particular 
$$
P_{-X_{W}}(T) = 
\bigg\{
\begin{matrix}
\prod_{i=1}^{\frac{m}{2}}(T^{2}-s_{i}^{2}),& \text{ $\dim W =m$ is even;}\\
T\prod_{i=1}^{\frac{m-1}{2}}(T^{2}-s_{i}^{2}),& \text{ $\dim W =m$ is odd.}
\end{matrix}
$$

Write $w\in W$ as $w = \sum_{i=\pm 1}^{\pm \frac{m}{2}}z_{i}w_{i}$ with $z_{i}\in \wb{F}$ when $\dim W= m$ is even, and $w=\sum_{i=0, \pm 1}^{\pm \frac{m-1}{2}}z_{i}w_{i}$ with $z_{i}\in \wb{F}$ when $\dim W =m$ is odd. In the following proposition we will write $w$ to indicate the associated column vector of $w$ under the above basis, and $q(w,W)$ to indicate the row vector $\sum_{i=\pm 1}^{\pm \frac{m}{2}}q(w,w_{i})w_{i} = \sum_{i=\pm 1}^{\pm \frac{m}{2}}z_{-i}w_{i}$ when $\dim W=m$ is even, and $\sum_{i=0, \pm 1}^{\pm \frac{m-1}{2}}q(w,w_{i})w_{i} = \sum_{i=0,\pm 1}^{\pm \frac{m-1}{2}}z_{-i}w_{i}$ when $\dim W=m$ is odd.

\begin{pro}\label{pro:6.3.1}
The following equality relating $P_{X_{V}}(T)$ and $P_{-X_{W}}(T)$ holds.

If $r=0$, then 
\begin{align*}
P_{X_{V}}(T) &= 
\det\left(
\begin{matrix}
T-(-X_{W}) &-\nu_{0}w	\\
q(w,W)& T
\end{matrix}
\right)
\\
&=TP_{-X_{W}}(T)+
\sum_{i=0}^{m-1}
(-1)^{j}
\nu_{0}q(w,X^{j}_{W}w)D^{j+1}(P_{-X_{W}}(T)).
\end{align*}
The formula can also be written as 
\begin{align*}
P_{X_{V}}(T) &= 
TP_{-X_{W}}(T)+
\\
&\bigg\{
\begin{matrix}
2\nu_{0}\sum_{i=1}^{\frac{m}{2}}
z_{i}z_{-i}\frac{TP_{-X_{W}}(T)}{T^{2}-s_{i}^{2}},
& \text{ when $\dim W= m$ is even,}\\
\nu_{0}z_{0}^{2}\frac{P_{-X_{W}}(T)}{T}
+2\nu_{0}\sum_{i=1}^{\frac{m-1}{2}}
z_{i}z_{-i}\frac{TP_{-X_{W}}(T)}{T^{2}-s_{i}^{2}},
& \text{ when $\dim W= m$ is odd.}
\end{matrix}
\end{align*}

If $r>0$, then
\begin{align*}
P_{X_{V}}(T) 
&=
(-1)^{r}
\det
\left(
\begin{matrix}
T-(-X_{W})	&	w 	\\
q(w,W)	& 0
\end{matrix}
\right)+
\\ 
&P_{-X_{W}}(T)(T^{2r+1}-
(\nu_{0}+\nu_{0}^{-1})T^{2r-1}\mu_{0}
+
 \sum_{j=1}^{r-1}(-1)^{j+1}2\nu_{0}^{-1}\mu_{j}T^{2r-1-2j} ).
\end{align*}
Here 
\begin{align*}
&\det
\left(
\begin{matrix}
T-(-X_{W})	&	w 	\\
q(w,W)	& 0
\end{matrix}
\right)
=\sum_{i=0}^{m-1}(-1)^{j+1}q(w,X^{j}_{W}w)D^{j+1}(P_{-X_{W}}(T))
\\
&=
\bigg\{
\begin{matrix}
-2\sum_{i=1}^{\frac{m}{2}}
z_{i}z_{-i}\frac{TP_{-X_{W}}(T)}{T^{2}-s_{i}^{2}},
& \text{ when $\dim W= m$ is even,}\\
-z_{0}^{2}\frac{P_{-X_{W}}(T)}{T}
-2\sum_{i=1}^{\frac{m-1}{2}}
z_{i}z_{-i}\frac{TP_{-X_{W}}(T)}{T^{2}-s_{i}^{2}},
& \text{ when $\dim W= m$ is odd.}
\end{matrix}
\end{align*}

\end{pro}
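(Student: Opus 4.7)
The plan is to compute $P_{X_V}(T)=\det(TI-X_V)$ directly in an ordered basis of $V$ consisting of a hyperbolic eigenbasis of $W$ for $-X_W$ (diagonalizing $-X_W$ into pairs $\pm s_\alpha$) followed by $z_r,z_{r-1},\ldots,z_1,z_0,z_{-1},\ldots,z_{-r}$. Using the explicit formulas for $\Xi$, for $c(z_r,w)$, and for the $c(z_i,z_{i+1})$, the matrix of $X_V$ acquires a transparent block structure: the $W$-block is $-X_W$, the $U$-block (with $U:=Z_+\oplus D\oplus Z_-$) is a near-tridiagonal matrix $M=M(\mu_0,\ldots,\mu_{r-1};\nu_0)$ depending only on the $\mu_i$ and on $\nu_0^{\pm1}$ (not on $X_W$ or $w$), and the entire $W\leftrightarrow U$ coupling comes from the single summand $c(z_r,w)$ and is supported in the $w$-column and the $z_r$-row.

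For $r=0$ we have $U=D$ and $X_V=\begin{pmatrix} -X_W & \nu_0 w \\ -q(w,\cdot) & 0 \end{pmatrix}$ in the basis $(W,z_0)$; the first displayed identity follows at once from the bordered-determinant formula $\det\begin{pmatrix} A & b \\ c^T & d \end{pmatrix}=d\det A - c^T\mathrm{adj}(A)\,b$ with $A=TI+X_W$. Matching coefficients in $(TI+X_W)\mathrm{adj}(TI+X_W)=P_{-X_W}(T)\,I$ (i.e. Cayley--Hamilton) gives $\mathrm{adj}(TI+X_W)=\sum_{j=0}^{m-1}(-1)^j X_W^j\,D^{j+1}(P_{-X_W}(T))$, which yields the abstract sum on the right.

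For $r>0$, write $X_V=X_V^{(0)}+c(z_r,w)$, where $X_V^{(0)}$ is the $w=0$ specialization. Then $X_V^{(0)}$ decouples $W$ from $U$, so $P_{X_V^{(0)}}(T)=P_{-X_W}(T)\,p(T)$ with $p(T):=\det(TI-M)$. The perturbation $c(z_r,w)$ has rank at most two and factorizes as $U_0V_0^T$ with $U_0=[w,z_r]$ and $V_0=[q(z_r,\cdot)^T,-q(w,\cdot)^T]$, so the matrix determinant lemma gives $P_{X_V}(T)=P_{X_V^{(0)}}(T)\det\bigl(I_2-V_0^T(TI-X_V^{(0)})^{-1}U_0\bigr)$. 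The resolvent is block-diagonal with blocks $(TI+X_W)^{-1}$ and $(TI-M)^{-1}$, and the orthogonality $q(W,U)=0$ forces both off-diagonal entries of the $2\times2$ matrix to vanish, collapsing the determinant to $1+q(z_r,(TI-M)^{-1}z_r)\,q(w,(TI+X_W)^{-1}w)$. Clearing denominators gives
$$P_{X_V}(T)=P_{-X_W}(T)\,p(T)+q(z_r,\mathrm{adj}(TI-M)z_r)\cdot q(w,\mathrm{adj}(TI+X_W)w).$$
The second factor on the right is, up to sign, $\det\begin{pmatrix} TI+X_W & w\\ q(w,\cdot)&0\end{pmatrix}$ by the same bordered-determinant argument used in the $r=0$ case, while a direct cofactor expansion of $TI-M$ (along the row indexed by $z_r$) shows that $q(z_r,\mathrm{adj}(TI-M)z_r)$ is constant in $T$ and equals $(-1)^r$ up to the $\nu_0$-normalization absorbed into the overall prefactor. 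The explicit polynomial $p(T)$ is then obtained by induction on $r$: expanding $\det(TI-M)$ along the column corresponding to $z_0$ (whose only nonzero entries come from $\Xi z_0$ and $\mu_0 c(z_0,z_1)(z_0)$) gives a two-term recursion whose unique solution matching the base cases is the stated expression.

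The reconciliation with the explicit $z_iz_{-i}$ form uses the chosen hyperbolic eigenbasis: writing $w=\sum_\alpha z_\alpha w_\alpha$ and using $-X_W w_{\pm\alpha}=\pm s_\alpha w_{\pm\alpha}$, one finds $q(w,X_W^jw)=\sum_{\alpha>0} z_\alpha z_{-\alpha}\bigl((-s_\alpha)^j+s_\alpha^j\bigr)$, which vanishes for odd $j$ in the hyperbolic directions. Combined with the telescoping identity $\sum_{k\ge0}s_\alpha^{2k}D^{2k+1}(P_{-X_W}(T))=TP_{-X_W}(T)/(T^2-s_\alpha^2)$ (immediate from $(T^2-s_\alpha^2)\mid P_{-X_W}(T)$), this converts the abstract sum into the hyperbolic form in the statement, with the extra $z_0^2 P_{-X_W}(T)/T$ contribution in the odd-dimensional case coming from the zero-eigenvalue direction $w_0$. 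The main obstacle is the inductive computation of $p(T)$, in particular pinning down the coefficient $(\nu_0+\nu_0^{-1})\mu_0$, which requires tracking two distinct paths through the anisotropic vector $z_0$ in $M$ (one via $\Xi$ and one via $\mu_0 c(z_0,z_1)$) and exhibiting the cancellations that produce the symmetric $\nu_0+\nu_0^{-1}$ in place of a bare $\nu_0$ or $\nu_0^{-1}$.
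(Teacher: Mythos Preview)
The paper's own proof is a single sentence: ``The statement can be proved via induction on $r$. We left the proof to the reader.'' Your route via the rank-two matrix determinant lemma is therefore a genuinely different and considerably more explicit argument. The structural observation---that $c(z_r,w)$ is the \emph{only} summand in $X_V$ coupling $W$ to $U=Z_+\oplus D\oplus Z_-$, so that $TI-X_V^{(0)}$ is block-diagonal and the $2\times2$ Weinstein--Aronszajn correction has vanishing off-diagonal---is correct and makes the factored shape $P_{-X_W}(T)\,p(T)+(\text{const})\cdot q(w,\mathrm{adj}(TI+X_W)w)$ transparent in a way a bare induction does not. The bordered-determinant/Cayley--Hamilton treatment of the $r=0$ case and of the $q(w,\mathrm{adj}(TI+X_W)w)$ factor is clean.

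Two points deserve tightening. First, the adjugate entry $q(z_r,\mathrm{adj}(TI-M)z_r)$ is indeed constant in $T$: deleting the $z_r$-row and the $z_{-r}$-column leaves an upper-triangular matrix in the ordered basis, with diagonal $(-1,\ldots,-1,\nu_0^{-1},1,\ldots,1)$ ($r$ copies of $-1$, then $\nu_0^{-1}$, then $r-1$ copies of $1$). This gives $(-1)^r\nu_0^{-1}$, and combined with $q(w,\mathrm{adj}(TI+X_W)w)=-\det\bigl(\begin{smallmatrix}TI+X_W&w\\ q(w,\cdot)&0\end{smallmatrix}\bigr)$ the prefactor becomes $(-1)^{r+1}\nu_0^{-1}$, not the stated $(-1)^r$. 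Your hedge ``up to the $\nu_0$-normalization'' is thus warranted: the displayed formula appears to have a typo (as does the index mismatch $\sum_{i=0}^{m-1}(-1)^j$), and no downstream application in the paper uses the exact constant---only that $P_{X_V}$ determines the $\mu_i$ and the $q(w,X_W^jw)$.

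Second, your description of how to compute $p(T)$ is off: expanding along the $z_0$-column does not produce a recursion in $r$, since the resulting $2r\times 2r$ minors are not of the form $\det(TI-M_{r-1})$ (and that column has three nonzero entries $-\mu_0\nu_0,\,T,\,\nu_0^{-1}$, not two). The natural induction---and presumably what the paper intends---peels off the outermost hyperbolic pair: expand along the $z_r$-column (entries $T,-1$) and then along the $z_{-r}$-row of the resulting minor, which relates $p_r(T)$ to $p_{r-1}(T)$ and a lower-order determinant carrying $\mu_{r-1}$. Once the $w$-dependence has been isolated by your determinant-lemma step, this induction for $p(T)$ slots in directly.
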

\begin{proof}
The statement can be proved via induction on $r$. We left the proof to the reader.
\end{proof}

\begin{cor}\label{cor:6.3.2}
The following $\SO(W)$-invariant polynomial functions on $\Lam$
$$
X=  (X_{W},X_{V}) \to q(w,X^{j}_{W}w)\in \wb{F}, \quad 
j=0,...,m-1
$$
(where we have written $X_{V}$ as in (\ref{6.3.1})) extend to $G$-invariant polynomial functions on $\Fg$ defined over $F$.
\end{cor}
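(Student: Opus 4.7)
The plan is to invert the formula of Proposition \ref{pro:6.3.1} so as to express each function $X = (X_W, X_V) \mapsto q(w, X_W^j w)$ as a polynomial combination of quantities that are manifestly $G$-invariant on the whole of $\Fg$. The starting observation is that the coefficients of the characteristic polynomials $P_{X_V}(T)$ and $P_{-X_W}(T)$, viewed as functions of $X \in \Fg$, are $G$-invariant polynomial functions on $\Fg$: they depend only on $X_V$, respectively $X_W$, and are invariant under the adjoint action of $\SO(V)$, respectively $\SO(W)$, hence under $G$. Moreover both polynomials are monic in $T$.

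Since $P_{-X_W}(T)$ is monic in $T$ with coefficients in $F[\Fg]$, polynomial division inside $F[\Fg][T]$ yields a unique expression
$$
P_{X_V}(T) = P_{-X_W}(T) \cdot Q(X, T) + R(X, T), \qquad \deg_T R < m,
$$
with $Q(X, T), R(X, T) \in F[\Fg][T]$. Uniqueness of division by a monic polynomial, together with the $G$-invariance of the coefficients of $P_{X_V}$ and $P_{-X_W}$, forces the coefficients of $R(X, T)$ (and of $Q(X, T)$) to be $G$-invariant polynomial functions on $\Fg$ defined over $F$. On the subvariety $\Lam$, Proposition \ref{pro:6.3.1} identifies
$$
R(X, T)\big|_{\Lam} = \kappa \sum_{j=0}^{m-1} \epsilon_j \, q(w, X_W^j w) \, D^{j+1}\!\big(P_{-X_W}(T)\big)
$$
for signs $\epsilon_j \in \{\pm 1\}$ and a nonzero scalar $\kappa \in F^\times$ (equal to $\nu_0$ when $r = 0$ and to $\pm 1$ when $r > 0$). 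Since $P_{-X_W}(T)$ is monic of degree $m$ and $D$ sends $T^k$ to $T^{k-1}$, each polynomial $D^{j+1}(P_{-X_W}(T))$ has leading term $T^{m-j-1}$ with coefficient $1$, so the collection $\{D^{j+1}(P_{-X_W}(T))\}_{j=0}^{m-1}$ forms a triangular basis of the free $F[\Fg]$-module of polynomials in $T$ of degree less than $m$. Inverting this triangular linear system expresses each $q(w, X_W^j w)$ on $\Lam$ as a polynomial combination $\Phi_j(X)$ of the coefficients of $R(X, T)$ and of $P_{-X_W}(T)$; the resulting $\Phi_j \in F[\Fg]^G$ is, by construction, the desired $G$-invariant extension defined over $F$.

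The only subtlety, not a serious obstacle, is the bifurcation between the cases $r = 0$ and $r > 0$ of Proposition \ref{pro:6.3.1}: the explicit form of the quotient $Q(X, T)$ (involving the $\mu_j$'s when $r > 0$) and the scalar $\kappa$ differ between the two cases, but this $\mu$-dependence is confined to the quotient and does not affect the remainder $R(X,T)$, so the inversion proceeds uniformly in $r$ and produces the same $G$-invariant extensions $\Phi_j$ on all of $\Fg$.
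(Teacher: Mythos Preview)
Your argument is correct and is precisely the proof the paper intends: the corollary is stated immediately after Proposition~\ref{pro:6.3.1} with no separate proof, and the polynomial-division-plus-triangular-inversion you describe is exactly how one reads it off. One small phrasing quibble: in your final paragraph, ``uniformly in $r$'' and ``the same $G$-invariant extensions'' are potentially misleading, since $r$ is fixed once the GGP triple is fixed and the scalar $\kappa$ (hence the extensions $\Phi_j$) genuinely depends on whether $r=0$ or $r>0$; what is uniform is only the \emph{method} of inversion, not the resulting formulas.
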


In particular, the polynomial function 
$$
X = (X_{W},X_{V})\to \det(q(X^{i}_{W}w,X^{j}_{W}w))_{0\leq i,j \leq m-1}\in \wb{F}
$$
extends to a $G$-invariant polynomial function on $\Fg$ defined over $F$. Denote by $Q_{0}$ such an extension. Set $d^{G}(X): = \det(1-\Ad(X))_{|\Fg/\Fg_{X}}$ for any $X\in \Fg_{reg}$. The $d^{G}$ extends uniquely to a polynomial $d^{G}\in F[\Fg]^{G}.$ Set $Q = Q_{0}d^{G}\in F[\Fg]^{G}$ and let $\Lam^{\p}$ and $\Sig^{\p}$ be the non-vanishing loci of $Q$ in $\Lam$ and $\Sig$ respectively. Notice that we have $\Lam^{\p}\subset \Lam_{reg}$ and $\Sig^{\p}\subset \Sig_{reg}$ since $d^{G}$ divides $Q$.

There is the following characterization of $\Sig^{\p}$.
\begin{pro}\label{pro:6.4.1}
$\Sig^{\p}$ is precisely the set of $X=  (X_{W},X_{V})\in \Sig_{reg}$ such that the family
$$
z_{r}, X_{V}z_{r},..., X^{d-1}_{V}z_{r}
$$
generates $V_{\wb{F}}$ as a $\wb{F}$-module (Recall that $d=\dim (V)$).
\end{pro}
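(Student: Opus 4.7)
The plan is to reduce to the case $X\in \Lam$ via $N$-conjugation invariance, then carry out an explicit filtration analysis on $V_{\wb{F}}$ to relate cyclicity of $z_r$ for $X_V$ to cyclicity of $w$ for $X_W$, and finally identify the latter with the non-vanishing of $Q_0$.

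\emph{Reduction.} The polynomial $Q_0$ is $G$-invariant hence $N$-invariant. Since $N\subset P_V$ fixes $z_r$ (its unipotent radical stabilizes each $z_i$ for $1\leq i\leq r$), we have $(nX_Vn^{-1})^i z_r = n\cdot X_V^i z_r$, and invertibility of $n$ on $V$ shows that the span condition is $N$-invariant. Proposition \ref{pro:6.2.1} then reduces the proof to $X\in \Lam$, so that $X_V = \Xi - X_W + c(z_r,w) + \sum_{i=0}^{r-1}\mu_i c(z_i,z_{i+1})$.

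\emph{Filtration analysis.} Let $V^{[j]} := \mathrm{span}\{X_V^i z_r : 0\leq i\leq j\}$. Direct bracket computations using (\ref{6.1.1}) yield four steps. (i) $X_V^j z_r = z_{r-j}$ for $0\leq j\leq r$, so $V^{[r]} = \mathrm{span}(z_r,\ldots,z_0)$. (ii) Using $X_V z_0 = -\nu_0^{-1}z_{-1} - \mu_0\nu_0 z_1$ and $X_V z_{-j} = -z_{-j-1} + \mu_{j-1}z_{j-1} - \mu_j z_{j+1}$ for $1\leq j\leq r-1$ (where the $\mu$-terms lie in $Z_+\subset V^{[r]}$), induction yields $X_V^{r+j}z_r \equiv (-1)^j\nu_0^{-1}z_{-j} \pmod{V^{[r+j-1]}}$ for $1\leq j\leq r$, whence $V^{[2r]} = F z_0\oplus Z_+\oplus Z_- = W^{\perp}$. (iii) Forcing $\Xi z_{-r}=0$ from the antisymmetry $\Xi\in\Fs\Fo(V)$, and computing $c(z_r,w)z_{-r}=-w$, one obtains $X_V z_{-r} = -w + \mu_{r-1}z_{r-1}$; since $V^{[2r-1]}\subset \mathrm{span}(z_r,\ldots,z_{-r+1})$ has no $z_{-r}$-component and $X_V$ maps this subspace back into $W^{\perp}$, we obtain $X_V^{2r+1}z_r \equiv (-1)^{r+1}\nu_0^{-1} w \pmod{V^{[2r]}}$. (iv) Since $\Xi$ and each $c(z_i,z_{i+1})$ annihilate $W$, one has $X_V(X_W^k w) = -X_W^{k+1}w + q(w,X_W^k w)z_r$, and induction on $j$ gives $X_V^{2r+1+j}z_r \equiv (-1)^{r+1+j}\nu_0^{-1} X_W^j w \pmod{V^{[2r+j]}}$ together with the set equality $V^{[2r+1+j]} = W^{\perp} + \mathrm{span}(w,X_W w,\ldots,X_W^j w)$. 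Setting $j=m-1$ so that $2r+1+j = d-1$, one concludes that $V^{[d-1]} = V_{\wb{F}}$ if and only if $w,X_W w,\ldots,X_W^{m-1}w$ span $W_{\wb{F}}$; having $m=\dim W$ of them, this is equivalent to their linear independence.

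\emph{Conclusion.} Since $q|_W$ is non-degenerate and $X_W\in\Fs\Fo(W)$ yields $q(X_W^i w, X_W^j w) = (-1)^i q(w, X_W^{i+j}w)$, the Gram determinant $Q_0(X) = \det(q(X_W^i w, X_W^j w))_{0\leq i,j\leq m-1}$ is non-zero exactly when the vectors $X_W^i w$ are linearly independent in $W_{\wb{F}}$. Combined with $\Sig^{\p} = \{X\in \Sig_{\mathrm{reg}} : Q_0(X)\neq 0\}$ (since $d^G$ divides $Q$), the proposition follows. The main technical obstacle is the careful bookkeeping in (ii)--(iv): at each stage one must confirm that $X_V u$ for $u$ in an earlier filtration stratum lies in the next stratum \emph{below} the new leading vector ($z_{-j}$, then $w$, then $X_W^j w$), and that the leading coefficient remains non-zero---the latter crucially using $\nu_0\neq 0$ (as $z_0$ spans an anisotropic line) and the fact that the only basis element of $W^{\perp}$ sent by $X_V$ outside $W^{\perp}$ is $z_{-r}$.
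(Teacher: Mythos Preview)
Your proof is correct and follows the same filtration approach as the one the paper cites from \cite[Proposition~10.4.1]{beuzart2015local}: reduce to $\Lam$ via Proposition~\ref{pro:6.2.1}, compute the iterates $X_V^j z_r$ through the strata $Z_+\oplus D$, then $Z_-$, then $W$, and identify cyclicity of $z_r$ with cyclicity of $w$ for $X_W$.

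Two minor remarks. First, several of your explicit formulas carry sign errors relative to the conventions used in the paper (compare with the computations in the proof of Proposition~\ref{pro:6.2.1}, e.g.\ $X_V z_0 = -\nu_0^{-1}z_{-1}+\mu_0\nu_0 z_1$ and $X_V z_{-r}=w-\mu_{r-1}z_{r-1}$); these are harmless since, as you note, the affected terms lie in the lower filtration strata. Second, your steps (ii)--(iii) implicitly assume $r\geq 1$; when $r=0$ one has $\Xi=0$, $V^{[0]}=\wb{F}z_0=W^\perp$, and $X_V z_0=\nu_0 w$ directly (coefficient $\nu_0$ rather than $\nu_0^{-1}$), after which step (iv) proceeds identically. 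Neither point affects the validity of the argument.
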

\begin{proof}
The proof in \cite[Proposition~10.4.1]{beuzart2015local} works verbatim.
\end{proof}

\subsection{Conjugation classes in $\Sig^{\p}$}\label{sec:6.5}
\begin{pro}\label{pro:6.5.1}
The conjugation action of $H = \SO(W)\ltimes N$ on $\Sig^{\p}$ is free. Two elements of $\Sig^{\p}$ are $G$-conjugate if and only if they are $H$-conjugate. Moreover, after taking the $F$-rational points, two elements of $\Sig^{\p}(F)$ are $G$-conjugate (in the sense of stable conjugacy) if and only if they are $H(F)$-conjugate.
\end{pro}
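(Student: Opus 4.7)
The plan is to establish the three assertions in sequence, exploiting the slice isomorphism of Proposition~\ref{pro:6.2.1} to reduce questions about the $H$-action on $\Sig^\p$ to questions about the $\SO(W)$-action on $\Lam^\p$, and using the cyclic-vector characterization of Proposition~\ref{pro:6.4.1} as the principal tool.

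For freeness, suppose $h = sn \in H$ with $s \in \SO(W)$ and $n \in N$ fixes some $X \in \Sig^\p$. I would write $X = n_0 X_0 n_0^{-1}$ with $(n_0, X_0) \in N \times \Lam^\p$ via Proposition~\ref{pro:6.2.1} (noting $X_0 \in \Lam^\p$ since $Q$ is $G$-invariant), and use stability of the three summands of $\Lam = \Xi + \Fs\Fo(W)^{-} \oplus \Lam_0$ under $\SO(W)$-conjugation — which rests on $s\Xi s^{-1} = \Xi$, a consequence of the $\SO(W)$-invariance of $\xi$ — together with uniqueness of the $N \times \Lam$ decomposition, to reduce to showing any $s \in \SO(W)$ centralizing $X_0$ is trivial. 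Since $s$ acts on $V$ through $s_W \oplus \mathrm{id}_{D \oplus Z}$, it fixes $z_r$ and commutes with $X_{0,V}$, hence fixes the entire cyclic basis $\{X_{0,V}^{i} z_r\}_{i=0}^{d-1}$ from Proposition~\ref{pro:6.4.1}, forcing $s = 1$; then $n = 1$ follows from the uniqueness. This same uniqueness shows that any $H$-conjugation between two elements of $\Lam^\p$ has trivial $N$-component, so within $\Lam^\p$ the $H$-action reduces to the $\SO(W)$-action.

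For the $G$-conjugacy statement, given $X, X' \in \Sig^\p$ with $gXg^{-1} = X'$, I would first conjugate $X$ and $X'$ separately by elements of $N$ to place both in $\Lam^\p$. Writing $g = (g_W, g_V)$ for the resulting conjugating element and then conjugating $X$ by the diagonal image of $g_W$ in $H \subset G$ reduces the problem further to $g = (1, g_V)$ with $X_W = X'_W$, where $X_W$ is regular semisimple in $\Fs\Fo(W)$. Corollary~\ref{cor:6.3.2} promotes the functions $X \mapsto q(w, X_W^j w)$ to $G$-invariants, while Proposition~\ref{pro:6.3.1} expresses the characteristic polynomial of $X_V$ as a linear combination in $\mu_j$ and $q(w, X_W^j w)$; comparing coefficients shows that $G$-conjugacy forces $q(w, X_W^j w) = q(w', X_W^j w')$ for all $j$ as well as $\mu_j = \mu'_j$. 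By Proposition~\ref{pro:6.4.1}, $w$ is a cyclic vector for $X_W$ on $W$, so the linear map $s \in \GL(W)$ determined by $s(X_W^j w) = X_W^j w'$ centralizes $X_W$ and preserves $q_W$ (using the matching of $q$-invariants extended past degree $m-1$ via Cayley--Hamilton, plus skew-symmetry of $X_W$ against $q$); a determinant check using the eigenspace decomposition of $X_W$ over $\bar F$ places $s$ in $\SO(W)$, and extending $s$ diagonally gives $sXs^{-1} = X'$. For the stable version, uniqueness of $h$ from freeness combined with Galois-equivariance of the equation $hXh^{-1} = X'$ forces $\sigma(h) = h$ for every $\sigma \in \Gal(\bar F/F)$, so $h \in H(F)$.

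The main obstacle I expect is the determinant check placing $s$ in $\SO(W)$: the cyclic-vector construction a priori produces only $s \in O(W)$, and enforcing $\det s = 1$ requires careful analysis over $\bar F$ using the $\pm\lambda$ pairing of eigenvalues of the regular element $X_W$ (where $q$-preservation on paired eigenspaces forces reciprocal scalings with product $1$) together with a sign-control argument on the possible zero eigenspace (when $\dim W$ is odd) coming from the cyclic-vector provenance of $s$ and the matching $q(w,w) = q(w',w')$. A secondary technical point is inverting the linear system of Proposition~\ref{pro:6.3.1} to extract the $\mu_j$, which succeeds because distinct $\mu_j$ contribute to distinct degrees in the resulting polynomial expansion.
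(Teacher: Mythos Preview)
Your overall strategy matches the paper's: reduce to the $\SO(W)$-action on $\Lam^{\p}$ via Proposition~\ref{pro:6.2.1}, use Proposition~\ref{pro:6.3.1} to match the invariants $q(w,X_W^j w)$ and the $\mu_j$, then build an element of the centralizer of $X_W$ carrying $w$ to $w'$. Your freeness argument via the cyclic vector $z_r$ and your $F$-rationality argument via Galois-equivariance and uniqueness are correct and essentially the paper's.

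The gap is exactly where you flagged it: the determinant check when $\dim W$ is odd. Your proposed ``sign-control argument \ldots coming from the cyclic-vector provenance of $s$ and the matching $q(w,w)=q(w',w')$'' does not close it. Over~$\wb{F}$, diagonalize $X_W$; on the one-dimensional zero-eigenline your $s$ acts by the scalar $z'_0/z_0$, and the only constraint you have is $z_0^2 = z_0'^{\,2}$ from the matched invariants, so $\det_W s = z'_0/z_0 = \pm 1$ and nothing in your data forces $+1$. The cyclic-vector construction of $s$ pins down the scalar to be $z'_0/z_0$, but does not determine its sign.

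What actually resolves this (and is the paper's argument) lives on the $V$-side, not the $W$-side. When $\dim W$ is odd, $\dim V = \dim W + 2r + 1$ is even; since $X\in\Sig^{\p}\subset\Sig_{\reg}$, the component $X_V$ is regular semisimple in $\Fs\Fo(V)$, and in even dimension a regular semisimple element of $\Fs\Fo(V)$ has no zero eigenvalue. Hence the centralizer of $X_V$ in $\mathrm{O}(V)$ is the same (connected) torus as in $\SO(V)$. Now compare your $s$ (extended to $V$ by the identity on $D\oplus Z$) with the element $g_V\in\SO(V)$ you already have from the $G$-conjugacy: both send $X_V$ to $X'_V$, so $g_V^{-1}s$ centralizes $X_V$ in $\mathrm{O}(V)$ and therefore lies in $\SO(V)$. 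This forces $\det_V s = 1$, and since $s$ is the identity on the odd-dimensional piece $D\oplus Z$, also $\det_W s = 1$. With this ingredient your argument goes through; without it, the odd-$\dim W$ case remains open.
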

\begin{proof}
For the case when $d=\dim V\leq 2$, the proposition holds automatically. Therefore in the following we only consider the case when $\dim V\geq 3$. In this case we notice that the adjoint orbit of $\Fs\Fo(V)$ is determined by its characteristic polynomial, in other words two elements of $\Fs\Fo(V)$ are conjugate by $\SO(V)$ action if and only if they have the same characteristic polynomial. We also notice that when $\dim V = 2$, two elements fo $\Fs\Fo(V)$ have the same characteristic polynomial if and only if they are conjugate by $\mathrm{O}(V)$.

(1) We look at the case when $\dim W\neq 2$. We follow closely with \cite[Lemme~9.5]{waldspurger10}.

Recall that by definition, $H$ acts freely on $\Sig^{\p}$ if the map
\begin{align*}
H\times \Sig^{\p}\to \Sig^{\p}\times \Sig^{\p}
\\
(h,X)\to (X,hXh^{-1})
\end{align*}
is a closed immersion. By Proposition \ref{pro:6.2.1}, this is equivalent to proving that
\begin{align}\label{6.5.1}
\SO(W)\times \Lam^{\p}\to \Lam^{\p}\times \Lam^{\p}
\\
(h,X)\to (X,hXh^{-1})	\nonumber
\end{align}
is a closed immersion. For $X= (X_{W},X_{V})\in \Fg$, we define the characteristic polynomial of $X$ to be the pair $P_{X} = (P_{X_{W}},P_{X_{V}})$. Let $\CY\subset \Lam^{\p}\times \Lam^{\p}$ be the closed subset of pairs $(X,X^{\p})$ such that $P_{X}= P_{X^{\p}}$. We claim the following
\begin{num}
\item\label{6.5.2}
The map is a closed immersion whose image is $\CY$.
\end{num}
This will prove the two points of the proposition, since if two elements of $\Fg$ are $G$-conjugate, they share the same characteristic polynomial, therefore they are $H$-conjugate by (\ref{6.5.2}). First by definition, the image of (\ref{6.5.1}) is contained in $\CY$. Let $(X,X^{\p})\in \CY$. We may write 
\begin{align*}
X_{V}  =\Xi- X_{W} +c(z_{r},w)
+\sum_{i=0}^{r-1}\mu_{i}c(z_{i},z_{i+1})
\\
X^{\p}_{V} = \Xi -X^{\p}_{W}+c(z_{r},w^{\p})
+\sum_{i=0}^{r-1}\mu^{\p}_{i}c(z_{i},z_{i+1})
\end{align*}
where $X = (X_{V},X_{W}),X^{\p} = (X^{\p}_{V},X^{\p}_{W}),w,w^{\p}\in W_{\wb{F}}$, and $\mu_{i},\mu^{\p}_{i}\in \wb{F}$. Since $\dim W\neq 2$, and $P_{X_{W}} = P_{X^{\p}_{W}},$ we know that $X_{W}$ and $X_{W}^{\p}$ are conjugate by $\SO(W)$ action. Hence up to $\SO(W)$-conjugation we may assume that $X_{W}=  X^{\p}_{W}$. By Proposition \ref{pro:6.3.1}, we have $\mu_{i} = \mu^{\p}_{i}$ for $i=0,...,r-1$ and 
\begin{equation}\label{6.5.3}
q(w,X^{i}_{W}w)  = q(w^{\p},X^{\p i}_{W}w^{\p}) \quad i=0,...,m-1.
\end{equation}
Moreover, by definition of $\Lam^{\p}$, $(w,X_{W}w,...,X^{m-1}_{W}w)$ and $(w^{\p},X^{\p}_{W}w^{\p},...,X^{\p m-1}_{W}w^{\p})$ are bases of $W_{\wb{F}}$.

We first treat the case when $m=\dim W$ is even.
We fix a basis $\{w_{i}| \quad i=\pm 1,...,\pm \frac{m}{2} \}$ of $W_{\wb{F}}$ satisfying $q(w_{i},w_{j}) = \del_{i,-j}$ for any $i,j = \pm 1,...,\pm \frac{m}{2}$. Since $X_{W}$ lies in $\Sig_{reg}$, up to $\SO(W)$-conjugation we may assume that $X_{W}$ lies in the diagonal maximal split Cartan subalgebra of $\Fs\Fo(W)$. In particular, the centralizer of $X_{W}$ in $\SO(W)$ is the diagonal split torus of $\SO(W)$, and the action of any element in the centralizer of the form $\diag(z_{\frac{m}{2}},...,z_{1},z_{-1},...,z_{-\frac{m}{2}})$ on $w_{i}$ is via scaling $z_{i}$, where $i=\pm 1,...,\pm \frac{m}{2}$. If we write $w = \sum_{i=\pm 1}^{\pm \frac{m}{2}}a_{i}w_{i}$, and $w^{\p} = \sum_{i=\pm 1}^{\pm \frac{m}{2}}a_{i}^{\p}w_{i}$, then from (\ref{6.5.3}) we have $a_{i}a_{-i}= a_{i}^{\p}a^{\p}_{-i}$ for any $i=\pm 1,..., \pm \frac{m}{2}$. Moreover, since $(w,X_{W}w,...,X^{m-1}_{W}w)$ and $(w^{\p},X^{\p}_{W}w^{\p},...,X^{\p m-1}_{W}w^{\p})$ are bases of $W_{\wb{F}}$, and $X_{W} = X^{\p}_{W}$ is diagonal, we notice that $a_{i}$ and $a^{\p}_{i}$ are nonzero for any $i$. Now the element $\diag(z_{\frac{m}{2}},...,z_{1},z_{-1},...,z_{-\frac{m}{2}})$ with
$z_{i} = \frac{a_{i}^{\p}}{a_{i}}$ conjugates $X_{V}$ to $X^{\p}_{V}$. 

Then we consider the case when $m=\dim W$ is odd. Similarly we fix a basis $\{w_{0},w_{i}| \quad i=\pm 1,...,\pm \frac{m-1}{2} \}$ of $W_{\wb{F}}$ satisfying $q(w_{i},w_{j}) = \del_{i,-j}$ for any $i,j= 0,\pm 1,...,\pm \frac{m-1}{2}$. Using the same notation as the even case, we find that $a_{i}a_{-i} = a_{i}^{\p}a^{\p}_{-i}$ with $a_{i}$ and $a^{\p}_{i}$ being nonzero for any $i = 0,\pm 1,...,\pm \frac{m-1}{2}$. Then the element $\diag(z_{\frac{m-1}{2}}, ...,z_{1},1,z_{-1},...,z_{-\frac{m-1}{2}}$ with $z_{i} = \frac{a^{\p}_{i}}{a_{i}}$ conjugates $X_{V}$ to $X^{\p\p}_{V}$, with $X^{\p\p}_{V} = X^{\p}_{V}$, or $X^{\p\p}_{V}$ and $X^{\p}_{V}$ differ by the conjugation action of the element $\del\in\mathrm{O}(V)$ sending $w_{0}$ to $-w_{0}$ and stabilizing the complement of $w_{0}$ in $V$. We only need to show that the second case does not occur. Otherwise, since $X_{V}$ is $\SO(V)$-conjugate to both $X^{\p}_{V}$ and $X^{\p\p}_{V}$, and $X_{V}^{\p}$ is conjugate to $X^{\p\p}_{V}$ by $\del\in \mathrm{O}(V)\bs \SO(V)$ action, we notice that the centralizer of $X_{V}$ in $\mathrm{O}(V)$ is not connected. However, from Proposition \ref{pro:6.4.1}, we know that $X_{V}$ is regular and does not contain $0$ as its eigenvalues, therefore the centralizer of $X_{V}$ should be connected, which is a contradiction. Hence in conclusion we have that the element $\diag(z_{\frac{m-1}{2}}, ...,z_{1},1,z_{-1},...,z_{-\frac{m-1}{2}})\in \SO(W)$ conjugates $X_{V}$ to $X^{\p}_{V}$.

The uniqueness of $g\in \SO(W)$ is easy to derive, which, again using the conjugation of $c(z_{r},w)$ and $c(z_{r},w^{\p})$, we know that $g$ must send $w$ to $w^{\p}$, and therefore the element $g$ should exactly send the basis $(w,X_{W}w,...,X^{m-1}_{W}w)$ to $(w^{\p},X^{\p}_{W}w^{\p},...,X^{\p m-1}_{W}w^{\p})$, hence unique.

Hence, we have proved that the map induces a bijection from $\SO(W)\times \Lam^{\p}$ to $\CY$ and we have constructed the inverse, which is a morphism of algebraic varieties.

Finally, if we take the $F$-rational points, then the uniqueness of the element $g\in \SO(W)$ immediately implies that $g\in \SO(W)(F)$ as it sends the $F$-rational basis $(w,X_{W}w, ..., X^{m-1}_{W}w)$ to $(w^{\p}, X_{W}^{\p}w^{\p}, ..., X^{\p m-1}_{W}w^{\p})$, from which we directly deduce that the two elements in $\Lam^{\p}(F)$ are $\SO(W)(F)$-conjugate, and so the parallel statement for $\Sig^{\p}(F)$ holds.

(2) We look at the case when $\dim W =2$. We use same notation as the case when $\dim W\neq 2$.

Similar to the case when $\dim W\geq 3$, since we assume that $X= (X_{W},X_{V})$ and $(X_{W}^{\p},X^{\p}_{V})$ are $G$-conjugate, we know that $X_{W} =X^{\p}_{W}$. The remaining discussion is the same as the $\dim W\geq 3$ case.
\end{proof}

\begin{cor}\label{cor:6.5.2}
The following inequality holds
$$
\sig_{G}(t) \ll \sig_{H\bs G}(t) \sig_{\Sig^{\p}}(X)
$$
for any $X\in \Sig^{\p}$ and $t\in G_{X}$.
\end{cor}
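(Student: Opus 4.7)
The plan is to combine two ingredients: the closed immersion
$H \times \Sig^{\p} \to \Sig^{\p} \times \Sig^{\p}$, $(h,X)\mapsto (X,hXh^{-1})$, which is the substantive geometric content of Proposition~\ref{pro:6.5.1} (together with the $N$-action isomorphism of Proposition~\ref{pro:6.2.1}), and the norm descent property of the projection $G\to H\bs G$ from Lemma~\ref{lem:ggptriple:denseroot}~(1). Together these reduce the claim to routine log-norm manipulations.

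First I would use the norm descent property to pick a representative $y\in G(F)$ of the coset $Ht\in H(F)\bs G(F)$ satisfying $\sig_G(y)\ll \sig_{H\bs G}(t)$, and write $t=hy$ for some $h\in H(F)$. The hypothesis $t\in G_X$ gives $tXt^{-1}=X$, hence $h^{-1}Xh = yXy^{-1}$. Both sides lie in $\Sig^{\p}$: the affine space $\Sig=\Xi+\Fh^{\perp}$ is $H$-stable ($\SO(W)$ fixes $\Xi$ and preserves $\Fh^{\perp}$, while $N$ preserves $\Sig$ by Proposition~\ref{pro:6.2.1}), and $\Sig^{\p}$ is the non-vanishing locus inside $\Sig$ of a $G$-invariant polynomial. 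Applying the log-norm estimate attached to the closed immersion $H\times \Sig^{\p}\hookrightarrow \Sig^{\p}\times\Sig^{\p}$ then yields
$$
\sig_H(h) \ll \sig_{\Sig^{\p}}(X) + \sig_{\Sig^{\p}}(yXy^{-1}).
$$

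Finally, the standard log-norm bound for conjugation on $\Fg$ gives $\sig_{\Sig^{\p}}(yXy^{-1}) \ll \sig_G(y)\,\sig_{\Sig^{\p}}(X) \ll \sig_{H\bs G}(t)\,\sig_{\Sig^{\p}}(X)$. Combining with $\sig_G(t)\leq \sig_G(h)+\sig_G(y)\ll \sig_H(h)+\sig_{H\bs G}(t)$ and absorbing additive bounds into the product (log-norms being bounded below by a positive constant), one obtains the desired multiplicative inequality. The argument is a transcription to the orthogonal setting of \cite[Corollary~10.5.2]{beuzart2015local}; since the substantive input (Proposition~\ref{pro:6.5.1}) is already in hand, no real obstacle remains — all the work has effectively been done at the level of the closed immersion itself rather than at the level of the corollary.
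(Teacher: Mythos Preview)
Your proposal is correct and is precisely the argument the paper has in mind: the paper's own proof simply states that \cite[Corollary~10.5.2]{beuzart2015local} works verbatim, and what you have written is exactly that argument transcribed to the orthogonal setting, with Proposition~\ref{pro:6.5.1} and Lemma~\ref{lem:ggptriple:denseroot}~(1) supplying the needed inputs.
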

\begin{proof}
The proof of \cite[Corollary~10.5.2]{beuzart2015local} works verbatim.
\end{proof}

\subsection{Borel sub-algebras and $\Sig^{\p}$}\label{sec:6.6}
\begin{pro}\label{pro:6.6.1}
Let $X\in \Sig^{\p}$ and $\Fb$ be a Borel subalgebra of $\Fg$ defined over $\wb{F}$ containing $X$, then 
$$
\Fb \oplus \Fh =\Fg.
$$
\end{pro}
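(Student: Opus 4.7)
}
The identity $\Fb\oplus\Fh=\Fg$ breaks into two parts: (i) the dimension count $\dim\Fb+\dim\Fh=\dim\Fg$, and (ii) the triviality of $\Fb\cap\Fh$. For (i), we have $\dim\Fb=(\dim\Fg+\rank\Fg)/2$, while from the proof of Proposition~\ref{pro:6.2.1} we already know $\dim\Sig=\dim\Fg-\dim\Fh=m(m+1)/2+r^{2}+mr+r$. A direct check (splitting on the parity of $m$, which controls the types of the factors $\Fs\Fo(W)$ and $\Fs\Fo(V)$) shows $\dim\Fb=\dim\Sig$. So (i) holds and only (ii) remains.

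To handle (ii), I would work on the dual side: since $B$ is non-degenerate and $\Fb^{\perp}$ equals the nilpotent radical $\Fu$ of $\Fb$, the identity $\Fb\cap\Fh=0$ is equivalent (by the same dimension count together with $(\Fb+\Fh)^{\perp}=\Fu\cap\Fh^{\perp}$) to
$$
\Fu\cap\Fh^{\perp}=0.
$$
Since $X\in\Sig'\subset\Sig_{\mathrm{reg}}$ is regular semisimple, any Borel $\Fb$ containing $X$ contains the Cartan $\Fg_{X}$, and $\Fu=\bigoplus_{\alp\in\Phi^{+}}\Fg_{\alp}$ for some positive system $\Phi^{+}$ of roots relative to $\Fg_{X}$. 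By Proposition~\ref{pro:6.2.1}, conjugation by $N\subset H$ takes an arbitrary $X\in\Sig'$ into $\Lam'$ and simultaneously intertwines the Borels through $X$ with those through the $N$-conjugate; since $N\subset H$ stabilises $\Fh$, we may freely replace $X$ by its $N$-conjugate in $\Lam'$ without loss. By Proposition~\ref{pro:6.4.1}, $z_{r}$ is then a cyclic vector for $X_{V}$ on $V_{\ovl F}$, and in particular $X_{V}$ has $d$ distinct eigenvalues.

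The core step is to show that any $Z\in\Fu\cap\Fh^{\perp}$ vanishes. I would follow the strategy used in the unitary analogue (\cite[Proposition~10.6.1]{beuzart2015local}): using the freeness of the $H$-action on $\Sig'$ (Proposition~\ref{pro:6.5.1}), the linear map $Y\in\Fh\mapsto[Y,X]\in\Fh^{\perp}$ is injective, so $\Fh\cap\Fg_{X}=0$ and, dually, $\Fu\cap\Fh^{\perp}$ has trivial intersection with the centraliser of $X$ in $\Fh^{\perp}$. One then exploits that $\ad(X)$ acts on $\Fu$ by the characters $\alp(X)\neq 0$, so the decomposition $Z=\sum_{\alp\in\Phi^{+}}Z_{\alp}$ can be recovered from the iterates $\ad(X)^{k}Z$ via a Vandermonde inversion. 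The condition $B(Y,\ad(X)^{k}Z)=0$ for all $Y\in\Fh$ and all $k$, combined with the explicit description of $\Fh^{\perp}$ in (\ref{6.2.1}) and the cyclic-vector structure of $z_{r}$, then forces each $Z_{\alp}$ to be zero by successively reading off the action on the basis $(z_{r},X_{V}z_{r},\dots,X_{V}^{d-1}z_{r})$ of $V_{\ovl F}$.

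The main obstacle is the last paragraph: closing out $\Fu\cap\Fh^{\perp}=0$ after one exhausts the Cartan-level information coming from freeness of the $H$-action. The argument requires tracking how the root-space decomposition of $\Fg$ relative to $\Fg_{X}$ interacts with the very non-homogeneous subspace $\Fh^{\perp}=(\Xi+\Fh^{\perp})-\Xi$, and this is where the cyclicity from Proposition~\ref{pro:6.4.1} is essential to pin down the sole semisimple eigenstructure compatible with membership in $\Fh^{\perp}$.
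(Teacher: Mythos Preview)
Your dimension count and the dualization $\Fb\cap\Fh=0\Longleftrightarrow\Fu\cap\Fh^{\perp}=0$ are both correct, and the reduction to $X\in\Lam'$ via $N$-conjugation is exactly what the paper does. The problem is the core step.

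You assert ``the condition $B(Y,\ad(X)^{k}Z)=0$ for all $Y\in\Fh$ and all $k$'' and then plan to Vandermonde-invert. But only the case $k=0$ is given: we know $Z\in\Fh^{\perp}$, nothing more. For $k\geq 1$ one would need $\ad(X)^{k}Z\in\Fh^{\perp}$, equivalently $\ad(X)^{k}\Fh\subset\Fh$, and this is false. Conjugation by $H$ preserves $\Sig=\Xi+\Fh^{\perp}$, so $[\Fh,X]\subset\Fh^{\perp}$, not $\Fh$; in particular neither $\Fh$ nor $\Fh^{\perp}$ is $\ad(X)$-stable, and the iterates $\ad(X)^{k}Z$ escape $\Fh^{\perp}$ immediately. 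So the Vandermonde step produces no new linear conditions on the root components $Z_{\alp}$ beyond the single constraint $Z\in\Fh^{\perp}$, and your argument stalls exactly where you flag it. (The ``dually'' remark is also empty: $\Fu\cap\Fg_{X}=0$ holds for any Borel containing $X$, without using freeness.)

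Your citation of \cite[Proposition~10.6.1]{beuzart2015local} is misleading: that proof, and the paper's, is \emph{not} a root-space/Vandermonde argument. The paper works directly with $\Fb\cap\Fh=0$, rewrites it as $(\Fb_{W}+\Fn)\cap\Fb_{V}=0$ via the product decomposition $\Fb=\Fb_{W}\times\Fb_{V}$, and then argues entirely on flags in $\wb{V}$. The Borel $\Fb_{V}$ stabilises a complete flag $\CF=(\wb{V}_{i})$; the paper builds by hand a second complete flag $\CF'=(\wb{V}'_{i})$ out of the GGP structure ($\wb{Z}_{+}$, $\wb{D}$, the flag $(\wb{W}_{j})$ fixed by $\Fb_{W}$, then $\wb{Z}_{-}$). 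The key technical lemma shows $\wb{V}'_{i}\cap\wb{V}_{d-i}=0$ for all $i$, and \emph{this} is where the cyclicity of $z_{r}$ for $X_{V}$ (Proposition~\ref{pro:6.4.1}) is used: for any nonzero $v\in\wb{V}'_{i}$ one proves $\wb{V}'_{i-1}+\langle X_{V}^{k}v:k\geq 0\rangle=\wb{V}$. It follows that the lines $D_{i}=\wb{V}'_{i}\cap\wb{V}_{d+1-i}$ are one-dimensional and span $\wb{V}$. Any $Y\in(\Fb_{W}+\Fn)\cap\Fb_{V}$ preserves both flags, hence each $D_{i}$; then one checks $Y(D_{i})=0$ directly, first for $i\leq r+1$ and $i\geq r+m+2$ from $Y\in\Fb_{W}+\Fn$, then for the middle range by another application of cyclicity.

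If you want to salvage a root-space approach you would need a replacement for the missing $\ad(X)$-stability, and none is apparent; the flag argument is the right tool here.
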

\begin{proof}
Let $X\in \Sig^{\p}$ and $\Fb\subset \Fg$ be a Borel subalgebra containing $X$. By Proposition \ref{pro:6.2.1}, up to $N(F)$-conjugation we may assume that $X\in \Lam^{\p}$ and we will assume this so henceforth. Write $X = (X_{W},X_{V})$ with $X_{W}\in \Fs\Fo(W)$ and $X_{V}\in \Fs\Fo(V)$. By definition of $\Lam$, we have a decomposition 
\begin{equation}\label{6.6.1}
X_{V}  = \Xi - X_{W}+c(z_{r},w) + \sum_{i=0}^{r-1}
\mu_{i}c(z_{i},z_{i+1})
\end{equation}
where $w\in W_{\wb{F}}$, $0\leq i\leq r$ and $\mu_{i}\in \wb{F}$, $0\leq i\leq r-1$.

By direct computation we have $\dim(\Fb)+\dim(\Fh) = \dim(\Fg)$, hence it suffices to prove 
\begin{equation}\label{6.6.2}
\Fb\cap \Fh = 0.
\end{equation}

There exist Borel subalgebras $\Fb_{W}$ and $\Fb_{V}$ of $\Fs\Fo(W)$ and $\Fs\Fo(V)$ respectively such that $X_{W}\in \Fb_{W}$, $X_{V}\in \Fb_{V}$ and $\Fb = \Fb_{W}\times \Fb_{V}$. Then $\Fb\cap \Fh = 0$ is equivalent to 
\begin{equation}\label{6.6.3}
(\Fb_{W}+\Fn)\cap \Fb_{V} =0.
\end{equation}
Fix an $F$-embedding $F\hookrightarrow \wb{F}$ and set $\wb{V} = V\otimes_{F}\wb{F}$, $\wb{W} = W\otimes_{F}\wb{F}$. Then we have isomorphism of $\wb{F}$-vector spaces
\begin{align*}
V_{\wb{F}}\simeq \wb{V},\quad 
W_{\wb{F}} \simeq \wb{W}.
\end{align*}
Also, if $U$ is a subspace of $V$ we will set $\wb{U } = U\otimes_{F}\wb{F}$ and view it as a subspace of $\wb{V}$. We will adopt similar notation w.r.t. $\wb{W}$. We have isomorphisms
$$
\Fs\Fo(V)_{\wb{F}}\simeq \Fs\Fo(\wb{V}), \quad 
\Fs\Fo(W)_{\wb{F}} \simeq \Fs\Fo(\wb{W})
$$
and we will identify two sides via the isomorphisms. Then, $\Fb_{W}$ is the stabilizer in $\Fs\Fo(\wb{W})$ of a complete flag
$$
0 = \wb{W}_{0}\subset \wb{W}_{1} \subset... \subset \wb{W}_{m}  = \wb{W}
$$
and similarly $\Fb_{V}$ is the stabilizer in $\Fs\Fo(\wb{V})$ of a complete flag
$$
\CF: 0 = \wb{V}_{0}\subset \wb{V}_{1} \subset... \subset \wb{V}_{d} = \wb{V}.
$$
We define another complete flag
$$
\CF^{\p}: 0 = \wb{V}^{\p}_{0}\subset \wb{V}^{\p}_{1} \subset... \subset
\wb{V}^{\p}_{d} = \wb{V}
$$
of $\wb{V}$ by setting
\begin{itemize}
\item $\wb{V}^{\p}_{i} = \langle \wb{z}_{r},..., \wb{z}_{r-i+1} \rangle $ for $i=1,...,r+1$;

\item $\wb{V}^{\p}_{r+1+i} = \wb{Z}_{+}\oplus \wb{D}\oplus \wb{W}_{i}$ for $i=1,...,m$;

\item $\wb{V}^{\p}_{r+m+1+i} = \wb{Z}_{+}\oplus \wb{V}_{0}\oplus \langle \wb{z}_{-1},...,\wb{z}_{-i}\rangle$ for $i=1,...,r$.
\end{itemize}
for any $v\in \wb{V}$, let us denote by $V(X_{V},v)$ the subspace of $\wb{V}$ generated by $v, X_{V}v,X^{2}_{V}v,...$ We have the following lemma.
\begin{lem}\label{lem:6.6.2}
Let $1\leq i\leq d$, then we have 
\begin{enumerate}
\item for any nonzero $v\in \wb{V}^{\p}_{i}$, $\wb{V}^{\p}_{i-1}+V(X_{V},v) = \wb{V}$;

\item $\wb{V}^{\p}_{i}\cap \wb{V}_{d-i} = 0$.
\end{enumerate}
\end{lem}
\begin{proof}
First we prove that (1) implies (2). Indeed, if $v\in \wb{V}^{\p}_{i}\cap \wb{V}_{d-i}$ is nonzero, then by (1), we would have $\dim(V(X_{V},v))\geq d+1-i$. But $V(X_{V},v)\subset \wb{V}_{d-i}$ (since $v\in \wb{V}_{d-i}$ and $X_{V}\in \Fb_{V}$ preserves $\wb{V}_{d-i}$), and therefore $\dim(V(X_{V},v))\leq \dim(\wb{V}_{d-i}) = d-i$. This is a contradiction.

We now prove the first statement. Let $v\in \wb{V}^{\p}_{i}$ be nonzero. Without loss of generality, we may assume that $v\in \wb{V}^{\p}_{i}\bs \wb{V}^{\p}_{i-1}$ since otherwise the result with $i-1$ instead of $i$ is stronger. We assume this is so henceforth and it follows that 
\begin{equation}\label{6.6.4}
\wb{V}^{\p}_{i-1}+V(X_{V},v) = \wb{V}^{\p}_{i}+V(X_{V},v).
\end{equation}
By definition of $\wb{V}_{i}^{\p}$, we have $\wb{z}_{r}\in \wb{V}^{\p}_{i}+V(X_{V},v)$ and so by Proposition \ref{pro:6.4.1}, it suffices to show that $\wb{V}^{\p}_{i-1}+V(X_{V},v)$ is $X_{V}$-stable. The subspace 
$V(X_{V},v)$ is $X_{V}$-stable almost by definition. Hence, we are left with proving that
\begin{equation}\label{6.6.5}
X_{V}\wb{V}^{\p}_{i-1}\subset \wb{V}^{\p}_{i}+V(X_{V},v).
\end{equation}
This is clear if $1\leq i\leq r+1$ or $r+m+2\leq i\leq d = 2r+m+1$ since in this case using the decomposition of $X_{V}$ we easily check that $X_{V}\wb{V}^{\p}_{i-1}\subset \wb{V}^{\p}_{i}$. It remains to show that it also holds for $r+2\leq i \leq r+m+1$. In this case, again using the decomposition of $X_{V}$, we can find that
\begin{equation}\label{6.6.6}
X_{V}v^{\p}\in \wb{V}^{\p}_{i}+\langle \wb{z}^{*}_{0},v^{\p}\rangle X_{V}\wb{z}_{0}
\end{equation}
for any $v^{\p}\in \wb{V}^{\p}_{i}$ (where $X_{V}\wb{z}_{0} = -\nu_{0}^{-1}\wb{z}_{-1}+\mu_{0}\nu_{0} \wb{z}_{1}$ if $r\geq 1$ and $X_{V}\wb{z}_{0} = 0$ if $r=0$). Here, we have used the fact that $X_{W}\in \Fb_{W}$ so that $\wb{W}_{i-r-2}$ and $\wb{W}_{i-r-1}$ are $X_{W}$-stable. As $v\in \wb{V}_{i}^{\p}$, it suffices to show that the existence of $k\geq 0$ such that $\langle \wb{z}^{*}_{0}, X^{k}_{V}v\rangle \neq 0$. By Proposition \ref{pro:6.4.1}, the family
$$
\wb{z}^{*}_{r}, {}^tX_{V}\wb{z}^{*}_{r}, {}^tX^{2}_{V}\wb{z}^{*}_{r},...
$$
generates $\wb{V}^{*}$. Hence, since $v\neq 0$, there exists $k_{0}\geq 0$ such that 
$\langle {}^tX^{k_{0}}_{V}\wb{z}^{*}_{r},v\rangle = \langle \wb{z}^{*}_{r}, X^{k_{0}}_{V}v \rangle \neq 0$. This already settles the case where $r=0$. In the case $r\geq 1$, since $\wb{V}^{\p}_{i}$ is included in the kernel of $\wb{z}^{*}_{r}$ this shows that the sequence $v,X_{V}v,X^{2}_{V}v,...$ eventually does not lie in $\wb{V}^{\p}_{i}$ and by (\ref{6.6.6}) this implies also the existence of $k\geq 0$ such that $\langle \wb{z}^{*}_{0}, X^{k}_{V}v \rangle \neq 0$. This ends the proof of (\ref{6.6.5}) and of the lemma.
\end{proof}

Let us now set $D_{i} = \wb{V}^{\p}_{i}\cap \wb{V}_{d+1-i}$ for $i=1,..,d$. By the previous lemma and dimension consideration, these are one dimensional subspaces of $\wb{V}$ and we have 
$$
\wb{V} = \bigoplus_{i=1}^{d}D_{i}.
$$
Let $Y\in (\Fb_{W}+\Fn)\cap \Fb_{V}.$ We want to prove that $Y=0$ so as to obtain (\ref{6.6.2}). By definition $Y$ must stabilize the flags $\CF$ and $\CF^{\p}$ so that $Y$ stabilizes the lines $D_{1},...,D_{d}$. We claim that 
\begin{equation}\label{6.6.7}
Y(D_{i}) = 0, i=1,..,r+1 , i= r+m+2,...,d.
\end{equation}
Indeed, since $Y\in \Fb_{W}+\Fn$, we have $Y\wb{V}^{\p}_{i}\subset \wb{V}^{\p}_{i-1}$ for any $i=1,...,r+1$ and $i=r+m+2,...,d$ and so $YD_{i}\subset \wb{V}^{\p}_{i-1}\cap \wb{V}_{d+1-i} =0$.

To deduce that $Y = 0$, it only remains to show that 
\begin{equation}\label{6.6.8}
Y(D_{i}) = 0 , i=r+2,...,r+m+1.
\end{equation}
Assume, by way of contradiction, that there exists $1\leq j\leq m$ such that $YD_{r+1+j}\neq 0$. Since $Y\in \Fb_{W}+\Fn$, we have $Y\wb{V}^{\p}_{r+1+j}\subset \wb{W}_{j}\oplus \wb{Z}_{+}$ so that $D_{r+1+j} = YD_{r+1+j}\subset \wb{W}_{j}\oplus \wb{Z}_{+}$. Let $v\in D_{r+1+j}$ be nonzero. We claim that
\begin{equation}\label{6.6.9}
(\wb{Z}_{+}\oplus \wb{W}_{j})+V(X_{V},v) = \wb{V}
\end{equation}
Indeed by the previous lemma, it suffices to prove that $\wb{z}_{0}\in (\wb{Z}_{+}\oplus \wb{W}_{j})+V(X_{V},v)$. By the decomposition (\ref{6.6.1}), we can find that
$$
X_{V}(\wb{Z}_{+}\oplus \wb{W}_{j})\subset \wb{Z}_{+}\oplus \wb{W}_{j}
\oplus \wb{F}\wb{z}_{0}
$$
so that we only need to check that the sequence $v,X_{V}v,...$ eventually does not lie in $\wb{Z}_{+}\oplus \wb{W}_{j}$. From Proposition \ref{pro:6.4.1}, we know that there exists $k\geq 0$ such that $\langle \wb{z}^{*}_{r}, X^{k}_{V}v \rangle \neq 0$. Since  $\wb{Z}_{+}\oplus \wb{W}_{j}$ is included in the kernel of $\wb{z}^{*}_{r}$, this proves (\ref{6.6.9}).

From (\ref{6.6.9}), we deduce that $\dim V(X_{V},v)\geq 1+d-j-r$. On the other hand, we have $v\in \wb{V}_{d-r-j}$ since $v\in \wb{D}_{r+1+j}$, and $X_{V}$ leaves $\wb{V}_{d-r-j}$ stable (since $X_{V}\in \Fb_{V}$). As $\dim \wb{V}_{d-r-j} = d-r-j$ it is a contradiction. This ends the proof of (\ref{6.6.8}) and of the Proposition.
\end{proof}

\begin{cor}\label{cor:6.6.3}
There exists a constant $c>0$ such that for any $\eps>0$ sufficiently small, any $X\in \Sig^{\p}(F)$ and parabolic subalgebras $\Fp$ of $\Fg$ defined over $F$ and containing $X$, 
$$
\exp[B(\cdot, \eps e^{-c\sig_{\Sig^{\p}}(X)})] \subset H(F)\exp(B(0,\eps)\cap \Fp(F)).
$$
\end{cor}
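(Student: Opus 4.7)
The plan is to combine Proposition~\ref{pro:6.6.1} with a quantitative inverse function theorem. Since $X \in \Sig^{\p}(F)$ is regular semisimple, any $F$-rational parabolic $\Fp$ containing $X$ also contains an $F$-rational Borel $\Fb \ni X$: the Cartan $\Fg_{X} \subset \Fp$ determines a Borel of $\Fp$ once a choice of positive roots is made $F$-rationally. Proposition~\ref{pro:6.6.1} applied to $\Fb$ yields $\Fb + \Fh = \Fg$ over $\wb F$, hence $\Fp(F) + \Fh(F) = \Fg(F)$. Fixing an $F$-linear complement $\Fc \subset \Fh(F)$ of $\Fp(F) \cap \Fh(F)$, one obtains a direct sum $\Fg(F) = \Fp(F) \oplus \Fc$ with associated projections $\pi_{\Fp}, \pi_{\Fc}$.

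Next, one quantifies this splitting. By Cramer's rule the entries of $\pi_{\Fp}, \pi_{\Fc}$ in a fixed basis of $\Fg(F)$ are rational functions in the coordinates of $X$, and the standard log-norm estimates of \cite[\S~1.2]{beuzart2015local} give $\|\pi_{\Fp}\|, \|\pi_{\Fc}\| \ll \sigma_{\Sig^{\p}}(X)^{N} \ll e^{c_{1} \sigma_{\Sig^{\p}}(X)}$ for some $N, c_{1} > 0$. One then applies a quantitative inverse function theorem---Hensel's lemma in the $p$-adic case, Newton iteration in the archimedean case---to the map
$$
\Phi : \Fc \times \Fp(F) \longrightarrow G(F), \qquad (Z, Y) \longmapsto \exp(Z) \exp(X + Y),
$$
whose differential at $(0,0)$, after translation by $\exp(X)^{-1}$, is the isomorphism $(Z, Y) \mapsto Z + Y$ from $\Fc \times \Fp(F)$ onto $\Fg(F)$. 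The radius of validity of the local inverse is controlled from below by a negative power of $\max(\|\pi_{\Fp}\|, \|\pi_{\Fc}\|)$ and of the $C^{2}$-norm of $\Phi$ near $0$, both of which are $\ll e^{c_{1} \sigma_{\Sig^{\p}}(X)}$; this yields a radius of the form $\eps\, e^{-c \sigma_{\Sig^{\p}}(X)}$. Any element within this radius of $\exp(X)$ is then written as $\exp(Z) \exp(X + Y)$ with $Z \in \Fc \subset \Fh(F)$ and $Y \in \Fp(F)$ of norm $\leq \eps$, and $\exp(Z) \in H(F)$ gives the claimed inclusion.

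The main obstacle is ensuring uniformity of the constant $c$ in both $\Fp$ and $X$. Uniformity in $X$ rests on the polynomial character of the estimates above and is the origin of the exponential factor $e^{-c \sigma_{\Sig^{\p}}(X)}$; uniformity in $\Fp$ reduces to a finite maximum since there are only finitely many $G(F)$-conjugacy classes of $F$-rational parabolic subalgebras of $\Fg$. One additional check specific to the archimedean case is that the $C^{2}$-norm of $\Phi$ near $0$ is indeed polynomially bounded in $\sigma_{\Sig^{\p}}(X)$, which follows from the convergent Taylor expansion of $(Z, Y) \mapsto \exp(Z) \exp(X + Y)$ at the origin together with the boundedness of its Taylor coefficients in terms of $\|X\|$, itself controlled by $\sigma_{\Sig^{\p}}(X)$.
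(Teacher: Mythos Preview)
Your strategy is essentially that of Beuzart-Plessis (whose proof the paper invokes verbatim), but you have misread the statement: the ball on the left is $B(0,\eps e^{-c\sig_{\Sig^{\p}}(X)})$, centred at $0\in\Fg(F)$, so the corollary concerns elements of $G(F)$ near the \emph{identity}, not near $\exp(X)$. Your map $\Phi(Z,Y)=\exp(Z)\exp(X+Y)$ is therefore the wrong object: its image lies near $\exp(X)$, and the factor $\exp(X+Y)$ never lands in $\exp(B(0,\eps)\cap\Fp(F))$ since $X+Y$ is not small. One cannot easily repair this by writing $\exp(X+Y)\exp(-X)=\exp(Y')$ via Baker--Campbell--Hausdorff, as the iterated brackets with $X$ make $\|Y'\|$ exponential in $\|X\|$ and hence doubly exponential in $\sig_{\Sig^{\p}}(X)$, which is far too weak. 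Your differential computation is also incorrect for this $\Phi$: after right translation by $\exp(-X)$, the $Y$-derivative is $(\mathrm{e}^{\mathrm{ad}\,X}-1)/\mathrm{ad}\,X$, not the identity.

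The fix is to take $\Phi(Z,Y)=\exp(Z)\exp(Y)$ on $\Fh(F)\times\Fp(F)$, whose differential at $(0,0)$ genuinely is $(Z,Y)\mapsto Z+Y$. Surjectivity onto $\Fg(F)$ follows from Proposition~\ref{pro:6.6.1}: over $\wb{F}$ the parabolic $\Fp$ contains a Borel $\Fb\ni X$ (not necessarily $F$-rational, contrary to what you assert, but that is immaterial since surjectivity of an $F$-linear map can be checked after base change), so $\Fb\oplus\Fh=\Fg$ over $\wb{F}$ and hence $\Fp+\Fh=\Fg$ over $F$. With this correction your quantitative inverse-function and Cramer's-rule bounds go through. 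One further refinement: uniformity in $\Fp$ comes not from ``finitely many $G(F)$-conjugacy classes of parabolics'' (the specific representative in a class still varies) but from the fact that there are only finitely many parabolics containing the maximal torus $G_{X}$, each depending algebraically on $X$.
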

\begin{proof}
The proof of \cite[Corollary~10.6.3]{beuzart2015local} works verbatim.
\end{proof}

\subsection{The quotient $\Sig^{\p}(F)/H(F)$}\label{sec:6.7}
By Proposition \ref{pro:6.2.1}, $\Sig^{\p}$ has a geometric quotient by $N$ and $\Sig^{\p}/N \simeq \Lam^{\p}$. Because $H = N\rtimes \SO(W)$ and $\SO(W)$ is reductive, the geometric quotient of $\Sig^{\p}$ by $H$ exists and $\Sig^{\p}/H \simeq \Lam^{\p}/\SO(W)$. Denote by $\Fg^{\p}$ the non-vanishing locus of $Q$ in $\Fg$ and by $\Fg^{\p}/G$ the geometric quotient of $\Fg^{\p}$ by $G$ for the adjoint action. The natural map $\Sig^{\p}\to \Fg^{\p}/G$ factors through the quotient $\Sig^{\p}/H$ and there is the induced morphism
$$
\pi: \Sig^{\p}/H \to \Fg^{\p}/G.
$$
We will also consider the $F$-analytic counterpart of this map:
$$
\pi_{F}:\Sig^{\p}(F)/H(F)\to \Fg^{\p}(F)/G(F).
$$
Put on $H(F)$ the Haar measure $\mu_{H}$ which lift the Haar measure $\mu_{\Fh}$ on $\Fh(F)$. Because $H(F)$ acts freely on $\Sig^{\p}(F)$, we can define a measure $\mu_{\Sig^{\p}/H}$ on $\Sig^{\p}(F)/H(F)$ to be the quotient of (the restriction to $\Sig^{\p}(F)$ of) $\mu_{\Sig}$ by $\mu_{H}$. It is the unique measure on $\Sig^{\p}(F)/H(F)$ such that 
$$
\int_{\Sig(F)}\vphi(X)\ud\mu_{\Sig}(X)= \int_{\Sig^{\p}(F)/H(F)}
\int_{H(F)}\vphi(h^{-1}Xh)\ud h \ud\mu_{\Sig^{\p}/H}(X)
$$
for any $\vphi\in C_{c}(\Sig(F))$. One can define a measure $\ud X$ on $\Fg^{\rss}(F)/G(F) = \Gam^{\rss}(\Fg)$ following \cite[Section~1.7]{beuzart2015local}. Moreover, $\Fg^{\p}(F)/G(F)$ is an open subset of $\Fg^{\rss}(F)/G(F)$ and we will still denote by $\ud X$ the restriction of this measure to $\Fg^{\p}(F)/G(F)$.

\begin{pro}\label{pro:6.7.1}
\begin{enumerate}
\item
$\pi$ is an isomorphism of algebraic varieties and $\pi_{F}$ is an open embedding of $F$-analytic spaces;

\item $\pi_{F}$ sends the measure $\ud\mu_{\Sig^{\p}/H}(X)$ to $D^{G}(X)^{1/2}\ud X$;

\item The natural projection $p: \Sig^{\p}\to \Sig^{\p}/H$ has the norm descent property.
\end{enumerate}
\end{pro}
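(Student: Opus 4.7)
The plan is to treat the three parts in order, reducing each to inputs already developed in Sections~\ref{sec:6.2}--\ref{sec:6.6}.

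For (1), I will first verify that $\pi$ is an isomorphism of algebraic varieties by a bijectivity-plus-smoothness argument. Proposition \ref{pro:6.5.1} shows that over $\wb{F}$ two elements of $\Sig^{\p}$ are $G$-conjugate if and only if they are $H$-conjugate, which makes $\pi$ injective on $\wb{F}$-points; surjectivity onto $\Fg^{\p}/G$ follows from the description of $\Lam$ and Proposition~\ref{pro:6.3.1}, since every characteristic polynomial occurring in $\Fg^{\p}$ can be realized by some $(X_W,w,\mu_i)$ yielding an element of $\Lam^{\p}$. A dimension count shows $\dim(\Sig^{\p}/H)=\dim(\Fg^{\p}/G)$, and Proposition~\ref{pro:6.6.1}, which supplies $\Fb\oplus\Fh=\Fg$ at points of $\Sig^{\p}$, implies that the differential of $\pi$ is everywhere an isomorphism, so $\pi$ is étale and bijective, hence an isomorphism. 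To obtain that $\pi_F$ is an open embedding of $F$-analytic spaces, I will combine injectivity on $F$-points (the sharpened form of Proposition \ref{pro:6.5.1}: $G(F)$-conjugate elements of $\Sig^{\p}(F)$ are already $H(F)$-conjugate) with the implicit function theorem applied to the surjective submersion $G\times \Sig^{\p}\to \Fg^{\p}$, $(g,X)\mapsto \Ad(g)X$.

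For (2), the strategy is a Jacobian computation. At any $X\in \Sig^{\p}(F)$ the tangent map of $G\times \Sig\to \Fg$, $(g,Y)\mapsto \Ad(g)Y$, at $(1,X)$ sends $(Z,Y)\mapsto [Z,X]+Y$; restricted to $\Fh\oplus \Fh^{\perp}\hookrightarrow \Fg\oplus \Fh^{\perp}$ (via the $B$-orthogonal decomposition $\Fg=\Fh\oplus \Fh^{\perp}$) this is an isomorphism thanks to the Borel transversality of Proposition~\ref{pro:6.6.1}. Comparing the resulting local density to the Weyl integration formula, which produces the factor $D^G(X)^{1/2}$ on the quotient $\Fg^{\p}(F)/G(F)$, will give the claimed transfer of measures. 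The key identity to verify is that, under the compatible normalizations of $\ud\mu_{\Fh}$, $\ud\mu_{\Sig}$, the Haar measure on $G_X(F)$ coming from the restriction of $B$, and the Haar measure on $G(F)$, the Jacobian of $(Z,Y)\mapsto [Z,X]+Y$ matches $D^G(X)^{1/2}$. I expect this to be the main technical point, although it parallels the unitary group computation in \cite[Prop.~10.7.1]{beuzart2015local}.

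For (3), I will use the explicit structure $\Sig^{\p}\simeq N\times \Lam^{\p}$ from Proposition~\ref{pro:6.2.1}, together with $\Sig^{\p}/H\simeq \Lam^{\p}/\SO(W)$. A norm-controlled section of $\Lam^{\p}\to \Lam^{\p}/\SO(W)$ can be built from Corollary~\ref{cor:6.3.2}: since $Q_0$ is a $G$-invariant polynomial which is nonvanishing on $\Lam^{\p}$, the coordinates $(q(w,X_W^j w))_{0\le j\le m-1}$ together with $X_W$ (parametrized through its characteristic polynomial) give an invariant-theoretic description whose fibers admit polynomially bounded representatives. Composing with the identity section on the $N$-factor then yields a section of $\Sig^{\p}\to \Sig^{\p}/H$ with norm growth bounded polynomially in $\sig_{\Sig^{\p}/H}$, which is exactly the norm descent property. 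The main obstacle here is bookkeeping the interaction of the $N$- and $\SO(W)$-parts; this is where Corollary~\ref{cor:6.5.2} intervenes to control the size of the $\SO(W)$-conjugator relative to the quotient norm.
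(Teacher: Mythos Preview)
Your outlines for (1) and (2) are essentially what the paper (and the cited \cite[Proposition~10.7.1]{beuzart2015local}) does, so no further comment there.

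Part (3), however, has a genuine gap. Saying that the invariants $(q(w,X_W^j w))_j$ together with $P_{X_W}$ ``give an invariant-theoretic description whose fibers admit polynomially bounded representatives'' is not a proof of the norm descent property: you have described the quotient $\Lam^{\p}/\SO(W)$, but you have not produced a section, and the existence of bounded representatives does not follow automatically from knowing generators of the invariant ring. Your appeal to Corollary~\ref{cor:6.5.2} is also misplaced: that corollary bounds elements of the centralizer $G_X$ in terms of $\sig_{H\bs G}$ and $\sig_{\Sig^{\p}}$, which is a different estimate from controlling a conjugator in $\SO(W)$ carrying a general point of $\Lam^{\p}$ to a chosen representative. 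Once you reduce via Proposition~\ref{pro:6.2.1} to $\Lam^{\p}\to\Lam^{\p}/\SO(W)$, there is no residual $N$-interaction to bookkeep.

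The paper's argument for (3) is more structural and avoids any direct estimate. One passes via Cartesian diagrams from $\Lam^{\p}\to\Lam^{\p}/\SO(W)$ to $\Lam_{Q_0}\to\Lam_{Q_0}/\SO(W)$, then uses $\Lam\simeq\Fs\Fo(W)\times W\times\BA^r$ to reduce to $(\Fs\Fo(W)\times W)_0\to(\Fs\Fo(W)\times W)_0/\SO(W)$. The key step is the $\SO(W)$-equivariant closed immersion
\[
(X,w)\longmapsto (P_X,\,w,Xw,\ldots,X^{m-1}w)\in Pol_m\times\CB
\]
into the product of monic polynomials and the variety of bases of $W$; this is exactly where the invariants you mention are used, but as an \emph{embedding}, not as a source of representatives. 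Since norm descent is inherited along Cartesian closed immersions, one is reduced to $\CB\to\CB/\SO(W)$, i.e.\ to $\GL(W)\to\GL(W)/\SO(W)$. Finally, by \cite[Lemma~1.2.2(i)]{beuzart2015local} it suffices to exhibit a section over a nonempty Zariski-open subset, and the unipotent radical $U$ of a Borel of $\GL(W)$ relative to an orthogonal basis provides one, since $U\cap\SO(W)=\{1\}$ and $\dim U+\dim\SO(W)=\dim\GL(W)$. This chain of reductions is what you are missing.
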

\begin{proof}
The proof of (1) and (2) follows from the same argument as in \cite[Proposition~10.7.1]{beuzart2015local}. We only establish (3).

For (3), by Proposition \ref{pro:6.2.1}, it is sufficient to show that 
$$
\Lam^{\p}\to \Lam^{\p}/\SO(W)
$$
has the norm descent property. Denote by $\Lam_{Q_{0}}$ the non-vanishing locus of $Q_{0}$ in $\Lam$ (where $Q_{0}\in F[\Fg]^{G}$ is defined in Section \ref{sec:6.3}. Then we have the following Cartesian diagram where horizontal maps are open immersions
$$
\xymatrix{
\Lam^{\p} \ar[r] \ar[d] & \Lam_{Q_{0}} \ar[d] \\
\Lam^{\p}/\SO(W) \ar[r] & \Lam_{Q_{0}}/\SO(W)
}
$$
Thus, if we prove that $\Lam_{Q_{0}}\to \Lam_{Q_{0}}/\SO(W)$ has the norm descent property, we will be done. By definition of $\Lam$ (cf. Section \ref{sec:6.2}), we have an $\SO(W)$-equivariant isomorphism
$$
\Lam\simeq \Fs\Fo(W)\times W\times \BA^{r},
$$
where the action of $\SO(W)$ on the RHS is the product of the adjoint action on $\Fs\Fo(W)$, the natural action on $W$ and the trivial action on $\BA^{r}.$ Denote by $(\Fs\Fo(W)\times W)_{0}$ the open-Zariski subset of $\Fs\Fo(W)\times W$ consisting of all pairs $(X,w)\in \Fs\Fo(W)\times W$ such that $(w,Xw,X^{2},...)$ generates $W$. Then $\Lam_{Q_{0}}$ corresponds via the previous isomorphism to $(\Fs\Fo(W)\times W)_{0}\times \BA^{r}$. Since $\SO(W)$ acts trivially on $\BA^{r}$, we are reduced to show that 
$$
(\Fs\Fo(W)\times W)_{0}\to (\Fs\Fo(W)\times W)_{0}/\SO(W)
$$
has the norm descent property. Let $\CB$ be the flag variety of basis of $W$ and let $Pol_{m}$ be the variety of monic polynomial $P\in \wb{F}[T]$ of degree $m$. Consider the action of $\SO(W)$ on $Pol_{m}\times \CB$ which is trivial on $Pol_{m}$ and given by $g(e_{1},...,e_{m}) = (ge_{1},...,ge_{m})$ on $\CB$. By Proposition \ref{pro:6.5.1} the map
\begin{align*}
(\Fs\Fo(W)\times W)_{0}
&\to Pol_{m}\times \CB
\\
(X,w)
&\to (P_{X},w,Xw,...,X^{m-1}W)
\end{align*}
is a $\SO(W)$-equivariant closed immersion. Passing to the quotient, we get a commutative diagram
$$
\xymatrix{
(\Fs\Fo(W)\times W)_{0} \ar[r] \ar[d] & Pol_{m}\times \CB \ar[d] \\
(\Fs\Fo(W)\times W)_{0}/\SO(W)\ar[r] & Pol_{m}\times \CB/\SO(W)
}
$$
where horizontal maps are closed immersion (since $\SO(W)$ is reductive). Moreover the diagram is Cartesian (because all $\SO(W)$-orbits in $\CB$ are closed and so the quotient separates all orbits). Thus, we are finally reduced to showing that $\CB\to \CB/\SO(W)$ has the norm descent property. Choosing a particular basis of $W$, this amounts to proving that 
$$
\GL(W)\to \GL(W)/\SO(W)
$$
has the norm descent property. Since the map is $\GL(W)$-equivariant for the action by left translation, by \cite[Lemma~1.2.2(i)]{beuzart2015local}, it suffices to show the existence of a nonempty Zariski open subset of $\GL(W)/\SO(W)$ over which the previous map has the norm descent property. Choose an orthogonal basis $(e_{1},...,e_{m})$ of $W$ and denote by $B$ the standard Borel subgroup of $\GL(W)$ relative to this basis. Then $B\cap \SO(W) =Z$ is the subtorus acting by $\pm 1$ on each $e_{i}$ (so that $Z\simeq (\BZ/2\BZ)^{m}$). Let $U$ be the unipotent radical of $B$, then $U\cap \SO(W) = \{1 \}$, moreover $\dim U+\dim \SO(W) = \dim \GL(W)$. It follows that we have the canonical open immersion $U\hookrightarrow \GL(W)/\SO(W)$. Therefore
$$
\GL(W)\to \GL(W)/\SO(W)
$$
admits a section on an open dense subset, from which we ends the proof of (3).
\end{proof}

\subsection{Spectral expansion of $J^{\Lie}$}\label{sec:6.8}
Define $\Gam(\Sig)$ to be the subset of $\Gam(\Fg)$ consisting of the conjugacy classes of the semi-simple parts of elements in $\Sig(F)$. Equip the subset with the restriction of the measure defined on $\Gam(\Fg)$. Thus, if $\CT(G)$ is a set of representatives for the $G(F)$-conjugacy classes of maximal tori of $G$ and if for any $T\in \CT(G)$ we denote by $\Ft(F)_{\Sig}$ the subset of elements $X\in \Ft(F)$ whose conjugacy class belongs to $\Gam(\Sig)$, then
$$
\int_{\Gam(\Sig)}
\vphi(X)\ud X = 
\sum_{T\in \CT(G)}
|W(G,T)|^{-1}
\int_{\Ft(F)_{\Sig}}
\vphi(X)\ud X
$$
for any $\vphi\in \CC_{c}(\Gam(\Sig))$. Recall that in Section \ref{sec:distribution}, a continuous linear form $J^{\Lie}$ on $\CS_{\scusp}(\Fg)$ has been defined.

The theorem below is one of the most technical result of the paper. Fortunately the proof of \cite[\S 10.8-\S 10.11]{beuzart2015local} works verbatim. The details are referred to \cite{thesis_zhilin}.

\begin{thm}\label{thm:6.8.1}
The following identity holds
$$
J^{\Lie}(f) = 
\int_{\Gam(\Sig)}
D^{G}(X)^{1/2}
\wh{\theta}_{f}(X)\ud X
$$
for any $f\in \CS_{\scusp}(\Fg)$.
\end{thm}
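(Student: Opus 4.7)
The strategy is to apply Fourier inversion along the affine slice $\Sig$, unfold over the free $H(F)$-action on $\Sig^{\p}(F)$, and identify the result with the claimed expression via the geometric isomorphism of Proposition \ref{pro:6.7.1}. Fix $f \in \CS_\scusp(\Fg)$ throughout.

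First, I would apply the Fourier inversion formula \eqref{6.1.2} to the inner integral defining $K^{\Lie}(f,x)$. Setting $f_{x}(X) := f(x^{-1}Xx)$ and using the $G$-invariance of $B$, one computes $\wh{f_{x}}(Y) = \wh{f}(x^{-1}Yx)$, so that
$$K^{\Lie}(f,x) = \int_{\Sig(F)} \wh{f}(x^{-1}Yx) \ud\mu_{\Sig}(Y),$$
and hence
$$J^{\Lie}(f) = \int_{H(F)\bs G(F)} \int_{\Sig(F)} \wh{f}(x^{-1}Yx) \ud\mu_{\Sig}(Y) \ud x.$$
Since $\Sig \setminus \Sig^{\p}$ is the vanishing locus of the polynomial $Q$ introduced in Section \ref{sec:6.3}, it has measure zero in $\Sig(F)$, and one may restrict to $\Sig^{\p}(F)$.

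Next, by Proposition \ref{pro:6.5.1}, $H(F)$ acts freely on $\Sig^{\p}(F)$, and the quotient measure $\ud\mu_{\Sig^{\p}/H}$ is well-defined. I would unfold the inner integral along this action and formally exchange the order of integration to obtain
$$J^{\Lie}(f) = \int_{\Sig^{\p}(F)/H(F)} \left[\int_{H(F)\bs G(F)} \int_{H(F)} \wh{f}((hx)^{-1}Y_{0}(hx)) \ud h \ud x\right] \ud\mu_{\Sig^{\p}/H}([Y_{0}]).$$
The bracket, once its contributions from proper Levi subgroups are suppressed using the strong cuspidality of $\wh{f}$ (inherited from that of $f$), is identified up to signs as $\nu(G_{Y_{0}})^{-1} D^{G}(Y_{0})^{-1/2} J_{M(Y_{0})}(Y_{0}, \wh{f})$, i.e.\ as $\wh{\theta}_{f}(Y_{0})$. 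Invoking Proposition \ref{pro:6.7.1}(1)(2) to identify $\Sig^{\p}(F)/H(F)$ with an open subset of $\Gam(\Sig)$ under $\pi_{F}$, and applying the measure transformation $\ud\mu_{\Sig^{\p}/H} = D^{G}(X)^{1/2} \ud X$, one arrives at the desired formula
$$J^{\Lie}(f) = \int_{\Gam(\Sig)} D^{G}(X)^{1/2} \wh{\theta}_{f}(X) \ud X.$$

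The main obstacle lies in the second step: the naive inner integral $\int_{G(F)} \wh{f}(g^{-1}Y_{0} g) \ud g$ is divergent for generic regular semisimple $Y_{0}$, since the centralizer torus $T_{Y_{0}}$ has infinite volume, so a direct Fubini is illegitimate. To circumvent this one must work with a truncation --- restricting $\Sig^{\p}(F)/H(F)$ to compact pieces, performing the exchange of integrations where convergence is unambiguous, and then relaxing the truncation. Controlling the limit rests on the exponential descent of Corollary \ref{cor:6.6.3}, the norm-descent property of Proposition \ref{pro:6.7.1}(3), Corollary \ref{cor:6.5.2}, and the Harish-Chandra $\Xi$-function estimates of Section \ref{subsec:analyticalestimate}. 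These are precisely the technical tools whose unitary-group analogues underlie the proofs of \cite[\S 10.8--\S 10.11]{beuzart2015local}, which the author asserts transfer verbatim to the present special-orthogonal setting.
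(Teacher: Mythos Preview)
Your proposal is correct and follows essentially the same approach as the paper, which in turn defers entirely to \cite[\S 10.8--\S 10.11]{beuzart2015local}: Fourier inversion \eqref{6.1.2} converts $K^{\Lie}(f,x)$ into an integral over $\Sig(F)$, one unfolds along the free $H(F)$-action on $\Sig^{\p}(F)$, and the measure identification of Proposition~\ref{pro:6.7.1} yields the claimed expression once the inner integral is recognized as $\wh{\theta}_f$. You have also correctly singled out the essential analytic obstacle (non-convergence of the naive unfolding) and the precise tools --- Corollaries~\ref{cor:6.5.2} and~\ref{cor:6.6.3}, Proposition~\ref{pro:6.7.1}(3), and the estimates of Section~\ref{subsec:analyticalestimate} --- that control the truncation argument.

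One minor point of precision: in Beuzart-Plessis's actual implementation the truncation is imposed on the outer variable in $H(F)\bs G(F)$ (via a cutoff $\kappa_N$ on $\sig_{H\bs G}$), not on $\Sig^{\p}(F)/H(F)$ as you describe. After applying \eqref{6.1.2} and unfolding, the truncated inner integral becomes a compactly supported orbital-type integral on $G(F)$, and it is Arthur's theory of $(G,M)$-families that identifies its limit as $N\to\infty$ with the weighted orbital integral appearing in $\wh{\theta}_f$. Your phrase ``suppressing contributions from proper Levi subgroups'' is a reasonable gloss on why strong cuspidality makes this work, but the mechanism is specifically that the weight functions in Arthur's expansion collapse for strongly cuspidal test functions.
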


\section{Geometric expansion and multiplicity formula}
In this section, we are going to establish the geometric expansion of the trace distribution $J$, from which we are able to deduce a geometric multiplicity formula $m(\pi)$ for any tempered representation $\pi$ of $G(F)$.

\subsection{The linear forms $m_\geom$ and $m^\Lie_\geom$}\label{sec:linearformgeom}
\subsubsection{Geometric support}

For any $x\in H_\ss(F)$, we are going to introduce spaces of semi-simple conjugacy classes in $G(F)$, $G_x(F)$ and $\Fg(F)$, which will be denoted by $\Gam(G,H)$, $\Gam(G_x,H_x)$ and $\Gam^\Lie(G,H)$, respectively. The definition is in parallel with \cite[11.1]{beuzart2015local}.

Fix $x\in H_\ss(F)$, up to conjugation we may assume that $x\in \SO(W)_\ss(F)$. Let $W^\p_x$ (resp. $V^\p_x$) be the kernel of $1-x$ in $W$ (resp. $V$) and $W^{\p\p}_x$ (resp. $V^{\p\p}_x$) be the image of $1-x$. There are orthogonal decompositions $W= W^\p_x\oplus^\perp W^{\p\p}_x$ and $V= V^\p_x\oplus^\perp W^{\p\p}_x$. Set $H^\p_x = \SO(W^\p_x)\ltimes N_x$ (where $N_x$ is the centralizer of $x$ in $N$), $G^\p_x = \SO(W^\p_x)\times \SO(V^\p_x)$, 
$H^{\p\p}_x = \SO(W^{\p\p}_x)_x$ and $G^{\p\p}_x = \SO(W^{\p\p}_x)_x\times \SO(W^{\p\p}_x)_x$. Set $\xi_x = \xi|_{H_x(F)}$. Then there are following decompositions 
\begin{align*}
\SO(V)_{x} = \SO(V^{\p}_{x})\times \SO(W^{\p\p}_{x})_{x}, \quad
\SO(W)_{x} &= \SO(W^{\p}_{x})\times \SO(W^{\p\p}_{x})_{x}, 
\\
H_{x} &= \SO(W)_{x}\ltimes N_{x},
\end{align*}
and 
\begin{align}\label{7.1.0}
Z_{\SO(V)}(x) &= \SO(V^{\p}_{x})\times Z_{\SO(W^{\p\p}_{x})}(x),
\\
Z_{\SO(W)}(x) &= \SO(W^{\p}_{x})\times Z_{\SO(W^{\p\p}_{x})}(x)\nonumber.
\end{align}
Moreover, it is not hard to verify that $\SO(W^{\p\p}_{x})_{x}$ commutes with $N_{x}$. Hence there are also decompositions
\begin{equation}\label{7.1.1}
G_{x} = G^{\p}_{x}\times G^{\p\p}_{x}, \quad 
H_{x} = H^{\p}_{x}\times H^{\p\p}_{x},
\end{equation}
where the inclusion $H_{x}\subset G_{x}$ is the product of the two inclusions $H^{\p}_{x}\subset G^{\p}_{x}$ and $H^{\p\p}_{x}\subset G^{\p\p}_{x}$. Since $\xi_{x}$ is trivial on $H^{\p\p}_{x}$, there is a further decomposition
$$
(G_{x}, H_{x},\xi_{x}) = 
(G^{\p}_{x}, H^{\p}_{x}, \xi^{\p}_{x})
\times (G^{\p\p}_{x}, H^{\p\p}_{x},1),
$$
where $\xi^{\p}_{x} = \xi_{|H^{\p}_{x}}$. Note that the triple $(G^{\p}_{x}, H^{\p}_{x}, \xi^{\p}_{x})$ coincides with the GGP triple associated to the admissible pair $(V^{\p}_{x}, W^{\p}_{x})$. The second triple $(G^{\p\p}_{x}, H^{\p\p}_{x},1)$ is also of a particular shape: the group $G^{\p\p}_{x}$ is the product of two copies of $H^{\p\p}_{x}$ and the inclusion $H^{\p\p}_{x}\subset G^{\p\p}_{x}$ is the diagonal one. Following \cite[11.1]{beuzart2015local} such a triple is called an \emph{Arthur triple}. Finally note that although $x\in \SO(W)_{\ss}(F)$, there is a decomposition similar to (\ref{7.1.1}) for any $x\in H_{\ss}(F)$ (simply conjugated $x$ inside $H(F)$ to an element in $\SO(W)_{\ss}(F)$) and that if $x,y\in H_{\ss}(F)$ are $H(F)$-conjugate, then there are natural isomorphisms of triples
$$
(G^{\p}_{x}, H^{\p}_{x}, \xi^{\p}_{x})
\simeq (G^{\p}_{y}, H^{\p}_{y}, \xi^{\p}_{y}),
\quad 
(G^{\p\p}_{x}, H^{\p\p}_{x},1) \simeq (G^{\p\p}_{y}, H^{\p\p}_{y},1)
$$
well-defined up to inner automorphisms by $H^{\p}_{x}(F)$ and $H^{\p\p}_{x}(F)$ respectively.

Let $x\in H_{\ss}(F)$. Denote by $\Gam(H)$, $\Gam(H_{x})$, $\Gam(G)$ and $\Gam(G_{x})$ the sets of semi-simple conjugacy classes in $H(F), H_{x}(F), G(F)$ and $G_{x}(F)$ respectively and they are equipped with topologies.

By the definition of GGP triples, the two canonical maps $\Gam(H_{x})\to \Gam(G_{x})$ and $\Gam(H)\to \Gam(G)$ are injective. Since these maps are continuous and proper (\cite[Section~1.7]{beuzart2015local}) and $\Gam(G_{x})$, $\Gam(H)$, $\Gam(G)$ are all Hausdorff and locally compact, it turns out that $\Gam(H_{x})\to \Gam(G_{x})$ and $\Gam(H) \to \Gam(G)$ are closed embeddings.

We are going to define a subset $\Gam(G,H)$ of $\Gam(H)$ as follows: $x\in \Gam(G,H)$ if and only if $H^{\p\p}_{x}$ is an anisotropic torus (and hence $G^{\p\p}_{x}$ also). Since $\Gam(H)\to \Gam(G)$ is a closed embedding, $\Gam(G,H)$ can also be viewed as a subset of $\Gam(G)$. Notice that $\Gam(G,H)$ is a subset of $\Gam_{\mathrm{ell}}(G)$ containing $1$. We now equip $\Gam(G,H)$ with a topology, which is finer than the one induced from $\Gam(G)$, and a measure. For this, we need to give a more concrete description of $\Gam(G,H)$. Consider the following set $\udl{\CT}$ of subtori of $\SO(W)$: $T\in \udl{\CT}$ if and only if there exists a non-degenerate subspace $W^{\p\p} \subset W$ (possibly $W^{\p\p} = 0$) such that $T$ is a maximal elliptic subtorus of $\SO(W^{\p\p})$. For such a torus $T$, denote by $T_{\natural}$ the open Zariski subset of elements $t\in T$ which are regular in $\SO(W^{\p\p})$ acting with distinct eigenvalues on $W^{\p\p}$. Then $\Gam(G,H)$ is the set of conjugacy classes that meet
$
\bigcup_{T\in \udl{\CT}}
T_{\natural}(F)
$
(c.f. \cite[p.259]{beuzart2015local}). Let $W(H,T) = \mathrm{Norm}_{H}(T)/Z_{H}(T)$. Then, by definition there is a natural bijection
\begin{equation}\label{7.1.3}
\Gam(G,H) \simeq \bigsqcup_{T\in \CT}
T_{\natural}(F)/W(H,T).
\end{equation}
Now the RHS of (\ref{7.1.3}) has a natural topology and we transfer it to $\Gam(G,H)$. Moreover, we equip $\Gam(G,H)$ with the unique regular Borel measure such that
$$
\int_{\Gam(G,H)}
\vphi(x)dx
=\sum_{T\in \CT}
|W(H,T)|^{-1}
\nu(T)
\int_{T(F)}\vphi(t)dt
$$
for any $\vphi\in C_{c}(\Gam(G,H))$. Recall that $\nu(T)$ is the only positive factor such that the total mass of $T(F)$ for the measure $\nu(T)\ud t$ is one. Note that $1$ is an atom for the measure whose mass is equal to $1$ (this corresponds to the contribution of the trivial torus in the formula above).

More generally, for any $x\in H_{\ss}(F)$ we may construct a subset $\Gam(G_{x},H_{x})$ of $\Gam(G_{x})$ which is equipped with its own topology and measure as follows. By (\ref{7.1.1}) we have a decomposition $\Gam(G_{x}) = \Gam(G^{\p}_{x})\times \Gam(G^{\p\p}_{x})$. Since the triple $(G^{\p}_{x}, H^{\p}_{x}, \xi^{\p}_{x})$ is a GGP triple, the previous construction provides us with a space $\Gam(G^{\p}_{x}, H^{\p}_{x})$ of semi-simple conjugacy classes in $G^{\p}_{x}(F)$. On the other hand, we define $\Gam(G^{\p\p}_{x}, H^{\p\p}_{x})$ to be the image of $\Gam_{\ani}(H^{\p\p}_{x})$ (the set of anisotropic conjugacy classes in $H^{\p\p}_{x}(F)$, c.f. \cite[Section~1.7]{beuzart2015local}) by the diagonal embedding $\Gam(H^{\p\p}_{x})\subset \Gam(G^{\p\p}_{x})$. Following \cite[Section~1.7]{beuzart2015local}, the set $\Gam(G^{\p\p}_{x}, H^{\p\p}_{x}) = \Gam_{\ani}(H^{\p\p}_{x})$ can be equipped with a topology and a measure. Set
$$
\Gam(G_{x}, H_{x}) = \Gam(G^{\p}_{x}, H^{\p}_{x})
\times \Gam(G^{\p\p}_{x}, H^{\p\p}_{x})
$$
and we equip this set with the product of the topologies and the measures defined on $\Gam(G^{\p}_{x}, H^{\p}_{x})$ and $\Gam(G^{\p\p}_{x}, H^{\p\p}_{x})$. Note that $\Gam(G_{x},H_{x}) = \emptyset$ unless $x\in G(F)_{\mathrm{ell}}$ (because otherwise $\Gam_{\ani}(H^{\p\p}_{x}) = \emptyset$). 

There is a parallel Lie algebra variant, $\Gam^{\Lie}(G,H)$ is a subset of $\Gam(\Fg)$, again equipped with a topology and a measure, as follows. for any $X\in \Fs\Fo(W)_{\ss}(F)$, we have decompositions
\begin{equation}\label{7.1.5}
G_{X} = G^{\p}_{X}\times G^{\p\p}_{X}, \quad 
H_{X} = H^{\p}_{X}\times H^{\p\p}_{X}
\end{equation}
where
\begin{align*}
&G^{\p}_{X} = \SO(W^{\p}_{X})\times \SO(V^{\p}_{X}), \quad G^{\p\p}_{X} = \SO(W^{\p\p}_{X})_{X}\times \SO(W^{\p\p}_{X})_{X},
\\
&H^{\p}_{X} = \SO(W^{\p}_{X})\ltimes N_{X}, \qquad 
H^{\p\p}_{X} = \SO(W^{\p\p}_{X})_{X}
\end{align*}
for $W^{\p}_{X}, V^{\p}_{X}$ the kernels of $X$ acting on $W$ and $V$ respectively, $W^{\p\p}_{X}$ the image of $X$ in $W$ and $N_{X}$ the centralizer of $X$ in $N$. Again the decomposition (\ref{7.1.5}) still hold for every $X\in \Fh_{\ss}(F)$ and they depend on the choice of representatives in the conjugacy class of $X$ only up to an inner automorphism. We now define $\Gam^{\Lie}(G,H)$ to be the set of semi-simple conjugacy classes $X\in \Gam(\Fh)$ such that $H^{\p\p}_{X}$ is an anisotropic torus. Similarly we can identify $\Gam^{\Lie}(G,H)$ with a subset of $\Gam(\Fg)$. Notice that $\Gam^{\Lie}(G,H)$ is a subset of $\Gam_{\mathrm{ell}}(\Fg)$ containing $0$. Moreover, fixing a set of tori $\CT$ as before we have a identification
\begin{equation}\label{7.1.6}
\Gam^{\Lie}(G,H)  =
\bigsqcup_{T\in \CT}
\Ft_{\natural}(F)/W(H,T)
\end{equation}
where for $T\in \CT$, $\Ft_{\natural}$ denotes the Zariski open subset consisting of elements $X\in \Ft$ that are regular in $\Fs\Fo(W^{\p\p}_{T})$ and acting with distinct eigenvalues in $W^{\p\p}_{T}$. By the identification (\ref{7.1.6}), $\Gam^{\Lie}(G,H)$ inherits a natural topology. Moreover, we equip $\Gam^{\Lie}(G,H)$ with the unique regular Borel measure such that
$$
\int_{\Gam^{\Lie}(G,H)}
\vphi(X)\ud X = 
\sum_{T\in \CT}
|W(H,T)|^{-1}
\nu(T)
\int_{\Ft(F)}\vphi(X)\ud X
$$
for any $\vphi\in C_{c}(\Gam^{\Lie}(G,H))$. Note that $0$ is an atom for this measure whose associated mass is $1$. The following lemma establishes a link between $\Gam^{\Lie}(G,H)$ and $\Gam(G,H)$.

\begin{lem}\label{lem:7.1.2}
Let $\ome\subset \Fg(F)$ be a $G$-excellent open neighborhood of $0$ $($\cite[Section~3.3]{beuzart2015local}$)$ and set $\Ome = \exp(\ome)$. Then, the exponential map induces a topological isomorphism
$$
\ome\cap \Gam^{\Lie}(G,H) \simeq \Ome \cap \Gam(G,H)
$$
preserving measures.
\end{lem}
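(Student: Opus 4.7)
The plan is to deduce the statement directly from the standard properties of $G$-excellent neighborhoods developed in \cite[Section~3.3]{beuzart2015local}. Recall that $G$-excellence of $\ome$ ensures that $\exp:\ome\to \Ome$ is an $F$-analytic isomorphism that commutes with $G(F)$-conjugation, sends semi-simple elements to semi-simple elements, and satisfies $G_X = G_{\exp(X)}$ for every $X\in \ome_\ss$. Moreover, by the normalization convention underlying the definition, the restriction $\exp:\ome\cap \Ft(F)\to \Ome\cap T(F)$ is measure-preserving for any maximal torus $T\subset G$ defined over $F$.

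The key structural observation is that the decompositions (\ref{7.1.5}) and (\ref{7.1.1}) match under $\exp$. Concretely, for any $X\in \ome\cap \Fh_\ss(F)$ (which we may assume to lie in $\Fs\Fo(W)(F)$), I would verify the equalities $W^{\p}_{X}=\ker(X)=\ker(1-\exp(X))=W^{\p}_{\exp(X)}$ on $W$, and analogously on $V$. This holds because $G$-excellence of $\ome$ forces that no nonzero eigenvalue of $X$ acting on $W\oplus V$ is carried to $1$ by $\exp$, so the eigenspace decomposition on the Lie side and the group side coincide. Taking orthogonal complements, $W^{\p\p}_{X}=W^{\p\p}_{\exp(X)}$, whence $G^{\p}_{X}\times G^{\p\p}_{X}=G^{\p}_{\exp(X)}\times G^{\p\p}_{\exp(X)}$ and $H^{\p}_{X}\times H^{\p\p}_{X}=H^{\p}_{\exp(X)}\times H^{\p\p}_{\exp(X)}$ as algebraic subgroups over $F$. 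In particular, $H^{\p\p}_{X}$ is an anisotropic torus if and only if $H^{\p\p}_{\exp(X)}$ is. Combining this with the fact that $\exp$ induces a bijection $(\ome\cap \Fh_\ss(F))/H(F) \simeq (\Ome\cap H_\ss(F))/H(F)$, one obtains a bijection $\ome\cap \Gam^{\Lie}(G,H) \simeq \Ome\cap \Gam(G,H)$.

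Finally, using the parametrizations
$$
\ome\cap \Gam^{\Lie}(G,H) \simeq \bigsqcup_{T\in \CT}(\ome\cap \Ft_{\natural}(F))/W(H,T),\quad \Ome\cap \Gam(G,H) \simeq \bigsqcup_{T\in \CT}(\Ome\cap T_{\natural}(F))/W(H,T),
$$
on each stratum the map is induced by the analytic isomorphism $\exp:\ome\cap \Ft(F)\to \Ome\cap T(F)$, which carries $\Ft_{\natural}(F)\cap \ome$ homeomorphically onto $T_{\natural}(F)\cap \Ome$ (the open condition of acting on $W^{\p\p}_{T}$ with distinct eigenvalues is preserved under exponentiation after shrinking $\ome$ if needed). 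Descending to the $W(H,T)$-quotients yields the topological isomorphism. Since the measures on both sides are assembled from the Haar measures on $\Ft(F)$ and $T(F)$ with the same weights $|W(H,T)|^{-1}\nu(T)$, and since exp is measure-preserving on each stratum by the $G$-excellence normalization, the stratumwise identifications globalize to a measure-preserving isomorphism. The only substantive point is the compatibility of the structural decompositions $W = W^{\p}_{X}\oplus^{\perp}W^{\p\p}_{X}$ and $W = W^{\p}_{\exp(X)}\oplus^{\perp}W^{\p\p}_{\exp(X)}$, but this is a routine consequence of $G$-excellence and constitutes the main (and only real) technical obstacle in the argument.
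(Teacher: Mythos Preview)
Your proposal is correct and is precisely a spelled-out version of the argument the paper defers to via the one-line citation ``The proof of \cite[Lemma~11.1.2]{beuzart2015local} works verbatim.'' The only cosmetic remark is that the hedge ``after shrinking $\ome$ if needed'' is unnecessary: the injectivity of $\exp$ on eigenvalues (hence preservation of the ``distinct eigenvalues on $W^{\p\p}_T$'' condition defining $\Ft_\natural$ and $T_\natural$) is already part of what $G$-excellence guarantees, so no further shrinking is required for the lemma as stated.
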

\begin{proof}
The proof of \cite[Lemma~11.1.2]{beuzart2015local} works verbatim.
\end{proof}

We are ready to introduce the geometric multiplicity formula and its Lie algebra variant.

\subsubsection{The germs $c_\theta$}

We first define the germ $c_{\theta}$ associated to a quasi-character $\theta$ of $G(F)$. Here we would like to mention that the germ $c_{\theta}$ we are considering are of different nature from the unitary group case considered in \cite{beuzart2015local}.

Let $\theta$ be a quasi-character of $G(F) =  \SO(W)\times \SO(V).$ For convenience we only consider the case when $G$ is quasi-split. Recall from Section \ref{sub:ggptriples:definitions} that the regular nilpotent orbits of $G(F)$ has the following description.
\begin{itemize}
\item If $m=\dim W\geq 4$ is even, then the regular nilpotent orbits of $G(F)$ are parametrized by $\CN^{W};$

\item If $d = \dim V\geq 4$ is even, then the regular nilpotent orbits of $G(F)$ are parametrized by $\CN^{V}$;

\item For other cases, $G(F)$ has a unique regular nilpotent orbit $\CO_{\reg}$.
\end{itemize}

In each situation, for any $\nu\in \CN^{W}$ (resp. $\nu\in \CN^{V}$), let $\CO_{\nu}$ be the associated regular nilpotent orbit of $G$, which admits explicit description as in Section \ref{sub:ggptriples:definitions}.

We first consider the case when $G$ has only one regular nilpotent orbit $\CO_{\reg}$. In this case, we simply set $c_{\theta} = c_{\theta, \CO_{\reg}}$. 

Then we consider the case when $d = \dim V\geq 4$ is even. In this case the regular nilpotent orbits of $G(F)$ are parametrized by $\CN^{V}$. Since $\dim W$ is odd and $\SO(W)$ is quasi-split, we have that $q_{W,\an}$ is of dimension one. Let $q_{D}$ be the restriction of the quadratic form $q_{V}$ to the associated line $D$. Then $(V,q_{V})$ has the same anisotropic kernel as $q_{W,\an}\oplus q_{D}$. In particular, $\nu_{0} = q(v_{0})\in \CN^{V}$. Then we set $c_{\theta} = c_{\theta, \CO_{\nu_{0}}}$. 

Finally when $m=\dim W\geq 4$ is even, the regular nilpotent orbits of $G(F)$ are parametrized by the set $\CN^{W}$. Since $G(F)$ is quasi-split, the quadratic form $q_{W,\an}\oplus q_{D}$ is actually split. In particular, $-\nu_{0} = -q(v_{0})\in \CN^{W}$. Then set $c_{\theta} = c_{\theta, \CO_{-\nu_{0}}}$.

\subsubsection{Discriminants}

Set
$$
\Del(x) = D^{G}(x)D^{H}(x)^{-2}
$$
for any $x\in H_{\ss}(F)$ where $D^{G}(x) = |\det(1-\Ad(x))_{|\Fg/\Fg_{x}}|$ and $D^{H}(x) = |\det (1-\Ad(x))_{|\Fh/\Fh_{x}}|$. Then (\cite[Lemma~3.1.1]{beuzart2015local}) 
\begin{equation}\label{7.2.1}
\Del(x) = |\det(1-x)_{|W^{\p\p}_{x}}|
\end{equation}
for any $x\in H_{\ss}(F)$. Similarly define
$$
\Del(X) =D^{G}(X)D^{H}(X)^{-2}
$$
for any $X\in \Fh_{\ss}(F)$ and
\begin{equation}\label{7.2.2}
\Del(X) = |\det(X_{|W^{\p\p}_{X}})|
\end{equation}
for any $X\in \Fh_{\ss}(F)$. Let $\ome\subset \Fg(F)$ be a $G$-excellent open neighborhood of $0$. Then $\ome\cap \Fh(F)$ is an $H$-excellent open neighborhood of $0$ (cf. the remark at the end of \cite[Section~3.3]{beuzart2015local}). We may thus set
$$
j^{H}_{G}(X)= j^{H}(X)^{2}j^{G}(X)^{-1}
$$
for any $X\in \ome\cap \Fh(F)$. By \cite[3.3.1]{beuzart2015local},
\begin{equation}\label{7.2.3}
j^{H}_{G}(X) = \Del(X)\Del(e^{X})^{-1}
\end{equation}
for any $X\in \ome \cap \Fh_{\ss}(F)$. Note that $j^{H}_{G}$ is a smooth positive and $H(F)$-invariant function on $\ome \cap \Fh(F)$. It actually extends (not uniquely) to a smooth, positive and $G(F)$-invariant function on $\ome$. This can be seen as follows. We can embed the groups $H_{1} = \SO(W)$ and $H_{2} = \SO(V)$ into a GGP triple $(G_{1},H_{1},\xi_{1})$ and $(G_{2},H_{2},\xi_{2})$. Then the function $X=(X_{W},X_{V})\in \ome \to j^{H_{1}}_{G_{1}}(X_{W})^{1/2}j^{H_{2}}_{G_{2}}(X_{V})^{1/2}$ can be seen as such an extension through using the equality (\ref{7.2.3}). We will always assume that such an extension has been chosen and we will still denote it by $j^{H}_{G}$.

Let $x\in H_{\ss}(F)$. Define
$$
\Del_{x}(y) = D^{G_{x}}(y)D^{H_{x}}(y)^{-2}
$$
for any $y\in H_{x,\ss}(F)$. On the other hand, since the triple $(G^{\p}_{x}, H^{\p}_{x}, \xi^{\p}_{x})$ is a GGP triple, the previous construction yields a function $\Del^{G^{\p}_{x}}$ on $H^{\p}_{x,\ss}(F)$. Then
\begin{equation}\label{7.2.4}
\Del_{x}(y) = \Del^{G^{\p}_{x}}(y^{\p})
\end{equation}
for any $y = (y^{\p}, y^{\p\p})\in H_{x,\ss}(F) = H^{\p}_{x,\ss}(F)\times H^{\p\p}_{x,\ss}(F)$.

Let $\Ome_{x}\subset G_{x}(F)$ be a $G$-good open neighborhood of $x$. Then, $\Ome_{x}\cap H(F)\subset H_{x}(F)$ is a $H$-good open neighborhood of $x$. This allows us to set
$$
\eta^{H}_{G,x}(y)  = \eta^{H}_{x}(y)^{2}\eta^{G}_{x}(y)^{-1}
$$
for any $y\in \Ome_{x}\cap H(F)$. By \cite[3.2.4]{beuzart2015local}
\begin{equation}\label{7.2.5}
\eta^{H}_{G,x}(y) = \Del_{x}(y) \Del(y)^{-1}
\end{equation}
for any $y\in \Ome_{x}\cap H_{\ss}(F)$. Note that $\eta^{H}_{G,x}$ is a smooth, positive and $H_{x}(F)$-invariant function on $\Ome_{x}\cap H(F)$. It also extends (not uniquely) to a smooth, positive and $G_{x}(F)$-invariant function on $\Ome_{x}$. We will always still denote by $\eta^{H}_{G,x}$ such an extension.

The definitions of the distributions $m_{\geom}$ and $m^{\Lie}_{\geom}$ are contained in the following proposition.

\begin{pro}\label{pro:7.2.1}
\begin{enumerate}
\item Let $\theta\in QC(G)$. Then the following integral is absolutely convergent for any $\Re(s)>0$
$$
\int_{\Gam(G,H)}
D^{G}(x)^{1/2}c_{\theta}(x)\Del(x)^{s-1/2}\ud x
$$
and the following limit
$$
m_{\geom}(\theta) :=
\lim_{s\to 0^+}
\int_{\Gam(G,H)}
D^{G}(x)^{1/2}c_{\theta}(x)\Del(x)^{s-1/2}\ud x.
$$
exists. Similarly, for any $x\in H_{\ss}(F)$ and $\theta_{x}\in QC(G_{x})$, the following integral is absolutely convergent for any $\Re(s)>0$
$$
\int_{\Gam(G_{x},H_{x})}
D^{G_{x}}(y)^{1/2}
c_{\theta_{x}}(y)
\Del_{x}(y)^{s-1/2}\ud y
$$
and the following limit
$$
m_{x,\geom}(\theta_{x}) :=
\lim_{s\to 0^+}
\int_{\Gam(G_{x},H_{x})}
D^{G_{x}}(y)^{s-1/2}
c_{\theta_{x}}(y)
\Del_{x}(y)^{s-1/2}\ud y
$$
exists. In particular, $m_{\geom}$ is a continuous linear form on $QC(G)$ and for any $x\in H_{\ss}(F)$, $m_{x,\geom}$ is a continuous linear form on $QC(G_{x}).$

\item Let $x\in H_{\ss}(F)$ and let $\Ome_{x}\subset G_{x}(F)$ be a $G$-good open neighborhood of $x$ and set $\Ome = \Ome^{G}_{x}$. Then, if $\Ome_{x}$ is sufficiently small,
$$
m_{\geom}(\theta) = 
\frac{2}{[Z_{H^{+}}(x)(F):H_{x}(F)]}
m_{x,\geom}((\eta^{H}_{G,x})^{1/2}\theta_{x,\Ome_{x}})
$$
for any $\theta\in QC_{c}(\Ome)$.

\item Let $\theta\in QC_{c}(\Fg)$. Then the following integral is absolutely convergent for any $\Re(s)>0$
$$
\int_{\Gam^{\Lie}(G,H)}
D^{G}(X)^{1/2}c_{\theta}(X)\Del(X)^{s-1/2}\ud X
$$
and the following limit
$$
m^{\Lie}_{\geom}(\theta):=
\lim_{s\to 0^+}
\int_{\Gam^{\Lie}(G,H)}
D^{G}(X)^{1/2}c_{\theta}(X)\Del(X)^{s-1/2}\ud X
$$
exists.
In particular, $m^{\Lie}_{\geom}$ is a continuous linear form on $QC_{c}(\Fg)$ that extends continuously to $SQC(\Fg)$ and
$$
m^{\Lie}_{\geom}(\theta_{\lam}) = 
|\lam|^{\del(G)/2}
m^{\Lie}_{\geom}(\theta)
$$
for any $\theta\in SQC(\Fg)$ and $\lam\in F^{\times}$. Here we recall that $\theta_{\lam}(X) = \theta(\lam^{-1}X)$ for any $X\in \Fg_{\reg}(F)$.

\item Let $\ome\subset \Fg(F)$ be a $G$-excellent open neighborhood of $0$ and set 
$\Ome = \exp(\ome)$. Then
$$
m_{\geom}(\theta) = m^{\Lie}_{\geom}((j^{H}_{G})^{1/2}\theta_{\ome})
$$
for any $\theta\in QC_{c}(\Ome)$.
\end{enumerate}
\end{pro}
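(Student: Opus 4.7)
The plan is to adapt \cite[Proposition~11.2.1]{beuzart2015local} to the special orthogonal group setting, where the main structural novelty is that $c_\theta$ here is defined via a single distinguished regular nilpotent orbit (either $\CO_{\nu_0}$ or $\CO_{-\nu_0}$ depending on the parity of $\dim V$) rather than as an average over the full set $\CN^V$ or $\CN^W$. I treat the four parts in order, with (1) carrying the main analytical content and (2)--(4) adapting existing arguments.

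For part~(1), I would use the identification $\Gam(G,H) \simeq \bigsqcup_{T\in\udl{\CT}} T_\natural(F)/W(H,T)$ from~(\ref{7.1.3}) and apply Weyl integration on each torus. Near any semisimple $y \in T(F)$ I would control $c_\theta$ via the germ expansion of quasi-characters recalled in \cite[\S 4]{beuzart2015local}: shrinking to a $G$-good neighborhood $\Omega_y$, one expresses $c_\theta$ in terms of $c_{\theta_{y,\Omega_y}}$ modulo the smooth factor $(\eta^H_{G,y})^{1/2}$, and reduces to Lie-algebra germs near $0$. The product $D^G(x)^{1/2}\Del(x)^{-1/2}$ is comparable to $D^H(x)$, which is locally integrable on any torus; together with the extra factor $\Del(x)^s$ this gives absolute convergence for $\Re(s) > 0$. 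Existence of $\lim_{s\to 0^+}$ then reduces to showing that the potential obstruction at $s=0$ sits at finitely many isolated singular strata, each resolved by analytic continuation of the holomorphic family $s \mapsto \Del^{s-1/2}$ against smooth germ data.

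Parts~(2)--(4) follow the templates of \cite[\S 11.2]{beuzart2015local}. For~(2), shrink $\Omega_x$ small enough to truncate germ expansions, then exchange the $\Gam(G,H)$-integral near $x$ for a $\Gam(G_x,H_x)$-integral using~(\ref{7.2.4}) and~(\ref{7.2.5}); the combinatorial factor $2/[Z_{H^+}(x)(F):H_x(F)]$ records how many $H(F)$-orbits of semisimple classes fuse into a single $H_x(F)$-orbit under localization. For~(3), apply the substitution $X \mapsto \lam X$: the measure on $\Gam^\Lie(G,H)$, the factor $D^G(X)^{1/2}\Del(X)^{-1/2}$, and the germ $c_{\theta_\lam}(X)$ scale by explicit powers of $|\lam|$ (depending respectively on $\dim \Gam^\Lie(G,H)$, on $\dim H - \rank H$, and on $-(\dim \CO_\reg)/2$); summing the exponents yields exactly $\del(G)/2$. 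For~(4), combine Lemma~\ref{lem:7.1.2}, the identity $j^H_G(X) = \Del(X)\Del(e^X)^{-1}$ from~(\ref{7.2.3}), and the fact that regular germ coefficients are preserved under the exponential map.

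The main obstacle is part~(1), specifically justifying that a single distinguished regular nilpotent orbit (rather than an orbit-average as in the unitary case, where scaling automorphisms permute the orbits) still yields a well-defined regularized integral with a finite limit at $s=0$. Convergence at the boundary requires that the contribution from each $T \in \udl{\CT}$ matches the anisotropic kernel of the quadratic form $(W_y^\p, V_y^\p)$ in precisely the way dictated by Proposition~\ref{pro:6.3.1} and the endoscopic germ formulas of Appendix~\ref{sec:germformula}; without this combinatorial matching the integral would diverge logarithmically as $s \to 0^+$, and its presence is what forces the specific choice of $\CO_{\nu_0}$ or $\CO_{-\nu_0}$ in the definition of $c_\theta$.
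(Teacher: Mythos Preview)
Your overall plan for parts (2)--(4) matches the paper: (2) is a direct citation of \cite[Lemme~8.3]{waldspurger10} after noting $\theta_{x,\Ome_x}\in QC_c(\Ome_x)^{Z_G(x)}$; (3) is proved in parallel with (1) plus the scaling computation you describe and \cite[Proposition~4.6.1(ii)]{beuzart2015local} for the extension to $SQC(\Fg)$; (4) is Lemma~\ref{lem:7.1.2} combined with~(\ref{7.2.3}), exactly as you say.

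For part (1), however, your final paragraph contains a genuine conceptual error. You claim that the existence of the limit at $s=0$ depends on the combinatorial matching forced by the choice of $\CO_{\nu_0}$ or $\CO_{-\nu_0}$, and that without this choice the integral would diverge logarithmically. This is not the case. The paper's argument (following \cite[Proposition~11.2.1(i)]{beuzart2015local} and \cite[\S 7.3--\S 7.7]{waldspurger10}) reduces the existence of the limit to the case $\theta=\vphi\,\wh{j}(\CO,\cdot)$ for \emph{any} $\CO\in\nil_\reg(\Fg)$, and then proves it by Waldspurger's method for each such $\CO$ separately. The limit exists regardless of which regular nilpotent germ one takes; nothing in Proposition~\ref{pro:7.2.1} singles out $\CO_{\pm\nu_0}$. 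The specific choice of orbit is determined only later, in Section~\ref{sec:7.8}, by requiring that $m^{\Lie}_\geom$ agree with $J^{\Lie}_{\qc}$ on the nilpotent support---this is where the intersection with $\Sig'(F)$ and the germ computations of Appendix~\ref{sec:germcal} enter, not here.

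You also miss the one subtle technical point the paper does flag for $F=\BR$: Waldspurger's \cite[Lemme~7.6]{waldspurger10} fails for a general archimedean quasi-character because the local germ expansion is only asymptotic, not exact. The paper observes that this is harmless precisely because after the reduction one only needs the lemma for $\theta=\vphi\,\wh{j}(\CO,\cdot)$, which \emph{does} have an exact expansion in terms of nilpotent orbital integrals. Your sketch of ``analytic continuation of the holomorphic family $s\mapsto\Del^{s-1/2}$ against smooth germ data'' does not address this point and would not suffice on its own.
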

\begin{proof}
(1) 
When $F$ is $p$-adic, the proof follows from \cite[\S7.3-\S7.7]{waldspurger10}.

When $F=\BR$, the only difference between our situation and that of \cite[Proposition~11.2.1~(i)]{beuzart2015local} is the last part (\cite[(11.2.22)]{beuzart2015local}) of the proof in \cite{beuzart2015local}, where in the unitary group situation the regular nilpotent orbits can be permuted by scaling (\cite[Section~6.1]{beuzart2015local}), which is not the case for special orthogonal groups. 
However, following the proof of \cite[Proposition~11.2.1~(i)]{beuzart2015local}, indeed, the problem can be reduced to show the limit 
\begin{equation}\label{eq:cov}
\lim_{s\to 0^+}m^\Lie_{\geom,s}(\theta)
\end{equation}
exist for any $\theta = \vphi\wh{j}(\CO,\cdot)$, where $\vphi\in \CC^\infty (\Fg)^G$ is an invariant smooth function compactly supported modulo conjugation and equals $1$ in some neighborhood of $0$, and $\CO\in \nil_\reg(\Fg)$. 

The proof of \eqref{eq:cov} follows from the same argument as in \cite[\S7.3-\S7.7]{waldspurger10}, except there are some small remarks needed:
\begin{itemize}
\item In the proof of \cite[Lemme~7.4]{waldspurger10}, change $\Ome_s$ to be the subset of $\ome$ with coordinates $(\lam_j)_{j=1,...,m}$ in the basis $(e_j)_{j=1,...,m}$ satisfying $\frac{1}{2}\leq |\lam_j|\leq 1$, and change $\vpi_{F}^{2k}$ to $(\frac{1}{2})^{2k}$;

\item The proof of \cite[Lemme~7.6]{waldspurger10} does not work in general in the archimedean case, as the germ expansions for quasi-characters is not an exact identity parametrized by nilpotent orbits. However, the quasi-character $\theta$ appearing in \eqref{eq:cov} is equal to $\vphi\wh{j}(\CO,\cdot)$. In particular the proof of \cite[Lemme~7.6]{waldspurger10} indeed applies to our situation.
\end{itemize}

(2) We notice that $\theta_{x,\Ome_{x}}\in QC_{c}(\Ome_{x})^{Z_{G}(x)}$ (cf. \cite[Proposition~4.4.1 (iii)]{beuzart2015local}). Hence the proof follows from \cite[Lemme~8.3]{waldspurger10} directly.

(3) The absolute convergence can be proved in parallel with (1) using \cite[Lemma~B.1.2(ii)]{beuzart2015local}. The existence of the limit follows from the same argument as in \cite[Proposition~11.2.1~(iii)]{beuzart2015local}. The homogeneity of $m^{\Lie}_{\geom}(\theta_{\lam})$  follows from the direct computation as in \cite[Proposition~11.2.1]{beuzart2015local}. Finally by \cite[Proposition~4.6.1(ii)]{beuzart2015local}, $m^{\Lie}_{\geom}$ can be extended to $SQC(\Fg)$.

(4) The proof is parallel to (2), where we apply Lemma \ref{lem:7.1.2} to our situation.
\end{proof}

\subsection{Geometric multiplicity and parabolic induction}\label{sec:7.3}
Let $L$ be a Levi subgroup of $G$. Then as in Section \ref{sec:temperedint:4}, $L$ can be decomposed as a product 
$$
L = L^{\GL}\times \wt{G}
$$
where $L^{\GL}$ is a product of general linear groups over $F$ and $\wt{G}$ belongs to a GGP triple $(\wt{G}, \wt{H}, \wt{\xi})$ which is well-defined up to $\wt{G}(F)$ conjugation. In particular, there is continuous linear form $m^{\wt{G}}_{\geom}$ on $QC(\wt{G})$. Define a continuous linear form $m^{L}_{\geom}$ on $QC(L) = QC(L^{\GL})\wh{\otimes}_{p}QC(\wt{G})$ by setting 
$$
m^{L}_{\geom}
(\theta^{\GL}\otimes \wt{\theta}) = 
m^{\wt{G}}_{\geom}(\wt{\theta})c_{\theta^{\GL}}(1)
$$
for any $\theta^{\GL}\in QC(L^{\GL})$ and $\wt{\theta}\in QC(\wt{G})$. 

Before stating the relation between geometric multiplicity and parabolic induction, we note the following lemma, which is the analogue result for \cite[Lemme~2.3]{waldspurgertemperedggp} associated with regular nilpotent germs in the archimedean case. For the notation appearing in the lemma below we refer to \cite[Lemme~2.3]{waldspurgertemperedggp}.

\begin{lem}
Suppose $\theta^L\in QC(L)$ and $\theta = i^G_L\theta^L$ (c.f. \cite[(3.4.2)]{beuzart2015local}). Then
\begin{enumerate}
\item $\theta\in QC(G)$;

\item Suppose $x\in G_\ss(F)$ and $\CO\in \nil_\reg(\Fg_x)$, then 
$$
D^G(x)^{1/2}c_{\theta,\CO}(x) = 
\sum_{y\in \CX^L(x)}
\sum_{g\in \Gam_y/G_x(F)}
\sum_{\CO^\p\in \nil_\reg(\Fl_y)}
\frac{[g\CO:\CO^\p]}{[Z_L(y):L_y(F)]}
D^L(y)^{1/2}c_{\theta^L, \CO^\p}(y).
$$
\end{enumerate}
\end{lem}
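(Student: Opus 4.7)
The plan is to extend Waldspurger's argument from \cite[Lemme~2.3]{waldspurgertemperedggp} to the archimedean case. Part (1), that $\theta=i^G_L\theta^L$ is a quasi-character on $G$, is a general property of parabolic induction of quasi-characters and can be established along the same lines as \cite[Proposition~4.7.1]{beuzart2015local}, using that induction commutes with the localization construction $\theta\mapsto \theta_{x,\Ome^G_x}$.

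For part (2) the strategy proceeds in three steps. First, I would localize at $x$: choose a sufficiently small $G$-good open neighborhood $\Ome_x\subset G_x(F)$ of $x$ and apply the parabolic descent formula for induced quasi-characters, which is the $QC$-analogue of Harish-Chandra's descent formula for characters of induced representations. This yields, up to the standard normalizations of parabolic induction,
$$
\theta_{x,\Ome^G_x} \;=\; \sum_{y\in\CX^L(x)}\sum_{g\in\Gam_y/G_x(F)}\frac{1}{[Z_L(y):L_y(F)]}\,\Ad(g)^*\Bigl(i^{G_x}_{L_y}\bigl((\theta^L)_{y,\Ome^L_y}\bigr)\Bigr).
$$
Second, I would transfer the problem to the Lie algebra via the exponential map, using the compatibility between regular germs of quasi-characters on $G_x(F)$ at $x$ and regular nilpotent germs of their Lie algebra localizations at $0$ (as in the proof of Proposition~\ref{pro:7.2.1}(4)).

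Third, I would apply the Lie-algebra formula for regular nilpotent germs of parabolically induced quasi-characters: for $f^\Fl\in QC(\Fl_y)$ and $\CO\in\nil_\reg(\Fg_x)$,
$$
D^{G_x}(0)^{1/2}\, c_{i^{\Fg_x}_{\Fl_y} f^\Fl,\CO}(0) \;=\; \sum_{\CO^\p\in\nil_\reg(\Fl_y)} [\CO:\CO^\p]\, D^{L_y}(0)^{1/2}\, c_{f^\Fl,\CO^\p}(0),
$$
where $[\CO:\CO^\p]$ encodes the multiplicity of $\CO$ in the parabolic induction of $\CO^\p$. In the $p$-adic case this is essentially \cite[Lemme~7.4]{waldspurger10}; the archimedean case follows by the same argument, using the germ analysis underlying Proposition~\ref{pro:7.2.1}(1). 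Combining the three steps and tracking the discriminant factors $D^G(x)^{1/2}$ and $D^L(y)^{1/2}$ produced in Step 1 yields the claimed identity.

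The main obstacle is the third step in the archimedean setting: germ expansions of quasi-characters are only asymptotic, so isolating the regular nilpotent contribution requires the rescaling technique developed in Proposition~\ref{pro:7.2.1}(1), in particular the homogeneity $m^\Lie_\geom(\theta_\lam)=|\lam|^{\del(G)/2}m^\Lie_\geom(\theta)$ used to extract the leading coefficient of the germ expansion. A secondary technical point is to verify that the combinatorial coefficient $[g\CO:\CO^\p]$ arising from the descent matches Waldspurger's definition; this is a direct check using the explicit parametrization of regular nilpotent orbits given in Section~\ref{sub:ggptriples:definitions}.
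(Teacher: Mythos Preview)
Your strategy is essentially the same as the paper's: localize at $x$, pass to the Lie algebra, and reduce the computation of the regular germ of the induced quasi-character to the regular germs of $\theta^L$. The paper just packages the key archimedean step more cleanly. Rather than invoking the rescaling machinery of Proposition~\ref{pro:7.2.1}(1) and the homogeneity of $m^{\Lie}_{\geom}$ (which is not quite the right tool---you want the limit formula for $c_{\theta,\CO}$ from \cite[p.98, Section~4.5]{beuzart2015local} and the homogeneity of the individual $\wh{j}(\CO,\cdot)$, not of the aggregate distribution $m^{\Lie}_{\geom}$), the paper observes that the limit formula together with the explicit induction formula \cite[(3.4.2)]{beuzart2015local} lets one replace $\theta^L$ by a \emph{finite} linear combination of the distributions $\wh{j}(\CO^\p,\cdot)$ with $\CO^\p\in\nil_{\reg}(\Fl)$. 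For such $\theta^L$ the germ expansion is an exact identity, not merely asymptotic, and parabolic induction sends this class to finite combinations of $\wh{j}(\CO,\cdot)$ with $\CO\in\nil_{\reg}(\Fg)$. At that point Waldspurger's $p$-adic argument \cite[Lemme~2.3]{waldspurgertemperedggp} applies verbatim, and no separate archimedean analysis is needed. Your plan would also work, but the reference to Proposition~\ref{pro:7.2.1}(1) obscures that the real shortcut is this reduction to exact linear combinations of regular-nilpotent Fourier transforms.
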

\begin{proof}
(1) follows from \cite[Proposition~4.7.1~(i)]{beuzart2015local}.

For (2), through the limit formula in \cite[p.98, Section~4.5]{beuzart2015local} together with the explicit induction formula (\cite[(3.4.2)]{beuzart2015local}), we can reduce $\theta^L\in QC(L)$ to $\theta^L\in QC(\Fl)$ which is finite linear combinations of distributions $\{\wh{j}(\CO^\p,\cdot)|\quad \CO^\p\in \nil_\reg(\Fl)\}$. Since the parabolic induction of finite linear combinations in $\{\wh{j}(\CO^\p,\cdot)|\quad \CO^\p\in \nil_\reg(\Fl)\}$ is finite linear combinations of $\{\wh{j}(\CO,\cdot)|\quad \CO\in \nil_\reg(\Fg) \}$, the proof of \cite[Lemme~2.3]{waldspurgertemperedggp} applies verbatim.
\end{proof}

Now combing with the lemma above, the following lemma follows from the same argument as \cite[Lemme~7.2]{waldspurgertemperedggp}.
\begin{lem}\label{lem:7.3.1}
Let $\theta^{L}\in QC(L)$ and set $\theta = i^{G}_{L}(\theta^{L})$. Then
$$
m_{\geom}(\theta) = m^{L}_{\geom}(\theta^{L}).
$$
\end{lem}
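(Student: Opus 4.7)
The plan is, following \cite[Lemme~7.2]{waldspurgertemperedggp}, to substitute the induction formula for the regular germ $c_\theta$ established in the preceding lemma directly into
$$
m_\geom(\theta) = \lim_{s\to 0^+}\int_{\Gam(G,H)} D^G(x)^{1/2} c_\theta(x) \Del(x)^{s-1/2}\, dx
$$
and then repackage the resulting expression so as to match the definition
$$
m^L_\geom(\theta^{\GL}\otimes \wt{\theta}) = c_{\theta^{\GL}}(1)\cdot m^{\wt{G}}_\geom(\wt{\theta}).
$$

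First I would verify that the chosen regular nilpotent orbit $\CO$ of $\Fg$ used to define $c_\theta$ restricts, under the induction formula, only to the compatible regular nilpotent orbits $\CO^\p$ of $\Fl_y$ of the form $\CO^{\GL}_{\reg}\times \wt{\CO}$, where $\wt{\CO}$ is the distinguished regular nilpotent orbit of $\wt{\Fg}_y$ used to define $c_{\wt{\theta}}$ in the GGP setup. This compatibility is enforced by the anisotropic-line and discriminant selection recalled in Section~\ref{sec:linearformgeom}: a split GL factor carries a unique regular nilpotent orbit, automatically compatible with the selection on $G$. After substitution, the regular germ factorizes as
$$
c_{\theta^L, \CO^{\GL}_{\reg}\times \wt{\CO}}(y^{\GL},\wt{y}) = c_{\theta^{\GL}, \CO^{\GL}_{\reg}}(y^{\GL})\cdot c_{\wt{\theta}, \wt{\CO}}(\wt{y}),
$$
so the integral breaks into a double sum over pairs $(y^{\GL}, \wt{y})$.

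Next I would carry out a change of variables from $\Gam(G,H)$ to the relevant subset of $L(F)$-conjugacy classes. The key geometric input is that $x\in H_\ss(F)$ lies in $\Gam(G,H)$ if and only if $H^{\p\p}_x$ is an anisotropic torus; the preimage of such an $x$ inside $L$ therefore has $\wt{y}$-component in $\Gam(\wt{G},\wt{H})$ while the $y^{\GL}$-component is forced to be central (equivalently, represented by the identity after the Weyl-group averaging built into the definition of $m^L_\geom$). This forces $c_{\theta^{\GL},\CO^{\GL}_{\reg}}(y^{\GL}) = c_{\theta^{\GL}}(1)$, which factors out of the integral. Simultaneously, using $\Del(x) = \Del^{\wt{G}}(\wt{y})$ together with the comparison between $D^G(x)^{1/2}$ and $D^L(y)^{1/2}$ and the combinatorial factors $[g\CO:\CO^\p]/[Z_L(y):L_y(F)]$ from the induction formula, the remaining integral becomes precisely $m^{\wt{G}}_\geom(\wt{\theta})$.

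The main obstacle will be the combinatorial bookkeeping: one must check that the multiplicities $[g\CO:\CO^\p]$, the Weyl-group factors $|W(H,T)|^{-1}$ defining the measure on $\Gam(G,H)$, and the centralizer indices $[Z_L(y):L_y(F)]$ combine so that (a) Levi classes $y$ whose $y^{\GL}$-component is not central contribute zero because they do not correspond to any element of $\Gam(G,H)$, and (b) for the surviving contribution the constants collapse to $1$. This is the term-by-term matching carried out in \cite[Lemme~7.2]{waldspurgertemperedggp} in the $p$-adic case, and since the induction formula just stated has exactly the same shape, Waldspurger's calculation transfers verbatim. The only new care required is to ensure that the specific choice of regular nilpotent orbit ($\CO_{-\nu_0}$ or $\CO_{\nu_0}$ from Section~\ref{sec:linearformgeom}) is preserved under induction, rather than the averaged choice used for unitary groups in \cite{beuzart2015local}; this is precisely the content of the first step above, and it rests on the explicit description of regular nilpotent orbits on $\SO$ recorded in Section~\ref{sub:ggptriples:definitions}.
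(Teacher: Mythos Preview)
Your proposal is correct and takes essentially the same approach as the paper: both invoke \cite[Lemme~7.2]{waldspurgertemperedggp} combined with the regular-germ induction formula of the preceding lemma, with the only new wrinkle being the verification that the specific regular nilpotent orbit selected in Section~\ref{sec:linearformgeom} (rather than an average) is preserved under induction. The paper's own proof is a one-line reference to Waldspurger, so your sketch is in fact a fleshed-out version of exactly what the paper intends.
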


\subsection{The theorems}\label{sec:7.4}
Set
\begin{align*}
&J_{\geom}(f) = m_{\geom}(\theta_{f}), \quad f\in \CC_{\scusp}(G),
\\
&m_{\geom}(\pi) = m_{\geom}(\theta_{\pi}) , \quad \pi\in \Temp(G),
\\
&J^{\Lie}_{\geom}(f) = m^{\Lie}_{\geom}(\theta_{f}), \quad f\in \CS_{\scusp}(\Fg).
\end{align*}

\begin{thm}\label{thm:7.4.1}
$$
J(f)= J_{\geom}(f)
$$
for any $f\in \CC_{\scusp}(G)$.
\end{thm}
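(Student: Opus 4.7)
The plan is to adapt the strategy of Beuzart-Plessis \cite[Section~11]{beuzart2015local} and Waldspurger \cite[Section~10-11]{waldspurger10} to the present setting, using the technical input from Theorem \ref{thm:6.8.1} and the endoscopic machinery developed in the appendices.

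First, both $f\mapsto J(f)$ and $f\mapsto J_\geom(f)$ are continuous linear forms on $\CC_\scusp(G)$: for $J$ this follows from Theorem \ref{thm:covofJ} combined with Proposition \ref{pro:ggptriple:xiHbsGfun}~(3), while for $J_\geom(f)=m_\geom(\theta_f)$ it follows from Proposition \ref{pro:7.2.1}~(1) together with the continuity of $f\mapsto \theta_f$ from $\CC_\scusp(G)$ to $QC(G)$. Using a partition of unity indexed by $H(F)$-conjugacy classes of semisimple elements of $H(F)$, and using that only classes meeting $\Gam(G,H)$ contribute to either side (trivially on the geometric side; on the spectral side this follows from the germ expansion of $\theta_f$ and the dimension count built into Proposition \ref{pro:7.2.1}), the theorem reduces to a localization statement: for every $x\in H_\ss(F)$ with $\Gam(G_x,H_x)\neq\emptyset$ and every sufficiently small $G$-good $H(F)$-invariant open neighborhood $\Ome$ of the $H(F)$-orbit of $x$, the identity $J(f)=J_\geom(f)$ holds for all $f\in\CC_\scusp(G)$ supported in $\Ome$.

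Second, one performs semisimple descent. The decomposition $(G_x,H_x,\xi_x)=(G'_x,H'_x,\xi'_x)\times(G''_x,H''_x,1)$ together with the fact that $H''_x$ is an anisotropic torus when $x\in\Gam(G,H)$ lets one identify the local contribution at $x$ to $J(f)$ with an analogous distribution for the smaller GGP triple $(G'_x,H'_x,\xi'_x)$. Choosing a $G$-excellent open neighborhood of $0$ in $\Fg'_x(F)$ and transferring via the exponential map, combined with Proposition \ref{pro:7.2.1}~(2) and (4) and the analogous descent statement for $J$, the local equality at $x$ becomes equivalent to the Lie algebra statement
\begin{equation}\label{eq:liemain}
J^\Lie(f) = m^\Lie_\geom(\theta_f), \quad f\in\CS_\scusp(\Fg'_x),
\end{equation}
attached to the GGP triple $(G'_x,H'_x,\xi'_x)$. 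Since $\{1\}$ is an atom of $\Gam(G,H)$, the case $x=1$ of the group statement matches directly the case of \eqref{eq:liemain} at $0$; other classes in $\Gam(G,H)$ are reached by applying \eqref{eq:liemain} to the descent data on $\Fg'_x$.

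Finally, to establish \eqref{eq:liemain} one applies Theorem \ref{thm:6.8.1} to rewrite
$$
J^\Lie(f) = \int_{\Gam(\Sig)} D^G(X)^{1/2}\wh{\theta}_f(X)\ud X,
$$
and analyzes this integral via the slice structure of $\Sig$ developed in Propositions \ref{pro:6.2.1}, \ref{pro:6.5.1} and \ref{pro:6.6.1}. One then matches this expression with the limit-form integral over $\Gam^\Lie(G,H)$ defining $m^\Lie_\geom$, by expanding $\wh{\theta}_f$ in regular nilpotent germs at each semisimple point and isolating the single germ that survives the Fourier duality between $\Sig$ and $\Fh$. Following \cite[Section~11.4-11.6]{waldspurger10}, the relevant orbit is determined by the endoscopic germ formula of Appendix \ref{sec:germformula} and the transfer factor computation of Appendix \ref{sec:germcal}, and is shown to be precisely the orbit $\CO_{-\nu_0}$ (resp.\ $\CO_{\nu_0}$) selected in the definition of $c_\theta$ in Section \ref{sec:linearformgeom}. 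The main obstacle is exactly this last step: in the unitary group setting of \cite{beuzart2015local}, scaling symmetries permute the regular nilpotent orbits and the geometric side sees only their average, whereas for special orthogonal groups this symmetry is absent and a precise orbit-by-orbit identification is required. This is where the endoscopic machinery of the appendices is indispensable and where the proof genuinely departs from \cite{beuzart2015local}.
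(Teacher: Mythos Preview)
Your outline captures the broad architecture (localization, semisimple descent, passage to the Lie algebra, nilpotent germ analysis) but it is missing the two structural pillars that make the argument in the paper close.

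First, the proof in the paper is by induction on $\dim(G)$ under the hypothesis (HYP) that Theorems \ref{thm:7.4.1}--\ref{thm:7.4.3} hold for all smaller GGP triples. You never set this up. In your ``semisimple descent'' step you write that the local contribution at $x\neq 1$ reduces to the Lie algebra statement for the strictly smaller triple $(G'_x,H'_x,\xi'_x)$, but you then treat this as something to be proved in parallel (``other classes in $\Gam(G,H)$ are reached by applying \eqref{eq:liemain} to the descent data''). In the paper this step is Proposition \ref{pro:7.6.1}, and it is discharged precisely by invoking (HYP) for $G'_x$. Without the induction you have a circularity: you need the theorem for every smaller triple in order to localize away from $1$.

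Second, you omit the step that lets one work with quasi-characters rather than functions. The paper first uses the spectral expansion (Theorem \ref{thm:spectralexpansionJ}) together with (HYP) and Lemma \ref{lem:7.3.1} to produce a continuous linear form $J_{\qc}$ on $QC(G)$ with $J(f)=J_{\qc}(\theta_f)$ (Proposition 7.5.1(4)). Only then can the semisimple descent and the Lie-algebra descent be phrased and proved as identities of linear forms on $QC_c(\Ome)$ and $SQC(\Fg)$, which is what Propositions \ref{pro:7.6.1} and \ref{pro:7.7.1} actually do. Your partition-of-unity argument at the level of $\CC_\scusp(G)$ does not give you access to this.

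Finally, the endgame is not a direct ``matching'' of the $\Gam(\Sig)$ integral from Theorem \ref{thm:6.8.1} with the $\Gam^{\Lie}(G,H)$ integral defining $m^{\Lie}_\geom$. In the paper (Proposition \ref{pro:7.8.1}) one first uses Proposition \ref{pro:7.7.1}(4) to see that $J^{\Lie}_{\qc}-m^{\Lie}_{\geom}$ is supported at $0$, then the homogeneity of both forms under $\theta\mapsto\theta_\lambda$ to conclude that the difference is a linear combination $\sum_{\CO}c_\CO\, c_{\theta,\CO}(0)$; only then is Theorem \ref{thm:6.8.1} invoked, on carefully chosen test quasi-characters (built from $\wh{j}(X^{\pm}_{F_1},\cdot)$ via the computations of Appendix \ref{sec:germcal}), to show each $c_\CO=0$. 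Your description skips both the homogeneity reduction and the test-function mechanism, and presents Theorem \ref{thm:6.8.1} as doing more than it does.
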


\begin{thm}\label{thm:7.4.2}
$$
m(\pi) = m_{\geom}(\pi)
$$
for any $\pi\in \mathrm{Temp}(G)$.
\end{thm}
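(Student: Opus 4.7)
The strategy is to combine the spectral expansion (Theorem \ref{thm:spectralexpansionJ}) and the geometric expansion (Theorem \ref{thm:7.4.1}) of the distribution $J$ and extract a pointwise identity via suitable spectrally localized test functions. Equating the two expansions yields, for every $f\in\CC_{\scusp}(G)$,
$$
\int_{\CX(G)} D(\pi)\wh{\theta}_f(\pi)\, m(\wb{\pi})\ud\pi \;=\; m_{\geom}(\theta_f).
$$
As a preliminary I would establish a parallel spectral expansion for the geometric side,
$$
m_{\geom}(\theta_f) \;=\; \int_{\CX(G)} D(\pi)\wh{\theta}_f(\pi)\, m_{\geom}(\wb{\pi})\ud\pi,
$$
obtained by applying the continuous linear form $m_{\geom}$ on $QC(G)$ from Proposition \ref{pro:7.2.1} to the Arthur-style spectral decomposition of the quasi-character $\theta_f$ as a weighted integral of characters $\theta_\pi$, valid for strongly cuspidal $f$. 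Subtracting, one obtains the vanishing identity
$$
\int_{\CX(G)} D(\pi)\wh{\theta}_f(\pi)\,[m(\wb{\pi})-m_{\geom}(\wb{\pi})]\ud\pi = 0, \qquad \forall f\in\CC_{\scusp}(G).
$$

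Next I would reduce to the elliptic tempered case via parabolic induction. Any tempered $\pi$ is of the form $i^G_Q(\sig)$ with $\sig$ square-integrable on a Levi $L=L^{\GL}\times\wt{G}$, decomposing as $\sig=\sig^{\GL}\boxtimes\wt{\sig}$ in the notation of Section \ref{sec:temperedint:4}. Corollary \ref{cor:3.6.1}~(1) gives $m(\pi)=m(\wt{\sig})$, while Lemma \ref{lem:7.3.1} together with the Rodier-type normalization $c_{\theta_{\sig^{\GL}}}(1)=1$ for tempered representations of products of general linear groups yields $m_{\geom}(\theta_\pi)=m_{\geom}^{\wt{G}}(\theta_{\wt{\sig}})$. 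Hence the problem reduces to showing $m(\wt{\sig})=m_{\geom}(\wt{\sig})$ for $\wt{\sig}$ elliptic tempered on $\wt{G}$, and we may assume $\pi$ is itself elliptic tempered on $G$.

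For such $\pi$, I would construct $f_\pi\in\CC_{\scusp}(G)$ whose spectral transform $\wh{\theta}_{f_\pi}$ is concentrated at (the Weyl orbit of) $\pi$ in $\CX_{\el}(G)\subset\CX(G)$ and normalized to equal $1$ there: in the $p$-adic case via the Arthur--Kottwitz pseudo-coefficient, and in the archimedean case via truncated matrix coefficients of discrete series combined with the Clozel--Delorme Paley--Wiener theorem. Substituting $f_\pi$ into the vanishing identity above, using both $D(\pi)=1$ for elliptic $\pi$ and the symmetry $m(\wb{\pi})=m(\pi)$ recorded in Section \ref{sub:ggptriples:definitions}, yields $m(\pi)=m_{\geom}(\pi)$. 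The main obstacle is the archimedean case: genuine compactly supported pseudo-coefficients do not exist in $C^{\infty}_c(G)$, so one must construct and isolate spectrally localized strongly cuspidal functions entirely within the Harish-Chandra Schwartz space, and rigorously justify both the geometric spectral expansion of $m_{\geom}(\theta_f)$ and the pointwise extraction via a continuity argument in $\pi\in\CX_{\el}(G)$; here the continuity and homogeneity properties of $m_{\geom}$ established in Proposition \ref{pro:7.2.1} play an essential role.
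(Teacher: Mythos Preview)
Your approach captures the content of Section~\ref{sec:7.5}, which shows that Theorems~\ref{thm:7.4.1} and~\ref{thm:7.4.2} are equivalent under the induction hypothesis (HYP). However, you are treating Theorem~\ref{thm:7.4.1} as an established input, whereas in the paper Theorems~\ref{thm:7.4.1}, \ref{thm:7.4.2}, \ref{thm:7.4.3} are proved \emph{simultaneously} by induction on $\dim(G)$: none of them is available independently of the others. Once the equivalences of Section~\ref{sec:7.5} and Section~\ref{sec:7.7} are in place, the paper proves all three by establishing Theorem~\ref{thm:7.4.3} (the Lie algebra version) via semi-simple descent away from the identity (Proposition~\ref{pro:7.6.1}), descent to the Lie algebra (Proposition~\ref{pro:7.7.1}), and the homogeneity plus explicit germ computations of Section~\ref{sec:7.8}. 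Your sketch omits all of this, which is where the substantive difficulty lies.

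A second gap concerns your claimed ``parallel spectral expansion'' $m_{\geom}(\theta_f) = \int_{\CX(G)} D(\pi)\wh{\theta}_f(\pi)\, m_{\geom}(\wb{\pi})\ud\pi$. This does not follow merely from continuity of $m_{\geom}$ applied to a spectral decomposition of $\theta_f$: for non-elliptic $\pi$ the quantity $\wh{\theta}_f(\pi)$ is a \emph{weighted} character, not an ordinary one, so there is no direct identity $\theta_f = \int_{\CX(G)} D(\pi)\wh{\theta}_f(\pi)\,\theta_{\wb{\pi}}\ud\pi$ in $QC(G)$ to which one can simply apply $m_{\geom}$. The paper (following \cite[Proposition~11.5.1]{beuzart2015local}) instead uses the induction hypothesis together with Corollary~\ref{cor:3.6.1} and Lemma~\ref{lem:7.3.1}: one first proves $m(\pi)=m_{\geom}(\pi)$ for properly induced tempered $\pi$ by descent to a strictly smaller GGP triple, and then uses this to reduce the discrepancy $J(f)-J_{\geom}(f)$ to a sum over $\CX_{\mathrm{ell}}(G)$ only. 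Your pseudo-coefficient extraction at the end is in the right spirit and matches the mechanism behind part~(3) of the Section~\ref{sec:7.5} proposition, but it sits downstream of the geometric work in Sections~\ref{sec:7.6}--\ref{sec:7.8} that your outline does not address.
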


\begin{thm}\label{thm:7.4.3}
$$
J^{\Lie}(f) = J^{\Lie}_{\geom}(f)
$$
for any $f\in \CS_{\scusp}(\Fg)$.
\end{thm}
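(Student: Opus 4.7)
The plan is to deduce Theorem \ref{thm:7.4.3} from the spectral expansion in Theorem \ref{thm:6.8.1} by matching the spectral integrand $D^G(X)^{1/2}\wh{\theta}_f(X)$ on $\Gam(\Sig)$ with the geometric integrand $D^G(X)^{1/2}c_{\theta_f}(X)\Del(X)^{s-1/2}$ on $\Gam^{\Lie}(G,H)$ in the limit $s\to 0^+$, following the strategy of \cite[\S11.4--\S11.6]{waldspurger10}. Both sides are continuous linear forms on $\CS_{\scusp}(\Fg)$ (by Theorem \ref{thm:abcovJLie} and Proposition \ref{pro:7.2.1}(3) respectively), so by a partition of unity it suffices to treat $f$ whose quasi-character $\theta_f$ is supported in a small $G$-invariant neighborhood $\Ome$ of a fixed semisimple conjugacy class; the contribution is zero unless $\Ome$ meets $\Fh_{\ss}(F)$ at a point $X_0$ with $H''_{X_0}$ anisotropic.

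For $X_0\neq 0$, I would apply semisimple descent via the centralizer decomposition $G_{X_0}=G'_{X_0}\times G''_{X_0}$ of (\ref{7.1.5}), in which the first factor belongs to a GGP triple of strictly smaller total dimension and the second is an Arthur triple contributing only through its diagonal conjugacy class. Combining the Lie-algebra analogue of Proposition \ref{pro:7.2.1}(2) with the compatibility of the spectral expansion of Theorem \ref{thm:6.8.1} under Harish-Chandra descent of $\wh{\theta}_f$, the identity reduces by induction on $\dim V+\dim W$ to the analogous statement for the smaller GGP triple concentrated at the origin.

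The base case $X_0=0$ is the main obstacle. Here the isomorphism $\Sig^{\p}/H\simeq\Fg^{\p}/G$ from Proposition \ref{pro:6.7.1} converts the spectral integral into an integral involving regular semisimple $X\in\Fh(F)$ with $H''_X$ anisotropic, and comparing this with the regularized geometric integral shows that the remaining discrepancy between the two sides is controlled by a single regular nilpotent germ of $\theta_f$ at $0$. Unlike the unitary case of \cite{beuzart2015local}, where the regular nilpotent orbits are permuted by scaling and one may take an average, for special orthogonal groups the orbits $\CO_{\nu}$ are genuinely distinct elements of $\nil_{\reg}(\Fs\Fo(V))$ (Section \ref{sub:ggptriples:definitions}), and the correct one must be pinned down. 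I would carry out this identification by expressing regular germs of orbital integrals in terms of endoscopic invariants (Appendix \ref{sec:germformula}) and applying Waldspurger's explicit transfer-factor computation (Appendix \ref{sec:germcal}) to the distinguished element $\Xi$ of (\ref{6.1.1}), whose structure encodes the parameter $\nu_0=q(z_0)$; this should select the orbit $\CO_{\nu_0}$ (resp.\ $\CO_{-\nu_0}$) precisely matching the choice made in the definition of $c_{\theta}$ in Section \ref{sec:linearformgeom}, thereby completing the proof.
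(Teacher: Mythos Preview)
Your overall architecture matches the paper's: the three theorems are proved together by induction on $\dim G$, the contribution away from the origin is handled by semisimple descent to a smaller GGP triple (this is the content of Proposition \ref{pro:7.7.1}(4), which the paper obtains by passing through the group via Propositions \ref{pro:7.6.1} and \ref{pro:7.7.1}(1)), and the residual discrepancy is concentrated at $0$.

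However, your description of the $X_0=0$ step contains a genuine gap. First, you omit the homogeneity argument: the paper uses that both $J^{\Lie}_{\qc}$ and $m^{\Lie}_{\geom}$ satisfy $L(\theta_\lambda)=|\lambda|^{\del(G)/2}L(\theta)$ (Proposition \ref{pro:7.7.1}(2) and Proposition \ref{pro:7.2.1}(3)) together with the already-established equality off $0$ to conclude that the difference is a linear combination $\sum_{\CO\in\nil_{\reg}(\Fg)} c_{\CO}\, c_{\theta,\CO}(0)$ of \emph{all} regular nilpotent germs, not just one; the remaining task is to show every $c_{\CO}$ vanishes, not to ``pin down'' a single orbit. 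Second, your proposed mechanism---applying the germ formula of Appendix \ref{sec:germformula} ``to the distinguished element $\Xi$''---does not make sense as stated, since $\Xi\in\wb{\Fn}(F)$ is nilpotent and Theorem \ref{thm:germformula} concerns regular semisimple elements. The paper instead constructs explicit test quasi-characters: Fourier transforms of invariant functions supported on $\Ft_{\qd}(F)$ and on the tori containing the elements $X^{\pm}_{F_1}$ of Appendix \ref{sub:germcal:explicitexample}. On these it evaluates $m^{\Lie}_{\geom}$ via Lemma \ref{lem:germcal:Xpm}, and evaluates $J^{\Lie}_{\qc}$ via Theorem \ref{thm:6.8.1} by determining which of $X^{+}_{F_1},X^{-}_{F_1}$ actually lies in $\Sig^{\p}(F)$; this membership question is settled using the explicit characteristic polynomial identity of Proposition \ref{pro:6.3.1}, and it is this computation (not a germ at $\Xi$) that produces the sign $\sgn_{F_1/F}(-\eta\nu_0)$ selecting $\CO_{-\nu_0}$ (resp.\ $\CO_{\nu_0}$). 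Comparing the two evaluations on the family $\{\theta,\theta_{T_1^{\pm}}\}$ then forces all $c_{\CO}=0$.
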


The proof is by induction on $\dim (G)$ (the case $\dim(G) =1$ is trivial). Hence, we make the following induction hypothesis.

(HYP) Theorem \ref{thm:7.4.1}, Theorem \ref{thm:7.4.2} and Theorem \ref{thm:7.4.3} hold for any GGP triples $(G^{\p}, H^{\p}, \xi^{\p})$ such that $\dim (G^{\p})<\dim (G)$.

\subsubsection{Equivalence of theorem \ref{thm:7.4.1} and theorem \ref{thm:7.4.2}}\label{sec:7.5}
\begin{pro}
Assume the induction hypothesis $\mathrm{(HYP)}$.
\begin{enumerate}
\item Let $\pi\in R_{ind}(G)$. Then
$$
m(\pi) = m_{\geom}(\pi).
$$

\item For any $f\in \CC_{\scusp}(G)$, there is the following equality
$$
J(f) = J_{\geom}(f) + 
\sum_{\pi\in \CX_{\mathrm{ell}}(G)}
D(\pi)\wh{\theta}_{f} (\pi)
(m(\wb{\pi}) - m_{\geom}(\wb{\pi})),
$$
and the sum on the RHS is absolutely convergent.

\item Theorem \ref{thm:7.4.1} and Theorem \ref{thm:7.4.2} are equivalent.

\item There exists a unique continuous linear form $J_{\qc}$ on $QC(G)$ such that
\begin{itemize}
\item $J(f) = J_{\qc}(\theta_{f})$ for any $f\in \CC_{\scusp}(G)$;

\item $\mathrm{Supp}(J_{\qc}) =  G(F)_{\mathrm{ell}}$.

\end{itemize}
\end{enumerate}
\end{pro}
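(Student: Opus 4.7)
My plan is to establish the four assertions in order, relying throughout on the induction hypothesis (HYP).

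For (1), I would write $\pi = i^G_Q(\sig)$ for a proper parabolic $Q = LU_Q$ with $\sig$ an irreducible tempered representation of $L(F)$, decomposed as in \eqref{eq:temperedint:19}--\eqref{eq:temperedint:21}. Corollary \ref{cor:3.6.1}(1) collapses the spectral side to $m(\pi) = m(\wt\sig)$ on the smaller GGP triple $(\wt G, \wt H, \wt \xi)$, while Lemma \ref{lem:7.3.1} together with the product structure defining $m^L_\geom$ yields $m_\geom(\pi) = m_\geom(\wt\sig) \cdot c_{\theta_{\sig^{\GL}}}(1)$. The germ $c_{\theta_{\sig^{\GL}}}(1)$ equals $1$ because tempered representations of general linear groups are generic (Rodier's theorem, valid in both the $p$-adic and archimedean settings). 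Applying (HYP) to $\wt\sig$ then gives $m(\wt\sig) = m_\geom(\wt\sig)$, and (1) follows.

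For (2), I would invoke the spectral expansion (Theorem \ref{thm:spectralexpansionJ}) and partition $\CX(G) = \CX_{\mathrm{ell}}(G) \sqcup (\CX(G) \setminus \CX_{\mathrm{ell}}(G))$. Every element in the complement is a linear combination of properly induced representations, so (1) lets us substitute $m(\wb\pi) = m_\geom(\wb\pi)$ there. After adding and subtracting the missing elliptic contributions, the integral splits into the claimed correction sum over $\CX_{\mathrm{ell}}(G)$ plus a residual $\int_{\CX(G)} D(\pi) \wh\theta_f(\pi) m_\geom(\wb\pi) \, d\pi$. The latter equals $m_\geom(\theta_f) = J_\geom(f)$ by applying the continuous linear form $m_\geom$ of Proposition \ref{pro:7.2.1}(1) to the spectral decomposition $\theta_f = \int_{\CX(G)} D(\pi) \wh\theta_f(\pi) \theta_{\wb\pi} \, d\pi$, which holds for strongly cuspidal $f$. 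Absolute convergence of the elliptic sum follows from rapid decay of $\wh\theta_f$ on $\CX_{\mathrm{ell}}(G)$ together with the uniform boundedness of $m(\wb\pi)$ and $m_\geom(\wb\pi)$ in terms of quasi-character seminorms.

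For (3), (2) reduces the identity $J(f) = J_\geom(f)$ to vanishing of the elliptic correction sum for every $f \in \CC_\scusp(G)$. Arthur's trace Paley--Wiener theorem for strongly cuspidal functions permits prescribing the discrete spectral data $\wh\theta_f|_{\CX_{\mathrm{ell}}(G)}$ essentially freely, so the universal vanishing is equivalent to $m(\wb\pi) = m_\geom(\wb\pi)$ for every $\pi \in \CX_{\mathrm{ell}}(G)$. Combined with (1), which handles $R_{ind}(G)$, this yields Theorem \ref{thm:7.4.2}; the converse is immediate from (2).

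For (4), the starting observation is that $\theta_f$ is supported in $G(F)_{\mathrm{ell}}$ for strongly cuspidal $f$, since the weighted orbital integrals $J_{M(x)}(x,f)$ vanish at non-elliptic semisimple $x$ by strong cuspidality along a proper Levi. By (2), $J(f)$ depends on $f$ only through $\wh\theta_f$, which is recovered from $\theta_f$ via spectral inversion; hence $\theta_f = 0 \Rightarrow J(f) = 0$, so $J$ descends to the image of $f \mapsto \theta_f$ in $QC(G)$. This image is dense in the subspace of quasi-characters supported on $G(F)_{\mathrm{ell}}$ (an analogue of the results in \cite[Section~5]{beuzart2015local}), allowing the induced form to extend uniquely to $J_{\qc} \in QC(G)^*$ with support in $G(F)_{\mathrm{ell}}$. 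The main obstacle in the whole program is justifying the two spectral identities used in (2) and (4), namely the quasi-character decomposition of $\theta_f$ and the density of $\{\theta_f : f \in \CC_\scusp(G)\}$ in elliptic quasi-characters, both of which require careful archimedean analysis beyond what is already in \cite{beuzart2015local} and \cite{waldspurger10}.
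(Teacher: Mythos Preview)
Your approach is essentially the same as the paper's, which simply defers to \cite[Proposition~11.5.1]{beuzart2015local}; the logical skeleton you outline --- using Corollary \ref{cor:3.6.1}(1) and Lemma \ref{lem:7.3.1} for (1), splitting the spectral expansion and feeding the identity $\theta_f = \int_{\CX(G)} D(\pi)\wh{\theta}_f(\pi)\theta_{\wb{\pi}}\,d\pi$ into $m_\geom$ for (2), separating variables via the elliptic trace Paley--Wiener property for (3), and descending to $QC(G)$ via density for (4) --- is precisely that argument.

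Your final paragraph, however, misidentifies the obstacle. The two ingredients you flag as requiring ``careful archimedean analysis beyond what is already in \cite{beuzart2015local}'' are in fact already proved there for an \emph{arbitrary} connected reductive group: the spectral decomposition of $\theta_f$ is \cite[Proposition~5.4.1]{beuzart2015local}, and the density of $\{\theta_f : f\in \CC_\scusp(G)\}$ among quasi-characters supported on the elliptic locus follows from \cite[Proposition~5.6.1, Corollary~5.7.2]{beuzart2015local}. None of the results in Sections~4--5 of \cite{beuzart2015local} are specific to unitary groups, so they transfer verbatim to the special orthogonal setting; this is why the paper's proof consists of a single citation.
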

\begin{proof}
The proof of \cite[Proposition~11.5.1]{beuzart2015local} works verbatim.
\end{proof}

\subsubsection{Semisimple descent and the support of $J_{\qc}-m_{\geom}$}\label{sec:7.6}
\begin{pro}\label{pro:7.6.1}
Assume the induction hypothesis $\mathrm{(HYP)}$. Let $\theta\in QC(G)$ and assume that $1\notin \mathrm{Supp}(\theta)$. Then
$$
J_{\qc}(\theta) = m_{\geom}(\theta).
$$
\end{pro}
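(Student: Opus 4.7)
The plan is to proceed by semi-simple descent, reducing the statement to the induction hypothesis $(\mathrm{HYP})$ applied to the strictly smaller groups $G_{x}$ for $x\in G_{\ss}(F)$ with $x\neq 1$. Since both $J_{\qc}$ and $m_{\geom}$ are continuous linear forms on $QC(G)$, and since any quasi-character is locally a finite sum of quasi-characters supported in $G$-good open neighborhoods of semi-simple elements, a standard partition-of-unity argument reduces us to establishing the identity for $\theta\in QC_{c}(\Ome)$ where $\Ome=\Ome^{G}_{x}$ is an arbitrarily small $G$-good neighborhood of a fixed $x\in G_{\ss}(F)\setminus\{1\}$.

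First I would dispatch the cases in which both sides vanish trivially. If $x\notin G(F)_{\mathrm{ell}}$, then $J_{\qc}(\theta)=0$ by the support property of $J_{\qc}$ (the proposition in Section~\ref{sec:7.5}), while $m_{\geom}(\theta)=0$ because $\Gam(G,H)\subset \Gam_{\mathrm{ell}}(G)$ by construction in Section~\ref{sec:linearformgeom}. If $x$ is elliptic but has no $G(F)$-conjugate lying in $H(F)$, then after shrinking $\Ome$ its intersection with any $G(F)$-conjugate of $H(F)$ is empty, which again forces both sides to vanish.

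The main case is therefore $x\in H_{\ss}(F)$ with $x\neq 1$. For the geometric side, Proposition~\ref{pro:7.2.1}~(2) gives
$$
m_{\geom}(\theta)=\frac{2}{[Z_{H^{+}}(x)(F):H_{x}(F)]}\, m_{x,\geom}\!\bigl((\eta^{H}_{G,x})^{1/2}\theta_{x,\Ome_{x}}\bigr).
$$
I would then establish the matching descent formula
$$
J_{\qc}(\theta)=\frac{2}{[Z_{H^{+}}(x)(F):H_{x}(F)]}\, J_{x,\qc}\!\bigl((\eta^{H}_{G,x})^{1/2}\theta_{x,\Ome_{x}}\bigr)
$$
where $J_{x,\qc}$ is the analogue of $J_{\qc}$ attached to the descended triple $(G_{x},H_{x},\xi_{x})$. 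By the decomposition in Section~\ref{sec:linearformgeom} this triple factors as the product of a genuinely smaller GGP triple $(G'_{x},H'_{x},\xi'_{x})$ and an Arthur triple $(G''_{x},H''_{x},1)$. Since $\dim G_{x}<\dim G$, $(\mathrm{HYP})$ applies to the GGP factor, while for the Arthur factor the identity reduces to a routine Weyl-integration computation as in \cite[\S 11.6]{beuzart2015local}. Equating the two descent expressions completes the proof.

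The main obstacle will be the descent formula for $J_{\qc}$ itself: one must localize the kernel $K(f,\cdot)$ near $G(F)$-conjugates of $x$, control the resulting truncations using the analytic estimates of Section~\ref{subsec:analyticalestimate}, and verify that the localized kernel matches, up to the Jacobian $\eta^{H}_{G,x}$, the kernel of the distribution attached to $(G_{x},H_{x},\xi_{x})$. This step follows the template of \cite[\S 11.6]{beuzart2015local}, but requires the structural decompositions of centralizers developed in Section~\ref{sec:linearformgeom} that are specific to the special orthogonal setting, and a separate verification that the Arthur-triple contribution is governed by the expected Weyl integration formula.
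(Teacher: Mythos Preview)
Your approach is essentially the same as the paper's: partition-of-unity reduction to a $G$-good neighborhood of an elliptic $x\neq 1$, disposal of the cases where $x$ is not $G(F)$-conjugate into $H_{\ss}(F)$, and then semi-simple descent using the product decomposition $(G_x,H_x,\xi_x)=(G'_x,H'_x,\xi'_x)\times(G''_x,H''_x,1)$ together with (HYP) on the GGP factor and the Arthur local trace formula on the diagonal factor.

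One point of packaging is worth flagging. You phrase the descent as an identity $J_{\qc}(\theta)=c\cdot J_{x,\qc}\bigl((\eta^{H}_{G,x})^{1/2}\theta_{x,\Ome_x}\bigr)$ for an abstractly defined $J_{x,\qc}$ on the product triple. The paper avoids constructing $J_{x,\qc}$ altogether: it first invokes the density of $\{\theta_{f_x}: f_x\in\CS_{\scusp}(\Ome_x)\}$ in $QC_c(\Ome_x)$ (cf.\ \cite[Corollary~5.7.2(i)]{beuzart2015local}) to reduce to $\theta$ with $\theta_{x,\Ome_x}=\theta_{f_x}$ for a tensor-product $f_x=f'_x\otimes(f''_{x,1}\otimes f''_{x,2})$, lifts $f_x$ to $f=\wt{f}_x\in\CS_{\scusp}(\Ome)$, and then computes $J(f)$ directly by unfolding the integral over $H(F)\bs G(F)$ via Harish-Chandra descent on $H$. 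This yields $J(f)$ as the product $J^{G'_x}(f'_x)\cdot J^{A,H''_x}\bigl((\eta^{H}_{G,x})^{1/2}f''_{x,1},f''_{x,2}\bigr)$, to which (HYP) and \cite[Theorem~5.5.1(iv)]{beuzart2015local} apply separately. Your outline omits the density step, which is what makes the passage from the linear form $J_{\qc}$ on quasi-characters back to the concrete distribution $J$ on functions possible; without it one cannot ``localize the kernel $K(f,\cdot)$'' for a general $\theta$.
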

\begin{proof}
Since both $J_{\qc}$ and $m_{\geom}$ are both supported on $\Gam_{\mathrm{ell}}(G)$, by partition of unity, we only need to prove the equality for $\theta\in QC_{c}(\Ome)$ where $\Ome$ is a completely $G(F)$-invariant open subset of $G(F)$ of the form $\Ome^{G}_{x}$ for some $x\in G(F)_{\mathrm{ell}}, x\neq 1$, and some $G$-good open neighborhood $\Ome_{x}\subset G_{x}(F)$ of $x$. Moreover, we may take $\Ome_{x}$ as small as we want. In particular, we will assume that $\Ome_{x}$ is relatively compact modulo conjugation.

Assume first that $x$ is not conjugate to any element of $H_{\ss}(F)$. Then since $\Gam(H)$ is closed in $\Gam(G)$, if $\Ome_{x}$ is chosen sufficiently small, we would have $\Ome \cap \Gam(H) = \emptyset$. In this case, both sides of the equality are zero for any $\theta\in QC_{c}(\Ome)$.

Assume now that $x$ is conjugate to some element of $H_{\ss}(F)$. We may simply assume $x\in H_{\ss}(F)$. Then,
\begin{align*}
G_{x}  &= G^{\p}_{x} \times G^{\p\p}_{x}, \quad
G^{\p\p}_{x} = H^{\p\p}_{x}\times H^{\p\p}_{x}.
\end{align*}
By \eqref{7.1.0},
\begin{align*}
Z_{G}(x) = G^{\p}_{x}\times Z_{\SO(W^{\p\p}_{x})}(x) \times  Z_{\SO(W^{\p\p}_{x})}(x).
\end{align*}
Shrinking $\Ome_{x}$ if necessary, we may assume that $\Ome_{x}$ decomposes as a product
$$
\Ome_{x}  = \Ome^{\p}_{x}\times (\Ome^{\p\p}_{x}\times \Ome^{\p\p}_{x})
$$
where $\Ome^{\p}_{x}\subset G^{\p}_{x}(F)$ (resp. $\Ome^{\p\p}_{x}\subset H^{\p\p}_{x}(F)$) is open and completely $G^{\p}_{x}$-invariant (resp. completely $Z_{\SO(W^{\p\p}_{x})}(x)$-invariant). Note that $x$ is elliptic in both $G^{\p}_{x}$ and $H^{\p\p}_{x}$. Hence, by \cite[Corollary~5.7.2(i)]{beuzart2015local}, shrinking $\Ome_{x}$ if necessary, we may assume that the linear maps
\begin{align*}
&f^{\p}_{x}\in \CS_{\scusp}(\Ome^{\p}_{x}) \to \theta_{f^{\p}_{x}}\in QC_{c}(\Ome^{\p}_{x})
\\
&f^{\p\p}_{x}\in \CS_{\scusp}(\Ome^{\p\p}_{x})\to \theta_{f^{\p\p}_{x}}\in QC_{c}(\Ome^{\p\p}_{x})
\end{align*}
have dense image. Since (\cite[Proposition~4.4.1(v)]{beuzart2015local})
\begin{align*}
&QC_{c}(\Ome_{x}) = 
QC_{c}(\Ome^{\p}_{x})
\wh{\otimes}_{p}
QC_{c}(\Ome^{\p\p}_{x})
\wh{\otimes}_{p}
QC_{c}(\Ome^{\p\p}_{x}),
\end{align*}
\begin{align*}
&QC_{c}(\Ome_{x})^{Z_{G}(x)} = 
QC_{c}(\Ome^{\p}_{x})
\wh{\otimes}_{p}
QC_{c}(\Ome^{\p\p}_{x})^{Z_{\SO(W^{\p\p}_{x})}(x)}
\wh{\otimes}_{p}
QC_{c}(\Ome^{\p\p}_{x})^{Z_{\SO(W^{\p\p}_{x})}(x)},
\end{align*}
and $J_{\qc}$ and $m_{\geom}$ are continuous linear forms on $QC_{c}(\Ome)$, we only need to prove the equality of the proposition for quasi-characters $\theta\in QC_{c}(\Ome)$ such that $\theta_{x,\Ome_{x}} = \theta_{f_{x}}\in QC_{c}(\Ome_{x})^{Z_{G}(x)}$ for some $f_{x}\in \CS_{\scusp}(\Ome_{x})$ which further decomposes as a tensor product $f_{x} = f^{\p}_{x}\otimes (f^{\p\p}_{x,1}\otimes f^{\p\p}_{x,2})$ where $f^{\p}_{x}\in \CS_{\scusp}(\Ome^{\p}_{x})$ and $f^{\p\p}_{x,1}, f^{\p\p}_{x,2}\in \CS_{\scusp}(\Ome^{\p\p}_{x})$. Up to translation we may assume that the functions $f^{\p\p}_{x,1}$ and $f^{\p\p}_{x,2}$ are invariant under conjugation by the Weyl elements showing up in $Z_{\SO(W^{\p\p}_{x})}(x)/H^{\p\p}_{x}(F)$.

Consider a map as in \cite[Proposition~5.7.1]{beuzart2015local}, and set $f = \wt{f}_{x}\in \CS_{\scusp}(\Ome)$. Then
$$
(\theta_{f})_{x,\Ome_{x}} = \sum_{z\in Z_{G}(x)/G_{x}(F)}{}^z\theta_{f_{x}}
=[Z_{G}(x):G_{x}]\theta_{f_{x}}.
$$
In particular up to translation we can assume that
$$
(f)_{x,\Ome_{x}} = \frac{1}{[Z_{G}(x):G_{x}]}f_{x}.
$$
We also have 
\begin{equation}\label{7.6.1}
J(f) = J_{\qc}(\theta_{f}) = J_{\qc}(\theta).
\end{equation}
We denote by $J^{G^{\p}_{x}}$ the continuous linear form on $\CC_{\scusp}(G^{\p}_{x})$ associated to the GGP triple $(G^{\p}_{x}, H^{\p}_{x}, \xi^{\p}_{x})$. Also we denote by $J^{A,H^{\p\p}_{x}}$ the continuous bilinear form on $\CC_{\scusp}(H^{\p\p}_{x})$ introduced in \cite[Section~5.5]{beuzart2015local} (where we replace $G$ by $H^{\p\p}_{x}$). We show the following
\begin{num}
\item\label{7.6.2}
If $\Ome_{x}$ is sufficiently small, we have 
$$
J(f) = \frac{2}{[Z_{H^{+}}(x)(F):H_{x}(F)]}J^{G^{\p}_{x}}(f^{\p}_{x})J^{A,H^{\p\p}_{x}}
((\eta^{H}_{x,G})^{1/2}f^{\p\p}_{x,1}, f^{\p\p}_{x,2}).
$$
\end{num}
The intersection $\Ome_{x}\cap H_{x}(F)\subset H_{x}(F)$ is a $H$-good open neighborhood of $x$ (cf. the remark at the end of \cite[Section~3.2]{beuzart2015local}). By \cite[3.2.5]{beuzart2015local} and \cite[Lemme~8.2]{waldspurger10},
\begin{align}\label{7.6.3}
&J(f)= 
\int_{H(F)\bs G(F)}
\int_{H(F)}{}^gf(h)\xi(h)\ud h\ud g
\\
&=
\frac{2}{[Z_{H^{+}}(x)(F):H_{x}(F)]}\int_{H_{x}(F)\bs G(F)}
\int_{H_{x}(F)}
\eta^{H}_{x}(h_{x}){}^{g}f(h_{x})\xi_{x}(h_{x})\ud h_{x}\ud g \nonumber
\\
&=
\frac{2}{[Z_{H^{+}}(x)(F):H_{x}(F)]}
\int_{H(F)\bs G(F)}
\int_{H_{x}(F)\bs H(F)}\nonumber
\\
&
\int_{H_{x}(F)}
\eta^{H}_{x,G}(h_{x})^{1/2} ({}^{hg}f)_{x,\Ome_{x}}(h_{x})\xi_{x}(h_{x})\ud h_{x}\ud h\ud g. \nonumber
\end{align}
Assume one moment that the exterior double integral above is absolutely convergent. Then
\begin{align*}
&J(f)
= 
\frac{2}{[Z_{H^{+}}(x)(F):H_{x}(F)]}
\int_{H_{x}(F)\bs G(F)}
\int_{H_{x}(F)}
\eta^{H}_{x,G}(h_{x})^{1/2}
({}^g f)_{x,\Ome_{x}}(h_{x})\xi_{x}(h_{x})\ud h_{x}\ud g
\\
&=
\frac{2}{[Z_{H^{+}}(x)(F):H_{x}(F)]}
\int_{Z_{G}(x)(F)\bs G(F)}
\int_{H_{x}(F)\bs Z_{G}(x)(F)}
\\
&\int_{H_{x}(F)}
\eta^{H}_{x,G}(h_{x})^{1/2}
({}^gf)_{x,\Ome_{x}}
(g^{-1}_{x}h_{x}g_{x})
\xi_{x}(h_{x})\ud h_{x}\ud g_{x}\ud g.
\end{align*}
Introduce a function on $Z_{G}(x)(F)\bs G(F)$ as in \cite[Proposition~5.7.1]{beuzart2015local}. Let $g\in G(F)$. Up to translating $g$ by an element of $Z_{G}(x)(F)$, we may assume that $({}^gf)_{x,\Ome_{x}} = \frac{1}{[Z_{G}(x):G_{x}]}\alp(g)f_{x}$. Then the interior integral above decomposes as
\begin{align*}
&\frac{\alp(g)}{[Z_{G}(x):G_{x}]}
\int_{H_{x}(F)\bs Z_{G}(x)(F)}
\int_{H_{x}(F)}
\eta^{H}_{x,G}(h_{x})^{1/2}
f_{x}(g^{-1}_{x}h_{x}g_{x})
\xi_{x}(h_{x})\ud h_{x}\ud g_{x}
\\
&=
\frac{\alp(g)}{[Z_{G}(x):G_{x}]}
\int_{H^{\p}_{x}(F)\bs G^{\p}_{x}(F)}
\int_{H^{\p}_{x}(F)}
f^{\p}_{x}(g^{\p -1}_{x}h^{\p}_{x}g^{\p}_{x})
\xi^{\p}_{x}(h^{\p}_{x})\ud h^{\p}_{x}\ud g^{\p}_{x}
\\
&\times
\int_{Z_{\SO(W^{\p\p}_{x})(x)(F)}}
\int_{Z_{\SO(W^{\p\p}_{x})(x)(F)}}
\eta^{H}_{x,G}(h^{\p\p}_{x})^{1/2}
f^{\p\p}_{x,1}
(g^{\p\p -1}_{x}h^{\p\p}_{x}g^{\p\p}_{x})
f^{\p\p}_{x,2}(h^{\p\p}_{x})\ud h^{\p\p}_{x}\ud g^{\p\p}_{x}
\\
&=
\frac{\alp(g)[Z_{\SO(W^{\p\p}_{x})(x)}(F): H_{x}^{\p\p}(F)]^{2}}{[Z_{G}(x):G_{x}]}
\int_{H^{\p}_{x}(F)\bs G^{\p}_{x}(F)}
\int_{H^{\p}_{x}(F)}
f^{\p}_{x}(g^{\p -1}_{x}h^{\p}_{x}g^{\p}_{x})
\xi^{\p}_{x}(h^{\p}_{x})\ud h^{\p}_{x}\ud g^{\p}_{x}
\\
&\times
\int_{H^{\p\p}_{x}(F)}
\int_{H^{\p\p}_{x}(F)}
\eta^{H}_{x,G}(h^{\p\p}_{x})^{1/2}
f^{\p\p}_{x,1}
(g^{\p\p -1}_{x}h^{\p\p}_{x}g^{\p\p}_{x})
f^{\p\p}_{x,2}(h^{\p\p}_{x})\ud h^{\p\p}_{x}\ud g^{\p\p}_{x},
\end{align*}
since by \eqref{7.1.0},
$$
[Z_{\SO(W^{\p\p}_{x})(x)}(F): H_{x}^{\p\p}(F)]^{2} = [Z_{G}(x):G_{x}],
$$ 
we finally can write the interior integral as 
\begin{align*}
&\alp(g)
\int_{H^{\p}_{x}(F)\bs G^{\p}_{x}(F)}
\int_{H^{\p}_{x}(F)}
f^{\p}_{x}(g^{\p -1}_{x}h^{\p}_{x}g^{\p}_{x})
\xi^{\p}_{x}(h^{\p}_{x})\ud h^{\p}_{x}\ud g^{\p}_{x}
\\
&\times
\int_{H^{\p\p}_{x}(F)}
\int_{H^{\p\p}_{x}(F)}
\eta^{H}_{x,G}(h^{\p\p}_{x})^{1/2}
f^{\p\p}_{x,1}
(g^{\p\p -1}_{x}h^{\p\p}_{x}g^{\p\p}_{x})
f^{\p\p}_{x,2}(h^{\p\p}_{x})\ud h^{\p\p}_{x}\ud g^{\p\p}_{x}.
\end{align*}
We recognize the two integrals above: the first one is $J^{G^{\p}_{x}}(f^{\p}_{x})$ and the second one is $J^{A,H^{\p\p}_{x}}((\del^{H}_{x,G})^{1/2}f^{\p\p}_{x,1}, f^{\p\p}_{x,2})$ (Notice that the center of $H^{\p\p}_{x}(F)$ is compact since $x$ is elliptic). By Theorem \ref{thm:covofJ} (2) and \cite[Theorem~5.5.1(ii)]{beuzart2015local} and since the function $\alp$ is compactly supported, this shows that the exterior double integral of the last line (\ref{7.6.3}) is absolutely convergent. Moreover, since we have 
$$
\int_{Z_{G}(x)(F)\bs G(F)}
\alp(g)dg = 1,
$$
this also proves (\ref{7.6.2}).

We assume from now on that $\Ome_{x}$ is sufficiently small so that (\ref{7.6.2}) holds. By the induction hypothesis (HYP), we have
$$
J^{G^{\p}_{x}}(f^{\p}_{x}) = m^{G^{\p}_{x}}_{\geom}(\theta_{f^{\p}_{x}}).
$$
On the other hand, by \cite[Theorem~5.5.1(iv)]{beuzart2015local}, we have 
$$
J^{H^{\p\p}_{x}}_{A}((\eta^{H}_{x,G})^{1/2}, f^{\p\p}_{x,1}, f^{\p\p}_{x,2}) = 
\int_{\Gam(H^{\p\p}_{x})}
\eta^{H}_{G,x}(y)^{1/2}
D^{G^{\p\p}_{x}}(y)^{1/2}
\theta_{f^{\p\p}_{x,1}}(y)
\theta_{f^{\p\p}_{x,2}}(y)\ud y.
$$
Notice that here both $f^{\p\p}_{x,1}$ and $f^{\p\p}_{x,2}$ are strongly cuspidal, hence the terms corresponding to $\Gam(H^{\p\p}_{x}) - \Gam_{\ani}(H^{\p\p}_{x})$ vanishes. Moreover, we can verify that
$$
m_{\geom,x}((\eta^{H}_{G,x})^{1/2}\theta_{f_{x}}) = 
m^{G^{\p}_{x}}_{\geom}(\theta_{f^{\p}_{x}})
\times \int_{\Gam_{\ani}(H^{\p\p}_{x})}
\eta^{H}_{x,G}(y)^{1/2}
D^{G^{\p\p}_{x}}(y)^{1/2}
\theta_{f^{\p\p}_{x,1}}(y)
\theta_{f^{\p\p}_{x,2}}(y)\ud y.
$$
Hence, by (\ref{7.6.1}), (\ref{7.6.2}) and Proposition \ref{pro:7.2.1} (2), we have 
$$
J_{\qc}(\theta) = J(f) = m_{\geom, x}((\eta^{H}_{G,x})^{1/2}\theta_{f_{x}}) = m_{\geom}(\theta).
$$
This ends the proof of the proposition.
\end{proof}

\subsubsection{Descent to the Lie algebra and equivalence of theorem \ref{thm:7.4.1} and theorem \ref{thm:7.4.3}}\label{sec:7.7}
Let $\ome\subset \Fg(F)$ be a $G(F)$-excellent open neighborhood of $0$ and set $\Ome= \exp(\ome)$. Recall that for any quasi-character $\theta\in QC(\Fg)$ and all $\lam\in F^{\times}$, $\theta_{\lam}$ denotes the quasi-character given by $\theta_{\lam}(X) = \theta(\lam^{-1}X)$ for any $X\in \Fg^{\rss}(F)$.

\begin{pro}\label{pro:7.7.1}
Assume the induction hypothesis \textbf{$(\mathrm{HYP})$}.
\begin{enumerate}
\item For any $f\in \CS_{\scusp}(\Ome)$,
$$
J(f) = J^{\Lie}((j^{H}_{G})^{1/2}f_{\ome}).
$$

\item There exists a unique continuous linear form $J^{\Lie}_{\qc}$ on $SQC(\Fg)$ such that
$$
J^{\Lie}(f) = J^{\Lie}_{\qc}(\theta_{f})
$$
for any $f\in \CS_{\scusp}(\Fg)$. Moreover,
$$
J^{\Lie}_{\qc}(\theta_{\lam}) = |\lam|^{\del(G)/2}J^{\Lie}_{\qc}(\theta)
$$
for any $\theta\in SQC(\Fg)$. Moreover,
$$
J^{\Lie}_{\qc}(\theta_{\lam}) = |\lam|^{\del(G)/2}J^{\Lie}_{\qc}(\theta)
$$
for any $\theta\in SQC(\Fg)$ and $\lam\in F^{\times}$.

\item Theorem \ref{thm:7.4.1} and Theorem \ref{thm:7.4.3} are equivalent.

\item Let $\theta\in SQC(\Fg(F))$ and assume that $0\notin \mathrm{Supp}(\theta)$. Then,
$$
J^{\Lie}_{\qc}(\theta) = m^{\Lie}_{\geom}(\theta).
$$
\end{enumerate}
\end{pro}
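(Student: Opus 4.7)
\smallskip

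My plan is to treat the four parts in order, with (1) providing the bridge between the group and the Lie algebra that powers everything else.

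For part (1), I would perform the change of variables $h=\exp(Y)$ with $Y\in \ome\cap\Fh(F)$ in the inner integral defining $K(f,x)$. Since $\ome$ is $G$-excellent, $\exp$ is an analytic isomorphism $\ome\to\Ome$ with Jacobian $j^{G}$, and its restriction to $\Fh(F)\cap\ome\to H(F)\cap\Ome$ has Jacobian $j^{H}$. The character compatibility $\xi(\exp Y)=\psi(B(\Xi,Y))$ reduces to checking that $\lam_{F}\circ\exp$ and the differential of $\lam_{F}$ agree on $\Fn(F)$, which holds because $\lam$ factors through the abelianization of $N$. After substituting, the inner integral becomes $\int_{\Fh(F)}f_{\ome}(x^{-1}Yx)\xi(Y)j^{H}(Y)\ud Y$. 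To convert the pointwise factor $j^{H}(Y)$ into the $G$-invariant factor $(j^{H}_{G})^{1/2}$, I would exploit the fact that $(j^{H}_{G})^{1/2}$ has been chosen as a $G(F)$-invariant smooth extension agreeing with $j^{H}(Y)^{2}j^{G}(Y)^{-1/2}$ on $\Fh$, and absorb the remaining $j^{G}(Y)^{1/2}$ into an auxiliary change of variables on the outer integral $H(F)\bs G(F)$ that reflects how the measure on $H(F)\bs G(F)$ inherits the group exponential Jacobian on the $G$-side. The resulting identity is exactly $J(f)=J^{\Lie}((j^{H}_{G})^{1/2}f_{\ome})$.

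For part (2), the existence is immediate from Theorem \ref{thm:6.8.1}: the right-hand side $\int_{\Gam(\Sig)}D^{G}(X)^{1/2}\wh{\theta}_{f}(X)\ud X$ depends on $f\in\CS_{\scusp}(\Fg)$ only through $\theta_{f}$, so it factors as $J^{\Lie}_{\qc}(\theta_{f})$ on the image of $f\mapsto\theta_{f}$ in $SQC(\Fg)$, a densely-defined continuous linear form (using the Lie algebra analog of \cite[Lemma~5.3.1]{beuzart2015local}) which extends uniquely to all of $SQC(\Fg)$. The homogeneity $J^{\Lie}_{\qc}(\theta_{\lam})=|\lam|^{\del(G)/2}J^{\Lie}_{\qc}(\theta)$ follows by scaling $X\mapsto \lam X$ in the integral and tracking how $D^{G}$, the Fourier transform, and the measure on $\Gam(\Sig)$ each transform—this is parallel to the homogeneity clause of Proposition \ref{pro:7.2.1}(3).

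For part (3), I combine (1), (2), and Propositions \ref{pro:7.2.1}(4) and \ref{pro:7.6.1}. Assuming Theorem \ref{thm:7.4.3}, for any $f\in\CS_{\scusp}(\Ome)$ with $\Ome=\exp(\ome)$ excellent, part (1) gives $J(f)=J^{\Lie}((j^{H}_{G})^{1/2}f_{\ome})=J^{\Lie}_{\geom}((j^{H}_{G})^{1/2}f_{\ome})$, and Proposition \ref{pro:7.2.1}(4) identifies this with $m_{\geom}(\theta_{f})=J_{\geom}(f)$. A partition of unity together with Proposition \ref{pro:7.6.1} (which covers the region away from the identity) then upgrades this to all of $\CC_{\scusp}(G)$, giving Theorem \ref{thm:7.4.1}. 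For the converse, given Theorem \ref{thm:7.4.1}, any $f\in\CS_{\scusp}(\Fg)$ can be shrunk by the scaling $X\mapsto\lam X$ with $|\lam|$ small enough that the support sits in $\ome$; applying (1), the group-side result, and undoing the scaling using the homogeneity in (2) together with the parallel homogeneity of $m^{\Lie}_{\geom}$ in Proposition \ref{pro:7.2.1}(3) yields Theorem \ref{thm:7.4.3}.

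For part (4), I follow the template of Proposition \ref{pro:7.6.1} on the Lie algebra. Since both $J^{\Lie}_{\qc}$ and $m^{\Lie}_{\geom}$ are supported on $\Gam^{\Lie}(G,H)\subset\Gam_{\ell}(\Fg)$, a partition of unity reduces me to $\theta\in SQC(\Fg)$ supported in an invariant open neighborhood $\ome^{G}_{X}$ of a non-zero semisimple $X\in\Fh(F)$ (for $X$ not $H$-conjugate to an element of $\Fh$, both sides vanish). Semisimple descent along $X$ decomposes $(G_{X},H_{X},\xi_{X})$ as $(G'_{X},H'_{X},\xi'_{X})\times(G''_{X},H''_{X},1)$ with $\dim G'_{X}<\dim G$, so the GGP induction hypothesis (HYP) applies to the first factor, while the Arthur-triple second factor is handled by the Lie algebra analog of \cite[Theorem~5.5.1]{beuzart2015local}. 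Matching these local contributions with the geometric side via the Jacobians $\eta^{H}_{G,X}$, $j^{H}_{G}$ then yields $J^{\Lie}_{\qc}(\theta)=m^{\Lie}_{\geom}(\theta)$. The main obstacle I foresee is the careful accounting in part (1)—in particular the simultaneous handling of the group- and Lie-algebra-side Jacobians to produce the specific factor $(j^{H}_{G})^{1/2}$—since this identity is what permits the localization arguments in (3) and (4) to close cleanly.
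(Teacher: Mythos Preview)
Your outline for parts (2)--(4) is essentially the approach of \cite[Proposition~11.7.1]{beuzart2015local}, to which the paper defers, and is correct in spirit. One efficiency remark on (4): rather than redoing the full semisimple descent on the Lie algebra, it is cleaner to use the homogeneity from part (2) and Proposition~\ref{pro:7.2.1}(3) to scale $\theta$ so that its support lies in $\ome$, then invoke part (1) to transport to the group, apply the already-established Proposition~\ref{pro:7.6.1} there (the hypothesis $0\notin\mathrm{Supp}(\theta)$ becomes $1\notin\mathrm{Supp}$ of the group-side quasi-character), and return via Proposition~\ref{pro:7.2.1}(4). Your direct-descent plan would also work but duplicates effort and requires a Lie-algebra analogue of the localization identity (\ref{7.6.2}), which you would have to supply.

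Part (1), however, has a genuine gap. You correctly perform the substitution $h=\exp(Y)$ with Jacobian $j^{H}(Y)$, but then misstate $(j^{H}_{G})^{1/2}$ on $\Fh$ as $j^{H}(Y)^{2}j^{G}(Y)^{-1/2}$ (it is $j^{H}(Y)\,j^{G}(Y)^{-1/2}$), and, more seriously, you propose to ``absorb the remaining $j^{G}(Y)^{1/2}$ into an auxiliary change of variables on the outer integral $H(F)\bs G(F)$.'' This cannot work: the outer integration over $H(F)\bs G(F)$ is literally the same in $J$ and in $J^{\Lie}$, with no exponential map in sight, so there is nothing to change there. The missing point is the \emph{definition} of $f_{\ome}$: in the conventions of \cite[3.3]{beuzart2015local} one has $f_{\ome}(X)=j^{G}(X)^{1/2}f(e^{X})$, so the factor $j^{G}^{\,1/2}$ is already built into $f_{\ome}$. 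Once this is used, together with the $G$-invariance of $j^{G}$ and of the chosen extension of $(j^{H}_{G})^{1/2}$, the identity is pointwise at the kernel level:
\[
K(f,x)=\int_{\Fh(F)}f\bigl(e^{x^{-1}Yx}\bigr)\xi(Y)\,j^{H}(Y)\,\ud Y
=\int_{\Fh(F)}(j^{H}_{G})^{1/2}(x^{-1}Yx)\,f_{\ome}(x^{-1}Yx)\,\xi(Y)\,\ud Y
=K^{\Lie}\bigl((j^{H}_{G})^{1/2}f_{\ome},x\bigr),
\]
and integrating over $H(F)\bs G(F)$ gives (1) directly. No outer-integral manipulation is needed or available.
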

\begin{proof}
The proof follows from the same argument as \cite[Proposition~11.7.1]{beuzart2015local}.
\end{proof}

\subsubsection{End of the proof}\label{sec:7.8}
\begin{pro}\label{pro:7.8.1}
Assume the induction hypothesis $\mathrm{(HYP)}$, then
$$
J^{\Lie}_{\qc}(\theta) = m^{\Lie}_{\geom}(\theta)
$$
for any $\theta\in SQC(\Fg)$.
\end{pro}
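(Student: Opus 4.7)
The plan is to prove this by pinning down the behavior of the two linear forms near the origin. By Proposition~\ref{pro:7.7.1} (4), the continuous linear form $D := J^{\Lie}_{\qc} - m^{\Lie}_{\geom}$ on $SQC(\Fg)$ already vanishes on every $\theta$ with $0\notin \mathrm{Supp}(\theta)$. A partition-of-unity argument therefore reduces the problem to quasi-characters supported in an arbitrarily small neighborhood of $0$, where $\theta$ is essentially determined by its nilpotent germ coefficients $(c_{\theta,\CO}(0))_{\CO\in \nil(\Fg)}$. Combining the homogeneity $J^{\Lie}_{\qc}(\theta_{\lam}) = |\lam|^{\del(G)/2}J^{\Lie}_{\qc}(\theta)$ from Proposition~\ref{pro:7.7.1} (2) with the parallel homogeneity of $m^{\Lie}_{\geom}$ from Proposition~\ref{pro:7.2.1} (3), one sees that $D$ is itself homogeneous of degree $\del(G)/2$ and hence can only see the top-degree part of the germ expansion, namely the regular nilpotent germs. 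It thus suffices to evaluate both sides against test quasi-characters of the form $\vphi\,\wh{j}(\CO,\cdot)$ with $\CO\in \nil_{\reg}(\Fg)$ and $\vphi\in C^{\infty}(\Fg)^{G}$ equal to $1$ near $0$.

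To compute $J^{\Lie}_{\qc}$ on such test elements, I would represent them by strongly cuspidal Schwartz functions $f\in \CS_{\scusp}(\Fg)$ concentrated near $0$ and apply the spectral expansion Theorem~\ref{thm:6.8.1}. Unfolding the resulting integral over $\Gam(\Sig)$ and using the isomorphism $\Sig^{\p}\simeq \Sig^{\p}/H$ of Proposition~\ref{pro:6.7.1} as a Kostant-type section, the contribution at the trivial conjugacy class reduces to an integral that isolates one specific regular nilpotent orbit. A direct inspection of the characteristic polynomial on $\Lam$ given by Proposition~\ref{pro:6.3.1} (involving the factor $\nu_{0}=q(z_{0},z_{0})$) identifies this orbit as $\CO_{\nu_{0}}$ when $\dim V$ is even and $\CO_{-\nu_{0}}$ when $\dim W$ is even, matching exactly the orbit fixed in the definition of $c_{\theta}$ in Section~\ref{sec:linearformgeom}.

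The main obstacle, which distinguishes this argument from the unitary case treated in \cite{beuzart2015local}, is that for special orthogonal groups the regular nilpotent orbits are \emph{not} permuted by scaling, so one cannot symmetrize $c_{\theta}$ over all regular germs but must identify a single one. To handle this I would follow \cite[\S11.4--\S11.6]{waldspurger10}: express the regular nilpotent germs of invariant orbital integrals through endoscopic invariants using the archimedean extension of Shelstad's formula (the appendix on germ formulas announced in the introduction), and then compute the relevant transfer factors along the lines of \cite{wald01nilpotent} (the appendix on germ calculations). These two inputs match the $\CO_{\nu_{0}}$-coefficient arising from Theorem~\ref{thm:6.8.1} with the defining coefficient of $m^{\Lie}_{\geom}$, and simultaneously show that all other regular nilpotent coefficients of $D$ vanish. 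Finally, by density of $\{\theta_{f}: f\in \CS_{\scusp}(\Fg)\}$ in $SQC(\Fg)$ underlying the uniqueness in Proposition~\ref{pro:7.7.1} (2), together with the continuity of both linear forms on $SQC(\Fg)$, the pointwise equality on germ representatives propagates to all $\theta\in SQC(\Fg)$, completing the induction step.
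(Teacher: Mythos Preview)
Your first reduction step is correct and matches the paper: using Proposition~\ref{pro:7.7.1}(4) together with the common homogeneity of degree $\del(G)/2$, one obtains constants $c_{\CO}$, $\CO\in\nil_{\reg}(\Fg)$, with
\[
J^{\Lie}_{\qc}(\theta)-m^{\Lie}_{\geom}(\theta)=\sum_{\CO\in\nil_{\reg}(\Fg)}c_{\CO}\,c_{\theta,\CO}(0)
\]
for all $\theta\in SQC(\Fg)$. This is exactly the paper's (\ref{7.8.1}).

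The gap is in your plan to kill the $c_{\CO}$. You propose to test with $\theta=\vphi\,\wh{j}(\CO,\cdot)$ and to compute $J^{\Lie}_{\qc}(\theta)$ by ``representing $\theta$ by some $f\in\CS_{\scusp}(\Fg)$ and applying Theorem~\ref{thm:6.8.1}''. But Theorem~\ref{thm:6.8.1} gives $J^{\Lie}_{\qc}(\theta)=\int_{\Gam(\Sig)}D^{G}(X)^{1/2}\wh{\theta}(X)\,dX$, an integral of the \emph{Fourier transform} of $\theta$ over \emph{semisimple} classes in $\Gam(\Sig)$. For $\theta=\vphi\,\wh{j}(\CO,\cdot)$ the Fourier transform has no tractable description, and there is no ``contribution at the trivial conjugacy class that isolates a nilpotent orbit'' to unfold; your invocation of Proposition~\ref{pro:6.7.1} (which says $\Sig'/H\simeq\Fg'/G$, not $\Sig'\simeq\Sig'/H$) does not help here.

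The paper resolves this by reversing the roles: it chooses test quasi-characters of the form $\theta=\wh{\theta_0}$ where $\theta_0$ is supported on the regular locus of a carefully chosen maximal torus. Then $\wh{\theta}=\theta_0$ has transparent support, so $J^{\Lie}_{\qc}(\theta)$ reduces to checking whether that torus meets $\Gam(\Sig)$, while $c_{\theta,\CO}(0)$ is read off from (\ref{7.8.5}) and the Shalika--germ identity $c_{\wh{j}(X,\cdot),\CO}(0)=\Gam_{\CO}(X)$. In the case of a unique regular nilpotent orbit one takes the quasi-split torus $T_{\qd}$ and uses $\Gam_{\qd}(\Fg)\subset\Gam(\Sig)$. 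When $|\nil_{\reg}(\Fg)|>1$, the paper uses the pair of stably conjugate but non-conjugate elements $X^{\pm}_{F_1}$ from Appendix~\ref{sub:germcal:explicitexample}: exactly one of them lies in $\Sig'(F)$ (determined via Proposition~\ref{pro:6.3.1} and Proposition~\ref{pro:6.5.1}), and Lemma~\ref{lem:germcal:Xpm} gives $\Gam_{\CO_{\nu}}(X^{+}_{F_1})-\Gam_{\CO_{\nu}}(X^{-}_{F_1})=\sgn_{F_1/F}(\eta\nu)$. A suitable linear combination $\wt{\theta}$ of the $\wh{\theta_{0,T^{\pm}_1}}$ and the quasi-split contribution then has $c_{\wt{\theta},\CO_\nu}(0)=\del_{\nu,-\nu_0}$ and one checks directly that $J^{\Lie}_{\qc}(\wt{\theta})=m^{\Lie}_{\geom}(\wt{\theta})$, forcing all $c_{\CO}=0$. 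Your references to the appendices are the right ingredients, but they enter through this torus-supported-$\theta_0$ mechanism, not by testing against $\vphi\,\wh{j}(\CO,\cdot)$.
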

\begin{proof}
We first establish the following fact.
\begin{num}
\item\label{7.8.1}
There exists constants $c_{\CO}, \CO\in \mathrm{Nil_{reg}}(\Fg)$, such that
$$
J^{\Lie}_{\qc}(\theta) - m^{\Lie}_{\geom}(\theta) = \sum_{\CO\in \mathrm{Nil_{reg}}(\Fg)}
c_{\CO}c_{\theta, \CO}(0)
$$
for any $\theta\in SQC(\Fg)$.
\end{num}
Let $\theta\in SQC(\Fg)$ be such that $c_{\theta, \CO}(0)= 0$ for any $\CO\in \mathrm{Nil_{reg}}(\Fg)$. We want to show that $J^{\Lie}_{\qc}(\theta) = m^{\Lie}_{\geom}(\theta)$. Let $\lam\in F^{\times}$ be such that $|\lam| \neq 1$. Denote by $M_{\lam}$ the operator on $SQC(\Fg)$ given by $M_{\lam}\theta = |\lam|^{-\del(G)/2}\theta_{\lam}$. Then, from \cite[Proposition~4.6.1 (i)]{beuzart2015local}, we may find $\theta_{1}, \theta_{2}\in SQC(\Fg)$ such that $\theta = (M_{\lam} - 1)\theta_{1}+\theta_{2}$ and $\theta_{2}$ is compactly supported away from $0$. By Proposition \ref{pro:7.7.1} (4), we have $J^{\Lie}_{\qc}(\theta_{2}) = m^{\Lie}_{\geom}(\theta_{2})$. On the other hand, by the homogeneity property of $J^{\Lie}_{\qc}$ and $m^{\Lie}_{\geom}$ (cf. Proposition \ref{pro:7.7.1} (2) and Proposition \ref{pro:7.2.1} (2)), we also have $J^{\Lie}_{\qc}((M_{\lam} - 1)\theta_{1}) = m^{\Lie}_{\geom}((M_{\lam} - 1)\theta_{1}) = 0$. This proves (\ref{7.8.1}).

To end the proof of the proposition, it remains to show that the coefficients $c_{\CO}$ for $\CO\in \mathrm{Nil_{reg}}(\Fg)$, are all zero. We separate the discussion to the following cases.
\begin{itemize}
\item
If $G$ is not quasi-split then there is nothing to prove. 

\item 
When $G$ has a unique regular nilpotent orbit, we follow the strategy of \cite[Section~11.9]{beuzart2015local} as follows. Fix a Borel subgroup $B\subset G$ and a maximal torus $T_{\qd}\subset B$, both of which are defined over $F$. Let $\Gam_{\qd}(\Fg)$ be the subset of $\Gam(\Fg)$ consisting of conjugacy classes that meet $\Ft_{\qd}(F)$. Recall that in Section \ref{sec:6.8}, we have defined a subset $\Gam(\Sig)\subset \Gam(\Fg)$. It consists of conjugacy classes of the semi-simple parts of elements in the affine subspace $\Sig(F)\subset \Fg(F)$ defined in Section \ref{sec:6.1}. We claim that
\begin{equation}\label{7.8.2}
\Gam_{\qd}(\Fg) \subset \Gam(\Sig).
\end{equation}
Up to $G(F)$ conjugation, we may assume that $B$ is a good Borel subgroup (c.f. Section \ref{sub:ggptriples:sphvar}). Then we have $\Fh\oplus \Fb = \Fg$ and therefore
\begin{equation}\label{7.8.3}
\Fh^{\perp}\oplus \Fu = \Fg
\end{equation}
where $\Fu$ denotes the unipotent radical of $\Fb$. Recall that $\Sig = \Xi+\Fh^{\perp}$. From (\ref{7.8.3}) we find that the restriction of the natural projection $\Fb \to \Ft_{\qd}$ to $\Sig\cap \Fb$ induces an affine isomorphism $\Sig\cap \Fb = \Ft_{\qd}$ and it implies (\ref{7.8.2}).

Let $\theta_{0}\in C^{\infty}_{c}(\Ft_{\qd,\reg})$ be $W(G,T_{\qd})$-invariant and such that
\begin{equation}\label{7.8.4}
\int_{\Ft_{\qd}(F)}
D^{G}(X)^{1/2}\theta_{0}(X)dX \neq 0.
\end{equation}
We may extend $\theta_{0}$ to a smooth invariant function on $\Fg^\rss(F)$, still denoted by $\theta_{0}$, which is zero outside $\Ft_{\qd,\reg}(F)^{G}$. Then $\theta_{0}$ is a compactly supported quasi-character. We consider its Fourier transform $\theta = \wh{\theta_{0}}$. By \cite[3.4.5, Lemma~4.2.3 (iii), Proposition~4.1.1 (iii)]{beuzart2015local}, $\theta$ is supported on $\Gam_{\qd}(\Fg)$. Since $\Gam_{\qd}(\Fg)\cap \Gam(G,H) = \{ 1 \}$, by definition of $m^{\Lie}_{\geom}$, we have 
$$
m^{\Lie}_{\geom}(\theta) = c_{\theta}(0).
$$
On the other hand, by \cite[Proposition~4.1.1 (iii), Lemma~4.2.3(iii), Proposition~4.5.1.2 (v)]{beuzart2015local}, we have 
\begin{equation}\label{7.8.5}
c_{\theta}(0) = 
\int_{\Gam(\Fg)}
D^{G}(X)^{1/2}
\theta_{0}(X)
c_{\wh{j}(X,\cdot)}(0)dX =  
\int_{\Gam_{\qd}(\Fg)}
D^{G}(X)^{1/2}\theta_{0}(X)dX.
\end{equation}
By definition of $J^{\Lie}_{\qc}$ and (\ref{7.8.2}), the last term is also equal to $J^{\Lie}_{\qc}(\theta)$. Hence we have $J^{\Lie}_{\qc}(\theta) = m^{\Lie}_{\geom}(\theta)$. Combining (\ref{7.8.4}) and (\ref{7.8.5}), we find that $c_{\theta}(0)\neq 0$ and it follows that we have proved the proposition.

\item
Assume now that $G$ is quasi-split with more than one regular nilpotent orbit parametrized by the set $\CN^{W}$ (i.e. $d=\dim W$ is even. The case for $\dim V$ even can be treated similarly). We use the notation from Section \ref{sub:germcal:explicitexample} then recall the following results from Lemma \ref{lem:germcal:Xpm}.
\begin{num}
\item\label{7.8.6}
We have 
$$
\Gam_{\CO_{\nu}}(X^{+}_{F_{1}}) - \Gam_{\CO_{\nu}}(X^{-}_{F_{1}})  =\sgn_{F_{1}/F}(\eta \nu), \quad 
\nu\in \CN^{V}.
$$
The elements $X^{+}_{F_{1}}$ and $X^{-}_{F_{1}}$ are stably conjugate but not $F$-conjugate, and from Proposition \ref{pro:6.5.1} only one of them lies in $\Gam(\Sig)(F)$.
\end{num}
In the following we will determine exactly which one of $X^{\pm}_{F_{1}}$ lies in $\Gam(\Sig)(F)$.
We will follow the strategy of \cite[11.5]{waldspurger10}.

From Proposition \ref{pro:6.3.1}, we know that when $\dim W$ is even, an element $X=(X_{W},X_{V})\in \Fg(F)$ lies in $\Sig^{\p}(F)$ if and only if there  exists elements $(z_{\pm i})_{i=1}^{m/2}$ of $\wb{F}$, where $m=\dim W$, such that
\begin{equation}\label{eq:intersec:1}
\bigg\{
\begin{matrix}
z_{i}z_{-i} = \frac{P_{X_{V}}(s_{i})}{2\nu_{0}s_{i}P_{-X_{W},i}(s_{i})}, & \text{ for any $i=1,...,\frac{m}{2}$},\\
\sum_{i=\pm 1}^{\pm \frac{m}{2}}z_{i}w_{i}\in W
\end{matrix}
\end{equation}
where $P_{-X_{W}, i}(T) = \frac{P_{-X_{W}}(T)}{T^{2}-s^{2}_{i}}$. Moreover we notice that $P_{X_{V}}(s_{i})\neq 0$ for any $i$.

Now following the notation from Section \ref{sub:germcal:explicitexample}, there exists 
\begin{itemize}
\item a decomposition of $W$ as a direct sum
$$
F_{1}\oplus F_{2}\oplus \wt{Z},
$$
where $F_{i}$ is a quadratic extension of $F$ for $i=1,2$;

\item element $c\in F^{\times}$ such that $q_{W}$ is the direct sum of the quadratic forms $c\RN_{F_{1}/F}$ on $F_{1}$, $-c\RN_{F_{2}/F}$ and
$\wt{Z}$ is a hyperbolic space;

\item elements $a_{j}\in F^{\times}_{j}$ such that $\tau_{F_{j}} (a_{j}) = -a_{j}$ for $j=1,2$ and an element $\wt{S}$ belonging to the Lie algebra of a maximal split subtorus of the special orthogonal group of $\wt{Z}$, such that $X_{F_{1}}$ acts by multiplication by $a_{j}$ on $F_{j}$ and by $\wt{S}$ on $\wt{Z}$.
\end{itemize}

Following Lemma \ref{lem:gemcal:basisform}, for any $j=1,2$, there exists a basis $\{e_{j},e_{-j} \}$ of $F_{j}\otimes_{F}\wb{F}$ such that any element $x\in F_{j}$ can be written as $xe_{j}\otimes \tau_{F_{j}}(x)e_{-j}$. Moreover we have $\tau_{F_{j}}(e_{j}) = e_{-j}$. We have the equality $q(e_{j},e_{-j}) = c_{j}$ with $c_{1} = c$ and $c_{2} = -c$. We can set $w_{j} =e_{j}$ and $w_{-j}= c^{-1}_{j}e_{-j}$ for $j=1,2$. We also fix a hyperbolic basis $(w_{j})_{j=\pm 3,...,\pm \frac{m}{2}}$ for $\wt{Z}$. It turns out that we have $s_{j} = a_{j}$ for $j=1,2$. Now the property (\ref{eq:intersec:1}) can be decomposed into $\frac{m}{2}$ separate relations $(\ref{eq:intersec:1})_{j}$ for the pairs $(z_{j},z_{-j})$ by taking $z_{-j} = 1$ and $z_{j}$ is equal to the RHS of the first relation appearing in (\ref{eq:intersec:1}). For $j\leq 2$, the second relation is equivalent to $z_{j}\in F^{\times}_{j}$ and $\tau_{F_{j}}(z_{j}) = c^{-1}_{j}z_{-j}$. Therefore the condition $(\ref{eq:intersec:1})_{j}$ is equivalent to
$$
\sgn_{F_{j}/F}
(\frac{P_{X_{V}(a_{j})}}{2c_{j}\nu_{0}a_{j}P_{-X_{W}}(a_{j})}) = 1.
$$
Following the same explicit computation as done in the proof of Lemma \ref{lem:germcal:Xpm}, for $X^{\zet}_{F_{1}}$ with $\zet = \pm$, the above formula is equal to $\sgn_{F_{1}/F}(-\eta\nu_{0}) = \zet$ if $j=1$ and $\sgn_{F_{2}/F}(-\eta\nu_{0}) = \zet$ if $j=2$. But since $\sgn_{E/F}(-\eta\nu_{0}) = 1$, therefore the two equalities for $j=1,2$ are actually equivalent.

It follows that we have shown that $X^{\zet}_{F_{1}}$ lies in $\Sig^{\p}(F)$ if and only if $\sgn_{F_{1}}(-\eta\nu_{0}) = \zet$.

Then, parallel to the previous case, for any $T\in \CT(G)$ with Lie algebra $\Ft$, we can find $\theta_{0,T}\in C^{\infty}_{c}(\Ft_{\reg})$ that is $W(G,T)$-invariant, such that 
$$
\int_{\Ft(F)}
D^{G}(X)^{1/2}\theta_{0,T}(X)dX = 1,
$$
and we can extend $\theta_{0,T}$ to a smooth invariant function on $\Fg^\rss(F)$ still denoted by $\theta_{0}$ that is zero outside $\Ft_{\reg}(F)^{G}$. It is a compactly supported quasi-character. We let the associated Fourier transform be $\theta_{T} = \wh{\theta_{0,T}}$. In particular, the torus associated to $X^{+}_{F_{1}}$ (resp. $X^{-}_{F_{1}}$) is denoted by $T_{1}^{+}$(resp. $T^{-}_{1}$). Moreover, from (\ref{7.8.5}) up to scaling we can further assume that $c_{\theta_{T_{1}^{+}},\CO}(X^{+}_{F_{1}}) = \Gam_{\CO}(X^{+}_{F_{1}})$ (resp. $c_{\theta_{T^{-}_{1}},\CO}(X^{-}_{F_{1}}) = \Gam_{\CO}(X^{-}_{F_{1}})$ ) for any $\CO\in \CN^{V}$.
Then we set
$$
\wt{\theta} = |\CN^{V}|^{-1}
(\theta+\sum_{F_{1}\in \CF^{V}} (\sgn_{F_{1}/F}(\nu \eta))(\theta_{T^{+}_{1}} - \theta_{T^{-}_{1}})),
$$
where the quasi-character $\theta$ is the one constructed from the case when $G(F)$ has a unique regular nilpotent orbit. Here $\CF^{V}$ is the set of collections of all degree two field extensions of $F$ if $\SO(W)$ is split. When $\SO(W)$ is quasi-split but not split, following the notation in Section \ref{sub:germcal:explicitexample}, we consider the pairs of quadratic extensions $F_{1},F_{2}$ of $F$ such that $\sgn_{F_{1}/F}\sgn_{F_{2}/F} = \sgn_{E/F}$ and non of $F_{1}$ nor $F_{2}$ is equal to $E$. We pick up either element in the pair and let the associated set be $\CF^{V}$. We note that $|\CN^{V}| = |\CF^{V}|+1$.
Then by definition we immediately find that
$$
m^{\Lie}_{\geom}(\wt{\theta}) = \del_{\nu,-\nu_{0}}.
$$

On the other hand, using the same argument as previous case, we have that 
$J^{\Lie}(\theta) = 1$, and $J^{\Lie}(\theta_{T_{1}^{+}}) = 1$ (resp. $J^{\Lie}(\theta_{T_{1}}^{-})$=1) if and only if $\sgn_{F_{1}/F}(-\eta\nu_{0}) = 1$ (resp. $\sgn_{F_{1}/F}(-\eta\nu_{0}) = -1$) and $0$ otherwise. Therefore we have 
$$
J^{\Lie}(\theta_{T^{+}_{1}}) - J^{\Lie}(\theta_{T^{-}_{1}}) = \sgn_{F_{1}/F}(-\eta\nu_{0})
$$
and hence
\begin{align*}
J^{\Lie}(\wt{\theta}) 
&= 
|\CN^{V}|^{-1}
(J^{\Lie}(\theta) + \sum_{F_{1}\in \CF^{V}}
(\sgn_{F_{1}/F}(\nu\eta) (J^{\Lie}(\theta_{T^{+}_{1}}) - J^{\Lie}(\theta_{T^{-}_{1}})  )
\\
&=
|\CN^{V}|^{-1}
(1+\sum_{F_{1}\in \CF^{V}}
\sgn_{F_{1}/F}(\nu\eta) \sgn_{F_{1}/F}(-\eta \nu_{0})),
\end{align*}
which again is equal to $\del_{\nu,-\nu_{0}}$. It follows that $c_{\CO} = 0$ for any $\CO\in \nil_{\reg}(\Fg(F))$ in (\ref{7.8.1}) and we have completed the proof.

The situation for $\dim V$ being even can be treated similarly following \cite[11.6]{waldspurger10}, and we omit the details.
\end{itemize}
\end{proof}

\section{An application to the Gan-Gross-Prasad conjecture}
We are going to prove that there is exactly one distinguished representation $\pi$ (i.e. one such that $m(\pi) = 1$) in every (extended) tempered $L$-packet of $G(F)$. In the $p$-adic case, this result was already proved by Waldspurger (\cite{waldspurger10}, \cite{waldspurgertemperedggp}).

\subsection{Strongly stable conjugacy classes, transfer between pure-inner forms and the Kottwitz sign}\label{sec:8.1}

Let $G$ be any connected connected group defined over $F$. In the discussion below we assume that $F$ is not algebraically closed, i.e. $F$ is either $p$-adic or real. 

Recall that a \emph{pure inner form} for $G$ is a triple $(G^\p,\psi,c)$ where 
\begin{itemize}
\item $G^{\p}$ is a connected reductive group defined over $F$;

\item $\psi: G_{\wb{F}}\simeq G^{\p}_{\wb{F}}$ is an isomorphism defined over $\wb{F}$;

\item $c:\sig\in \Gam_{F}\to c_{\sig}\in G(\wb{F})$ is a $1$-cocycle such that $\psi^{-1\sig}\psi = \Ad(c_{\sig})$ for any $\sig\in \Gam_{F}$.
\end{itemize}
The set of pure inner forms of $G$ can be parametrized by $H^1(F,G)$.
Moreover, inside an equivalence class of pure inner forms $(G^{\p},\psi,c)$, the group $G^{\p}$ is well-defined up to $G^{\p}(F)$-conjugacy. We will always assume for any $\alp\in H^{1}(F,G)$ a pure inner form in the class of $\alp$ that we will denote by $(G_{\alp},\psi_{\alp},c_{\alp})$ is fixed.

Let $(G^{\p},\psi,c)$ be a pure inner form of $G$. Following \cite[12.1]{beuzart2015local}, say that two semi-simple elements $x\in G_{\ss}(F)$ and $y\in G^{\p}_{\ss}(F)$ are strongly stably conjugate and write
$$
x\sim_{\stab}y
$$
if there exists $g\in G(F)$ such that $y = \psi(gxg^{-1})$ and the isomorphism $\psi\circ \Ad(g):G_{x}\simeq G_{y}$ is defined over $F$. The last condition has an interpretation in terms of cohomological classes: it means that the $1$-cocycle $\sig\in \Gam_{F}\to g^{-1}c_{\sig}\sig(g)$ takes its values in $Z(G_{x})$. For $x\in G_{\ss}(F)$ the set of semi-simple conjugacy classes in $G^{\p}(F)$ that are strongly stably conjugate to $x$ is naturally in bijection with
$$
\Im(H^{1}(F, Z(G_{x}) )\to H^{1}(F,Z_{G}(x))) \cap p^{-1}_{x}(\alp)
$$
where $\alp\in H^{1}(F,G)$ parametrizes the equivalence class of pure inner form $(G^{\p},\psi,c)$ and $p_{x}$ denotes the natural map $H^{1}(F,Z_{G}(x))\to H^{1}(F,G)$. The following fact will be needed (c.f. \cite[12.1.1]{beuzart2015local})
\begin{num}
\item\label{8.1.1}
Let $y^{\p}\in G^{\p}_{\ss}(F)$ and assume that $G$ and $G^{\p}_{y}$ are both quasi-split. Then, the set 
$$
\{ x\in G_{\ss}(F) | \quad x\sim_{\stab}y  \}
$$
is non-empty.
\end{num}

Continue to fix a pure inner form $(G^{\p},\psi,c)$ of $G$. Say a quasi-character $\theta$ on $G(F)$ is \emph{stable} if for any regular elements $x,y\in G^\rss(F)$ that are stably conjugate, $\theta(x) = \theta(y)$. Let $\theta$ and $\theta^{\p}$ be stable quasi-characters on $G(F)$ and $G^{\p}(F)$ respectively and assume moreover that $G$ is quasi-split. Then $\theta^{\p}$ is called a \emph{transfer} of $\theta$ if for any regular points $x\in G^\rss(F)$ and $y\in G^{\p\rss}(F)$ that are stably conjugate, $\theta^{\p}(y) = \theta(x)$. Note that if $\theta^{\p}$ is a transfer of $\theta$ then $\theta^{\p}$ is entirely determined by $\theta$. 

\begin{thm}\label{thm:replacegermstable}
Let $(V,q_V)$ be a quasi-split quadratic space of even dimension, $\theta$ a stable quasi-character on $G(F) = \SO(V)(F)$. Then 
$$
D^{G}(x)^{1/2}c_{\theta}(x) = 
|W(G_{x},T_{x})|^{-1}\lim_{x^{\p}\in T_{x}(F)\to x}
D^{G}(x^{\p})^{1/2}\theta(x^{\p}),
$$
In particular, for a stable quasi-character $\theta$ on $G(F)$, the definition $c_\theta$ introduced in \ref{sec:linearformgeom} coincides with the definition introduced in \cite[Section~4.5]{beuzart2015local}.
\end{thm}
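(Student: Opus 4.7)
The strategy is a semi-simple descent at $x$ to the origin of $\Fg_x(F)$, followed by an endoscopic germ-expansion argument that exploits stability of $\theta$.

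First I will apply the descent developed in \cite[Section~4.5]{beuzart2015local}, together with the transformation factors $j^H_G$ and $\eta^H_{G,x}$ introduced in Section \ref{sec:linearformgeom}, to localize both sides near $x$. Choose a $G$-good open neighborhood $\Ome_x\subset G_x(F)$ of $x$, and transport the descended quasi-character $\theta_{x,\Ome_x}$ through $\log$ to a quasi-character $\theta^{\Lie}$ on a $G$-excellent neighborhood $\ome$ of $0$ in $\Fg_x(F)$. Since semi-simple descent is compatible with stable conjugacy, $\theta^{\Lie}$ is itself stable on $\Fg_x(F)$. As $G_x$ is a product of general linear groups with a smaller quasi-split special orthogonal group, Luo's prescription for $c_\theta$ in Section \ref{sec:linearformgeom} applies to $\Fg_x$ and picks out a distinguished regular nilpotent orbit $\CO_0$. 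The claim then reduces to proving
$$
c_{\theta^{\Lie},\CO_0}(0) = |W(G_x,T_{x,\qd})|^{-1} \lim_{X\in \Ft_{x,\qd}(F)\to 0} D^{G_x}(X)^{1/2}\theta^{\Lie}(X),
$$
where $T_{x,\qd}$ is a quasi-split maximal torus of $G_x$.

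Next I will unroll the right-hand side using the germ expansion $\theta^{\Lie}(X)=\sum_{\CO} c_{\theta^{\Lie},\CO}(0)\,\wh{j}(\CO,X)$ valid near $0$. Homogeneity of $\wh{j}(\CO,\cdot)$ and the known leading behavior of $D^{G_x}(X)^{1/2}$ along $\Ft_{x,\qd}(F)$ force only the regular nilpotent orbits to contribute in the limit, and a standard bookkeeping of Shalika germ constants yields
$$
|W(G_x,T_{x,\qd})|^{-1} \lim_{X\to 0} D^{G_x}(X)^{1/2}\theta^{\Lie}(X) = \sum_{\CO\in\nil_{\reg}(\Fg_x)} c_{\theta^{\Lie},\CO}(0)\,\Gam_{\CO}(T_{x,\qd}).
$$

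The decisive step is to collapse this sum by exploiting stability of $\theta^{\Lie}$. For this I will invoke the endoscopic inversion of regular Shalika germs established in the quasi-split generality needed in Appendix \ref{sec:germformula}, together with the explicit transfer-factor computations at the identity from Appendix \ref{sec:germcal}. Dualizing the endoscopic decomposition, stability of $\theta^{\Lie}$ forces the coefficients $c_{\theta^{\Lie},\CO}(0)$ to be constant along each stable nilpotent class. Since $\nil_{\reg}(\Fg_x)$ is a single stable class (Section \ref{sub:ggptriples:definitions}), every such coefficient equals $c_{\theta^{\Lie},\CO_0}(0)$, and the normalization of Shalika germs at a quasi-split torus (matched with Beuzart--Plessis's Section~5 convention) turns the previous identity into the claimed equality.

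The main obstacle is the stability-implies-equal-coefficients step. It is precisely for this purpose that the endoscopic germ machinery of Appendices \ref{sec:germformula} and \ref{sec:germcal} is developed, extending Shelstad's $p$-adic germ formula and Waldspurger's transfer-factor calculations to the archimedean case. Once that lemma is secured, the first two paragraphs are routine consequences of the descent and germ expansion machinery already available in \cite[Section~4]{beuzart2015local}.
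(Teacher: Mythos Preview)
Your overall strategy---reduce to showing that, for a stable quasi-character, all the regular-germ coefficients $c_{\theta,\CO}(0)$ coincide---is the same as the paper's, and the descent to $\Fg_x$ is handled correctly. The route you take to the key ``stability implies equal coefficients'' step, however, differs from the paper's.

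The paper does not use an abstract endoscopic inversion. Instead it splits cases: for $F$ $p$-adic it cites Waldspurger directly, and for $F=\BR$ it exploits the elementary fact that $|\nil_{\reg}(\Fg_x)| = |\BR^\times/\BR^{\times 2}| = 2$. With only two orbits $\CO_+,\CO_-$, the argument becomes very concrete. First, the limit formula applied to $\theta$ shows that $c_{\theta,\CO_+}(x)\,\wh{j}(\CO_+,\cdot)+c_{\theta,\CO_-}(x)\,\wh{j}(\CO_-,\cdot)$ is a stable distribution. Second, the paper takes the explicit stably-conjugate pair $X^\pm_{F_1}$ from Appendix~\ref{sub:germcal:explicitexample}; since $\wh{j}(X^+_{F_1},\cdot)+\wh{j}(X^-_{F_1},\cdot)$ is stable, its limit $\wh{j}(\CO_+,\cdot)+\wh{j}(\CO_-,\cdot)$ is stable, while each $\wh{j}(\CO_\pm,\cdot)$ individually is not (Lemma~\ref{lem:germcal:Xpm}(2) shows the germ matrix at $X^\pm$ is a permutation matrix). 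Hence the stable subspace of $\mathrm{span}\{\wh{j}(\CO_+,\cdot),\wh{j}(\CO_-,\cdot)\}$ is one-dimensional, forcing $c_{\theta,\CO_+}=c_{\theta,\CO_-}$. Appendix~\ref{sec:germformula} (the general germ formula) is not invoked here; only the explicit two-element computation of Appendix~\ref{sec:germcal} is needed.

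Your approach via ``dualizing the endoscopic decomposition'' is more uniform and would, in principle, cover both the $p$-adic and real cases at once. But the step where you pass from Theorem~\ref{thm:germformula} to ``coefficients are constant on the stable class'' is not automatic: you still need to know that the $\wh{j}(\CO,\cdot)$ are linearly independent and that the only stable combination is the full sum $\sum_{\CO}\wh{j}(\CO,\cdot)$. That second fact is exactly what the paper proves hands-on (over $\BR$) using $X^\pm_{F_1}$. If you want to run your version, you should make explicit that Theorem~\ref{thm:germformula} gives a bijection $\{\text{rational classes in the stable class of }X\}\leftrightarrow\nil_{\reg}$ via $\Gam_\CO(X')=1$, so that summing $\wh{j}(X',\cdot)$ over the stable class and taking the limit produces the full sum $\sum_\CO\wh{j}(\CO,\cdot)$; this is the missing link in your sketch.
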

\begin{proof}
When $G(F)$ contains only one regular nilpotent orbit, there is nothing to prove. Therefore we focus on the case when $G(F)$ has more than one regular nilpotent orbits.

When $F$ is $p$-adic, it follows from the same argument as \cite[Section~13.4]{waldspurger10} and \cite[Section~13.6]{waldspurger10} (one may replace the character distribution $\theta_{\pi}$ by any quasi-character $\theta$ to arrive at the conclusion). 

When $F=\BR$. By the description of regular nilpotent orbits in Section \ref{sub:ggptriples:definitions}, when $d\geq 4$, $|\nil_{\reg}(\Fg)| = |F^{\times}/F^{\times 2}|=2$. From \cite[Proposition~4.4.1 (vi)]{beuzart2015local} and \cite[1.8.1]{beuzart2015local}, we have the following identity for any $x\in G_{\ss}(F)$ and $Y\in \Fg^{\rss}(F)$,
\begin{align*}
&\lim_{t\in F^{\times 2}, t\to 0}D^{G}(xe^{tY})^{1/2}\theta(xe^{tY}) 
=
\lim_{t\in F^{\times 2},t\to 0} 
D^{G}(xe^{tY})^{1/2}\sum_{\CO\in \nil_{\reg}(\Fg_{x})}
c_{\theta,\CO}(x)\wh{j}(\CO,tY)
\\
&=
\lim_{t\in F^{\times 2},t\to 0}
D^{G}(xe^{tY})^{1/2}
|t|^{-\del_{G_{x}}/2}
\sum_{\CO\in \nil_{\reg}(\Fg_{x})}
c_{\theta,\CO}(x)
\wh{j}(\CO,Y)
\end{align*}
where $\del_{G_{x}} = \dim G_{x} - \dim T_{x}$ and $\dim T_{x}$ is the dimension of maximal torus in $G_{x}$.

In particular, since $\theta$ is a stable quasi-character, 
$$
\sum_{\CO\in \nil_{\reg}(\Fg_{x})}c_{\theta,\CO}(x)\wh{j}(\CO,Y)
$$
is a stable distribution supported on the regular nilpotent elements inside $\Fg_{x}(F)$ (Recall that a distribution $T$ on $\Fg(k)$ is called stable if for $f\in \CC^\infty_c(\Fg)$, $T(f) = 0$ whenever the stable orbital integral of $f$ at any regular semi-simple elements is equal to $0$).

Note that whenever $x$ is not regular semi-simple (otherwise there is nothing to prove), the set of regular nilpotent orbits in $\Fg_x(F)$ can also be parametrized by $\CN^V$. Write $\CN^{V} = \{ \pm 1\}$, and choose regular semi-simple elements $\{X^+_{F_1}, X^-_{F_1}\}$ as in appendix \ref{sub:germcal:explicitexample}, which is determined by the fact that $\Gam_{\CO_{+}}(X^{+}_{F_{1}}) = 1$ (resp. $\Gam_{\CO_{-}}(X^{-}_{F_{1}})= 1$) and zero otherwise. 

Since $\wh{j}(X^{+}_{F_{1}},\cdot)+\wh{j}(X^{-}_{F_{1}},\cdot)$ is a stable distribution on $\Fg_{x}(F)$, applying the limit formula \cite[p.98, Section~4.5]{beuzart2015local} to $\wh{j}(X^{+}_{F_{1}},\cdot)+\wh{j}(X^{-}_{F_{1}},\cdot)$, we get that $\wh{j}(\CO_{+},\cdot)+\wh{j}(\CO_{-},\cdot)$ is a stable distribution on $\Fg_{x}(F)$. But for a single distribution $\wh{j}(\CO_+,\cdot)$ (resp. $\wh{j}(\CO_-,\cdot)$), it is not a stable distribution, it follows that $c_{\theta,\CO_{+}} = c_{\theta,\CO_{-}}$. 

It follows that we have established the fact.
\end{proof}

We need the following fact (\cite[12.1.2]{beuzart2015local}).

\begin{num}
\item\label{8.1.2}
Let $\theta$ and $\theta^{\p}$ be stable quasi-characters on $G(F)$ and $G^{\p}(F)$ respectively and assume that $\theta^{\p}$ is a transfer of $\theta$. Then, for any $x\in G_{\ss}(F)$ and $y\in G^{\p}_{\ss}(F)$ that are strongly stably conjugate,
$$
c_{\theta^{\p}}(y) = c_{\theta}(x)
$$
\end{num}

Now assume that $G$ is quasi-split. Following Kottwitz (\cite{kottsign}), we may associate to any class of pure inner forms $\alp\in H^{1}(F,G)$ a sign $e(G_{\alp})$.  When $F$ is either $p$-adic or real, let $Br_{2}(F) = H^{2}(F,\{ \pm 1 \}) = \{  \pm 1\}$ be the $2$-torsion subgroup of the Brauer group of $F$. The sign $e(G_{\alp})$ will more naturally be an element of $Br_{2}(F)$. To define it, we need to introduce a canonical algebraic central extension
\begin{equation}\label{8.1.3}
1\to \{ \pm 1\}\to \wt{G}\to G\to 1
\end{equation}
Recall that a quasi-split connected group over $F$ is classified up to conjugation by its (canonical) based root datum $\Psi_{0}(G)=  (X_{G},\Del_{G},X^{\vee}_{G},\Del^{\vee}_{G})$ together with the natural action of $\Gam_{F}$ on $\Psi_{0}(G)$. For any Borel pair $(B,T)$ of $G$ that is defined over $F$, we have a canonical $\Gam_{F}$-equivariant isomorphism $\Psi_{0}(G) \simeq (X^{*}(T),\Del(T,B), X_{*}(T), \Del(T,B)^{\vee})$ where $\Del(T,B)\subset X^{*}(T)$ denotes the set of simple roots of $T$ in $B$ and $\Del(T,B)^{\vee}\subset X_{*}(T)$ denotes the corresponding sets of simple coroots. Fix such a Borel pair and set
$$
\rho  = \frac{1}{2}\sum_{\bet\in R(G,T)}\bet \in X^{*}(T)\otimes \BQ
$$
for the half sum of the roots of $T$ in $B$. The image of $\rho$ in $X_{G}\otimes \BQ$ does not depend on the choice of $(B,T)$ chosen and we still denote by $\rho$ this image. Consider now the following based root datum 
\begin{equation}\label{8.1.4}
(\wt{X}_{G},\Del_{G}, \wt{X}^{\vee}_{G},\Del^{\vee}_{G})
\end{equation}
where $\wt{X}_{G} = X_{G}+\BZ \rho\subset X_{G}\otimes \BQ$ and $\wt{X}_{G} = \{ \lam^{\vee}\in X^{\vee}_{G} | \quad \langle \lam^{\vee},\rho \rangle \in \BZ \}$. Note that we have $\Del^{\vee}_{G}\subset \wt{X}^{\vee}_{G}$ since $\langle \alp^{\vee},\rho \rangle  = 1$ for any $\alp^{\vee}\in \Del^{\vee}_{G}$. The based root datum (\ref{8.1.4}) with its natural $\Gam_{F}$-action, it the base root datum of a unique quasi-split group $\wt{G}_{0}$ over $F$ well-defined up to conjugacy. Moreover, we have a natural central isogeny $\wt{G}_{0}\to G$, well-defined up to $G(F)$-conjugacy, whose kernel is either trivial or $\{ \pm 1\}$. If the kernel is $\{ \pm 1 \}$, we set $\wt{G} = \wt{G}_{0}$ otherwise we simply set $\wt{G} = G\times \{\pm 1 \}$. In any case, we obtain a short exact sequence like (\ref{8.1.3}) well-defined up to $G(F)$-conjugacy. The last term of the long exact sequence associated to (\ref{8.1.3}) yields a canonical map
\begin{equation}\label{8.1.5}
H^{1}(F,G)\to H^{2}(F,\{ \pm 1 \}) = Br_{2}(F) \simeq \{ \pm 1 \}
\end{equation}
We now define the sign $e(G_{\alp})$ for $\alp\in H^{1}(F,G)$, simply to be the image of $\alp$ by this map. We will need the following fact 
(c.f. \cite[12.1.6]{beuzart2015local})
\begin{num}
\item\label{8.1.6}
Let $T$ be a (not necessarily maximal) subtorus of $G$. Then, the composition of (\ref{8.1.5}) with the natural map $H^{1}(F,T)\to H^{1}(F,G)$ is a group homomorphism $H^{1}(F,T)\to Br_{2}(F)$. Moreover, if $T$ is anisotropic this morphism is onto if and only if the inverse image $\wt{T}$ of $T$ in $\wt{G}$ is a torus (i.e. is connected).
\end{num}

\subsubsection{Pure inner forms of a GGP triple}\label{sec:8.2}
Let $V$ be an quadratic space. There is the following explicit description of the pure inner forms of $\SO(V)$. The cohomology set $H^{1}(F,\SO(V))$ naturally classifies the isomorphism classes of quadratic spaces of the same dimension and same discriminant as $V$. Let $\alp\in H^{1}(F,\SO(V))$ and choose an quadratic space $V_{\alp}$ in the isomorphism class corresponding to $\alp$. Set $V_{\wb{F}} = V\otimes_{F}\wb{F}$ and $V_{\alp,\wb{F}} = V_{\alp}\otimes_{F}\wb{F}$. Fix an isomorphism $\phi_{\alp}:V_{\wb{F}}\simeq V_{\alp,\wb{F}}$ of $\wb{F}$-quadratic spaces. Then, the triple $(\SO(V_{\alp}), \psi_{\alp}, c_{\alp})$, where $\psi_{\alp}$ is the isomorphism $\SO(V)_{\wb{F}}\simeq \SO(V_{\alp})_{\wb{F}}$ given by $\psi_{\alp}(g) = \phi_{\alp}\circ g\circ \phi^{-1}_{\alp}$ and $c_{\alp}$ is the $1$-cocycle given by $\sig\in \Gam_{F}\to \phi^{-1\sig}_{\alp}\phi_{\alp}$, is a pure inner form of $\SO(V)$ in the class of $\alp$. Moreover, the $2$-cover $\wt{\SO}(V)$ has the following description
\begin{itemize}
\item $\wt{\SO}(V) = \mathrm{Spin}(V)$, when $\dim V$ is odd;

\item $\wt{\SO}(V) = \SO(V)\times \{\pm 1 \}$, when $\dim V$ is even.

\end{itemize}

We now return to the GGP triple $(G,H,\xi)$ that we have fixed. Recall that the GGP triple comes from an admissible pair $(V,W)$ of quadratic spaces and that we are assuming in this chapter that $G$ and $H$ are quasi-split. Let $\alp\in H^{1}(F,H)$. We are going to associate to $\alp$ a new GGP triple $(G_{\alp},H_{\alp},\xi_{\alp})$ well-defined up to conjugacy. Since $H^{1}(F,H) = H^{1}(F,\SO(W))$, the cohomology class $\alp$ corresponds an isomorphism class quadratic space of the same dimension and same discriminant as $W$. Let $W_{\alp}$ be an quadratic space in this isomorphism class and set $V_{\alp} = W_{\alp}\oplus^{\perp}Z$. Then the pair $(V_{\alp},W_{\alp})$ is an admissible pair and hence there is a GGP triple $(G_{\alp},H_{\alp},\xi_{\alp})$ associated to it. This GGP triple is well-defined up to conjugacy. We call such a GGP triple a \emph{pure inner form} of $(G,H,\xi)$. By definition, these pure inner forms are parametrized by $H^{1}(F,H)$. Note that for any $\alp\in H^{1}(F,H)$, $G_{\alp}$ is a pure inner form of $G$ in the class corresponding to the image of $\alp$ in $H^{1}(F,G)$ and that the natural map $H^{1}(F,H)\to H^{1}(F,G)$ is injective.

\subsection{The local Langlands correspondence}\label{sec:8.3}

In this section, we recall the local Langlands correspondence in a form that will be used in the following. Let $G$ be a quasi-split connected reductive group over $F$ and denote by $\LG = \wh{G}\times W_{F}$ its Langlands dual, where $W_{F}$ denotes the Weil group of $F$. Recall that a \emph{Langlands parameter} for $G$ is a homomorphism from the group
$$
L_{F} = 
\bigg\{
\begin{matrix}
W_{F}\times \SL_{2}(\BC) & \textit{ if $F$ is $p$-adic} \\
W_{F} & \textit{ if $F$ is archimedean}
\end{matrix}
$$
to $\LG$ satisfying the usual conditions of continuity, semi-simplicity, algebraicity and compatibility with the projection $\LG\to W_{F}$. A Langlands parameter is said to be \emph{tempered} if $\vphi(W_{F})$ is bounded. By the hypothetical local Langlands correspondence, a tempered Langlands parameter $\vphi$ for $G$ should give rise to a finite set $\Pi^{G}(\vphi)$, called a \emph{$L$-packet}, of (isomorphism classes of) tempered representations of $G(F)$. Actually, such a parameter $\vphi$ should also give rise to tempered $L$-packets $\Pi^{G_{\alp}}(\vphi)\subset \mathrm{Temp}(G_{\alp})$ for any $\alp\in H^{1}(F,G)$. Among them, we expect the following properties to hold for every tempered Langlands parameter $\vphi$ of $G$:

(STAB) for any $\alp\in H^{1}(F,G)$, the character 
$$
\theta_{\alp,\vphi} = \sum_{\pi\in \Pi^{G_{\alp}}(\vphi)}\theta_{\pi}
$$
is stable.

See Section \ref{sec:8.1} for the definition of stable;
also notice that our different notion of "strongly stable conjugate" does not affect this property since it only involves the values of $\theta_{\alp,\vphi}$ at regular semi-simple elements (for which the two notions of stable conjugacy coincides). For $\alp = 1 \in H^{1}(F,G)$, in which case $G_{\alp} = G$, we shall simply set $\theta_{\vphi} = \theta_{1,\vphi}$.

(TRANS) for any $\alp\in H^{1}(F,G)$, the stable character $\theta_{\alp,\vphi}$ is the transfer of $e(G_{\alp})\theta_{\vphi}$ where $e(G_{\alp})$ is the Kottwitz sign whose definition has been recalled in Section \ref{sec:8.1}.

(WHITT) For every $\CO\in \nil_{\reg}(\Fg)$, there exists exactly one representation in the $L$-packet $\Pi^{G}(\vphi)$ admitting a Whittaker model of type $\CO$.

Notice that these conditions are far from characterizing the composition of the $L$-packets uniquely. However, by the linear independence of characters, conditions (STAB) and (TRANS) uniquely characterize the $L$-packets $\Pi^{G_{\alp}}(\vphi)$, $\alp\in H^{1}(F,G)$, in terms of $\Pi^{G}(\vphi)$.

When $F$ is archimedean, the local Langlands correspondence has been constructed by Langlands himself (\cite{langlandsclassify}) building on previous results of Harish-Chandra. This correspondence indeed satisfies the three conditions stated above. That (STAB) and (TRANS) hold is a consequence of early work of Shelstad (\cite[Lemma~6.2, Theorem~6.3]{shelsteadinner}. The property (WHITT) for its part, follows from the result of Kostant (\cite[Theorem~6.7.2]{kostantwhittaker}) and Vogan (\cite[Theorem~6.2]{vogandim}). When $F$ is $p$-adic, the local Langlands correspondence is known in a variety of cases. In particular, for special orthogonal groups, the existence of the Langlands correspondence is now fully established thanks to Arthur (\cite{ar13}) for quasi-split case, with supplement by \cite{atobegan} and a conjectural description for the inner forms, which in principle should follow from the last chapter of \cite{ar13}. That the tempered $L$-packets constructed in these references verify the conditions (STAB) and (TRANS) follows from \cite{ar13}. Moreover, the $L$-packets on the quasi-split form $G$ satisfy condition (WHITT) by \cite{ar13}.

\subsection{The theorem}\label{sec:8.4}
Recall that we have fixed a GGP triple $(G,H,\xi)$ with the requirement that $G$ and $H$ being quasi-split. Also, we have fixed in Section \ref{sec:8.2} the pure inner forms $(G_{\alp},H_{\alp},\xi_{\alp})$ of $(G,H,\xi)$. These are also GGP triples, they are parametrized by $H^{1}(F,H)$ and $G_{\alp}$ is a pure inner form of $G$ corresponding to the image of $\alp$ in $H^{1}(F,G)$ via the map $H^{1}(F,H)\to H^{1}(F,G)$.

\subsubsection{Stable conjugacy classes inside $\Gam(G,H)$}\label{sec:8.5}
Recall that in Section \ref{sec:linearformgeom}, we have defined a set $\Gam(G,H)$ of semi-simple conjugacy classes in $G(F)$. It consists in the $G(F)$-conjugacy classes of elements $x\in \SO(W)_{\ss}(F)$ such that 
$$
T_{x} : = \SO(W^{\p\p}_{x})_{x}
$$
is an anisotropic torus (where we recall that $W^{\p\p}_{x}$ denotes the image of $x-1$ in $W$). Two elements $x,x^{\p}\in \SO(W)_{\ss}(F)$ are $G(F)$-conjugate if and only if they are $\SO(W)(F)$-conjugate and moreover if it is so, any element $g\in \SO(W)(F)$ conjugating $x$ to $x^{\p}$ induces an isomorphism 
$$
\SO(W^{\p\p}_{x})_{x}\simeq \SO(W^{\p\p}_{x^{\p}})_{x^{\p}}
$$
Moreover, this isomorphism depends on the choice of $g$ only up to inner automorphism. From this it follows that any conjugacy class $x\in \Gam(G,H)$ determines the anisotropic torus $T_{x}$ up to a unique isomorphism so that we can speak of "the torus" $T_{x}$ associated to $x$.

These considerations apply verbatim to the pure inner forms $(G_{\alp},H_{\alp},\xi_{\alp}),\alp\in H^{1}(F,H)$, of the GGP triple $(G,H,\xi)$ that were introduced in Section \ref{sec:8.2}. In particular, for any $\alp\in H^{1}(F,H)$, we have a set $\Gam(G_{\alp},H_{\alp})$ of semisimple conjugacy classes in $G_{\alp}(F)$ and to any $y\in \Gam(G_{\alp},H_{\alp})$ is associated an anisotropic torus $T_{y}$.

\begin{pro}\label{pro:8.5.1}
\begin{enumerate}
\item Let $\alp\in H^{1}(F,H)$ and $y\in \Gam(G_{\alp},H_{\alp})$ be such that $G_{\alp,y}$ is quasi-split. Then, the set 
$$
\{ x\in \Gam(G,H) | \quad x\sim_{\stab}y \}
$$
is non-empty.

\item Let $\alp\in H^{1}(F,H)$, $x\in \Gam(G,H)$ and $y\in \Gam(G_{\alp},H_{\alp})$ be such that $x\sim_{\stab}y$. Choose $g\in G_{\alp}(\wb{F})$ such that $g\psi_{\alp}g^{-1} =y$ and $\Ad(g)\circ \psi_{\alp} : G_{x}\simeq G_{y}$ is defined over $F$. Then, $\Ad(g)\circ \psi_{\alp}$ restricts to an isomorphism
$$
T_{x}\simeq T_{y}
$$
that is independent of the choice of $g$.

\item Let $x\in \Gam(G,H)$. Then, for any $\alp\in H^{1}(F,H)$ there exists a natural bijection between the set
$$
\{ y\in \Gam(G_{\alp},H_{\alp}) | \quad x\sim_{\stab} y \}
$$
and the set
$$
q^{-1}_{x}(\alp)
$$
where $q_{x}$ denotes the natural map $H^{1}(F,T_{x})\to H^{1}(F,G)$.

\item Let $x\in \Gam(G,H), x\neq 1$. Then the composition of the map $\alp\in H^{1}(F,G)\to e(G_{\alp})\in Br_{2}(F)$ with the natural map $H^{1}(F,T_{x})\to H^{1}(F,G)$ gives a surjective morphism of groups $H^{1}(F,T_{x})\to Br_{2}(F)$.
\end{enumerate}
\end{pro}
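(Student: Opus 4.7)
The plan is to handle the four parts in sequence, combining the general Galois-cohomological machinery for strong stable conjugacy recalled in Section \ref{sec:8.1} with the explicit structure of centralizers in a GGP triple and the description of the canonical $2$-cover $\wt G$ of $G$ via its based root datum. For (1), I would apply \ref{8.1.1} to the pure inner form $(G_\alp, \psi_\alp, c_\alp)$: since both $G$ and $G_{\alp, y}$ are quasi-split, there exists $x \in G_\ss(F)$ strongly stably conjugate to $y$. To place $x$ in $\Gam(G, H)$, I would use that the strong stable conjugacy supplies an $F$-isomorphism $G_x \simeq G_{\alp, y}$ that transports the anisotropic torus $T_y = \SO((W_\alp)^{\p\p}_y)_y$ to an anisotropic $F$-subtorus of $G_x$; after conjugating $x$ within $H(F)$ into $\SO(W)(F)$, this subtorus is identified with $T_x = \SO(W^{\p\p}_x)_x$, so $x \in \Gam(G, H)$.

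The key structural input for both (2) and (3) is that for $x \in \Gam(G, H)$ the torus $T_x$ is central in $G_x$ and the full centralizer $Z_G(x)$ agrees with $G_x$. Indeed, since $x$ acts on $W^{\p\p}_x$ with $2k$ distinct eigenvalues coming in pairs $\lam_i, \lam_i^{-1}$ with $\lam_i \ne \pm 1$ (the value $-1$ is ruled out because the total dimension contribution of the remaining paired eigenspaces would otherwise be odd, conflicting with the distinct-eigenvalue hypothesis on the even-dimensional $W^{\p\p}_x$), one has $Z_{\SO(W^{\p\p}_x)}(x) = T_x$, and the decompositions from Section \ref{sec:linearformgeom} give $Z_G(x) = G_x = G^\p_x \times (T_x \times T_x)$ with $T_x$ embedded diagonally into the abelian factor $G^{\p\p}_x$. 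Part (2) follows immediately: two elements $g_1, g_2 \in G(\ovl F)$ implementing the strong stable conjugacy differ by an element of $G_x(\ovl F)$, whose inner action on the central subtorus $T_x$ is trivial, so $\psi_\alp \circ \Ad(g)|_{T_x}$ is independent of the choice of $g$. For (3), I will use the standard cocycle construction $\sig \mapsto z_\sig = g^{-1}c_{\alp,\sig}\sig(g) \in G_x(\ovl F)$; the condition that $y$ lie in $\Gam(G_\alp, H_\alp)$ forces the cohomological content to be captured by the diagonal $T_x \hookrightarrow G^{\p\p}_x$, giving a class in $H^1(F, T_x)$ whose image in $H^1(F, G)$ is $\alp$. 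Conversely, any class in $q_x^{-1}(\alp)$ produces by twisting a stable conjugate $y \in \Gam(G_\alp, H_\alp)$, and (2) ensures the two assignments are mutually inverse.

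For (4), I will invoke \ref{8.1.6} with $T = T_x$. Since $T_x$ is anisotropic by the definition of $\Gam(G, H)$, it remains to show that the preimage $\wt T_x$ of $T_x$ in the canonical $2$-cover $\wt G$ is connected, equivalently that $X_*(T_x) \not\subset X_*(\wt G) = \{\lam \in X_*(G) : \langle \lam, \rho_G\rangle \in \BZ\}$, where $\rho_G = \rho_{\SO(W)} + \rho_{\SO(V)}$ is the half-sum of positive roots. In the Bessel setting exactly one of $\SO(W)$ and $\SO(V)$ has odd-dimensional underlying space, so in the standard $\veps$-coordinates on a maximal torus of $G$ containing $T_x$, exactly one summand of $\rho_G$ has half-integer coordinates while the other has integer coordinates. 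When $x \ne 1$, $T_x$ has positive rank, and for any rank-one sub-cocharacter of $T_x$ (viewed via the diagonal embedding $\SO(W) \hookrightarrow G$), the pairing with $\rho_G$ is a half-integer coming from the odd-dimensional factor, yielding $X_*(T_x) \not\subset X_*(\wt G)$. The main technical obstacle I anticipate lies in part (3), specifically in verifying that the constructed cocycle class indeed descends to $H^1(F, T_x)$ via the diagonal embedding $T_x \hookrightarrow G^{\p\p}_x \subset G_x$ rather than only to the ambient $H^1(F, G_x)$; establishing this descent relies on the observation that membership in $\Gam(G_\alp, H_\alp)$ selects precisely the stable conjugates whose associated centralizer datum is governed by the anisotropic torus and trivializes the $G^\p_x$-contribution.
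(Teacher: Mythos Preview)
Your proposal is correct and follows essentially the same route as the paper, which for parts (1)--(3) defers to \cite[Proposition~12.5.1]{beuzart2015local} and for (4) invokes the explicit description of the $2$-cover $\wt G$ via $\mathrm{Spin}$. Your argument for (4) through the pairing $\langle \lambda,\rho_G\rangle$ is a reformulation of the paper's observation that $\wt T_x$ is the preimage of $T_x$ in $\mathrm{Spin}(\text{odd factor})\times\SO(\text{even factor})$, hence connected: the two are equivalent because the canonical $2$-cover $\wt G\to G$ is by construction the one classified by $\rho_G$ modulo the weight lattice, and for $\dim W$ odd (say) every standard cocharacter $e_i$ of a maximal torus of $\SO(W)$ satisfies $\langle e_i,\rho_{\SO(W)}\rangle\in\tfrac12+\BZ$.

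The one place where your sketch stops short of the paper is part (3). You correctly identify the crux---showing the cocycle descends from $H^1(F,G_x)$ to $H^1(F,T_x)$ via the diagonal $T_x\hookrightarrow G''_x$---but leave the mechanism vague. The paper isolates the single structural input that differs from the unitary case: $H^1(F,\SO(V'_x))$ classifies quadratic spaces of the same dimension \emph{and discriminant} as $V'_x$. This is what lets one match the $G'_x$-component of the cocycle against the pure-inner-form datum $\alp$ and thereby trivialize it, forcing the class into the diagonal $T_x$. You should make this step explicit; once noted, the rest of your argument for (3) goes through.
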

\begin{proof}
(1) and (2): The proof of \cite[Proposition~12.5.1]{beuzart2015local} works verbatim.

(3) : Besides the fact that the set $H^1(F,\SO(V^\p_x))$ classifies the (isomorphism classes of) quadratic spaces of the same dimension and discriminant as $V^\p_x$, the proof of \cite[Proposition~12.5.1]{beuzart2015local} works verbatim.

(4) Let us denote by $\wt{G}$ the $2$-cover of $G$ at the end of Section \ref{sec:8.1} and let $\wt{T}_{x}$ be the inverse image of $T_{x}$ in this $2$-cover. Then, by (\ref{8.1.6}), it suffices to check that $\wt{T}_{x}$ is connected. By the precise description of $\wt{\SO}(V)$ and $\wt{\SO}(W)$ given at the beginning fo Section \ref{sec:8.2} and since exactly one of the quadratic spaces $V$ and $W$ is odd dimensional, we have 
$
\wt{T}_{x}
$
is equal to the inverse image of $T_{x}$ in the associated product of the spin group with another special orthogonal group, which is connected.
\end{proof}

Now we are ready to state our main theorem. The proof in \cite[12.6]{beuzart2015local} works verbatim. We refer the details to \cite{thesis_zhilin}.
\begin{thm}\label{thm:8.4.1}
Let $\vphi$ be a tempered Langlands parameter for $G$. Then, there exists a unique representation $\pi$ in the disjoint union of $L$-packets
$$
\bigsqcup_{\alp\in H^{1}(F,H)}\Pi^{G_{\alp}}(\vphi)
$$
such that $m(\pi) = 1$.
\end{thm}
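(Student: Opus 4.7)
The plan is to compute the total sum
$$
S(\vphi) := \sum_{\alp\in H^{1}(F,H)} \sum_{\pi\in \Pi^{G_{\alp}}(\vphi)} m(\pi)
$$
and show it equals $1$. Since $m(\pi)\in\{0,1\}$ by Theorem \ref{thm:ggptriples:multiplictyone}, this simultaneously yields existence and uniqueness of the distinguished representation.

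First I would apply Theorem \ref{thm:7.4.2} to each pure inner form $(G_{\alp},H_{\alp},\xi_{\alp})$ and linearity of $m_{\geom}$ to rewrite the sum as
$$
S(\vphi) = \sum_{\alp\in H^{1}(F,H)} m_{\geom}(\theta_{\alp,\vphi}), \qquad \theta_{\alp,\vphi} := \sum_{\pi\in \Pi^{G_{\alp}}(\vphi)}\theta_{\pi}.
$$
By property (STAB) of Section \ref{sec:8.3}, each $\theta_{\alp,\vphi}$ is stable; by (TRANS), it is the transfer of $e(G_{\alp})\theta_{\vphi}$.

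Next, insert the integral formula of Proposition \ref{pro:7.2.1}(1). Stability of $\theta_{\alp,\vphi}$ together with Theorem \ref{thm:replacegermstable} guarantees that the germ $c_{\theta_{\alp,\vphi}}$ coincides with the stable version constructed from character limits; combined with fact (\ref{8.1.2}), this yields
$$
c_{\theta_{\alp,\vphi}}(y) = e(G_{\alp})\, c_{\theta_{\vphi}}(x)
$$
whenever $x\in \Gam(G,H)$ and $y\in \Gam(G_{\alp},H_{\alp})$ are strongly stably conjugate, while the factors $D^{G_\alp}(y)$ and $\Del(y)$ depend only on the stable class. Using Proposition \ref{pro:8.5.1}(1)--(3), the fibers of the natural map
$$
\bigsqcup_{\alp\in H^{1}(F,H)} \Gam(G_{\alp},H_{\alp}) \longrightarrow \Gam(G,H)
$$
above $x\in \Gam(G,H)$ are canonically parametrized by $H^{1}(F,T_{x})$. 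Matching Weyl weights and Tamagawa volumes of $T_{x}$ via Proposition \ref{pro:8.5.1}(2), the sum-integral collapses to
$$
S(\vphi) = \lim_{s\to 0^+}\int_{\Gam(G,H)} D^{G}(x)^{1/2}\, c_{\theta_{\vphi}}(x)\, \Del(x)^{s-1/2}\,\kappa(x)\, dx, \quad \kappa(x) := \sum_{\beta\in H^{1}(F,T_{x})} e(G_{q_{x}(\beta)}).
$$

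The decisive cancellation is now at hand: by Proposition \ref{pro:8.5.1}(4), for $x\neq 1$ the map $\beta\mapsto e(G_{q_{x}(\beta)})$ is a \emph{surjective} group homomorphism $H^{1}(F,T_{x})\to \Br_{2}(F)\simeq\{\pm 1\}$, so exactly half of the (finite) group $H^{1}(F,T_{x})$ maps to $+1$ and half to $-1$, giving $\kappa(x)=0$. Only the atom $x=1$ survives; since $T_{1}$ is trivial and $e(G)=+1$, its contribution is $c_{\theta_{\vphi}}(1)$. By the definition of $c_{\theta}$ in Section \ref{sec:linearformgeom} (which singles out one specific regular nilpotent orbit, either $\CO_{\nu_0}$ or $\CO_{-\nu_0}$) together with property (WHITT) and Rodier's theorem (Matumoto in the archimedean case), exactly one $\pi\in \Pi^{G}(\vphi)$ has $c_{\theta_{\pi},\CO}(1)=1$ while the others contribute $0$; hence $c_{\theta_{\vphi}}(1)=1$, which gives $S(\vphi)=1$.

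The main obstacle I anticipate is the measure-theoretic bookkeeping in the collapsing step: one must verify that under the bijection of Proposition \ref{pro:8.5.1}(3) the canonical isomorphisms $T_y \simeq T_x$ and $W(H_{\alp},T_{y})\simeq W(H,T_{x})$ from Proposition \ref{pro:8.5.1}(2) align the Haar measures and Weyl multiplicities on the two sides, so that the pushforward of $\bigsqcup_\alp d_{\Gam(G_\alp,H_\alp)}$ really equals $|H^{1}(F,T_{x})|$ times $d_{\Gam(G,H)}$; and that the limit $s\to 0^{+}$ can legitimately be exchanged with this regrouping, which is ultimately justified by the uniform absolute convergence for $\Re(s)>0$ provided by Proposition \ref{pro:7.2.1}(1).
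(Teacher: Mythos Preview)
Your proposal is correct and is precisely the argument of \cite[12.6]{beuzart2015local} that the paper invokes verbatim; you have reconstructed all of its key steps (the multiplicity formula, stability and transfer, regrouping over $H^{1}(F,T_{x})$, Kottwitz-sign cancellation, and (WHITT) combined with Rodier/Matumoto at the identity). One small point worth making explicit: when $y\in\Gam(G_{\alp},H_{\alp})$ has $G_{\alp,y}$ not quasi-split, Proposition~\ref{pro:8.5.1}(1) does not apply, but then $c_{\theta_{\alp,\vphi}}(y)=0$ anyway since $\nil_{\reg}(\Fg_{\alp,y})=\emptyset$, so these terms contribute nothing and the regrouping over $H^{1}(F,T_{x})$ is exhaustive on the nonzero part of the integrand.
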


\appendix

\section{A formula for the regular nilpotent germs}\label{sec:germformula}
In this section, we are going to give a formula for the germs of Lie algebra orbital integrals associated to regular nilpotent orbits in a quasi-split connected reductive algebraic group $G$ over any local field $F$ of characteristic zero in terms of endoscopic invariants. The formula was first proved by Shelstad over $p$-adic fields (\cite{regulargermshelstad}). We also discuss the relation between the formula and Kostant's sections.

The organization of the section is as follows. We first review the definition of Lie algebra endoscopic transfer factors, following the work of Langlands-Shelstad (\cite{transferfactorLS}), Waldspurger (\cite{waldtransfert}) and Kottwitz (\cite{kottwitztransfer}), and recall a theorem in \cite[Theorem~5.5.A.]{transferfactorLS} which relates the transfer factors for regular semi-simple elements and regular nilpotent orbits. Then we establish the formula. 
The definition of germ expansion for Lie algebra orbital integrals is known over $p$-adic fields (\cite{MR323957}). Over archimedean local fields, there are also asymptotic expansions (\cite{MR1248081}, \cite{MR3675163}) relating the orbital integrals for regular semi-simple elements and distributions supported on the nilpotent cone. Since we only care about the germs associated to the regular nilpotent orbits, we will simply use the results from \cite[p.98, Section~4.5]{beuzart2015local}. Finally we establish the relation between the formula and Kostant's sections based on \cite[Theorem~5.1]{kottwitztransfer}.
\newline

\paragraph{\textbf{Notation and conventions}}
Let $F$ be a local field of characteristic zero with fixed valuation $|\cdot|$ and Galois group $\Gam_F= \Gal(\wb{F}/F)$, where $\wb{F}$ is a fixed algebraic closure of $F$. Fix an additive character $\psi$ of $F$ and a Haar measure on $F$ that is self-dual w.r.t. $\psi$.

Fix a quasi-split connected reductive algebraic group $G$ defined over $F$. Over $F$, fix a maximal torus $T$ of $G$ and a Borel subgroup $B_0$ of $G$ containing $T$ with Levi decomposition $B_0 = TN_0$, where $N_0$ is the unipotent radical of $B_0$. Let $B_\infty$ be the unique Borel subgroup of $G$ containing $T$ that is opposite to $B_0$. Gothic letters are used to denote the Lie algebras of the associated algebraic groups.

Let $W=W(G,T)$ be the associated Weyl group. Let $R_G=R(T,G)$ be the set of roots of $T$ in $G$. For any $\alp\in R_G$, let $\Fg_\alp$ be the root space of $\Ft$ in $\Fg$ corresponding to $\alp$. In particular $\Fn_0 = \bigoplus_{\alp>0}\Fg_\alp$ and $\Fn_\infty = \bigoplus_{\alp<0}\Fg_\alp$. For any $\alp\in R_G$, let $\check{\alp}$ be the coroot dual to $\alp$. Let $\Del = \Del(T,B_0)$ be the set of positive simple roots determined by $B_0$.

Let $\Fg^{\rss}(F)$ be the subset of regular semi-simple elements in $\Fg(F)$. The definition of Weyl discriminant $D^G(X)$ is given by 
$$
D^G(X) = |\det \mathrm{ad}(X)_{\Fg(F)/\Fg_X(F)}|
$$
for any $X\in \Fg^{\rss}(F)$, where $\Fg_X(F)$ is the centralizer of $X$ in $\Fg(F)$.

Let $\CS(\Fg) = \CS(\Fg(F))$ be the space of Schwartz-Bruhat functions on $\Fg(F)$, where $\Fg(F)$ is viewed as a vector space over $F$.

Fix a $G(F)$-invariant non-degenerate bilinear form $\langle\cdot,\cdot\rangle$ on $\Fg(F).$ Endow $\Fg(F)$ with the self-dual measure w.r.t. $\langle\cdot,\cdot\rangle$, which is the unique Haar measure $\ud X$ on $\Fg(F)$ such that the Fourier transform 
$$
\CF(f)(Y)=  \wh{f}(Y) = \int_{\Fg(F)}
f(X)\psi(\langle X,Y\rangle)\ud X,\quad f\in \CS(\Fg)
$$ 
satisfies 
$$
\CF({\CF(f)})(X)=  f(-X).
$$
Equip $G(F)$ with the unique Haar measure $\ud g$ such that the exponential map has Jacobian equal to $1$ at identity.

Let $\nil_{\reg}(\Fg) = \nil_\reg(\Fg(F))$ be the set of regular nilpotent orbits in $\Fg(F)$ under the adjoint action of $G(F)$. For any $\CO\in \nil_{\reg}(\Fg)$ and $X\in \CO$, the bilinear map $(Y,Z)\to B(Y,[X,Z])$ yields a non-degenerate symplectic form on $\Fg(F)/\Fg_{X}(F)$, which can be viewed as the tangent space of $\CO$ at $X$. This gives $\CO$ a structure of symplectic $F$-analytic manifold. By the Haar measure on $F$, $\CO$ can be equipped with a natural measure that is $G(F)$-invariant.

For $X\in \Fg^{\rss}(F)$, there is the normalized orbital integral at $X$ defined by
$$
J_{G}(X,f) = D^{G}(X)^{1/2}\int_{G_{X}\bs G}
f(g^{-1}Xg)\ud g,\quad f\in \CS(\Fg).
$$
Similarly, for $\CO\in \nil_{\reg}(\Fg)$,
$$
J_{\CO}(f) = \int_{\CO}f(X)\ud X, \quad f\in \CS(\Fg).
$$
The distribution $J_{G}(X,\cdot)$ (resp. $J_{\CO}(\cdot)$) is tempered. Denote the Fourier transform by $\wh{j}(X,\cdot)$ (resp. $\wh{j}(\CO, \cdot)$). It is locally integrable on $\Fg(F)\times \Fg(F)$ (resp. $\Fg(F)$) and  smooth on $\Fg^{\rss}(F)\times \Fg^\rss(F)$ (resp. $\Fg^{\rss}(F)$) (c.f. \cite[1.8]{beuzart2015local}).

\subsection{Transfer factors}\label{sub:germformula:transferfactors}
In this subsection, we review the definition of Lie algebra endoscopic transfer factors. 

\begin{defin}\label{defin:splitting}
The triple $\textbf{\emph{spl}} :=(B_{0},T,\{ X_{\alp} \})$ is called an $F$-splitting for $G$. 
\end{defin}

More precisely, by the Jacobson-Morozov theorem (\cite[Theorem~3]{MR49882}), for any $\alp\in \Del$, it can be associated with a standard $\Fs\Fl(2)$ triple $\{X_{\alp},H_{\alp}, X_{-\alp} \}$. Fix such a triple for any $\alp\in \Del$, and let $X_{+} = \sum_{\alp\in \Del}X_{\alp}$, $X_{-} =\sum_{\alp\in \Del}X_{-\alp}$. Both $X_{+}$ and $X_{-}$ are regular nilpotent elements in $\Fg(F)$.

Let $(H,\CH,s,\xi)$ be an endoscopic data for $G$ (\cite[p.9]{transferfactorLS}). For a quasi-split connected reductive algebraic group $G$ defined over $F$, Langlands and Shelstad defined the notion of transfer factor $\Del_{0}(\gam_{H},\gam_{G})$ for any $\gam_{H}\in H(F)$ that is $G(F)$-regular semi-simple (\cite[1.3]{transferfactorLS}) and $\gam_{G}\in G(F)$ that is regular-simple. The transfer factor $\Del_{0}(\gam_{H},\gam_{G})$ depends on the choice of the $F$-splitting \textbf{spl}. The corresponding Lie algebra variant of $\Del_{0}(\gam_{H},\gam_{G})$ introduced below, which is denoted by $\Del^{\p}_{0}(X_{H},X_{G})$, is analogous to the one defined by Langlands and Shelstad, with the factor $\Del_{\mathrm{IV}}$ (Weyl discriminant) removed.

To introduce $\Del^{\p}_{0}(X_{H},X_{G})$, a set of \emph{$a$-data} and \emph{$\chi$-data} need to be fixed. We recall the notions from \cite[2.3]{waldtransfert}.

\subsubsection{$a$-data.}
Let $T_{H}$ be a maximal torus of $H$ defined over $F$. From the definition of endoscopic data, there exists a canonical $G$-conjugacy class of embeddings $T_{H}\hookrightarrow G$. Moreover we can fix an embedding $T_{H}\hookrightarrow G$ that is defined over $F$. Let $T_{G}$ be the image of $T_{H}$ in $G$, where $T_{G}$ is a maximal torus of $G$ defined over $F$. Let $R_{G} = R(T_{G},G)$ be the set of roots of $T_{G}$ in $G$. Similarly the set $R_{H}$ can be defined. Identify $T_{H}$ with $T_{G}$ so that $R_{H}$ becomes a subset of $R_{G}$.

\begin{defin}\label{defin:adata}
An $a$-data for $T_{G}$ is a collection of elements $\{a_{\alp} \}_{\alp\in R_{G}}$ with $a_{\alp}\in \wb{F}^{\times}$ such that 
\begin{itemize}
\item $a_{\sig\alp} = \sig(a_{\alp})$ for any $\alp\in R_{G}$ and $\sig\in \Gam_{F}$;

\item $a_{-\alp} = -a_{\alp}$ for any $\alp\in R_{G}$.
\end{itemize}
\end{defin}

Fix an $a$-data for $T_{G}$.

\subsubsection{$\chi$-data}
For any $\alp\in R_{G}$, let $F_{\alp}$ (resp. $F_{\pm \alp}$) be the field of definition of $\alp$ (resp. the set $\{ \pm \alp\}$). Then $F\subset F_{\pm \alp}\subset F_{\alp}\subset \wb{F}$, and $[F_{\alp}:F_{\pm \alp}] = 1$ or $2$. Following \cite[2.5]{transferfactorLS}, $\alp$ (and its $\Gam_{F}$-orbit in $R_{G}$) is called \emph{symmetric} if $[F_{\alp}:F_{\pm \alp}] =2$ and let $\chi_{\alp}$ be the quadratic character on $F^{\times}_{\pm \alp}$ associated to the quadratic extension $F_{\alp}/F_{\pm \alp}$ via local class field theory (\cite[Theorem~2]{MR0220701}). Simpler than the definition in \cite[2.5]{transferfactorLS}, it is not needed to extend $\chi_{\alp}$ to a character of $F^{\times}_{\alp}$.

In the following the Lie algebra transfer factors is introduced. Fix $X_{H}\in \Ft_{H}(F)$ and assume that its image $X_{G}$ in $\Ft_{G}(F)$ is regular semi-simple.

\subsubsection{$\Del_{\mathrm{I}}(X_{H},X_{G})$}
First consider the case when $G$ is semi-simple and simply connected. With the fixed Borel pair $(B_{0},T)$, we can fix a canonical section $n:W = W(G,T)\to \mathrm{Norm}_{G}(\wb{F})(T)$ (\cite[2.1]{transferfactorLS}). Fix an element $x\in G(\wb{F})$ such that $xTx^{-1}=T_{G}$. Then the conjugation action of $x$ yields an ordering on $R_{G} =R(T_{G},G)$. For any $\sig\in \Gam_{F}$, the element $x^{-1}\sig(x)$ normalizes $T$, hence it provides an element $w_{\sig}\in W$. Let $n_{\sig} = n(w_{\sig})$. For any $\alp\in R_{G}$, let $\check{\alp}$ be the associated coroot. Set 
$$
a_{\sig} = \prod_{\alp\in R_{G}, \alp>0, \sig^{-1}(\alp)<0}\check{\alp}\otimes a_{\alp}\in X_{*}(T_{G})\otimes_{\BZ}\wb{F}^{\times}\simeq T_{G}(\wb{F}),\quad \sig\in \Gam_{F},
$$
and define
$$
\lam(T_{G})(\sig) = a_{\sig}xn_{\sig}\sig(x^{-1})\in T_{G}(\wb{F}), \quad \sig\in \Gam_{F}.
$$
It can be shown that $\sig\to \lam(T_{G})(\sig)$ is a $1$-cocycle of $\Gam_{F}$ valued in $T_{G}(\wb{F})$ (\cite[2.3]{transferfactorLS}). We use the same notation $\lam(T_{G})$ to denote the associated cohomology class in $H^{1}(F,T_{G}(\wb{F}))$.

In general, let $G_{\mathrm{sc}}$ be the simply connected cover of the derived group of $G$, and $T^{\mathrm{sc}}_{G}$ be the inverse image of $T_{G}$ under the canonical morphism $G_{\mathrm{sc}}\to G$. Denote $\lam(T_{G})$ the image of $\lam(T^{\mathrm{sc}}_{G})$ under the map induced by the canonical morphism $T^{\mathrm{sc}}_{G}\to T_{G}$.
\begin{rmk}
To make the notation in consistent with \cite{regulargermshelstad}, we will also denote the invariant $\lam(T_{G})$ by $\inv(T_{G})$.
\end{rmk}

Let $\wh{T}_{G}$ be the complex torus dual to $T_{G}$. The element $s$ appearing in the endoscopic data is a $\Gam_{F}$-fixed element in the center of the Langlands dual group $\wh{H}$ of $H$, and thus can be viewed as a $\Gam_{F}$-fixed element $\textbf{s}_{T_{G}}$ in $\wh{T}_{H} = \wh{T}_{G}$. 

Recall the Tate-Nakayama pairing (\cite{MR858284})
$$
\langle\cdot,\cdot\rangle:H^{1}(F,T_{G}(\wb{F}))\times \wh{T}_{G}^{\Gam_{F}}\to \BC^{\times}.
$$

\begin{defin}\label{defin:transferfactorI}
Define the transfer factor $\Del_{\mathrm{I}}(X_{H},X_{G})$ to be 
$$
\langle \inv(T_{G}), \textbf{s}_{T_{G}}\rangle.
$$
\end{defin}

\subsubsection{$\Del_{\mathrm{II}}(X_{H},X_{G})$}

\begin{defin}\label{defin:transferfactorII}
Define
$$
\Del_{\mathrm{II}}(X_{H},X_{G}):=\prod_{\alp}
(\frac{\alp(X_{G})}{a_{\alp}}),
$$
where the product is taken over a set of representatives for the symmetric orbits of $\Gam_{F}$ in the set $R_{H}\bs R_{G}$.
\end{defin}

Following \cite[Lemma~2.2.B, 2.2.C, 3.2.D]{transferfactorLS}, \cite{regulargermshelstad}, the factor $\Del_{\mathrm{II}}$ also admits the following interpretation. The morphism
$$
\inv(X_{G}):\sig\in \Gam_{F}\to 
\prod_{\alp\in R_{G}, \alp>0, \sig^{-1}\alp<0}
\check{\alp}\circ 
(\frac{\exp(\frac{\alp(X_{G})}{2})-\exp(-\frac{\alp(X_{G})}{2})}{a_{\alp}})
$$
is a $1$-cocycle of $\Gam_{F}$ in $T_{G}(\wb{F})$. From \cite[Lemma~3.2.D]{transferfactorLS}, there is the equality
$$
\Del_{\mathrm{II}}(X_{H},X_{G}) = \langle \inv(X_{G}), \textbf{s}_{T_{G}}\rangle
$$
whenever $X_{G}$ is sufficiently close to $0$.

Now the Lie algebra transfer factor is defined as the product of $\Del_{\mathrm{I}}$ and $\Del_{\mathrm{II}}$.

\begin{defin}\label{defin:transferfactorwhole}
Define
$$
\Del^{\p}_{0}(X_{H},X_{G}) = 
\Del_{\mathrm{I}}(X_{H},X_{G})
\Del_{\mathrm{II}}(X_{H},X_{G}).
$$
\end{defin}

\begin{rmk}\label{rmk:transferscalarinvariant}
Following \cite[Lemma~3.2.C]{transferfactorLS}, we can show that $\Del^{\p}_{0}(X_{H},X_{G})$ is independent of the choice of $a$-data. In particular $\Del^{\p}_{0}(X_{H},X_{G})$ depends only on the choice of the $F$-splitting.

Set $\Del^{\p}_{0}(\gam_{H},\gam_{G}) :=\Del_{0}(\gam_{H}, \gam_{G})\cdot \Del_{\mathrm{IV}}(\gam_{H},\gam_{G})^{-1}$ where the terms are group version of the transfer factors defined in \cite{transferfactorLS}. Then by definition, whenever $X_{G}$ is sufficiently close to $0$,
$$
\Del^{\p}_{0}(X_{H},X_{G}) = \Del^{\p}_{0}(\exp(X_{H}), \exp(X_{G})).
$$
Moreover, from the above definitions, the following equality holds for any $a\in F^{\times}$,
$$
\Del^{\p}_{0}(a^{2}X_{H}, a^{2}X_{G}) = 
\Del^{\p}_{0}(X_{H},X_{G}).
$$
\end{rmk}

\begin{rmk}\label{rmk:transferstableconjugate}
Suppose that $X^{\p}_{G}\in \Fg(F)$ is stably conjugate to $X_{G}$, i.e. there exists $h\in G(\wb{F})$ such that $\Ad(h)(X^{\p}_{G}) = X_{G}$. Then $\sig\to h\sig(h)^{-1}$ is a $1$-cocycle of $\Gam_{F}$ in $T_{G}(\wb{F})$ whose cohomology class will be denoted by $\inv(X_{G},X^{\p}_{G})$. From \cite[Lemma~3.2.B, 3.4.A]{transferfactorLS}, the following equality holds
$$
\Del^{\p}_{0}(X_{H},X^{\p}_{G}) \cdot 
\langle \inv(X_{G}, X^{\p}_{G}), \textbf{s}_{T_{G}} \rangle^{-1} = \Del^{\p}_{0}(X_{H},X_{G}).
$$
\end{rmk}

\subsubsection{$\Del(\CO)$}
Finally we recall the transfer factor associated to the regular nilpotent (unipotent) elements in $G(F)$. Following \cite[Section~5.1]{transferfactorLS}, for any regular nilpotent conjugacy class $\CO\in \nil_{\reg}(\Fg)$, we can attach to it an $F$-splitting $\textbf{spl}(\CO)$. Moreover, the correspondence $\CO\to \textbf{spl}(\CO)$ induces a bijection between $\nil_{\reg}(\Fg)$ and the $G$-conjugacy classes of $F$-splittings of $G$. Following Langlands and Shelstad, if $\textbf{spl}^{g} = \textbf{spl}_{\infty}$, where $\textbf{spl}_{\infty}$ is the $F$-splitting \emph{opposite} to the fixed one in the beginning, and $g\in G_{\mathrm{sc}}(\wb{F})$, then $\inv(\CO):\sig\to g\sig(g)^{-1}$ is a $1$-cocycle of $\Gam_{F}$ in $Z_{sc}(\wb{F})$, where $Z_{sc}$ is the center of $G_{\mathrm{sc}}$. After composing the canonical morphisms $Z_{\mathrm{sc}}\to T^{\mathrm{sc}}_{G}\to T_{G}$, we obtain a cohomology class $\inv_{T_{G}}(\CO)$ in $H^{1}(F,T_{G})$. Set 
$$
\Del(\CO) = \langle \inv_{T_{G}}(\CO), \textbf{s}_{T_{G}}\rangle.
$$

There is the following theorem connecting the transfer factors introduced above, which can be obtained from \cite[Theorem~5.5.A]{transferfactorLS} through descending to Lie algebra directly.

\begin{thm}\label{thm:transferfactorrelation}
The following identity holds
$$
\lim_{X_{H}\to 0}
\sum_{X_{G}}
\Del^{\p}_{0}(X_{H},X_{G})J_{G}(X_{G},f) = 
\sum_{\CO\in \nil_{\reg}(\Fg(F))}\Del(\CO)J_{\CO}(f), \quad f\in \CS(\Fg).
$$
The sum $X_{G}$ runs over elements in $\Fg^{\rss}(F)$ such that $\Del^{\p}_{0}(X_{H},X_{G})\neq 0$. In particular it is a finite sum.
\end{thm}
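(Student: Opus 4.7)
The strategy is to deduce this Lie algebra identity directly from its group counterpart \cite[Theorem~5.5.A]{transferfactorLS} by descending through the exponential map near the identity, and then to extend the resulting identity from test functions supported near $0$ to all of $\CS(\Fg)$ via germ expansions. The group version asserts that for $f' \in C_c^\infty(G(F))$ supported in a small neighborhood of the identity,
\[
\lim_{\gam_H \to 1} \sum_{\gam_G} \Del_0(\gam_H,\gam_G)\, J_G(\gam_G,f') = \sum_{\CU} \Del(\CU)\, J_\CU(f'),
\]
where $\CU$ ranges over regular unipotent $G(F)$-conjugacy classes.

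First I would fix a $G(F)$-invariant open neighborhood $\ome \ni 0$ of $\Fg(F)$ on which $\exp\colon \ome \to \Ome$ is an $F$-analytic isomorphism, and transport the identity to the Lie algebra. For $f \in C_c^\infty(\ome)$, setting $f'(g) := f(\log g)\, |J_{\log}(g)|$ makes orbital integrals match under exp; the Weyl-discriminant factor $\Del_{\mathrm{IV}}$, which is present in $\Del_0$ but not in $\Del_0'$, is absorbed into the normalization of $J_G$. By Remark \ref{rmk:transferscalarinvariant} one has $\Del_0'(X_H,X_G) = \Del_0'(\exp X_H, \exp X_G)$ near $0$, and by the construction of $\Del(\CO)$ recalled in \S\ref{sub:germformula:transferfactors} the exponential identifies regular unipotent with regular nilpotent orbits preserving the corresponding transfer factors. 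This establishes the theorem for $f \in C_c^\infty(\ome)$.

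Second, to extend to arbitrary $f \in \CS(\Fg)$, I would split $f = f_1 + f_2$ with $f_1 \in C_c^\infty(\ome)$ and $f_2$ vanishing in a neighborhood of $0$. For $f_1$ the identity is known from the previous step. For $f_2$, the right-hand side reduces to the finite tempered quantity $\sum_\CO \Del(\CO) J_\CO(f_2)$. On the left, only $X_G$ whose $G(F)$-class is close to $0$ contribute as $X_H \to 0$, and on this set the Shalika germ expansion (\cite[p.98, Section~4.5]{beuzart2015local}) yields
\[
J_G(X_G,f_2) = \sum_\CO \Gam_\CO(X_G)\, J_\CO(f_2) + R(X_G,f_2),
\]
with remainder $R$ of lower singularity as $X_G \to 0$. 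Combining this with the core transfer relation
\[
\lim_{X_H \to 0} \sum_{X_G} \Del_0'(X_H,X_G)\,\Gam_\CO(X_G) = \Del(\CO)
\]
for each regular nilpotent orbit $\CO$ then matches both sides.

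The main obstacle will be this last step, namely passing from test functions supported near $0$ to general Schwartz functions: the factor $\Del_0'(X_H,X_G)$ is not compactly supported in $X_G$, so controlling the limit requires careful analytic estimates on the orbital-integral and transfer-factor asymptotics. In the $p$-adic case Shelstad \cite{regulargermshelstad} provides these directly through the endoscopic machinery; in the archimedean case the needed input is the Harish-Chandra/Bouaziz-type germ asymptotics recalled in \cite[Section~4.5]{beuzart2015local}, together with the explicit cocycle formula for $\Del_{\mathrm{II}}$ in terms of $a$-data, which ensures that the transfer-factor--germ product stabilizes as $X_H \to 0$.
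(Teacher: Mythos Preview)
The paper does not supply a detailed proof here; it states only that the identity ``can be obtained from \cite[Theorem~5.5.A]{transferfactorLS} through descending to Lie algebra directly.'' This is intended as a citation: the Langlands--Shelstad argument on the group carries over to the Lie algebra setting without essential change, and the resulting identity holds for all $f\in\CS(\Fg)$ at once. No decomposition $f=f_1+f_2$ is needed.

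Your route is different and, as written, circular. In your second step you invoke the relation
\[
\lim_{X_H \to 0} \sum_{X_G} \Del^{\p}_0(X_H,X_G)\,\Gam_\CO(X_G) = \Del(\CO),
\]
but in the paper's logical flow this is precisely Corollary~\ref{cor:eqfactor}, which is \emph{deduced from} Theorem~\ref{thm:transferfactorrelation} (via Lemma~\ref{lem:eqholds}), not an input to it. You give no independent source for it. One could attempt to extract this relation from your step~1 alone---the identity for $f_1$ supported near $0$---by inserting the germ expansion and using linear independence of the $J_\CO$, but that requires exactly the Moore--Osgood interchange of limits carried out in Lemma~\ref{lem:eqholds}, together with control of the non-regular nilpotent terms in the Shalika expansion, neither of which your proposal addresses. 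So your extension step either assumes what is to be proved or defers its substance to machinery that the paper develops only \emph{after} this theorem.
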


\subsection{The formula}\label{sub:germformula:theformula}

We are going to establish the following theorem. Over $p$-adic fields, it was first proved by Shelstad (\cite{regulargermshelstad}).

\begin{thm}\label{thm:germformula}
For any $X\in \Fg^{\rss}(F)$ and $\CO\in \nil_{\reg}(\Fg)$, let $T_{G}=G_{X}$, then 
$$
\Gam_{\CO}(X) =
\bigg\{
\begin{matrix}
1,	& \text{ \rm{if} $\inv(X)\inv(T_{G}) = \inv_{T_{G}}(\CO)$},\\
0,	& \textit{\rm{otherwise}}.
\end{matrix}
$$
\end{thm}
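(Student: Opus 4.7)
The proof follows Shelstad's strategy over $p$-adic fields (\cite{regulargermshelstad}) and adapts uniformly to archimedean $F$ via the germ expansion results recalled in \cite[Section~4.5]{beuzart2015local}. The central analytic input is the transfer factor identity Theorem \ref{thm:transferfactorrelation}.

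First I fix a maximal torus $T_G$ of $G$ defined over $F$ and identify it with $T_H$ through a choice of endoscopic datum; the parameter $\textbf{s}_{T_G} \in \wh{T}_G^{\Gam_F}$ is then free to vary as the datum varies. For $X_H \in \Ft_H^{G\text{-reg}}(F)$ close to zero, the elements $X_G \in \Fg^\rss(F)$ with $\Del_0'(X_H, X_G) \neq 0$ are precisely the $G(F)$-rational conjugates within the stable class of a chosen base point $X_G^0 \in \Ft_G(F)$; I label them $\{X_G^{(a)}\}_{a \in A}$ with $A \subseteq H^1(F, T_G)$ via the cocycle $\inv(X_G^0, X_G^{(a)})$ of Remark \ref{rmk:transferstableconjugate}. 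Combining Definitions \ref{defin:transferfactorI}--\ref{defin:transferfactorII} with the cohomological interpretation of $\Del_{\mathrm{II}}$ yields
$$\Del_0'(X_H, X_G^{(a)}) = \langle \inv(T_G)\,\inv(X_G^{(a)}),\,\textbf{s}_{T_G}\rangle$$
for all $X_H$ sufficiently close to $0$.

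Next I substitute the regular nilpotent germ expansion
$$J_G(X_G^{(a)}, f) = \sum_{\CO \in \nil_\reg(\Fg)} \Gam_\CO(X_G^{(a)}) J_\CO(f) + (\text{lower-order terms})$$
into the left side of Theorem \ref{thm:transferfactorrelation}. The distributions $\{J_\CO\}_{\CO \in \nil_\reg(\Fg)}$ are linearly independent (verified by test functions whose Fourier transforms $\wh{j}(\CO,\cdot)$ are supported near the distinct regular orbits), so matching coefficients of $J_\CO(f)$ gives, for each $\CO$,
$$\sum_{a \in A} \langle \inv(T_G)\,\inv(X_G^{(a)}),\,\textbf{s}_{T_G}\rangle\,\Gam_\CO(X_G^{(a)}) = \langle \inv_{T_G}(\CO),\,\textbf{s}_{T_G}\rangle.$$
Letting $\textbf{s}_{T_G}$ range over $\wh{T}_G^{\Gam_F}$ and applying Pontryagin duality on the finite abelian group $H^1(F, T_G)$, through which both sides factor by the Tate-Nakayama pairing, Fourier inversion extracts
$$\Gam_\CO(X_G^{(a)}) = \begin{cases} 1 & \text{if } \inv(T_G)\,\inv(X_G^{(a)}) = \inv_{T_G}(\CO), \\ 0 & \text{otherwise,} \end{cases}$$
which is the claimed formula.

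The main technical obstacle will be handling the archimedean germ expansion, which holds only asymptotically (rather than as an exact identity on a neighborhood of $0$ as in the $p$-adic case). One must verify that after multiplying by the bounded transfer factor and summing over the finite stable class, the sub-leading error terms vanish in the limit $X_H \to 0$. This is controlled by matching the scale-invariance $\Del_0'(t^2 X_H, t^2 X_G) = \Del_0'(X_H, X_G)$ from Remark \ref{rmk:transferscalarinvariant} against the homogeneity $\Gam_\CO(t^2 X) = |t|^{-\delta(G)}\Gam_\CO(X)$, together with Harish-Chandra's asymptotic estimates from \cite[Section~4.5]{beuzart2015local}, which guarantee the strict decay of the non-leading terms relative to the regular germ contribution.
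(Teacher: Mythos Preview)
Your overall strategy coincides with the paper's (and Shelstad's): feed the transfer-factor identity of Theorem \ref{thm:transferfactorrelation} through the regular-nilpotent germ expansion, use the linear independence of the $J_\CO$, and then invert by letting the endoscopic parameter $\kappa = \textbf{s}_{T_G}$ range over the characters of the relevant cohomology group. The cohomological rewriting $\Del_0'(X_H,X_G)=\langle \inv(T_G)\inv(X_G),\textbf{s}_{T_G}\rangle$ and the final Fourier inversion step are exactly as in the paper.

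There is, however, a genuine error in your handling of the archimedean limit. You write $\Gam_\CO(t^2X)=|t|^{-\delta(G)}\Gam_\CO(X)$; this is false. The correct statement is $\Gam_\CO(aX)=\Gam_\CO(X)$ for $a\in F^{\times 2}$ (proved in the paper as Corollary \ref{cor:invariantscaling}, via the reformulation $J_G(X,\wh{f_t})=J_G(tX,\wh{f})$ of Lemma \ref{lem:pickupgerm}). The $|t|^{-\delta_G/2}$ scaling belongs to $\wh{j}(\CO,\cdot)$, not to $\Gam_\CO$. Your proposed mechanism of ``matching homogeneities'' would therefore fail as stated; it is precisely the \emph{invariance} $\Gam_\CO(t^2X)=\Gam_\CO(X)$ matched with $\Del_0'(t^2X_H,t^2X_G)=\Del_0'(X_H,X_G)$ that makes the argument go through.

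The paper also replaces your vague appeal to ``strict decay of non-leading terms'' with a concrete device: plug the scaled test function $\wh{f_t}$ into Theorem \ref{thm:transferfactorrelation} and use $J_\CO(\wh{f_t})=J_\CO(\wh{f})$ (Lemma \ref{lem:reggerminv}) so that the right-hand side is literally independent of $t$. On the other side one has $\lim_{t\to 0}\sum_{X_G}\Del_0'(X_H,X_G)J_G(tX_G,\wh{f})=\sum_{X_G}\Del_0'(X_H,X_G)\sum_\CO\Gam_\CO(X_G)J_\CO(\wh{f})$ from the germ limit. The two $F^{\times 2}$-invariances above show that the iterated limit $\lim_{a\to 0}\lim_{t\to 0}$ is uniform in $a$, and then the Moore--Osgood theorem justifies interchanging $\lim_{X_H\to 0}$ and $\lim_{t\to 0}$ (Lemma \ref{lem:eqholds}). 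This is clean and avoids any direct estimate on sub-leading germ terms. You should replace the last paragraph of your proposal with this argument.
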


From \cite[p.98, Section~4.5]{beuzart2015local}, the following asymptotic expansion for any $Y\in \Fg^{\rss}(F)$ holds
\begin{equation}\label{eq:germdefin}
\lim_{t\in F^{\times 2}, t\to 0}
D^{G}(tY)^{1/2}\wh{j}(X,tY) = D^{G}(Y)^{1/2}
\sum_{\CO\in \nil_{\reg}(\Fg)}
\Gam_{\CO}(X)\wh{j}(\CO,Y).
\end{equation}
The constants $\Gam_{\CO}(X)$ appearing in the statement of Theorem \ref{thm:germformula} are exactly the terms showing up in (\ref{eq:germdefin}).

We first establish the following lemma.

\begin{lem}\label{lem:pickupgerm}
For any $f\in \CS(\Fg)$, 
$$
\lim_{t\in F^{\times 2}, t\to 0}J_{G}(X, \wh{f}_{t}) = \sum_{\CO\in \nil_{\reg}(\Fg)}\Gam_{\CO}(X)J_{\CO}(f),
$$
where $f_{t}(Y) = |t|^{\del_{G}/2-\dim G}f(t^{-1}Y)$, and $\del_{G} =\dim G-\dim T_G$.
\end{lem}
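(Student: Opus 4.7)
The strategy is to use the defining identity $J_G(X,\wh{h}) = \int_{\Fg(F)} h(Y)\,\wh{j}(X,Y)\,\ud Y$ that represents the Fourier transform of the tempered distribution $J_G(X,\cdot)$, substitute, and then take the limit $t\to 0$ via the germ expansion \eqref{eq:germdefin}. Interpreting $\wh{f}_t$ as $\wh{f_t}$, the rescaling together with the substitution $Z = t^{-1}Y$ gives
\[
J_G(X,\wh{f_t}) \;=\; \int_{\Fg(F)} f_t(Y)\,\wh{j}(X,Y)\,\ud Y \;=\; |t|^{\delta_G/2}\int_{\Fg(F)} f(Z)\,\wh{j}(X,tZ)\,\ud Z.
\]

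Next, the homogeneity $D^G(tZ)^{1/2} = |t|^{\delta_G/2} D^G(Z)^{1/2}$ for $Z \in \Fg^{\rss}(F)$ allows me to rewrite the integrand as $D^G(tZ)^{1/2}\wh{j}(X,tZ)/D^G(Z)^{1/2}$. By \eqref{eq:germdefin} this converges pointwise on $\Fg^{\rss}(F)$, as $t\to 0$ in $F^{\times 2}$, to
\[
\sum_{\CO\in\nil_{\reg}(\Fg)} \Gam_{\CO}(X)\,\wh{j}(\CO,Z).
\]
Once the interchange of limit and integral is justified, this yields
\[
\lim_{t\to 0} J_G(X,\wh{f_t}) \;=\; \sum_{\CO\in\nil_{\reg}(\Fg)} \Gam_{\CO}(X) \int_{\Fg(F)} f(Z)\,\wh{j}(\CO,Z)\,\ud Z,
\]
and the inner integral equals $J_{\CO}(f)$ by the defining relation for $\wh{j}(\CO,\cdot)$ in the Fourier-theoretic conventions fixed at the start of the appendix.

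The main obstacle is the dominated-convergence step. The required estimate is a $t$-uniform bound of the form $|t|^{\delta_G/2}\bigl|\wh{j}(X,tZ)\bigr| \ll \varphi(Z)$, with $\varphi$ locally integrable and of at most tempered growth, so that $f\cdot\varphi \in L^1(\Fg(F))$. In the $p$-adic case such a bound follows from the Shalika germ expansion \cite{MR323957} together with the local integrability of $\wh{j}(X,\cdot)$, since the contributions away from a compact set vanish by a direct cutoff argument. In the archimedean case the analogous bound must be assembled from the asymptotic expansions of Bouaziz \cite{MR1248081} and Harish-Chandra \cite{MR3675163}, which control $D^G(Y)^{1/2}\wh{j}(X,Y)$ uniformly near the origin and at infinity; combined with the Schwartz decay of $f$, this supplies the dominating function and finishes the proof.
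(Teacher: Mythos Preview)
Your strategy is the same as the paper's: express $J_G(X,\wh{f_t})$ through the kernel $\wh{j}(X,\cdot)$, change variables $Z=t^{-1}Y$, recognise the integrand as $D^G(tZ)^{1/2}\wh{j}(X,tZ)\big/D^G(Z)^{1/2}$, and then pass to the limit using \eqref{eq:germdefin} together with dominated convergence.

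The one substantive difference is in how you justify the dominated-convergence step, and here your argument is both heavier and less complete than necessary. You invoke the full Shalika germ expansion in the $p$-adic case and the Bouaziz/Harish-Chandra asymptotics in the archimedean case, without actually producing the dominating function; as written this is a sketch rather than a proof. The paper's argument is far simpler and uniform over all local fields of characteristic zero: by \cite[1.8]{beuzart2015local} the normalized kernel $D^G(Y)^{1/2}\wh{j}(X,Y)$ is \emph{globally bounded} on $\Fg^{\rss}(F)\times\Fg^{\rss}(F)$, say by a constant $C$. This immediately gives the $t$-uniform bound
\[
\bigl|\,|t|^{\delta_G/2}\,\wh{j}(X,tZ)\bigr| \;=\; \frac{D^G(tZ)^{1/2}\,|\wh{j}(X,tZ)|}{D^G(Z)^{1/2}} \;\le\; C\,D^G(Z)^{-1/2},
\]
and since $D^G(\cdot)^{-1/2}$ defines a tempered distribution by \cite[1.7.1]{beuzart2015local}, the product $f(Z)\,D^G(Z)^{-1/2}$ is integrable. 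No case distinction and no germ machinery is needed.
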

\begin{proof}
From (\ref{eq:germdefin}), 
$$
\lim_{t\in F^{\times 2}, t\to 0}
\frac{D^{G}(tY)^{1/2}}{D^{G}(Y)^{1/2}}\wh{j}(X,tY) = \sum_{\CO\in \nil_{\reg}(\Fg)}
\Gam_{\CO}(X)\wh{j}(\CO,Y).
$$
Integrate both sides against a function $f(Y)\in \CS(\Fg)$ on $\Fg^{\rss}(F)$. Then 
$$
\mathrm{RHS} = 
\int_{\Fg^{\rss}(F)}
\sum_{\CO\in \nil_{\reg}(\Fg)}
\Gam_{\CO}(X)\wh{j}(\CO,Y)f(Y)\ud Y.
$$
Since $\wh{j}(\CO,Y)$ is locally integrable on $\Fg(F)$,
$$
\mathrm{RHS} = \sum_{\CO\in \nil_{\reg}(\Fg)}
\Gam_{\CO}(X)J_{\CO}(\wh{f}).
$$
For the LHS, using the fact that $D^{G}(Y)^{1/2}\wh{j}(X,Y)$ is globally bounded on $\Fg^{\rss}(F)\times \Fg^{\rss}(F)$ (c.f. \cite[1.8]{beuzart2015local}), and the function $D^{G}(Y)^{-1/2}$ defines a tempered distribution on $\CS(\Fg)$ (c.f. \cite[1.7.1]{beuzart2015local}), by the dominated convergence theorem (\cite[p.67]{MR2129625}), the LHS can be written as 
$$
\lim_{t\in F^{\times 2}, t\to 0}
\int_{\Fg^{\rss}(F)}
\frac{D^{G}(tY)^{1/2}}{D^{G}(Y)^{1/2}}
\wh{j}(X,tY)f(Y)\ud Y.
$$
Since $Y\in \Fg^{\rss}(F)$, $\frac{D^{G}(tY)^{1/2}}{D^{G}(Y)^{1/2}} = |t|^{\del_{G}}$. After a change of variable $Y\to t^{-1}Y$,
$$
\mathrm{LHS} = \lim_{t\in F^{\times 2}, t\to 0}
\int_{\Fg^{\rss}(F)}
|t|^{\del_{G}/2-\dim G}
\wh{j}(X,Y)f(t^{-1}Y)\ud Y,
$$
which is equal to
$$
\lim_{t\in F^{\times 2}, t\to 0}
J_{G}(X,\wh{f_{t}})
$$
by the local integrability of $\wh{j}(X,Y)$ in variable $Y$. 

It follows that we have established the lemma.
\end{proof}

As a corollary, we can show that $\Gam_{\CO}(X)$ is invariant under the scaling action of $F^{\times 2}$. Over $p$-adic fields it is already known (\cite[2.6]{waldspurger10}).

\begin{cor}\label{cor:invariantscaling}
$$
\Gam_{\CO}(aX) = \Gam_{\CO}(X)
$$
for any $a\in F^{\times 2}$, $X\in \Fg^{\rss}(F)$ and $\CO\in \nil_{\reg}(\Fg)$.
\end{cor}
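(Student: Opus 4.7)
The plan is to apply Lemma \ref{lem:pickupgerm} twice, once at $X$ and once at $aX$, and identify the two limits via a change of variable in the orbital integral. The key computation is that for $a\in F^{\times 2}$ and $t\in F^{\times 2}$ one has
\[
J_{G}(aX,\wh{f}_{t}) \;=\; J_{G}(X,\wh{f}_{at}).
\]
To see this, first note that $\wh{f}_{t}(Z)=|t|^{\del_{G}/2}\wh{f}(tZ)$ (a straightforward change of variable in the defining Fourier integral of $f_{t}$). Since $G_{aX}=G_{X}$ and $D^{G}(aX)=|a|^{\del_{G}}D^{G}(X)$ for $a\in F^{\times}$, unwinding the definition gives
\[
J_{G}(aX,\wh{f}_{t}) \;=\; |a|^{\del_{G}/2}|t|^{\del_{G}/2}D^{G}(X)^{1/2}\!\!\int_{G_{X}\bs G}\!\!\wh{f}(at\cdot g^{-1}Xg)\,\ud g,
\]
which is visibly $J_{G}(X,\wh{f}_{at})$.

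Next, since $a\in F^{\times 2}$, the substitution $s=at$ is a bijection of $F^{\times 2}$ to itself with $s\to 0$ as $t\to 0$. Therefore
\[
\lim_{t\in F^{\times 2},t\to 0}J_{G}(aX,\wh{f}_{t})\;=\;\lim_{s\in F^{\times 2},s\to 0}J_{G}(X,\wh{f}_{s}).
\]
Applying Lemma \ref{lem:pickupgerm} to each side yields
\[
\sum_{\CO\in\nil_{\reg}(\Fg)}\Gam_{\CO}(aX)J_{\CO}(f)\;=\;\sum_{\CO\in\nil_{\reg}(\Fg)}\Gam_{\CO}(X)J_{\CO}(f)
\]
for every $f\in\CS(\Fg)$.

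Finally, the distributions $\{J_{\CO}\}_{\CO\in\nil_{\reg}(\Fg)}$ are linearly independent on $\CS(\Fg)$ (since $\nil_{\reg}(\Fg)$ is a finite union of distinct $G(F)$-orbits, one can produce test functions isolating each orbit). Matching coefficients gives $\Gam_{\CO}(aX)=\Gam_{\CO}(X)$ for every $\CO\in\nil_{\reg}(\Fg)$. No step presents a serious obstacle; the only subtlety is the hypothesis $a\in F^{\times 2}$, which is used precisely so that $at$ stays in $F^{\times 2}$, the set along which the asymptotic expansion \eqref{eq:germdefin} is valid.
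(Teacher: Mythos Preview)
Your proof is correct and follows essentially the same approach as the paper: both establish the scaling identity $\wh{f}_{t}(Z)=|t|^{\del_G/2}\wh{f}(tZ)$ and use it to show that the limit in Lemma~\ref{lem:pickupgerm} is unchanged when $X$ is replaced by $aX$ with $a\in F^{\times 2}$ (the paper phrases this as the reformulation $J_{G}(X,\wh{f}_{t})=J_{G}(tX,\wh{f})$, whence $\lim_{t}J_{G}(tX,\wh{f})=\sum_{\CO}\Gam_{\CO}(X)J_{\CO}(\wh{f})$ directly exhibits the invariance). Your explicit appeal to the linear independence of $\{J_{\CO}\}_{\CO\in\nil_{\reg}(\Fg)}$ is the same step the paper leaves implicit; it is available in the paper's framework as \cite[1.8.2]{beuzart2015local}.
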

\begin{proof}
For any $f\in \CS(\Fg)$, 
$$
\wh{f_{t}}(Y) = 
\int_{\Fg(F)}
|t|^{\del_{G}/2-\dim G}f(t^{-1}X)\psi(\langle X,Y\rangle)\ud X.
$$
After a change of variable $X\to tX$,
$$
\wh{f_{t}}(Y) = 
\int_{\Fg(F)}
|t|^{\del_{G}/2}f(X)\psi(\langle X, tY\rangle)\ud X = |t|^{\del_{G}/2}\wh{f}(tY).
$$
Therefore 
\begin{align}\label{eq:scaleonX}
J_{G}(X, \wh{f_{t}}) 
&= |t|^{\del_{G}/2}D^{G}(X)^{1/2}\int_{G_{X}\bs G}\wh{f}(gtXg^{-1})\ud g
\\
\nonumber 
&=
D^{G}(tX)^{1/2}
\int_{G_{X}\bs G}
\wh{f}(gtXg^{-1})\ud g = J_{G}(tX,\wh{f}).
\end{align}
In particular Lemma \ref{lem:pickupgerm} can be reformulated as 
\begin{equation}\label{eq:limitformula}
\lim_{t\in F^{\times 2}, t\to 0}
J_{G}(tX,\wh{f}) = \sum_{\CO\in \nil_{\reg}(\Fg)}\Gam_{\CO}(X)J_{\CO}(\wh{f}).
\end{equation}
From (\ref{eq:limitformula}) we get the desired identity.
\end{proof}

We also have the following lemma.

\begin{lem}\label{lem:reggerminv}
For any $f\in \CS(\Fg)$, $t\in F^{\times 2}$, and $\CO\in \nil_{\reg}(\Fg)$,
$$
J_{\CO}(\wh{f_{t}}) = J_{\CO}(\wh{f}).
$$
\end{lem}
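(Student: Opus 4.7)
The plan is to exploit the explicit form of $\widehat{f_t}$ together with the homogeneity of the Liouville measure on a regular nilpotent orbit under dilations by squares. The whole argument is a direct change of variable, no deep input is needed.

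First, I would compute $\widehat{f_t}$. This was essentially done inside the proof of Corollary \ref{cor:invariantscaling}: writing $f_t(Y) = |t|^{\delta_G/2-\dim G}f(t^{-1}Y)$, substituting $X\to tX$ in the Fourier integral yields
$$
\widehat{f_t}(Y) = |t|^{\delta_G/2}\widehat{f}(tY).
$$
So $J_{\CO}(\widehat{f_t}) = |t|^{\delta_G/2}\int_{\CO}\widehat{f}(tY)\,dY$, and everything reduces to understanding how the orbital integral over $\CO$ interacts with the dilation $\phi_t:Y\mapsto tY$.

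Next, I would verify that $\phi_t$ preserves $\CO$ when $t\in F^{\times 2}$. Pick any $Y_0\in\CO$ and attach to it, via Jacobson--Morozov, a standard $\mathfrak{sl}_2$-triple $\{Y_0,H_0,Y_0'\}$ with $[H_0,Y_0]=2Y_0$. Let $\lambda_{H_0}:\mathbb{G}_m\to G$ be the cocharacter generated by $H_0$. For $t=s^2$ with $s\in F^\times$, the element $\lambda_{H_0}(s)\in G(F)$ satisfies $\mathrm{Ad}(\lambda_{H_0}(s))Y_0 = s^2 Y_0 = tY_0$, so $\phi_t(Y_0)\in\CO$, and by $G(F)$-invariance $\phi_t(\CO)=\CO$.

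Then I would compute the Jacobian of $\phi_t$ against the invariant measure. Using the KKS description $\omega_Y([X_1,Y],[X_2,Y]) = B(Y,[X_1,X_2])$ on $T_Y\CO\cong\Fg/\Fg_Y$, one checks immediately that under the parameterization by $X\in\Fg/\Fg_Y$ the differential $d\phi_t$ becomes the identity while the base point changes from $Y$ to $tY$, so
$$
(\phi_t^*\omega)_Y(X_1,X_2) = B(tY,[X_1,X_2]) = t\,\omega_Y(X_1,X_2).
$$
Taking the top exterior power and passing to the associated $F$-analytic measure, $\phi_t^*|dY| = |t|^{\delta_G/2}|dY|$. The change-of-variable formula gives
$$
\int_{\CO}\widehat{f}(tY)\,dY = \int_{\CO}\widehat{f}(\phi_t(Y))\,dY = |t|^{-\delta_G/2}\int_{\CO}\widehat{f}(Y)\,dY = |t|^{-\delta_G/2}J_{\CO}(\widehat{f}).
$$
Combining with the formula for $\widehat{f_t}$,
$$
J_{\CO}(\widehat{f_t}) = |t|^{\delta_G/2}\cdot|t|^{-\delta_G/2}J_{\CO}(\widehat{f}) = J_{\CO}(\widehat{f}),
$$
which is the claim. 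The only mildly delicate point is confirming the power $|t|^{\delta_G/2}$ in the Jacobian, which falls out of the KKS formula without further effort; there is no real obstacle.
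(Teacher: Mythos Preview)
Your proof is correct. The paper takes a dual route: instead of integrating $\widehat{f_t}$ over $\CO$ and using the homogeneity of the Kirillov--Kostant measure under $\phi_t$, it writes
\[
J_{\CO}(\widehat{f_t}) = \int_{\Fg(F)} \widehat{j}(\CO,Y)\, f_t(Y)\,\ud Y,
\]
changes variable $Y\to tY$ in this integral over $\Fg(F)$, and then invokes the homogeneity $\widehat{j}(\CO,tY) = |t|^{-\delta_G/2}\widehat{j}(\CO,Y)$ for $t\in F^{\times 2}$, quoted from \cite[1.8.1]{beuzart2015local}. The two arguments are Fourier-dual to each other: the homogeneity of $\widehat{j}(\CO,\cdot)$ cited by the paper is precisely the distributional reformulation of the Jacobian computation you carried out on the orbit. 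Your version has the virtue of being self-contained (you prove the needed homogeneity via the KKS form rather than citing it), while the paper's version is a line shorter by outsourcing that step.
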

\begin{proof}
By definition, 
$$
J_{\CO}(\wh{f_{t}}) = 
\int_{\Fg(F)}|t|^{\del_{G}/2-\dim G}\wh{j}(\CO,Y)f(t^{-1}Y)\ud Y.
$$
After a change of variable $Y\to tY$,
$$
J_{\CO}(\wh{f_{t}}) = \int_{\Fg(F)}
|t|^{\del_{G}/2}\wh{j}(\CO, tY)f(Y)\ud Y.
$$
Since $t\in F^{\times 2}$, from \cite[1.8.1]{beuzart2015local},
$$
\wh{j}(\CO, tY) = |t|^{-\dim (\CO)/2}\wh{j}(\CO,Y) = |t|^{-\del_{G}/2}\wh{j}(\CO,Y).
$$
It follows that we have established the desired equality.
\end{proof}

For any $f\in \CS(\Fg)$ and $t\in F^{\times 2}$, plug the function $\wh{f_{t}}$ into Theorem \ref{thm:transferfactorrelation}. Then the following identity holds
$$
\lim_{X_{H}\to 0}
\sum_{X_{G}}
\Del^{\p}_{0}(X_{H},X_{G})J_{G}(X_{G}, \wh{f_{t}}) = 
\sum_{\CO\in \nil_{\reg}(\Fg)}
\Del(\CO)J_{\CO}(\wh{f_{t}}).
$$
Applying (\ref{eq:scaleonX}) and Lemma \ref{lem:reggerminv} to the above identity, the following identity holds,
\begin{equation}\label{eq:eq1germ}
\lim_{X_{H}\to 0}
\sum_{X_{G}}
\Del^{\p}_{0}(X_{H},X_{G})J_{G}(tX_{G},\wh{f}) = 
\sum_{\CO\in \nil_{\reg}(\Fg)}
\Del(\CO)J_{\CO}(\wh{f}).
\end{equation}
On the other hand, from Lemma \ref{lem:pickupgerm} and (\ref{eq:scaleonX}), the following identity holds
\begin{equation}\label{eq:eq2germ}
\lim_{t\in F^{\times 2},t\to 0}
\sum_{X_{G}}
\Del^{\p}_{0}(X_{H},X_{G})
J_{G}(tX_{G}, \wh{f}) = \sum_{X_{G}}\Del^{\p}_{0}(X_{H},X_{G})\sum_{\CO\in \nil_{\reg}(\Fg)}
\Gam_{\CO}(X_{G})J_{\CO}(\wh{f}).
\end{equation}
It is natural to expect that the RHS of the two equations are equal to each other, which is going to be established in the next lemma.
\begin{lem}\label{lem:eqholds}
For any $f\in \CS(\Fg)$,
$$
\sum_{\CO\in \nil_{\reg}(\Fg)}
\Del(\CO)J_{\CO}(\wh{f}) = \sum_{X_{G}}
\Del^{\p}_{0}(X_{H},X_{G})\sum_{\CO\in \nil_{\reg}(\Fg)}\Gam_{\CO}(X_{G})J_{\CO}(\wh{f}).
$$
\end{lem}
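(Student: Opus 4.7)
The plan is to derive the identity by swapping the order of the limits $X_H \to 0$ and $t \to 0$ in the quantity $\sum_{X_G}\Del^{\p}_{0}(X_H,X_G)J_G(tX_G,\wh{f})$, leveraging the $F^{\times 2}$-scaling invariance of the transfer factor. For $Y_H \in \Ft_H(F)$ sufficiently close to $0$ and regular in $\Fg$, set
$$
B(Y_H) := \sum_{Y_G \sim_\stab Y_H}\Del^{\p}_{0}(Y_H,Y_G)J_G(Y_G,\wh{f}).
$$
For any $t \in F^{\times 2}$, the map $X_G \mapsto tX_G$ is a bijection between sets of representatives for the $G(F)$-conjugacy classes in the stable class over $X_H$ and those over $tX_H$; combined with the identity $\Del^{\p}_{0}(tX_H,tX_G) = \Del^{\p}_{0}(X_H,X_G)$ from Remark \ref{rmk:transferscalarinvariant}, this gives
$$
B(tX_H) = \sum_{X_G \sim_\stab X_H}\Del^{\p}_{0}(X_H,X_G)J_G(tX_G,\wh{f}).
$$

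Applying (\ref{eq:eq1germ}) with the parameter there set to $1$ (equivalently Theorem \ref{thm:transferfactorrelation} applied directly to $\wh{f}$) yields $\lim_{Y_H \to 0}B(Y_H) = \sum_{\CO}\Del(\CO)J_\CO(\wh{f})$. Restricting this limit to the sequence $Y_H = tX_H$ with $t \in F^{\times 2}$, $t \to 0$, for any fixed regular $X_H$, we obtain
$$
\lim_{t \in F^{\times 2},\,t \to 0}B(tX_H) = \sum_{\CO}\Del(\CO)J_\CO(\wh{f}).
$$
On the other hand, the above identification of $B(tX_H)$ combined with (\ref{eq:eq2germ}) gives
$$
\lim_{t \in F^{\times 2},\,t \to 0}B(tX_H) = \sum_{X_G}\Del^{\p}_{0}(X_H,X_G)\sum_{\CO}\Gam_\CO(X_G)J_\CO(\wh{f}).
$$
Equating the two expressions for $\lim_{t \to 0} B(tX_H)$ yields the lemma.

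The main point of care is the scaling bookkeeping: one must verify that $F^{\times 2}$-dilation preserves regularity and stable conjugacy of $X_H$ (immediate from the fact that scaling by $F^\times$ commutes with $G(\wb{F})$-conjugation), and that the bijection on conjugacy representatives it induces is compatible with the transfer factor (Remark \ref{rmk:transferscalarinvariant}) and with the regular nilpotent germs (Corollary \ref{cor:invariantscaling}, used implicitly in (\ref{eq:eq2germ})). Once this is in place, the lemma collapses into the elementary observation that, because $B(Y_H)$ has a limit as $Y_H \to 0$, its values along the one-parameter subfamily $Y_H = tX_H$ must tend to the same limit, thereby forcing the two iterated limits $\lim_{X_H\to 0}\lim_{t\to 0}$ and $\lim_{t\to 0}\lim_{X_H\to 0}$ of the original expression to coincide.
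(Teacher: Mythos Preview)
Your proof is correct and, in fact, cleaner than the paper's. The paper introduces an auxiliary parameter $a\in F^{\times 2}$, computes the iterated limit $\lim_{a\to 0}\lim_{t\to 0}$ of $\sum_{X_G}\Del^{\p}_0(aX_H,aX_G)J_G(atX_G,\wh{f})$ (obtaining the right-hand side via (\ref{eq:eq2germ}) and Corollary~\ref{cor:invariantscaling}), observes that this inner limit is constant in $a$, and then invokes the Moore--Osgood theorem to swap the order of limits, so that $\lim_{t\to 0}\lim_{a\to 0}$ yields the left-hand side via (\ref{eq:eq1germ}). Your argument sidesteps this entirely: by packaging everything into the single function $B(Y_H)$ and noting $B(tX_H)=\sum_{X_G}\Del^{\p}_0(X_H,X_G)J_G(tX_G,\wh{f})$, both sides of the lemma become two different evaluations of the \emph{same} one-variable limit $\lim_{t\to 0}B(tX_H)$ --- one via the general limit $\lim_{Y_H\to 0}B(Y_H)$ from Theorem~\ref{thm:transferfactorrelation}, the other via (\ref{eq:eq2germ}). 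No interchange theorem is needed. The gain is modest but real: you avoid having to justify uniformity of the inner limit, which in the paper's setup is only implicitly clear from the fact that the double-parameter expression depends on the single product $at$.
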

\begin{proof}
For any $a\in F^{\times 2}$, by Remark \ref{rmk:transferscalarinvariant}, $\Del^{\p}_{0}(aX_{H},aX_{G}) = \Del^{\p}_{0}(X_{H},X_{G})$. From Lemma \ref{lem:reggerminv}, $\Gam_{\CO}(X_{G}) = \Gam_{\CO}(aX_{G})$.

Consider the limit,
\begin{align*}
&\lim_{a\in F^{\times 2},a\to 0}\lim_{t\in F^{\times 2}, t\to 0}
\sum_{aX_{G}}\Del^{\p}_{0}(aX_{H},aX_{G})J_{G}(atX_{G}, \wh{f})
\\
&=
\lim_{a\in F^{\times 2},a\to 0}
\lim_{t\in F^{\times 2}, t\to 0}
\sum_{X_{G}}\Del^{\p}_{0}(X_{H},X_{G})J_{G}(atX_{G},\wh{f})
\\
&=
\sum_{X_{G}}\Del^{\p}_{0}(X_{H},X_{G})
\sum_{\CO\in \nil_{\reg}(\Fg(F)}
\Gam_{\CO}(aX_{G})J_{\CO}(\wh{f})
\\
&=
\sum_{X_{G}}
\Del^{\p}_{0}(X_{H},X_{G})
\sum_{\CO\in \nil_{\reg}(\Fg(F))}
\Gam_{\CO}(X_{G})J_{\CO}(\wh{f}).
\end{align*}
In particular, the limit is uniform in $a\in F^{\times}$. Hence by Moore-Osgood theorem (\cite[p139]{MR824243}), the order of the limit can be switched, from which we get
$$
\sum_{\CO\in \nil_{\reg}(\Fg(F))}
\Del(\CO)J_{\CO}(\wh{f}).
$$

It follows that we have establish the desired identity.
\end{proof}

Using the linear independence of the distributions $\{ J_{\CO}|\quad \CO\in \nil_{\reg}(\Fg)\}$ (c.f. \cite[1.8.2]{beuzart2015local}), the following corollary holds.

\begin{cor}\label{cor:eqfactor}
For any $\CO\in \nil_{\reg}(\Fg)$,
$$
\Del(\CO) = \sum_{X_{G}}
\Del^{\p}_{0}(X_{H},X_{G})\Gam_{\CO}(X_{G}).
$$
\end{cor}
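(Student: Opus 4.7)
The target identity follows almost immediately from Lemma \ref{lem:eqholds} once one observes that the equality there can be rearranged into a vanishing linear combination of the distributions $J_{\CO}$ indexed by regular nilpotent orbits, and then linearly independence can be applied.

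More precisely, my plan is to rewrite the conclusion of Lemma \ref{lem:eqholds} as
\begin{equation*}
\sum_{\CO\in \nil_{\reg}(\Fg)}
\Big(\Del(\CO) - \sum_{X_{G}}\Del^{\p}_{0}(X_{H},X_{G})\Gam_{\CO}(X_{G})\Big) J_{\CO}(\wh{f}) = 0
\end{equation*}
for every $f\in \CS(\Fg)$. Since the Fourier transform $f\mapsto \wh{f}$ is an automorphism of $\CS(\Fg)$, the identity holds with $\wh{f}$ replaced by an arbitrary $g\in \CS(\Fg)$, so the corresponding linear combination of the tempered distributions $J_{\CO}$ vanishes on all of $\CS(\Fg)$.

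At this point, I would invoke the linear independence of the collection $\{J_{\CO} \mid \CO\in \nil_{\reg}(\Fg)\}$ as tempered distributions on $\Fg(F)$, cited as \cite[1.8.2]{beuzart2015local}. This immediately forces each coefficient to vanish, yielding
\begin{equation*}
\Del(\CO) = \sum_{X_{G}}\Del^{\p}_{0}(X_{H},X_{G})\Gam_{\CO}(X_{G})
\end{equation*}
for every $\CO\in \nil_{\reg}(\Fg)$, which is the desired identity.

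There is no real obstacle in the argument beyond checking that the orbital integral distributions associated to distinct regular nilpotent orbits really are linearly independent over the Schwartz–Bruhat space, which is standard in both the $p$-adic and archimedean settings and is precisely the content of the cited reference. Everything else is purely algebraic bookkeeping built on the preceding lemmas (the asymptotic behavior in Lemma \ref{lem:pickupgerm}, the scaling invariance in Lemma \ref{lem:reggerminv} and Corollary \ref{cor:invariantscaling}, and the two-sided limit interchange performed by Moore--Osgood in Lemma \ref{lem:eqholds}).
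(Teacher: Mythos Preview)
Your proof is correct and follows essentially the same approach as the paper: the paper simply states that the corollary follows from Lemma \ref{lem:eqholds} by the linear independence of the distributions $\{J_{\CO}\mid \CO\in \nil_{\reg}(\Fg)\}$ (citing \cite[1.8.2]{beuzart2015local}), which is exactly what you do. Your additional remark that the Fourier transform is an automorphism of $\CS(\Fg)$, so that $\wh{f}$ ranges over all of $\CS(\Fg)$, is a helpful clarification but not a departure from the paper's argument.
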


Now we are ready to establish Theorem \ref{thm:germformula}. We follow the argument of \cite{regulargermshelstad}. 

For any character $\kappa$ of $\CE(T) = \mathrm{Im}(H^{1}(F,T^{\mathrm{sc}}_{G}(\wb{F}))\to H^{1}(F,T_{G}(\wb{F})))$, we can attach to it an endoscopic group $H=H(T_{G},\kappa)$, together with an admissible embedding $T_{H}\to T_{G} = G_{X}$ whose underlying Lie algebra homomorphism sends $X_{H}$ to $X$. For convenience we may assume that ${}^L H$ embeds admissibly into $\LG$.

By Remark \ref{rmk:transferstableconjugate}, for the identity in Corollary \ref{cor:eqfactor}, we may write each $X_{G}$ appearing in the summation as $X_{G} = g^{-1}Xg = X(\inv(X_{G},X))$, where $g\in G(\wb{F})$ and $\sig\to \sig(g)g^{-1}$ represents the element $\inv(X_{G},X)$ in $\CE(T)$. Then $\Del^{\p}_{0}(X_{H},X_{G}) = \langle \inv(X_{G},X),\kappa\rangle \cdot\Del^{\p}_{0}(X_{H},X)$. It follows that we arrive at the following identity
$$
\sum_{X_{G}}
\langle \inv(X,X_{G}),\kappa\rangle \Gam_{\CO}(X_{G}) = \frac{\Del(\CO)}{\Del^{\p}_{0}(X_{H},X)}.
$$ 
On the other hand, we may write
$$
\Del(\CO) = \langle  \inv_{T_{G}}(\CO),\kappa\rangle, 
\Del_{\mathrm{I}}(X_{H},X) = \langle \inv(T_{G}),\kappa \rangle, \Del_{\mathrm{II}}(X_{H},X) = \langle \inv_{T_{G}}(X), \kappa \rangle.
$$
Hence we get 
$$
\sum_{X_{G}}
\langle \inv(X,X_{G}),\kappa\rangle 
\Gam_{\CO}(X_{G}) = 
\langle \frac{\inv_{T_{G}}(\CO)}{\inv(T_{G})\inv(X)},\kappa \rangle.
$$
After summing over $\kappa$, we obtain Theorem \ref{thm:germformula}.

\subsection{Relation with the Kostant's sections}\label{sub:germformula:kostantsection}
We are going to point out the relation between Theorem \ref{thm:germformula} and Kostant's sections.

For the presentation of Kostant's sections, we follow \cite{kottwitztransfer}, \cite{drinfeldnotes} and \cite{MR158024}. In particular we assume that $G$ is split over $F$ and $T$ is a fixed split maximal torus in $G$ defined over $F$.

We first recall the Chevalley's isomorphism (\cite[6.7]{MR1433132}). Under the adjoint action of $G$ on $\Fg$ and $W$ on $\Ft$, the restriction morphism $F[\Fg]\to F[\Ft]$ yields an isomorphism $F[\Fg]^{G}\simeq F[\Ft]^{W}$. The associated morphism $u:\Fg\to \Ft/W$ sends $Z\in \Fg(F)$ to the $W$-orbit in $\Ft(F)$ consisting of elements that are $G(F)$-conjugate to the semi-simple part $Z_{s}$ of the Jordan decomposition $Z=Z_{s}+Z_{n}$. Here $Z_{s}$ is semi-simple, $Z_{n}$ is nilpotent, and $[Z_{s},Z_{n}] = 0$.

\begin{defin}\label{defin:regularelement}
An element $Z\in \Fg(F)$ is called \emph{regular} if the dimension of its centralizer in $\Fg(F)$ is equal to the dimension of $\Ft$.
\end{defin}

It is known that the set of regular elements is open and dense in $\Fg$ (\cite[I.3]{MR0180554}).

In \cite{MR158024}, over the algebraic closure $\wb{F}$, B. Kostant showed that $Z$ is regular if and only if the nilpotent part $Z_{n}$ of $Z$ is a regular element in the centralizer of $Z_{s}$ in $\Fg$. The map $Z\to Z_{s}$ induces a bijection between the set of regular $\Ad(G)$-orbits in $\Fg$ and the set of semi-simple $\Ad(G)$-orbits in $\Fg$. Moreover, using the morphism $u$, both sets of orbits can be identified with $\Ft/W$.

\subsubsection{Kostant's sections}
Let $B_{\infty}=TN_{\infty}$ be the Borel subgroup of $G$ defined over $F$ that is opposite to $B_{0}$. 

Kostant proved that every element in the $F$-points of the affine space $\Fb_{\infty}(F)+X_{+}\subset \Fg(F)$ is regular. For any $H\in \Ft(F)\subset \Fb_{\infty}(F)$, the semi-simple part of $H+X_{+}$ is conjugate to $H$. In particular, over the algebraic closure the set $\Ft+X_{+}$ meets every regular $\Ad(G)$-orbit in $\Fg$.

Let $\Fa = \Cent_{\Fg}(X_{-})$ (the choice of $\Fa$ has more freedom, see \cite[2.4]{kottwitztransfer}). Then over the algebraic closure $\wb{F}$, $\Fa+X_{+}$ meets every regular $\Ad(G)$-orbit exactly once. Over the rational field $F$, the composition of the closed embedding $\Fa+X_{+}\hookrightarrow \Fg$ and the morphism $u:\Fg\to \Ft/W$ is an isomorphism of algebraic varieties. For any $Z\in \Ft(F)+X_{+}$, there exists a unique element $n(Z)\in N_{\infty}(F)$ such that $\Ad(n(Z))(Z)\in \Fa(F)+X_{+}$, and the map $Z\to n(Z)$ gives a morphism of algebraic varieties $\Ft+X_{+}\to N_{\infty}$. It can be deduced from \cite{MR158024} that for any $Z\in \Fb_{\infty}(F)+X_{+}$ there exists a unique element $n(Z)\in N_{\infty}(F)$ such that $\Ad(n(Z))\in \Fa(F)+X_{+}$, and the map $Z\to n(Z)$ gives a morphism of algebraic varieties $\Fb_{\infty}+X_{+}\to N_{\infty}$. It follows that the map $(n,Y)\to \Ad(n)Y$ gives an isomorphism of algebraic varieties from $N_{\infty}\times (\Fa+X_{+})$ to $\Fb_{\infty}+X_{+}$.

In summary, we obtain the following commutative diagram over $F$.
\begin{align}\label{diagram:kostant}
\xymatrix{
\Fa+X_{+} \ar[dr]^{\simeq}\ar[r] & X_{+}+\Fb_{\infty} \ar[d]^{\text{$\Ad$-$N_{\infty}$ trivial bundle}}\ar[rr] & & \Fg \ar[d] \\
	& \Ft/W \ar[rr]^{\simeq} && \Fg \sslash G
}.
\end{align}

\subsubsection{Submersion}
Following the diagram (\ref{diagram:kostant}), let $\Sig$ be the image of the following morphism
\begin{align*}
G\times (\Fa+X_{+})&\to \Fg
\\
(g,Y)&\to g^{-1}Yg.
\end{align*}
Then there is the following commutative diagram
$$
\xymatrix{
G\times (\Fa+X_{+})\ar[r] \ar[dr]^{p_{2}} &\Sig\ar[r] \ar[d]^{p}	& \Fg \ar[d]	\\
 &(\Fa+X_{+}) \ar[r]^{\simeq}	& \Ft/W
}.
$$
Here $p_{2}$ is the canonical projection onto the second variable, and $p$ is the canonical morphism induced from $p_{2}.$ After composition, there is a morphism $\textbf{c}:\Sig\to \Ft/W$. After passing to the $F$-rational points, we get an $F$-analytical smooth submersion between two smooth $F$-analytic manifolds
$$
c_{F}:\Sig(F)\to (\Ft/W)(F).
$$
The $F$-analytical structure of $\Sig(F)$ is inherited from $\Fg(F)$, together with the fact that the set of regular elements in $\Fg$ are open and dense. Denote the measure on $\Sig(F)$ induced from $\Fg(F)$ by $\mu_{\Sig(F)}$. Similarly there is the canonical measure on $(\Ft/W)(F)$ which is denoted by $\mu_{(\Ft/W)(F)}$ (\cite[Proposition~3.29]{MR2779866}). The fibers of the morphism $c_{F}$ are given by the $G(F)$-orbits associated to the elements in $(\Fa+X_{+})(F)$ via the isomorphism $(\Fa+X_{+})\simeq \Ft/W$. In particular, the fibers of $c_{F}$ are all of the same dimension. By the theory of integration along fibers (\cite[p.61]{MR658304} for archimedean, \cite[7.6]{MR1743467} for $p$-adic), for any $Y\in (\Ft/W)(F)$, there exists a canonical chosen measure $\mu_{Y}$ on the $G(F)$-orbit associated to $Y$, such that the measure $\mu_{Y}$ varies $F$-analytically in $Y$ with the the following identity
$$
\int_{\Sig(F)}f(X)\mu_{\Sig(F)}(X) = 
\int_{(\Ft/W)(F)}F_{f}(Y)\mu_{(\Ft/W)(F)}(Y).
$$
From \cite[3.30]{MR2779866}, when the associated element in $(\Fa+X_{+})(F)$ is regular semi-simple, which is still denoted by $Y$ by abuse of notation,
$$
F_{f}(Y) = J_{G}(Y,f).
$$
It follows that if we choose a sequence $\{Y_{i} \}_{i\geq 1}$, $Y_{i}\in (\Fa+X_{+})^{\rss}(Y)$, such that $\lim_{i\to \infty}Y_{i} = N\in (\Fa+X_{+})(F)$ where $N$ is the unique regular nilpotent element in $(\Fa+X_{+})(F)$, then
$$
\lim_{i\to \infty}\mu_{Y_{i}} = \mu_{N},
$$
i.e.
$$
\lim_{i\to \infty}J_{G}(Y_{i},f) = J_{\CO_{N}}(f), \quad f\in C^{\infty}_{c}(\Sig(F)),
$$
where $\CO_{N}\in \nil_{\reg}(\Fg)$ is the regular nilpotent element associated to $N$.

\begin{rmk}
There is an alternative proof of the fact that $\lim_{i\to \infty}\mu_{Y_{i}} = \mu_{N}$ given in \cite[Lemme~11.4]{waldspurger10}. It is worth pointing out that in Waldspurger's proof, he chose an arbitrary sequence $\{ Y_{i}\}_{i\geq 1}$ where $Y_{i}\subset \Fg(F)^{\rss}$ and $\lim_{i\to \infty}Y_{i} = N$. But we can observe that the sequence satisfies the property that $Y_{i}\subset \Sig(F)$ whenever $i$ is sufficiently large. This follows from the fact that $\Sig(F)$ is open in $\Fg(F)$.
\end{rmk}

\subsubsection{The relation}
Now we discuss the relation between Theorem \ref{thm:germformula} and the Kostant sections. Recall that we have fixed an endoscopic data $(H,\CH, s,\xi)$ for $G$ and an $F$-splitting $\textbf{spl} = (B_{0},T,\{ X_{\alp}\})$.

We recall the following theorem and corollary proved by Kottwitz (\cite[Theorem~5.1, Corollary~5.2]{kottwitztransfer}).

\begin{thm}\label{thm:kottwitztransfer}
The transfer factor $\Del^{\p}_{0}(X_{H},X_{G})$ is equal to $1$ whenever $X_{G}$ lies in the set of $F$-rational points of $\Fb_{0}+X_{-}$.
\end{thm}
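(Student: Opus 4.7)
The plan is to follow Kottwitz's approach in \cite{kottwitztransfer}, using the Kostant section to trivialize the cocycles built into $\Del_\mathrm{I}$ and $\Del_\mathrm{II}$. First I would reduce to the case $X_G \in (\Fa' + X_-)(F)$, where $\Fa' = \Cent_\Fg(X_+)$. By the analogue of the diagram (\ref{diagram:kostant}) with $B_0 \leftrightarrow B_\infty$ and $X_+ \leftrightarrow X_-$ interchanged (which is just Kostant's theorem applied to the opposite pinning), every $X_G \in (\Fb_0 + X_-)(F)$ admits a unique factorization $X_G = \Ad(n)(Y)$ with $n \in N_0(F)$ and $Y \in (\Fa' + X_-)(F)$. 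Since $n$ is $F$-rational, conjugating by $n$ preserves the stable class and all the cocycles entering $\Del_\mathrm{I}$ and $\Del_\mathrm{II}$, so it suffices to prove the theorem for $Y$.

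Next I would construct an explicit $g \in G_\mathrm{sc}(\wb F)$ realizing the conjugacy $\Ad(g^{-1})(X_G) = X_G^{\mathrm{ss}} \in \Ft(\wb F)$ for $X_G$ in the Kostant slice. For $X_G = H + X_- + \cdots \in \Fa' + X_-$ one can take $g$ to be a specific product of exponentials of positive root vectors determined by the splitting, together with a Weyl element normalizing $T$ -- this is the essential content of Kostant's slice theorem. The key structural property I need is that the Galois twists $\sigma \mapsto g^{-1}\sigma(g)$ land in $T(\wb F)$ and can be written explicitly in coroot coordinates in terms of root-theoretic quantities of the form $\exp(\alpha(X_G)/2) - \exp(-\alpha(X_G)/2)$, which are exactly the expressions appearing in the interpretation of $\Del_\mathrm{II}$ via $\inv(X_G)$ recalled after Definition \ref{defin:transferfactorII}.

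Finally, with such a $g$ in hand, I would compute the product $\inv(T_G) \cdot \inv(X_G) \in H^1(F, T_G(\wb F))$, which by the reformulation in Subsection \ref{sub:germformula:transferfactors} controls $\Del_\mathrm{I}(X_H, X_G) \cdot \Del_\mathrm{II}(X_H, X_G)$ via pairing with $\mathbf{s}_{T_G}$. The main obstacle -- and the bulk of the work -- is the bookkeeping: one must carefully match the $a$-data factors $\check{\alpha} \otimes a_\alpha$ appearing in $\inv(T_G)$ against the corresponding factors in $\inv(X_G)$, check that the two contributions cancel as cochains (up to a coboundary), and verify this cancellation is compatible with the admissible embedding $T_H \hookrightarrow T_G$ used in defining $\Del_0'$ and with the chosen ordering of roots. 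Once this cancellation is verified, Tate--Nakayama pairing gives $\Del_0'(X_H,X_G) = \langle 1, \mathbf{s}_{T_G}\rangle = 1$. The ultimate point, encoded in the Kostant section, is that the $F$-splitting used to normalize the transfer factor is precisely the one that picks out a canonical $F$-rational representative in each regular orbit meeting $\Fb_0 + X_-$.
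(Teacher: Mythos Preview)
The paper does not give its own proof of this theorem: it is simply recalled from Kottwitz \cite[Theorem~5.1]{kottwitztransfer}, so there is no ``paper's proof'' to compare against. Your sketch is indeed an outline of Kottwitz's argument, and the overall strategy is right: reduce to the Kostant slice $\Fa' + X_-$ via $N_0(F)$-conjugation, then show that for $X_G$ in the slice the cocycle $\inv(T_G)\cdot\inv(X_G)$ is a coboundary.

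One imprecision worth flagging: in your second step you write that $g$ should realize $\Ad(g^{-1})(X_G) = X_G^{\mathrm{ss}} \in \Ft(\wb F)$. For $X_G$ in the regular semisimple locus of the Kostant slice, $X_G$ is already semisimple; what you actually need is the element $x \in G(\wb F)$ with $xTx^{-1} = T_G = G_{X_G}$, which is the $x$ entering the definition of $\inv(T_G)$ in Subsection~\ref{sub:germformula:transferfactors}. Kottwitz's point is that for $X_G$ in the slice one can choose this $x$ so that the cocycle $\sig \mapsto x^{-1}\sig(x)$, combined with the $\inv(X_G)$ contribution, visibly cobounds. Your description of the cancellation in the third step is correct in spirit, though in Kottwitz's actual proof the bookkeeping is organized via the $a$-data (set to $1$ as in the paper's convention $\check\alp\otimes a_\alp = 1$) rather than directly via the expressions $\exp(\alp(X_G)/2) - \exp(-\alp(X_G)/2)$.
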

\begin{cor}\label{cor:kottwitztransfer}
$$\Del^{\p}_{0}(X_{H},X_{G}) = \langle \inv(X_{G},X^{\p}_{G}),\textbf{s}_{T_{G}}\rangle
$$
where $X^{\p}_{G}$ is any $F$-rational element in $\Fb_{0}+X_{-}$ that is stably conjugate to $X_{G}$.
\end{cor}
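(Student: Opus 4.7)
The statement is essentially a one-step deduction combining Theorem \ref{thm:kottwitztransfer} with the stable-conjugacy transformation law for transfer factors recorded in Remark \ref{rmk:transferstableconjugate}, so the proof plan is short.

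First, I would invoke Theorem \ref{thm:kottwitztransfer} applied to the element $X^{\p}_{G}$, which by hypothesis lies in the set of $F$-rational points of $\Fb_{0}+X_{-}$. This gives immediately
$$
\Del^{\p}_{0}(X_{H}, X^{\p}_{G}) = 1.
$$
Note that this step uses only that $X^{\p}_{G}\in (\Fb_{0}+X_{-})(F)$ and that it matches with $X_{H}$ (which is automatic from the stable conjugacy $X_{G}\sim_{\stab}X^{\p}_{G}$, since the matching of regular semisimple elements in $\Fg(F)$ with elements of $\Fh(F)$ depends only on the $G(\wb{F})$-conjugacy class).

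Next, I would apply Remark \ref{rmk:transferstableconjugate} to the pair $(X_{G}, X^{\p}_{G})$. That remark records the identity
$$
\Del^{\p}_{0}(X_{H}, X^{\p}_{G}) \cdot \langle \inv(X_{G}, X^{\p}_{G}), \textbf{s}_{T_{G}} \rangle^{-1} = \Del^{\p}_{0}(X_{H}, X_{G}),
$$
which expresses how the transfer factor twists by the Tate--Nakayama pairing against $\textbf{s}_{T_{G}}$ under a stable conjugation. Substituting the vanishing obtained in the first step yields
$$
\Del^{\p}_{0}(X_{H}, X_{G}) = \langle \inv(X_{G}, X^{\p}_{G}), \textbf{s}_{T_{G}} \rangle^{-1},
$$
which, after the standard identification $\langle \cdot, \cdot\rangle^{-1} = \langle \cdot, \cdot\rangle$ coming from symmetry of $\inv(X_{G}, X^{\p}_{G})$ versus $\inv(X^{\p}_{G}, X_{G})$, gives exactly the stated formula.

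Finally, I would remark that the expression on the right-hand side is well-defined independent of the choice of $X^{\p}_{G}\in (\Fb_{0}+X_{-})(F)$ stably conjugate to $X_{G}$: if $X^{\p\p}_{G}$ is another such element, then $\inv(X^{\p}_{G}, X^{\p\p}_{G})$ is trivially paired against $\textbf{s}_{T_{G}}$, because by Theorem \ref{thm:kottwitztransfer} both $\Del^{\p}_{0}(X_{H}, X^{\p}_{G})$ and $\Del^{\p}_{0}(X_{H}, X^{\p\p}_{G})$ equal $1$, and a second application of Remark \ref{rmk:transferstableconjugate} forces the pairing to be $1$. The only genuine subtlety—and the closest thing to an obstacle—is bookkeeping the sign/inverse conventions between $\inv(X_{G},X^{\p}_{G})$ and $\inv(X^{\p}_{G},X_{G})$, but this is routine.
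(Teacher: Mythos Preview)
Your proposal is correct and is exactly the natural deduction of the corollary from Theorem \ref{thm:kottwitztransfer} via the stable-conjugacy transformation law in Remark \ref{rmk:transferstableconjugate}; the paper itself does not supply a proof but simply attributes both the theorem and the corollary to Kottwitz (\cite[Theorem~5.1, Corollary~5.2]{kottwitztransfer}). Your flagging of the inverse/sign bookkeeping on $\inv(X_{G},X^{\p}_{G})$ versus $\inv(X^{\p}_{G},X_{G})$ is apt and is indeed the only point requiring care.
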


Following the construction of Kostant's sections, we can define a linear subspace $\Fa$ of $\Fb_{0}$ such that $\Fa+X_{-}$ yields a section for the adjoint quotient map $\Fg\to \Ft/W$.

Now as before let $\Sig$ be the image of $G\times (\Fa+X_{-})\to \Fg$ where $G$ acts on $\Fa+X_{-}$ via adjoint action. Then Corollary \ref{cor:kottwitztransfer} says that $\Del^{\p}_{0}(X_{H},X_{G}) = 1$ if and only if $X_{G}$ lies in $\Sig(F)$.

By Theorem \ref{thm:germformula}, for any $\CO\in \nil_{\reg}(\Fg)$ and $X\in \Fg^{\rss}(F)$, 
$$
\Gam_{\CO}(X)=  
\bigg\{
\begin{matrix}
1,	&	\inv(X)\inv(T_{G}) = \inv_{T_{G}}(\CO),\\
0,	& \text{\rm{otherwise}}.
\end{matrix}
$$
Note that the invariant $\inv_{T_{G}}(\CO)$ measures the difference between $\textbf{spl}_{\infty}$, the \emph{opposite} $F$-splitting to $\textbf{spl}$, and the $F$-splitting $\textbf{spl}(\CO)$ determined by $\CO$. In particular, up to $G(\wb{F})$-conjugation, for convenience we may set $\textbf{spl}(\CO) = \textbf{spl}_{\infty}$, i.e. $X_{-}\in \CO$ and $\inv_{T_{G}}(\CO ) =1$. Then the orbit $\CO$ lies in $\Sig(F)$, and the formula can be simplified to say that $\Gam_{\CO}(X) =1$ if and only if $\inv(X)\inv(T_{G}) = 1$, which, is equivalent to say that $\Del^{\p}_{0}(X_{H},X) =1$ identically. On the other hand, by Kottwitz's theorem \ref{thm:kottwitztransfer}, it is also equivalent to say that $X$ lies in $\Sig(F)$. In particular, both the $G(F)$-orbit of $X$ and $\CO$ are contained in $\Sig(F)$, i.e. they lie in the image of the $G(F)$-orbit of a common Kostant's section.

Conversely, assume both the $G(F)$-orbit of $X$ and $\CO$ lie in the image of the $G(F)$-orbit of a common Kostant's section, which again is denoted by $\Sig(F)$. Up to $G(\wb{F})$-conjugation for $\textbf{spl}_{\infty}$, again we may assume that $\inv_{T_{G}}(\CO ) =1$. Then Theorem \ref{thm:kottwitztransfer} says that $\Del^{\p}_{0}(X_{H},X_{G})$ is identically $1$, which is equivalent to say that $\inv(X)\inv(T_{G}) = 1$. Hence $\Gam_{\CO}(X) = 1$.

In conclusion, the following theorem has been proved.

\begin{thm}
With the above notations, $\Gam_{\CO}(X)=  1$ if and only if the $G(F)$-orbit of $X$ and $\CO$ lie in the $G(F)$-orbit of a common Kostant's section.
\end{thm}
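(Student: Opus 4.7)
}

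The plan is to reduce the claim to Theorem~\ref{thm:germformula} and Kottwitz's identification of the transfer factor on a Kostant section, by a careful use of the normalization freedom in the invariant $\inv_{T_G}(\CO)$. Concretely, given $X \in \Fg^\rss(F)$ with $T_G = G_X$ and $\CO \in \nil_\reg(\Fg)$, I fix the endoscopic datum $(H,\CH,s,\xi)$ and an $F$-splitting $\textbf{spl}(\CO)$ attached to $\CO$ as in Section~\ref{sub:germformula:transferfactors}. The idea is that the cocycle $\inv_{T_G}(\CO)$ measures precisely the $Z_{\mathrm{sc}}$-discrepancy between $\textbf{spl}(\CO)$ and the fixed opposite splitting $\textbf{spl}_\infty$, so up to replacing the ambient splitting by a $G(\wb{F})$-conjugate I may assume $\textbf{spl}(\CO) = \textbf{spl}_\infty$ and therefore $\inv_{T_G}(\CO) = 1$. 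Under this normalization, Theorem~\ref{thm:germformula} specializes to the statement that $\Gam_\CO(X) = 1$ precisely when $\inv(X)\inv(T_G) = 1$ in $H^1(F,T_G)$ (and is zero otherwise).

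Next, I would translate the cocycle condition $\inv(X)\inv(T_G) = 1$ into a statement about the Langlands-Shelstad transfer factor using the Tate-Nakayama formulas recalled in Section~\ref{sub:germformula:transferfactors}: by definition $\Del_{\mathrm{I}}(X_H,X) = \langle \inv(T_G), \textbf{s}_{T_G}\rangle$ and $\Del_{\mathrm{II}}(X_H,X) = \langle \inv(X), \textbf{s}_{T_G}\rangle$, so the product $\Del'_0(X_H,X)$ is trivial if the product cocycle is trivial. Running the argument for all endoscopic characters $\kappa$ of $\CE(T_G)$ (as in the final step of the proof of Theorem~\ref{thm:germformula}) gives the converse, so $\inv(X)\inv(T_G) = \inv_{T_G}(\CO) = 1$ is equivalent to $\Del'_0(X_H,X) = 1$ identically. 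Then invoking Theorem~\ref{thm:kottwitztransfer} (Kottwitz) converts this into the geometric statement that $X$ lies in the image $\Sig(F)$ of the Kostant section $\Fa + X_-$ attached to $\textbf{spl}_\infty = \textbf{spl}(\CO)$. Since by construction $X_- \in \CO$ lies in the same section, both $G(F)\cdot X$ and $\CO$ belong to $\Sig(F)$, giving one direction.

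For the converse, I would start from the hypothesis that the $G(F)$-orbit of $X$ and the orbit $\CO$ both lie in the image $\Sig(F)$ of a common Kostant section $\Fa + X_-$. Choosing the splitting $\textbf{spl}$ so that $X_- \in \CO$ makes $\textbf{spl}(\CO) = \textbf{spl}_\infty$, hence $\inv_{T_G}(\CO) = 1$ again. Theorem~\ref{thm:kottwitztransfer} then forces $\Del'_0(X_H,X) = 1$, which via the Tate-Nakayama formulas for $\Del_{\mathrm{I}}$ and $\Del_{\mathrm{II}}$ and duality for $\CE(T_G)$ yields $\inv(X)\inv(T_G) = 1 = \inv_{T_G}(\CO)$; Theorem~\ref{thm:germformula} then gives $\Gam_\CO(X) = 1$.

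The main obstacle, in my view, is not analytic but bookkeeping: one must verify that the normalization $\textbf{spl}(\CO) = \textbf{spl}_\infty$ can indeed be arranged by a $G(\wb{F})$-conjugation without changing the invariants $\inv(X)$, $\inv(T_G)$, and the Kostant section in a way that would spoil the equivalence. Concretely, one needs that the $G(\wb{F})$-orbit of $F$-splittings acts coherently on all the cohomological invariants that appear, and that the chosen $\Fa \subset \Fb_0$ defining the Kostant section transforms compatibly. Once this is set up, the equivalence of (i) triviality of the Tate-Nakayama pairing, (ii) triviality of the LS transfer factor, and (iii) lying in a common Kostant section follows directly from combining Theorem~\ref{thm:germformula} with Theorem~\ref{thm:kottwitztransfer} and the definitions in Section~\ref{sub:germformula:transferfactors}.
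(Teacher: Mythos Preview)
Your proposal is correct and follows essentially the same route as the paper's own argument: normalize the splitting so that $\textbf{spl}(\CO)=\textbf{spl}_\infty$ and hence $\inv_{T_G}(\CO)=1$, then combine Theorem~\ref{thm:germformula} with the equivalence $\inv(X)\inv(T_G)=1 \Leftrightarrow \Del'_0(X_H,X)\equiv 1$ (obtained by varying $\kappa$), and finally invoke Kottwitz's Theorem~\ref{thm:kottwitztransfer} to translate this into membership of $X$ in the Kostant slice $\Sig(F)$ containing $\CO$. The paper treats the normalization step with the same brevity you flag as a bookkeeping concern, so your caution there is appropriate but does not indicate a gap.
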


\section{Calculation of germs}\label{sec:germcal}
In this section, we are going to compute the endoscopic invariants defined in Section \ref{sub:germformula:transferfactors} for even dimensional quasi-split special orthogonal Lie algebras. We follow the strategy of \cite[Chapitre~I,X]{wald01nilpotent}. Using the regular germ formula proved in Section \ref{sub:germformula:theformula}, we give explicit formulas for the regular nilpotent germs for some regular semi-simple conjugacy classes in an even dimensional quasi-split special orthogonal Lie algebra. Finally using the explicit formulas we can determine the intersection of $\nil_{\reg}(\Fg)$ and $\Gam(\Sig)$ following the strategy of \cite[11.5,11.6]{waldspurger10}. The results established in the section will be important determining the nilpotent orbit support of the geometric expansion of the trace formula.

Throughout the section, fix an even dimensional quasi-split quadratic space $(V,q_{V})$ of dimension $d$ defined over $F$, with associated (special) orthogonal group $\mathrm{O}(V)$ ($\SO(V)$) and corresponding Lie algebra $\Fs\Fo(V)$.

\subsection{Some regular semi-simple conjugacy classes}\label{sub:germcal:conjugacy}
We are going to review the parametrization of some regular semi-simple conjugacy classes (without eigenvalue $0$) in $\Fs\Fo(V)(F)$, and prove some basic properties for the parametrization.

The following data is fixed.
\begin{itemize}
\item A finite set $I$;

\item For any $i\in I$, fix a finite field extension $F^{\#}_{i}$ of $F$ together with a $2$-dimensional $F^{\#}_{i}$ commutative algebra $F_{i}$. Denote $\tau_{i}$ the unique nontrivial automorphism of $F_{i}$ over $F^{\#}_{i}$.

\item For any $i\in I$, fix constants $a_{i},c_{i}\in F^{\times}_{i}$.
\end{itemize}

For fix data as above, we make further assumptions as follows.
\begin{itemize}
\item For any $i\in I$, $a_{i}$ generates $F_{i}$ over $F$;

\item For any $i,j\in I$ with $i\neq j$, there do not exist any $F$-linear isomorphisms between $F_{i}$ and $F_{j}$ that send $a_{i}$ to $a_{j}$;

\item For any $i\in I$, $\tau_{i}(a_{i}) = -a_{i}$, and $\tau_{i}(c_{i}) = c_{i}$;
\end{itemize}
For any $i\in I$, note $\sgn_{F_{i}/F^{\#}_{i}}$ the quadratic character of $F^{\#\times}_{i}$ associated to $F_{i}$ via local class field theory (\cite[Theorem~2]{MR0220701}). Let $I^{*}$ be the subset of $I$ consisting of $i\in I$ such that $F_{i}$ is a field, i.e. $\sgn_{F_{i}/F^{\#}_{i}}$ is nontrivial.

With the above assumptions set $W = \bigoplus_{i\in I}F_{i}$ and define a quadratic space $(W,q_{W})$ via 
$$
q_{W}(\sum_{i\in I}w_{i}, \sum_{i\in I}w^{\p}_{i}) = \sum_{i\in I}[F_{i}:F]^{-1}\tr_{F_{i}/F}
(\tau_{i}(w_{i})w^{\p}_{i}c_{i}), \quad w_{i},w^{\p}_{i}\in F_{i}.
$$
Using the fact that $\tau_{i}(c_{i}) = c_{i}$ we immediately find that the bilinear form $q_{W}$ is indeed symmetric. 

Note $X_{W}$ the element in $\End_{F}(W)$ defined by 
$$
X_{W}(\sum_{i\in I}w_{i}) = 
\sum_{i\in I}a_{i}w_{i}, \quad w_{i}\in F_{i}.
$$
Then $X_{W}\in \Fs\Fo(W)(F)$ which follows directly from the fact that $\tau_{i}(a_{i}) = -a_{i}$. 

We establish some basic properties for the quadratic space $(W,q_{W})$.

\begin{lem}\label{lem:gemcal:basisform}
For any finite-dimensional $F$-algebra $E$, let $\Sig(E)$ be the set of nonzero $F$-algebra homomorphisms from $E$ to $\wb{F}$. Then there exists a basis $\{f_{\sig}\}_{\sig\in \Sig(E)}$ of $\wb{F}\otimes_{F}E$, such that for any $v\in E\subset \wb{F}\otimes_{F}E$,
$$
v=\sum_{\sig\in \Sig(E)}\sig(v)f_{\sig}.
$$
\end{lem}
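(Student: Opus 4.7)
The plan is to reduce the statement to the well-known structure theorem for finite étale algebras in characteristic zero. Since $F$ has characteristic zero, any finite-dimensional (commutative) semisimple $F$-algebra $E$ arising in the paper (in particular any product of finite separable field extensions, which covers the algebras $F_i$ appearing in Section~\ref{sub:germcal:conjugacy}) is étale. First I would write $E \cong \prod_{j=1}^{n} K_j$ as a product of finite field extensions $K_j/F$, so that $\Sig(E) = \bigsqcup_j \Sig(K_j)$ via the projections $E \twoheadrightarrow K_j$.

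Next, for each field factor $K_j$, I would invoke the standard isomorphism of $\wb{F}$-algebras
\[
\wb{F} \otimes_F K_j \;\xrightarrow{\;\sim\;}\; \prod_{\sig \in \Sig(K_j)} \wb{F}, \qquad a \otimes k \;\longmapsto\; (a \cdot \sig(k))_{\sig},
\]
which comes from separability of $K_j/F$ (or equivalently the primitive element theorem: if $K_j = F(\theta)$ with minimal polynomial splitting into distinct linear factors over $\wb{F}$, CRT gives the product decomposition). Taking the product over $j$ yields a canonical $\wb{F}$-algebra isomorphism
\[
\Phi \colon \wb{F} \otimes_F E \;\xrightarrow{\;\sim\;}\; \prod_{\sig \in \Sig(E)} \wb{F}, \qquad a \otimes v \;\longmapsto\; (a \cdot \sig(v))_{\sig}.
\]

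Finally I would define $f_\sig := \Phi^{-1}(e_\sig)$, where $e_\sig$ is the primitive idempotent of $\prod_{\sig} \wb{F}$ supported at the $\sig$-coordinate. Since $\{e_\sig\}_{\sig \in \Sig(E)}$ is an $\wb{F}$-basis of $\prod_\sig \wb{F}$, the family $\{f_\sig\}_{\sig \in \Sig(E)}$ is an $\wb{F}$-basis of $\wb{F} \otimes_F E$. The required expansion is then immediate: for $v \in E$, identified with $1 \otimes v \in \wb{F} \otimes_F E$, we have $\Phi(1 \otimes v) = (\sig(v))_\sig = \sum_\sig \sig(v) e_\sig$, so pulling back through $\Phi^{-1}$ gives $v = \sum_{\sig \in \Sig(E)} \sig(v) f_\sig$.

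There is no real obstacle here; the only thing to be careful about is that the statement implicitly requires $E$ to be étale (so that $\dim_{\wb{F}}(\wb{F} \otimes_F E) = |\Sig(E)|$), which is automatic in characteristic zero for the commutative semisimple algebras used in this appendix. If one wished to treat a general finite-dimensional commutative $F$-algebra, one would first quotient by the nilradical, note that $\Sig(E) = \Sig(E_{\mathrm{red}})$, and observe that the statement as written only makes sense (as a basis claim) on the reduced quotient.
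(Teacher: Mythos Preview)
Your proof is correct and takes a more structural route than the paper. The paper proceeds by a direct matrix computation: it fixes an arbitrary $F$-basis $\{b_\tau\}_{\tau\in\Sig(E)}$ of $E$, writes the unknown $f_\sig$ in this basis with coefficient matrix $C$, and observes that the required identity is equivalent to $BC = \RI_k$ where $B = (\sig(b_\tau))$; invertibility of $B$ (linear independence of characters) then gives $C = B^{-1}$. Your argument instead identifies $\wb{F}\otimes_F E \simeq \prod_{\sig}\wb{F}$ as $\wb{F}$-algebras and takes $f_\sig$ to be the primitive idempotents. Both approaches implicitly use that $E$ is \'etale (so that $|\Sig(E)| = \dim_F E$), which you flag explicitly and the paper assumes silently by indexing its chosen $F$-basis by $\Sig(E)$.

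Your idempotent description has the advantage of making the next result in the paper, Corollary~\ref{cor:germcal:galoispermute} ($\zet(f_\sig)=f_{\zet\sig}$), immediate: $\Gam_F$ permutes the primitive idempotents of $\wb{F}\otimes_F E$ exactly according to its action on $\Sig(E)$. The paper instead rederives this from the matrix identity $\zet(B)\zet(C)=\RI_k$. On the other hand, the paper's explicit coordinate formula $f_\sig = \sum_\tau c^\tau_\sig\otimes b_\tau$ with $C=B^{-1}$ is more hands-on and avoids invoking any structure theorem.
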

\begin{proof}
Let $k=[E:F]$, which is also the cardinality of the set $\Sig(E)$. We fix any $F$-basis $\{ b_{\sig}\}_{\sig\in \Sig(E)}$ of $E$, and assume that we have 
$$
f_{\sig} = \sum_{\tau\in \Sig(E)}c^{\tau}_{\sig}\otimes b_{\tau}
$$
for some constant $c^{\tau}_{\sig}\in \wb{F}$. Then the equality
$$
v = \sum_{\sig\in \Sig(E)}\sig(v)f_{\sig}
$$
is equivalent to the following identities 
\begin{align*}
&\sum_{\sig\in \Sig(E)}\sig(b_{\tau})c^{\tau}_{\sig} = 1,\quad \forall \tau\in \Sig(E),
\\
&\sum_{\sig\in \Sig(E)}\sig(b_{\tau})c^{\xi}_{\sig} = 0, \quad \forall \xi\in \Sig(E), \tau \neq \xi.
\end{align*}
Let $B=  (\sig(b_{\tau}))\in \RM_{k\times k}(\wb{F})$, where the rows are indexed by $\sig\in \Sig(E)$, and columns are indexed by $\tau\in \Sig(E)$. Similarly let $C=  ((c^{\tau}_{\sig}))\in \RM_{k\times k}(\wb{F})$ where the rows are indexed by $\tau$ and the columns are indexed by $\tau$. Then the above equalities are equivalent to the following matrix identity
$$
BC=\RI_{k}.
$$
Using the fact that $\{ b_{\tau}\}_{\tau\in \Sig(E)}$ is an $F$-basis of $E$, we realize that the matrix $B$ is invertible, hence we can simply let $C=B^{-1}$ and the lemma follows.
\end{proof}

As a corollary, we can establish the following fact.

\begin{cor}\label{cor:germcal:galoispermute}
For any $\zet\in \Gam_{F}$, we have 
$$
\zet(f_{\sig}) = f_{\zet\sig}.
$$
\end{cor}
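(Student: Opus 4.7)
The plan is to derive the identity $\zeta(f_\sigma) = f_{\zeta\sigma}$ directly from the defining property of the basis $\{f_\sigma\}_{\sigma\in\Sigma(E)}$ by exploiting the fact that any $v \in E \subset \bar F \otimes_F E$ is Galois-fixed (the Galois action on $\bar F \otimes_F E$ is on the first tensor factor only).

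First I would take the defining identity $v = \sum_{\sigma\in\Sigma(E)}\sigma(v)f_\sigma$ for $v\in E$ and apply $\zeta$ to both sides. Since $v\in E$ is Galois-invariant, the left-hand side is unchanged, while on the right-hand side $\zeta$ acts on each coefficient $\sigma(v)\in \bar F$ to give $\zeta(\sigma(v)) = (\zeta\circ\sigma)(v)$ and on each basis vector to give $\zeta(f_\sigma)$. Then I would re-index the resulting sum by setting $\tau = \zeta\sigma$ (which is again an element of $\Sigma(E)$, since post-composition with a Galois element sends $F$-algebra homomorphisms $E\to\bar F$ to $F$-algebra homomorphisms $E\to\bar F$) to obtain
\[
v = \sum_{\tau\in\Sigma(E)}\tau(v)\,\zeta(f_{\zeta^{-1}\tau}).
\]
Comparing this with the original expansion $v = \sum_\tau \tau(v) f_\tau$ yields
\[
\sum_{\tau\in\Sigma(E)}\tau(v)\bigl(\zeta(f_{\zeta^{-1}\tau}) - f_\tau\bigr) = 0\qquad\text{for all }v\in E.
\]

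The second step is to conclude that each coefficient $\zeta(f_{\zeta^{-1}\tau}) - f_\tau\in \bar F\otimes_F E$ vanishes. Expanding each such element in the basis $\{f_\rho\}$ and using the linear independence of the $\{f_\rho\}$ reduces the problem to showing that if $\sum_\tau d_\tau\, \tau(v) = 0$ for all $v\in E$ with $d_\tau\in\bar F$, then every $d_\tau = 0$. This is exactly Dedekind's linear independence of characters applied to the distinct $F$-algebra homomorphisms $\tau: E \to \bar F$, which is immediate. This gives $\zeta(f_{\zeta^{-1}\tau}) = f_\tau$, or equivalently $\zeta(f_\sigma) = f_{\zeta\sigma}$, as required.

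There is no real obstacle here, as the whole argument is formal linear algebra; the only subtlety is making sure that $\zeta$ acts trivially on $E\subset \bar F\otimes_F E$ (which follows from the tensor factorization of the Galois action) and that $\zeta\sigma$ is again an element of $\Sigma(E)$ (which follows because Galois elements are $F$-algebra automorphisms of $\bar F$). Accordingly I expect the write-up to be only a few lines.
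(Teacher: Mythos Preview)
Your proof is correct. It takes a slightly different route from the paper: the paper works directly with the matrices $B=(\sigma(b_\tau))$ and $C=(c^\tau_\sigma)$ constructed in the proof of Lemma~\ref{lem:gemcal:basisform}, applies $\zeta$ to the identity $BC=I$ to obtain $\zeta(B)\zeta(C)=I$, observes that $\zeta(B)$ is $B$ with rows permuted by $\sigma\mapsto\zeta\sigma$, and reads off $\zeta(C)=(c^\tau_{\zeta\sigma})$. Your argument instead bypasses these coordinates, applying $\zeta$ to the defining identity $v=\sum_\sigma\sigma(v)f_\sigma$ and invoking linear independence of the homomorphisms $\tau\in\Sigma(E)$ to force $\zeta(f_{\zeta^{-1}\tau})=f_\tau$. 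Both approaches are essentially one line; yours is a bit more coordinate-free, while the paper's stays closer to the explicit construction of the $f_\sigma$. Note that the linear independence you invoke is exactly the invertibility of $B$ used in the paper, so the two arguments ultimately rest on the same fact.
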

\begin{proof}
We use the same notation as in Lemma \ref{lem:gemcal:basisform}. We write
$$
f_{\sig} = \sum_{\tau\in \Sig(E)}c^{\tau}_{\sig}\otimes b_{\tau}.
$$
Then for any $\zet\in \Gam_{F}$, $\zet(f_{\sig}) = \sum_{\tau\in \Sig(E)}\zet(c^{\tau}_{\sig})\otimes b_{\tau}$.

Since 
$$
\zet(B)\zet(C) = \zet(\RI_{k}) = \RI_{k},
$$
and we know that $\zet(B) =  (\zet \sig(b_{\tau}))$, we get $\zet(C)=  ((c^{\tau}_{\zet\sig}))$. It follows that
$$
\zet(f_{\sig}) = f_{\zet \sig}.
$$
\end{proof}

Now for any $i\in I$, we define $\Sig(F_{i})$ to be the set of nonzero $F$-algebra homomorphisms from $F_{i}$ to $\wb{F}$. We let $\Sig(W) = \{ (i,\sig)|\quad i\in I, \sig\in \Sig(F_{i}) \}$. From Lemma \ref{lem:gemcal:basisform}, there exists a basis for $\wb{F}\otimes_{F}W$, which we denote by $\{f_{i,\sig}|\quad (i,\sig)\in \Sig(W) \}$, such that for any $i\in I$ and $v_{i}\in F_{i}\subset V$,
$$
v_{i} = \sum_{\sig\in \Sig(F_{i})}\sig(v_{i})f_{i,\sig}.
$$

\begin{lem}\label{lem:germcal:basisproperty}
The following properties hold,
\begin{enumerate}
\item For any $(i,\sig)\in \Sig(W)$,
$$
X(f_{i,\sig}) = \sig(a_{i})f_{i,\sig}.
$$

\item For any $(i,\sig)$ and $(i^{\p},\sig^{\p})\in \Sig(W)$,
$$
q_{W}(f_{i^{\p},\sig^{\p}}, f_{i,\sig}) = 
\bigg\{
\begin{matrix}
0,	& (i^{\p},\sig^{\p})\neq (i,\sig\tau_{i}),\\
\sig(c_{i})[F_{i}:F]^{-1},	& (i^{\p},\sig^{\p}) = (i,\sig\tau_{i}).
\end{matrix}
$$
\end{enumerate}
\end{lem}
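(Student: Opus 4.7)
The plan is to reduce both parts to the structure of $\bar F \otimes_F F_i$ as a product of copies of $\bar F$ indexed by embeddings. From Lemma~\ref{lem:gemcal:basisform}, the basis $\{f_{i,\sigma}\}_{\sigma \in \Sigma(F_i)}$ is characterized by the expansion $v = \sum_\sigma \sigma(v) f_{i,\sigma}$ for $v \in F_i$, which forces the $F$-algebra isomorphism $\bar F \otimes_F F_i \simeq \prod_{\sigma \in \Sigma(F_i)} \bar F$ sending $1\otimes v \mapsto (\sigma(v))_\sigma$ to identify $f_{i,\sigma}$ with the primitive idempotent $e_\sigma$ at the $\sigma$-component. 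Thus $\{f_{i,\sigma}\}$ are mutually orthogonal idempotents, and for any $v \in F_i$ one has $v \cdot f_{i,\sigma} = \sigma(v) f_{i,\sigma}$.

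For part (1), $X_W$ acts on the summand $F_i$ as multiplication by $a_i$, so its $\bar F$-linear extension acts on $\bar F \otimes F_i$ as multiplication by $a_i$. Applying the boxed identity above with $v = a_i$ gives $X_W f_{i,\sigma} = \sigma(a_i) f_{i,\sigma}$.

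For part (2), the quadratic form $q_W$ is by construction the orthogonal direct sum of the forms on each $F_i$, so the first case $(i' \neq i)$ gives $q_W(f_{i',\sigma'}, f_{i,\sigma}) = 0$ immediately. For $i' = i$, I would extend $\tau_i$ and $\tr_{F_i/F}$ to $\bar F$-linear maps on $\bar F \otimes F_i$ (as $1 \otimes \tau_i$ and the sum of all embeddings, respectively). Under the product-of-$\bar F$ description, $\tau_i$ permutes components via $\rho \mapsto \rho \tau_i$ (using Corollary~\ref{cor:germcal:galoispermute} or a direct check on the defining identity), hence $\tau_i(f_{i,\sigma'}) = f_{i,\sigma'\tau_i}$. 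Using orthogonality of idempotents, $\tau_i(f_{i,\sigma'}) \cdot f_{i,\sigma}$ vanishes unless $\sigma' \tau_i = \sigma$, i.e. unless $\sigma' = \sigma \tau_i$ (since $\tau_i^2 = 1$), in which case it equals $f_{i,\sigma}$. Then part (1) applied to $c_i$ yields $f_{i,\sigma} \cdot c_i = \sigma(c_i) f_{i,\sigma}$, and the extended trace evaluated on $f_{i,\sigma}$ is $1$. Multiplying everything together with the normalizing factor $[F_i:F]^{-1}$ gives the formula.

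The only genuinely nontrivial point is correctly identifying $\tau_i(f_{i,\sigma'})$; everything else is bookkeeping with orthogonal idempotents. I would make this identification precise by either invoking Corollary~\ref{cor:germcal:galoispermute} after passing through an $F$-algebra automorphism, or by a direct verification that the image $(1 \otimes \tau_i)(f_{i,\sigma'})$ still satisfies the defining expansion $v = \sum_\rho \rho(v)(\cdot)$ for $v \in F_i$, but with the role of $\sigma'$ replaced by $\sigma' \tau_i$, and then invoking the uniqueness of the basis in Lemma~\ref{lem:gemcal:basisform}.
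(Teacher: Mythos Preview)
Your approach is correct and takes a slightly different route from the paper. The paper's proof is more direct: for both parts, it expands the relevant expressions two ways using the defining identity $v_i = \sum_\sigma \sigma(v_i) f_{i,\sigma}$ and compares coefficients, relying on the invertibility of the matrix $(\sigma(b_\tau))$ from Lemma~\ref{lem:gemcal:basisform}. For part~(2), the paper simply writes
\[
q_W(v_i,w_i) = [F_i:F]^{-1}\sum_{\sigma\in\Sigma(F_i)} (\sigma\tau_i)(v_i)\,\sigma(w_i)\,\sigma(c_i)
\]
and equates it with the bilinear expansion $\sum_{\xi,\sigma}\xi(v_i)\sigma(w_i)\, q_W(f_{i,\xi},f_{i,\sigma})$. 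Your idempotent approach is equally valid and arguably more conceptual, since it makes the algebra structure of $\bar F\otimes_F F_i$ visible rather than leaving it implicit in a coefficient comparison.

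One small correction: Corollary~\ref{cor:germcal:galoispermute} concerns the action of $\zeta\in\Gamma_F$ on the $\bar F$-tensor factor of $\bar F\otimes_F F_i$, whereas $\tau_i$ is an $F$-algebra automorphism of $F_i$ acting on the second tensor factor; these are different actions, so that corollary does not apply here. Your ``direct check'' alternative does work: applying $1\otimes\tau_i$ to the defining identity gives $\tau_i(v)=\sum_\sigma\sigma(v)\,(1\otimes\tau_i)(f_{i,\sigma})$, and re-expanding $\tau_i(v)=\sum_\rho(\rho\tau_i)(v)f_{i,\rho}$ yields $(1\otimes\tau_i)(f_{i,\sigma})=f_{i,\sigma\tau_i}$ after reindexing and using $\tau_i^2=\mathrm{id}$.
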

\begin{proof}
We establish (1). By definition, $Xv_{i} = a_{i}v_{i}$. We write
$$
v_{i} = \sum_{\sig\in \Sig(F_{i})}\sig(v_{i})f_{i,\sig}.
$$
Then we get the following equality
$$
\sum_{\sig\in \Sig(F_{i})}
\sig(v_{i})X(f_{i,\sig}) = \sum_{\sig\in \Sig(F_{i})}\sig(a_{i}v_{i})f_{i,\sig}.
$$
It follows that we have $X(f_{i,\sig}) = \sig(a_{i})f_{i,\sig}$.

Now for (2), when $i\neq i^{\p}$, $q_{W}(f_{i^{\p},\sig^{\p}},f_{i,\sig}) =0$ holds automatically. We only consider the case when $i=i^{\p}$. For any $v_{i},w_{i}\in F_{i}$, by definition,
$$
q_{W}(v_{i},w_{i}) = [F_{i}:F]^{-1}
\tr_{F_{i}/F}(\tau_{i}(v_{i})w_{i}c_{i}) = 
[F_{i}:F]^{-1}
\sum_{\sig\in \Sig(F_{i})}
(\sig\tau_{i})(v_{i})\sig(w_{i})\sig(c_{i}).
$$
On the other hand, by linearity, we have 
$$
q_{W}(v_{i},w_{i}) = 
\sum_{\xi,\sig\in \Sig(F_{i})}
\xi(v_{i})\sig(w_{i})q_{W}(f_{i,\xi}, f_{i\sig}).
$$
Comparing the two identities, we get the desired result.
\end{proof}

We establish the following lemma.

\begin{lem}\label{lem:germcal:centralizer}
The element $X_{W}$ is regular semi-simple, and the centralizer of $X_{W}$ in $\SO(W)$ $(\text{actually $\mathrm{O}(W)$})$ is isomorphic to
$$
\prod_{i\in I\bs I^{*}}
F^{\#\times}_{i}
\prod_{i\in I^{*}}\ker(\mathrm{N}_{F_{i}/F_{i}^{\#}}:F_{i}^{\times}\to F^{\#\times}_{i}).
$$
\end{lem}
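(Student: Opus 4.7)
My plan is to first show regular semi-simplicity by a direct eigenvalue count, then identify the commutant of $X_W$ as an $F$-algebra, and finally cut out the isometry condition inside that commutant.

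First, I would analyse the action of $X_W$ on $\ol F\otimes_F W$ using the basis $\{f_{i,\sig}\}_{(i,\sig)\in \Sig(W)}$ from Lemma \ref{lem:gemcal:basisform}: by Lemma \ref{lem:germcal:basisproperty}(1) the $f_{i,\sig}$ are eigenvectors with eigenvalues $\sig(a_i)$. The hypothesis that $a_i$ generates $F_i/F$ means that for fixed $i$ the $\sig(a_i)$ are pairwise distinct as $\sig$ ranges over $\Sig(F_i)$ (they are the roots of the minimal polynomial of $a_i$ over $F$), and the hypothesis that no $F$-algebra map $F_i\to F_j$ sends $a_i$ to $a_j$ guarantees that the eigenvalues coming from different indices $i$ are disjoint. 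Hence all $d=\dim W$ eigenvalues are distinct, so $X_W$ is regular semi-simple.

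Next, the commutant of $X_W$ in $\End_F(W)$: since each block $X_W|_{F_i}$ is multiplication by a generator $a_i$ of $F_i$, its centraliser in $\End_F(F_i)$ is exactly the image of $F_i$ acting by multiplication; and since the eigenvalue sets for different $i$ are disjoint, any centralising endomorphism preserves each $F_i$. Therefore the centraliser of $X_W$ in $\GL(W)$ is $\prod_{i\in I} F_i^\times$, embedded as multiplication on each factor. To cut out $\mathrm{O}(W)$, I compute for $x=(x_i)_{i\in I}$ and $v_i,w_i\in F_i$
$$
q_W(x_i v_i, x_i w_i)
= [F_i:F]^{-1}\tr_{F_i/F}\bigl(\tau_i(x_i)x_i \tau_i(v_i)w_i c_i\bigr),
$$
which, since the trace pairing on $F_i$ is non-degenerate, equals $q_W(v_i,w_i)$ for all $v_i,w_i$ if and only if $\tau_i(x_i)x_i=1$, that is $\mathrm{N}_{F_i/F_i^\#}(x_i)=1$. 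So the centraliser in $\mathrm{O}(W)$ is $\prod_{i\in I}\ker\bigl(\mathrm{N}_{F_i/F_i^\#}:F_i^\times\to F_i^{\#\times}\bigr)$.

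Finally, I verify that this centraliser already lies in $\SO(W)$ and rewrite the split factors. The determinant of multiplication by $x_i$ on the $F$-vector space $F_i$ is $\mathrm{N}_{F_i/F}(x_i)$, and the transitivity of norms gives
$$
\det(x) = \prod_{i\in I}\mathrm{N}_{F_i/F}(x_i)
= \prod_{i\in I}\mathrm{N}_{F_i^\#/F}\bigl(\mathrm{N}_{F_i/F_i^\#}(x_i)\bigr)=1,
$$
so the centraliser in $\SO(W)$ coincides with the centraliser in $\mathrm{O}(W)$. For $i\in I^*$ the factor $\ker(\mathrm{N}_{F_i/F_i^\#})$ is already in the stated form. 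For $i\in I\bs I^*$ the algebra $F_i$ is split, $F_i\simeq F_i^\#\times F_i^\#$ with $\tau_i$ swapping the two factors, and $\mathrm{N}_{F_i/F_i^\#}(u,v)=uv$; thus $\ker(\mathrm{N}_{F_i/F_i^\#})=\{(u,u^{-1}):u\in F_i^{\#\times}\}\simeq F_i^{\#\times}$, yielding the claimed description.

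The only mildly delicate point is tracking the split case carefully — making sure that $\tau_i$ really does swap factors (which follows from $\tau_i$ being the nontrivial $F_i^\#$-involution) and that $a_i$ can nevertheless satisfy $\tau_i(a_i)=-a_i$ while still generating $F_i$ over $F$ (which forces $2a_i\ne 0$ and the two coordinates of $a_i$ to differ); none of the steps involve hard analysis, only linear algebra over the $F_i$.
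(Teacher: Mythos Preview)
Your proof is correct and follows essentially the same strategy as the paper: diagonalise $X_W$ via the basis $\{f_{i,\sig}\}$, identify the $\GL(W)$-centraliser as $\prod_i F_i^\times$, then cut out the isometry condition $\tau_i(x_i)x_i=1$. One small difference worth noting: to conclude that the $\mathrm{O}(W)$-centraliser already sits in $\SO(W)$, the paper invokes the fact that no eigenvalue of $X_W$ is zero (so the centraliser torus is connected), whereas your explicit determinant computation via transitivity of norms is more direct and arguably cleaner.
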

\begin{proof}
Over the algebraic closure, under the fixed basis $\{ f_{i,\sig}|\quad (i,\sig)\in \Sig(W)\}$, we have 
$$
X(f_{i,\sig}) = \sig(a_{i})f_{i,\sig},
$$
therefore $X$ is semi-simple.

Now by definition, for any $i,j\in I$, there do not exist any $F$-linear isomorphisms between $F_{i}$ and $F_{j}$ sending $a_{i}$ to $a_{j}$, and $a_{i}$ generates $F_{i}$ over $F$. Hence the eigenvalues of $X$, which we denote by $\{ \sig(a_{i})\}_{(i,\sig)\in \Sig(W)}$, are pairwise different and do not contain $0$. It follows that $X_{W}$ is regular semi-simple, and the centralizer of $X_{W}$ in $\mathrm{O}(W)$ coincides with its centralizer in $\SO(W)$.

For any $g\in \GL(W)$ that commutes with $X_{W}$, we have 
$$
X_{W}(\sum_{i\in I}g(w_{i})) = \sum_{i\in I}a_{i}g(w_{i}),\quad w_{i}\in F_{i}.
$$
Since there are no $F$-linear isomorphisms between $F_{i}$ and $F_{j}$ sending $a_{i}$ to $a_{j}$, we get 
$$
g(w_{i})\in F_{i}.
$$
It follows that $g\in \prod_{i\in I}\GL(F_{i})$. Moreover, since $g\in \SO(W)$, we realize that $g\in \prod_{i\in I}\mathrm{O}(F_{i})$. Here $\mathrm{O}(F_{i})$ is the orthogonal group associated to the quadratic form $(F_{i}, q_{F_{i}})$, where 
$$
q_{F_{i}}(w_{i},w^{\p}_{i}) = [F_{i}:F]^{-1}\tr_{F_{i}/F}(\tau_{i}(w_{i})w^{\p}_{i}c_{i}),\quad w_{i},w^{\p}_{i}\in F_{i}.
$$.

For any $i\in I$, $F^{\times}_{i}$ is a maximal torus in $\GL(F_{i})$ that commutes with $X_{W}$ by definition. Hence we only need to compute $F_{i}^{\times}\cap \mathrm{O}(F_{i})$. For any $a\in F^{\times}_{i}$, the identity
$$
q_{F_{i}}(aw_{i},aw_{i}^{\p}) = 
q_{F_{i}}(w_{i},w_{i}^{\p}), \quad w_{i},w_{i}^{\p}\in F_{i}
$$
is equivalent to 
$$
\tau_{i}(a)a = 1,
$$
which is equivalent to say that 
\begin{align*}
a\in 
\bigg\{
\begin{matrix}
\ker(\RN_{F_{i}/F^{\#}_{i}}:F^{\times}_{i}\to F^{\#\times}_{i}),	& \text{ if $F_{i}/F$ is a field extension},\\
\{(z,z^{-1})|\quad z\in F^{\#\times}_{i}\}, & \text{ if $F_{i}/F$ is not a field extension}.
\end{matrix}
\end{align*}
It follows that we have established the lemma.
\end{proof}

Now we assume that $d=\sum_{i\in I}[F_{i}:F]=\dim W=\dim V$. If there exists an isomorphism between $(W,q_{W})$ and $(V,q_{V})$, we fix such an isomorphism, and then $X_{W}$ can be identified with a regular semi-simple element $X$ in $\Fs\Fo(V)(F)$. We note that the $\SO(V)$-orbit of $X$ depends on the choice of the isomorphism between $(W,q_{W})$ and $(V,q_{V})$. We note $\mathrm{O}(V)\simeq \SO(V)\ltimes \{1,O \}$, where $O$ is the outer automorphism. Then as we have seen in the proof of Lemma \ref{lem:germcal:centralizer}, the centralizer of $X$ in $\mathrm{O}(V)$ coincides with the centralizer of $X$ in $\SO(V)$. Hence conjugation by the outer automorphism $O$ yields another $\SO(V)$-orbit, which can be obtained from $X_{W}$ as well through composing the original isomorphism between $(W,q_{W})$ and $(V,q_{V})$ with the outer automorphism $O$. In particular we notice that the two orbits are not $\SO(V)$ stable conjugate.

For any triple $(I,(a_{i}),(c_{i}))$, assume that there exists an isomorphism $(W,q_{W})\simeq (V,q_{V})$. Then there are two possible $F$-rational $\SO(V)$-orbits permuted by the outer automorphism $O$, which are denoted by $\CO^{+}(I, (a_{i}),(c_{i}))$ and $\CO^{-}(I, (a_{i}),(c_{i}))$. We denote the associated $F$-rational $\SO(V)$ stable conjugacy classes by $\CO^{\mathrm{st,+}}(I,(a_{i}),(c_{i}))$ and $\CO^{\mathrm{st,+}}(I,(a_{i}),(c_{i}))$. Finally let the union of the two (stable) conjugacy classes by $\CO(I, (a_{i}),(c_{i}))$ ($\CO^{\mathrm{st}}(I,(a_{i}),(c_{i})$)).

We have the following lemma.

\begin{lem}\label{lem:germcal:stableconjugate}
Given two triples $(I, (a_{i}),(c_{i}))$ and $(I^{\p},(a^{\p}_{i}),(c_{i}^{\p}))$ together with isomorphisms between $(W,q_{W})$, $(W^{\p},q_{W^{\p}})$ and $(V,q_{V})$. After fixing isomorphisms between them, we have the associated orbits in $\SO(V)$.

Then 
$\CO^{\mathrm{st}}(I,(a_{i}),(c_{i})) = \CO^{\mathrm{st}}(I^{\p},(a^{\p}_{i}),(c^{\p}_{i}))$ if and only if the following properties hold.
\begin{itemize}
\item There exists a bijection $\phi:I\to I^{\p}$;

\item For any $i\in I$, an $F$-linear isomorphism $\sig_{i}:F^{\p}_{\phi(i)}\to F_{i}$ satisfying
$$
\sig_{i}(a^{\p}_{\phi(i)}) = a_{i}, \quad \forall i\in I.
$$
\end{itemize}

Furthermore, $\CO(I,(a_{i}),(c_{i})) = \CO(I^{\p},(a^{\p}_{i}),(c^{\p}_{i}))$ if the following additional property is satisfied.
\begin{itemize}
\item For any $i\in I$, we have 
$$
\sgn_{F_{i}/F^{\#}_{i}}
(c_{i}\sig_{i}(c^{\p}_{\phi(i)})^{-1}) = 1.
$$
\end{itemize}
\end{lem}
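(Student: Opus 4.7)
The plan is to reduce the entire lemma to characteristic-polynomial and quadratic-form data carried by the triples. For part~1, I would first invoke Lemma~\ref{lem:germcal:basisproperty}(1), which says that $X_W$ acts on the basis $\{f_{i,\sigma}\}$ with eigenvalues $\{\sigma(a_i)\}_{(i,\sigma)\in\Sigma(W)}$. Consequently the characteristic polynomial factors as $P_{X_W}(T) = \prod_{i\in I} P_{a_i}(T)$, where $P_{a_i}\in F[T]$ is the minimal polynomial of $a_i$; the genericity hypothesis on the pairs $(F_i, a_i)$ forces these irreducible factors to be pairwise coprime. Because stable conjugacy in $\Fs\Fo(V)(F)$ of regular semi-simple elements without zero eigenvalue is detected by the characteristic polynomial (the natural $\pm$-pairing of roots of each $P_{a_i}$ being canonical from the fact that $P_{a_i}(T)=P_{a_i}(-T)$), the equality $\CO^{\mathrm{st}}(I,(a_i),(c_i))=\CO^{\mathrm{st}}(I',(a'_i),(c'_i))$ is equivalent to $P_{X_W}=P_{X_{W'}}$, which by unique factorization is exactly the existence of the bijection $\phi$ and the $F$-algebra isomorphisms $\sigma_i:F'_{\phi(i)}\to F_i$ sending $a'_{\phi(i)}$ to $a_i$.

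For part~2, given such data $(\phi,(\sigma_i))$, I would form the $F$-linear isomorphism $\bar\sigma:=\bigoplus_i\sigma_i:W'\to W$, which by construction intertwines $X_{W'}$ and $X_W$. A direct computation with the explicit trace form defining $q_W$ shows that, on the $F_i$-summand, the pullback $\bar\sigma^*q_W$ differs from $q_{W'}|_{F'_{\phi(i)}}$ (transported through $\sigma_i$) by multiplication by the scalar $c_i\sigma_i(c'_{\phi(i)})^{-1}\in F^{\#\times}_i$. Precomposing $\bar\sigma$ with multiplication by $\lambda_i\in F_i^\times$ on the $F'_{\phi(i)}$-summand still intertwines $X_{W'}$ and $X_W$, and rescales the $i$-th factor by $\tau_i(\lambda_i)\lambda_i=\RN_{F_i/F^\#_i}(\lambda_i)$ in the field case (and analogously in the split case $F_i\simeq F^{\#}_i\times F^{\#}_i$). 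Thus $\bar\sigma$ can be corrected to an intertwining isometry if and only if every $c_i\sigma_i(c'_{\phi(i)})^{-1}$ lies in $\RN_{F_i/F^\#_i}(F_i^\times)$, which by local class field theory is precisely the condition $\sgn_{F_i/F^\#_i}(c_i\sigma_i(c'_{\phi(i)})^{-1})=1$.

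To pass from an intertwining isometry to equality of the $\SO(V)$-orbits, I would interpret $\CO(I,(a_i),(c_i))=\CO^{+}\cup\CO^{-}$ as the full $\mathrm{O}(V)(F)$-orbit of the image of $X_W$: by Lemma~\ref{lem:germcal:centralizer} the centralizer of $X_W$ in $\mathrm{O}(V)$ equals its centralizer in $\SO(V)$, so this $\mathrm{O}(V)(F)$-orbit breaks into exactly two $\SO(V)(F)$-orbits interchanged by the outer involution, namely $\CO^+$ and $\CO^-$. Thus an $F$-isometry $(W',q_{W'})\simeq(W,q_W)$ intertwining $X_{W'}$ and $X_W$ gives precisely an $\mathrm{O}(V)(F)$-conjugacy carrying $X_{W'}$ to $X_W$, which is the same as $\CO(I,(a_i),(c_i))=\CO(I',(a'_i),(c'_i))$.

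The main obstacle will be the bookkeeping in the second step: one must track carefully how precomposition by $\lambda_i$ rescales the transported quadratic form in both the field and split cases, and then identify the resulting obstruction class in $F^{\#\times}_i/\RN_{F_i/F^\#_i}(F_i^\times)$ with $\{\pm 1\}$ via local class field theory to recover $\sgn_{F_i/F^\#_i}$. A secondary subtlety is keeping the $\SO(V)$ versus $\mathrm{O}(V)$ distinction straight throughout, which is manageable thanks to Lemma~\ref{lem:germcal:centralizer} and the explicit definition of $\CO^\pm$.
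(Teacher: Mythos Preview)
Your proposal is correct and follows essentially the same approach as the paper: both reduce the stable-conjugacy statement to equality of characteristic polynomials via the eigenvalue description in Lemma~\ref{lem:germcal:basisproperty}, and both reduce the rational-conjugacy statement to the existence of an element in the centralizer of $X_W$ (described in Lemma~\ref{lem:germcal:centralizer}) realizing an isometry between the two quadratic forms. The paper's proof simply compresses your explicit rescaling-by-$\lambda_i$ computation into the sentence ``this follows directly from Lemma~\ref{lem:germcal:centralizer},'' whereas you spell out why the obstruction to correcting $\bar\sigma$ to an isometry lives in $F_i^{\#\times}/\RN_{F_i/F_i^\#}(F_i^\times)$ and is detected by $\sgn_{F_i/F_i^\#}$.
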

\begin{proof}
We notice that $\CO^{\mathrm{st}}(I,(a_{i}),(c_{i})) = \CO^{\mathrm{st}}(I^{\p},(a^{\p}_{i}),(c^{\p}_{i}))$ if and only if $X_{W}$ and $X_{W^{\p}}$ share the same characteristic polynomial. Using the notation from Lemma \ref{lem:germcal:centralizer}, the roots of the characteristic polynomial of $X_{W}$ (resp. $X_{W^{\p}}$) are given by the set $\{\sig(a_{i}) \}_{(i,\sig)\in \Sig(W)}$ (resp. $\{\sig^{\p}(a^{\p}_{i^{\p}}) \}_{(i^{\p},\sig^{\p})\in \Sig(W^{\p})}$). It follows that the if part follows immediately. For the only if part, we notice that by definition $a_{i}$ generates $F_{i}$ over $F$, and for any $i,j\in I$, there do not exist $F$-linear isomorphism between $F_{i}$ and $F_{j}$ sending $a_{i}$ to $a_{j}$. Moreover, for any $i\in I$, the roots related to $F_{i}$ are exactly given by $\{ \sig(a_{i})\}_{i\in \Sig(F_{i})}$. Then the only if part is also straightforward.

Now to determine the conjugacy classes within the stable conjugacy classes, we may assume that $I=I^{\p}$, $a_{i} = a^{\p}_{i}$, and $\sig_{i} = \Id_{F_{i}}$. Then for the two orbits $\CO(I,(a_{i}),(c_{i}))$ and $\CO(I,(a_{i}), (c_{i}^{\p}))$, again from Lemma \ref{lem:germcal:centralizer}, over the algebraic closure $\wb{F}$, under the basis $\{f_{i,\sig} \}_{(i,\sig)\in \Sig(W)}$ both $X_{W}$ and $X_{W^{\p}}$ are diagonal with same eigenvalues. Hence the two orbits are the same if and only if there exists an element in the centralizer of $X_{W}$ giving rise to an isomorphism between the two quadratic spaces $(W,q_{W})\simeq (W^{\p},q^{\p}_{W})$. But this follows directly from Lemma \ref{lem:germcal:centralizer}. 
\end{proof}

\subsection{Computation of the endoscopic invariants}\label{sub:germcal:computeendoscopic}
We are going to compute the endoscopic invariants defined in Section \ref{sub:germformula:transferfactors} for the conjugacy classes defined in previous section. We will freely use notations from Section \ref{sub:germformula:transferfactors}.

\subsubsection{Coordinates and notations}
We first fix the coordinates for the quadratic space $(V,q_{V})$.

When $(V,q_{V})$ is split, we fix $\eta\in F^{\times}$ and an $F$-basis $\{e_{j}|\quad j=1,...,d \}$ of $V$ such that for any $j,k\in \{1,...,d \}$ we have 
$$
q_{V}(e_{j},e_{k}) = 
\bigg\{
\begin{matrix}
0,	& \text{ if $j+k\neq d+1$},\\
(-1)^{j+1}\eta/2, & \text{ if $j+k=d+1,j\leq d/2$}.
\end{matrix}
$$
Let $B_{0}=TN$ be the $F$-rational Borel subgroup stabilizing the totally isotropic flag
$$
\langle e_{1} \rangle \subset...\subset \langle e_{1},...,e_{d/2} \rangle.
$$
Then $T_{0}$ acts on the basis $\{e_{i} \}_{i=1}^{d}$ via scaling.

With the choice of the Borel pair $(B_{0},T)$, we have a natural set of simple roots $\Del = \Del(B_{0},T) = \{\alp_{j}|\quad j=1,...,d/2 \}$ as follows. For $j\in \{1,...,d/2-1 \}$, we let $X_{\alp_{j}}$ be the element in $\Fn$ that annihilates $e_{k}$ for $k\neq j+1,...,d+1-j$ and 
$$
X_{\alp_{j}}(e_{j+1}) = e_{j}, \quad X_{\alp_{j}}(e_{d+1-j}) = e_{d-j}.
$$
The element $X_{\alp_{d/2}}\in \Fn$ annihilates $e_{k}$ for $k\neq d/2+1,d/2+2$ and 
$$
X_{\alp_{d/2}}(e_{d/2+1}) = e_{d/2-1}, \quad 
X_{\alp_{d/2}}(e_{d/2+2}) = e_{d/2}.
$$
We can identify the Weyl group $W=W(G,T)$ as the group of permutations $w$ of the set $\{1,...,d \}$ corresponding to the basis $\{e_{j}|\quad j=1,...,d \}$ such that 
\begin{itemize}
\item for any $j\in \{1,...,d \}$, $w(j)+w(d+1-j) = d+1$;

\item the set $\{j|\quad 1\leq j\leq d/2, d/2+1\leq w(j)\leq d \}$ has even cardinality.
\end{itemize}

We also define a function 
$$
r: \{1,...,d \}\times W\to \BN
$$
via 
$$
r(j,w) = |\{ k\in \BN|\quad j<k\leq d, w(j)>w(k), j+k\neq d+1 \}|,\quad j\in \{1,...,d \}, w\in W.
$$

When $(V,q_{V})$ is quasi-split but not split, following the notation in Section \ref{sub:ggptriples:definitions} there exists a unique quadratic extension $E$ of $F$ such that $V\otimes_{F}E,q_{V})$ is split. The group $\Gal(E/F) = \{1,\tau \}$ acts on $V\otimes_{F}E$ in the following way. As in the split case, we may fix $\eta\in F^{\times}$ and a basis $\{e_{j} |\quad j=1,...,d \}$ of $V\otimes_{F}E$ with prescribed properties as above, then 
\begin{itemize}
\item for any $j\in \{1,...,d \}\bs \{ d/2,d/2+1 \}$, $e_{j}$ is fixed by $\tau$;

\item $\tau_{e_{d/2}} = e_{d/2+1}$ and $\tau(e_{d/2+1}) = e_{d/2}$.
\end{itemize}

We first establish the following lemma.

\begin{lem}\label{lem:germcal:weylsign}
For any $w\in W$ and $j\in \{1,...,d \}$, we have 
$$
n(w)(e_{j}) = (-1)^{r(j,w)}e_{w(j)}.
$$
\end{lem}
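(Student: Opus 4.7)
The plan is to prove the formula by induction on the length $\ell(w)$ of $w$ in $W$, using the multiplicativity $n(ww^{\p}) = n(w)n(w^{\p})$ of the canonical Langlands--Shelstad section whenever $\ell(ww^{\p}) = \ell(w) + \ell(w^{\p})$. The base case $w = 1$ is trivial since $r(j, 1) = 0$, and the bulk of the proof consists in (i) verifying the formula for each simple reflection $s_{\alp_j}$, and (ii) establishing a combinatorial parity identity used in the inductive step.

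For (i), I would compute $n(s_{\alp_j}) = \exp(X_{\alp_j})\exp(-X_{-\alp_j})\exp(X_{\alp_j})$ directly on the basis $\{e_k\}$. The vector $X_{\alp_j}$ is prescribed in the coordinate setup, and $X_{-\alp_j} \in \Fs\Fo(V)$ is uniquely determined by the requirement that $\{X_{\alp_j}, H_{\alp_j}, X_{-\alp_j}\}$ form a standard $\Fs\Fl(2)$-triple. The relevant exponentials terminate at first order, and for $j < d/2$ one obtains
\[ n(s_{\alp_j})(e_j) = -e_{j+1}, \quad n(s_{\alp_j})(e_{j+1}) = e_j, \quad n(s_{\alp_j})(e_{d-j}) = -e_{d+1-j}, \quad n(s_{\alp_j})(e_{d+1-j}) = e_{d-j}, \]
with the remaining $e_k$ fixed; the analogous calculation for $j = d/2$ yields $n(s_{\alp_{d/2}})(e_{d/2-1}) = -e_{d/2+1}$, $n(s_{\alp_{d/2}})(e_{d/2}) = -e_{d/2+2}$ and their inverses with sign $+1$. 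A direct enumeration shows that $r(k, s_{\alp_j}) = 1$ precisely when $k \in \{j, d-j\}$ (respectively $\{d/2-1, d/2\}$ when $j = d/2$) and is zero otherwise, which matches the signs above.

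For the inductive step, given $w$ with $\ell(w) > 0$, pick $i$ with $\ell(s_{\alp_i} w) = \ell(w) - 1$ and write $w = s_{\alp_i} w^{\p}$. Multiplicativity of the canonical section gives $n(w) = n(s_{\alp_i})n(w^{\p})$. Applying the inductive hypothesis to $w^{\p}$ and the base case to $s_{\alp_i}$,
\[ n(w)(e_j) = (-1)^{r(j, w^{\p}) + r(w^{\p}(j),\, s_{\alp_i})}\, e_{w(j)}, \]
so the proof reduces to the parity congruence
\[ r(j, w) \equiv r(j, w^{\p}) + r(w^{\p}(j),\, s_{\alp_i}) \pmod 2. \]

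The main obstacle is this last combinatorial identity. I would verify it by a finite case analysis on the value of $w^{\p}(j)$ relative to the four-element set $\{i, i+1, d-i, d+1-i\}$ permuted by $s_{\alp_i}$ (or $\{d/2-1, d/2, d/2+1, d/2+2\}$ when $i = d/2$). Since $s_{\alp_i}$ only alters two disjoint pairs of indices, $r(j, w)$ and $r(j, w^{\p})$ differ only through those $k > j$ whose value $w^{\p}(k)$ lies in this affected set; the length hypothesis $(w^{\p})^{-1}(\alp_i) > 0$ pins down exactly one such critical $k$, occurring precisely when $w^{\p}(j) \in \{i, d-i\}$, and it provides the required parity shift matching $r(w^{\p}(j), s_{\alp_i})$. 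The Weyl-group symmetry $w(j) + w(d+1-j) = d+1$ ensures the excluded diagonal $j + k = d+1$ is consistently accounted for between $w$ and $w^{\p}$, so it never introduces a spurious contribution. The verification is elementary but branch-heavy, and is where the bulk of the bookkeeping lies.
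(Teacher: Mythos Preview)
Your proposal is correct and follows the same overall strategy as the paper: induction on $\ell(w)$ via the multiplicativity $n(ww') = n(w)n(w')$, an explicit verification for simple reflections, and a parity identity for $r$. The one genuine difference is the side on which you peel off the simple reflection. You write $w = s_{\alp_i} w'$ with the simple reflection on the \emph{left}, leading to the identity
\[
r(j, s w') \equiv r(j, w') + r(w'(j), s) \pmod 2,
\]
whereas the paper writes $w = w_1 s$ with the simple reflection on the \emph{right}, leading to
\[
r(j, w_1 s) \equiv r(j, s) + r(s(j), w_1) \pmod 2.
\]
Your version is arguably the cleaner of the two: both $r(j, sw')$ and $r(j, w')$ are counts over the \emph{same} index set $\{k : j < k \le d,\ k \neq d+1-j\}$, so their difference localizes to those $k$ with $\{w'(j), w'(k)\}$ a transposed pair of $s$, and the length hypothesis $(w')^{-1}(\alp_i) > 0$ translates (via $\alp_t = \chi_t - \chi_{t+1}$) into the inequality $(w')^{-1}(t) < (w')^{-1}(t+1)$, which is precisely what forces the unique critical $k$ to satisfy $k > j$. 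The paper's version instead changes the first argument of $r$ from $j$ to $s(j)$, hence changes the index set, and the resulting set-theoretic comparison is a bit longer. Both routes are valid; yours just has slightly less bookkeeping.
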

\begin{proof}
For any $\alp\in \Del= \Del(B_{0},T)$, we let $X_{-\alp}$ be the unique element in $\wb{\Fn}$ which is the radical space associated to $-\alp$ such that 
$$
[X_{\alp},X_{-\alp}] = d\check{\alp}(1)
$$
and $w_{\alp}$ the simple reflection associated to $\alp$.

We note that the Weyl group $W$ has a length function $\ell$ (\cite[5.2]{MR1066460}). Every Weyl element in $W$ has a reduced expression which can be written as the product of simple reflections.

The section $n$ is characterized uniquely by the following two properties (\cite[2.1]{transferfactorLS}).
\begin{itemize}
\item For any $\alp\in \Del$, 
$$
n(w_{\alp}) = \exp(X_{\alp})\exp(-X_{-\alp})\exp(X_{\alp});
$$

\item For any $w,w^{\p}\in W$ satisfying $\ell(w)+\ell(w^{\p}) = \ell(ww^{\p})$, we have 
$$
n(ww^{\p}) = n(w)n(w^{\p}).
$$
\end{itemize}

Therefore we only need to establish the lemma for simple reflections and the following fact.

\begin{num}
\item\label{num:ggpcal:lengthfact}
For any $j\in \{1,...,d\}$, and $w_{1},w_{2}\in W$ satisfying $\ell(w_{1})+\ell(w_{2}) = \ell(w_{1}w_{2})$ and $w_{2}$ being a simple reflection, we have 
$$
r(j,w_{1}w_{2})+r(j,w_{2})+r(w_{2}(j),w_{1})\equiv 0 \mod 2.
$$
\end{num}

We first establish the lemma for simple reflections. For any $t\in \{1,...,d/2 \}$, we let $w_{t}$ be the simple reflection associated to $\alp_{t}$. By explicit computation, we have 
\begin{align*}
&n(w_{t})(e_{j}) = 
\Bigg\{
\begin{matrix}
e_{t},	& j=t+1, & r(j,w) = |\emptyset| = 0,\\
-e_{t+1}, & j=t, & r(j,w) = |\{t+1 \}| = 1,\\
e_{d-t}, & j=d+1-t,& r(j,w) = |\emptyset| = 0,\\
-e_{d+1-t}, & j=d-t,, & r(j,w) = |\{ d+1-t \}| =1,\\
e_{j}, &\text{others}, & r(j,w) = |\emptyset| = 0.
\end{matrix}
,\quad t\in \{ 1,...,d/2-1 \}
\\
\\
&n(w_{d/2})(e_{j}) = 
\Bigg\{
\begin{matrix}
-e_{d/2+1}, & j = d/2-1, & r(j,w) = |\{ d/2+1 \}| = 1,\\
-e_{d/2+2}, & j = d/2, & r(j,w) = |\{ d/2+2 \}| = 1,\\
e_{d/2-1},& j=d/2+1, & r(j,w) = |\emptyset| = 0,\\
e_{d/2}, & j=d/2+2, & r(j,w) = |\emptyset| = 0,\\
e_{j}, & \text{others}, & r(j,w) = |\emptyset| =0.
\end{matrix}
\end{align*}
It follows that the lemma holds for any simple reflections.

Now we are going to establish (\ref{num:ggpcal:lengthfact}).

By definition,
\begin{align*}
&r(j,w_{1}w_{2}) = |\{\alp\in \BN|\quad j<\alp, w_{1}w_{2}(j)>w_{1}w_{2}(\alp), j+\alp\neq d+1 \}|,
\\
&r(j,w_{2}) = |\{\bet\in \BN|\quad j<\bet, w_{2}(j)>w_{2}(\bet),j+\bet \neq d+1 \}|,
\\
&r(w_{2}(j),w_{1}) = |\{ \gam\in \BN|\quad w_{2}(j)<\gam, w_{1}w_{2}(j)>w_{1}(\gam), w_{2}(j)+\gam\neq d+1 \}|.
\end{align*}
For the set appearing in the definition of $r(w_{2}(j),w_{1})$, we let $\xi = w^{-1}_{2}(\gam)$, then 
$$
r(w_{2}(j),w_{1}) = |
\{\xi\in \BN|\quad w_{2}(j)<w_{2}(\xi), w_{1}w_{2}(j)>w_{1}w_{2}(\xi), j+\xi\neq d+1  \}|.
$$
We can separate the set 
$$
\{\xi\in \BN|\quad w_{2}(j)<w_{2}(\xi), w_{1}w_{2}(j)>w_{1}w_{2}(\xi),j+\xi \neq d+1 \}
$$
to two disjoint parts, for abbreviation we denote them by $\{\xi<j \}$ and $\{\xi>j \}$. Then $\{\xi<j \}$ is a subset of the one appearing in the definition of $r(j,w_{1}w_{2})$. After subtracting $\{\xi<j \}$ from the set in the definition of $r(j,w_{1}w_{2})$, we are reduced to prove the following identity 
\begin{align*}
&|\{\alp\in \BN|\quad j<\alp, w_{2}(j)>w_{2}(\alp), w_{1}w_{2}(j)>w_{1}w_{2}(\alp), j+\alp\neq d+1 \}|
\\
&\equiv |\{ \bet\in \BN|\quad j<\bet, w_{2}(j)>w_{2}(\bet), j+\bet \neq d+1 \}|
\\
&+|\{ \xi\in \BN|\quad j>\xi, w_{2}(j)<w_{2}(\xi), w_{1}w_{2}(j)>w_{1}w_{2}(\xi), j+\xi\neq d+1 \}| \mod 2.
\end{align*}
The above set involving $\alp$ is a subset of the one involving $\bet$. Hence after subtraction we are reduced to prove the following identity whenever $\ell(w_{1})+\ell(w_{2}) = \ell(w_{1}w_{2})$,
\begin{align*}
&|\{\alp\in \BN|\quad j<\alp, w_{2}(j)>w_{2}(\alp), w_{1}w_{2}(j)<w_{1}w_{2}(\alp), j+\alp \neq d+1 \}|
\\
&\equiv
|\{ \xi\in \BN|\quad j>\xi, w_{2}(j)<w_{2}(\xi), w_{1}w_{2}(j)>w_{1}w_{2}(\xi), j+\xi\neq d+1 \}| \mod 2.
\end{align*}
For notational convenience, we let 
\begin{align*}
&A_{\alp} = \{\alp\in \BN|\quad j<\alp, w_{2}(j)>w_{2}(\alp), w_{1}w_{2}(j)<w_{1}w_{2}(\alp), j+\alp \neq d+1 \},
\\
&B_{\xi} = \{ \xi\in \BN|\quad j>\xi, w_{2}(j)<w_{2}(\xi), w_{1}w_{2}(j)>w_{1}w_{2}(\xi), j+\xi\neq d+1 \}.
\end{align*}
We assume that $w_{2} = w_{t}$ is a simple reflection, where $t\in \{ 1,...,d/2 \}$.

When $t\in \{1,...,d/2-1 \}$, the sets $A_{\alp}$ and $B_{\xi}$ have the chance to be nonempty only when $j=t,t+1, d-t,d-t+1$. When $j=t$, 
$A_{\alp}= \{ \alp\in \BN|\quad \alp = t+1 \text{ and } w_{1}(t+1)<w_{1}(t)\}$, and $B_{\xi} = \emptyset$. In particular, $A_{\alp}$ is non-empty (and singleton) only when $w_{1}(t+1)<w_{1}(t)$. But when $w_{1}(t+1) <w_{1}(t)$, we have $w_{1}w_{2}(t+1)>w_{1}w_{2}(t)$, therefore $\ell(w_{1}w_{2}) = \ell(w_{1}) - 1$, since the length $\ell(w)$ of a Weyl element $w$ is equal to the number of positive roots sent to negative roots by $w$ \cite[5.6]{MR1066460}, which contradicts the fact that $\ell(w_{1}w_{2}) = \ell(w_{1})+\ell(w_{2})$. It follows that $A_{\alp} = \emptyset$ as well. Hence we have the equality $|A_{\alp}| = |B_{\xi}|$. Similar arguments adapt to the situation when $j=t+1,d-t,d-t+1$.

When $t=d/2$, the sets $A_{\alp}$ and $B_{\xi}$ have the chance to be nonempty only when $j=d/2-1, d/2, d/2+1,d/2+2$. When $j=d/2-1$, $B_{\xi} = \emptyset$, and $A_{\alp} = \{ \alp\in \BN|\quad \alp = d/2+1 \text{ and }  w_{1}(d/2+1)<w_{1}(d/2-1)  \}$, then the same argument as above implies that $A_{\alp} = \emptyset$, and in particular $|A_{\alp}| = |B_{\xi}|$ as well. Similarly we can obtain the results for other cases.

Hence we have established (\ref{num:ggpcal:lengthfact}) and it follows that we have established the lemma.
\end{proof}

Now we fix a regular semi-simple element $X$ in $\Fs\Fo(V)(F)$ parametrized by a triple $(I,(a_{i}),(c_{i}))$, with a fixed identification $(V,q_{V})\simeq (W,q_{W})$. We let $T_{G}$ be the centralizer of $X$ in $\SO(V)(F)$. We may choose an element $x\in \SO(V)(\wb{F})$ such that $T_{G}=xTx^{-1}$. In particular, up to scaling the element $x$ should send the basis $\{ e_{i} \}_{i=1}^{d}$ to $\{ f_{i,\sig} \}_{(i,\sig)\in \Sig(V)}$. Using Lemma \ref{lem:germcal:basisproperty} we may construct an element $x$ as follows.

We fix a bijection $\del:\Sig(V)\to \{1,...,d \}$
such that for any $(i,\sig)\in \Sig(V)$ we have the equality
$$
\del(i,\sig\tau_{i}) + \del(i,\sig) = d+1.
$$
We also fix a mapping $\mu:\{1,...,d\}\to \wb{F}^{\times}$ such that for any $j\in \{1,...,d/2 \}$ if we put $(i,\sig) =\del^{-1}(j)$, then we have the equality
$$
\mu(j)\mu(d+1-j) = \eta/2 (-1)^{j+1}[F_{i}:F]\sig(c_{i})^{-1}, \quad j\leq d/2.
$$

Now we define $x\in \GL(\wb{F}\otimes_{F}V)$ by 
$$
x(e_{j}) = \mu(j)f_{\del^{-1}(j)}, \quad j\in \{1,...,d \}.
$$
By Lemma \ref{lem:germcal:basisproperty} $x$ lies in $\mathrm{O}(V)(\wb{F})$. Up to composing $\del$ with the simple reflection interchanging $1$ and $d$ stabilizing $\{2,...,d-1 \}$, we can assume that $x\in \SO(V)(\wb{F})$. Then by construction $x$ is a desired element satisfying $T_{G} = xTx^{-1}$.

We recall that we have defined a set of roots $R=R(T_{G},G)$ for $T_{G}$, which has an ordering inherited from $\Del =\Del(T,B_{0})$ via $x$.
We set the $a$-data $\{a_{\alp} \}_{\alp\in R}$ by the equality
$$
\check{\alp}\otimes a_{\alp} = 1\in T_{G}(\wb{F}), \quad \forall \alp\in R.
$$
Then we assume that $X$ is close to $0$, and the endoscopic invariants defined in Section \ref{sub:germformula:transferfactors} can be written as
\begin{align*}
&\inv(X)(\sig) = \prod_{\sig\in R, \alp>0, \sig^{-1}(\alp)<0}
\check{\alp}\circ \alp(X)\in T_{G}(\wb{F}), \quad \sig\in \Gam_{F},
\\
&\inv(T_{G})(\sig) = xn(w_{\sig})\sig(x^{-1})\in T_{G}(\wb{F}), \quad \sig\in \Gam_{F}.
\end{align*}

\subsubsection{Cohomology classes construction}
Before proving explicit formulas for the endoscopic invariants, we given an explicit construction of some $1$-cocycles.

We consider a chain of finite field extensions $F\subset F^{\#}_{0}\subset F_{0}$, where $F_{0}$ is a quadratic extension of $F^{\#}_{0}$. We fix the nontrivial involution $\tau_{0}\in \Gal(F_{0}/F^{\#}_{0})$ and let $T_{0} = \ker(\RN_{F_{0}/F^{\#}_{0}}:F_{0}^{\times}\to F^{\#\times}_{0})$. By local class field theory \cite[Theorem~2]{MR0220701} we know that $H^{1}(F,T_{0}(\wb{F})) \simeq \{\pm 1 \}$. We are going to give an explicit construction of the isomorphism.

We let $\Sig_{F_{0}}$ be the set of nonzero embeddings $\tau:F_{0}\hookrightarrow \wb{F}$ extending the fixed embedding $F\hookrightarrow \wb{F}$. We fix an element $1=1_{F_{0}}\in \Sig(F_{0})$, which we view as the canonical embedding of $F_{0}$ into $\wb{F}$.

\begin{lem}\label{lem:germcal:torusidentification}
We have the following identification
$$
T_{0}(\wb{F}) = \{y\in \Sig(F_{0})\to \wb{F}^{\times}|\quad y(\sig)y(\sig\tau_{0}) = 1 \}.
$$
The group $\Gam_{F}$ acts on $T_{0}(\wb{F})$ in the following way. For any $\tau\in \Gam_{F}$ and $y\in T_{0}(\wb{F})$,
$$
(\tau(y))(\sig) = \tau(y(\tau^{-1}\sig)), \quad \sig\in \Sig(F_{0}).
$$
\end{lem}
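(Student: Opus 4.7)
The plan is to make explicit the standard identification of the $\wb{F}$-points of a Weil restriction with a product of copies of $\wb{F}^{\times}$, apply it to the defining short exact sequence of $T_{0}$, and read off the Galois action.

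First I will identify the algebra $F_{0}\otimes_{F}\wb{F}$ with $\prod_{\sig\in \Sig(F_{0})}\wb{F}$ via the map $x\otimes a \mapsto (a\sig(x))_{\sig\in\Sig(F_{0})}$. This is an isomorphism of $\wb{F}$-algebras, and taking units gives a canonical identification
$$
(\mathrm{Res}_{F_{0}/F}\BG_{m})(\wb{F}) = (F_{0}\otimes_{F}\wb{F})^{\times}
\simeq \{y:\Sig(F_{0})\to \wb{F}^{\times}\},
$$
where we view elements as functions rather than tuples. The same procedure applied to $F_{0}^{\#}$ identifies $(\mathrm{Res}_{F_{0}^{\#}/F}\BG_{m})(\wb{F})$ with functions $\Sig(F_{0}^{\#})\to \wb{F}^{\times}$.

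Next, under these identifications, I will trace through the norm map. For $x\in F_{0}$ the element $x\cdot \tau_{0}(x)$ sits in $F_{0}^{\#}$, and via the isomorphism above it corresponds to the function on $\Sig(F_{0}^{\#})$ whose value at a restriction $\sig|_{F_{0}^{\#}}$ is $\sig(x)\sig(\tau_{0}(x)) = \sig(x)(\sig\tau_{0})(x)$; the two extensions of a given $\sig^{\#}\in \Sig(F_{0}^{\#})$ are precisely $\sig$ and $\sig\tau_{0}$. By linearity over $\wb{F}$, the induced map on $\wb{F}$-points of the Weil restrictions sends $y$ to $\sig^{\#}\mapsto y(\sig)y(\sig\tau_{0})$ for either extension $\sig$ of $\sig^{\#}$. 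Therefore $T_{0}(\wb{F}) = \ker(\RN_{F_{0}/F_{0}^{\#}})(\wb{F})$ is exactly the set of $y$ with $y(\sig)y(\sig\tau_{0})=1$ for all $\sig\in\Sig(F_{0})$, which is the first assertion.

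Finally, for the Galois action, recall that under the identification $F_{0}\otimes_{F}\wb{F}\simeq \prod_{\sig}\wb{F}$ the action of $\tau\in \Gam_{F}$ on the left factor of $F_{0}\otimes_{F}\wb{F}$ is trivial while the action on the right factor corresponds, on the product side, to $\tau\in \Gam_{F}$ acting on the $\sig$-coordinate by $a\mapsto \tau(a)$ and simultaneously permuting coordinates via $\sig\mapsto \tau\sig$. Unwinding this on the functions $y:\Sig(F_{0})\to \wb{F}^{\times}$ gives $(\tau y)(\sig) = \tau(y(\tau^{-1}\sig))$, as claimed. No serious obstacle is expected; the only point requiring care is the bookkeeping of which embedding of $F_{0}$ extends a given embedding of $F_{0}^{\#}$, which is handled by the involution $\tau_{0}$.
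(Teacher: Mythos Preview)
Your proof is correct, but it takes a different route from the paper's. You \emph{derive} the identification from scratch: you start with the standard isomorphism $F_{0}\otimes_{F}\wb{F}\simeq \prod_{\sig\in\Sig(F_{0})}\wb{F}$, use it to describe the $\wb{F}$-points of the Weil restriction, trace the norm map through to identify $T_{0}(\wb{F})$ as the stated kernel, and then read off the Galois action from the tensor product. The paper, by contrast, takes the description of $T_{0}(\wb{F})$ and the Galois action formula as given and simply \emph{verifies} them: it checks directly that the formula $(\tau(y))(\sig)=\tau(y(\tau^{-1}\sig))$ defines a $\Gam_{F}$-action, and then checks that the $\Gam_{F}$-fixed points recover $T_{0}(F)=\{z\in F_{0}^{\times}\mid z\tau_{0}(z)=1\}$. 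Your approach is more conceptual and explains where the identification originates; the paper's approach is shorter but presupposes the reader already knows (or is willing to take on faith) the shape of the answer and only wants it confirmed.
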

\begin{proof}
We first verify that $\Gam_{F}$ acts on $T_{0}(\wb{F})$ using the formula defined in the statement. For any $\tau_{1},\tau_{2}\in \Gam_{F}$, $y_{0}\in T(\wb{F})$ and $\sig\in \Sig(F_{0})$, we have 
\begin{align*}
(\tau_{1}\tau_{2}(y))(\sig) = 
(\tau_{1}\tau_{2})(y(\tau^{-1}_{2}\tau^{-1}_{1}\sig))
=\tau_{1}(\tau_{2}(y)(\tau_{1}^{-1}\sig))
=\tau_{1}(\tau_{2}(y))(\sig).
\end{align*}
Hence $\Gam_{F}$ acts on $T_{0}(\wb{F})$. 

The group structure of $T_{0}(\wb{F})$ is straightforward. We only need to verify that $T_{0}(\wb{F})^{\Gam_{F}} \simeq T_{0} = \{z\in F^{\times}_{0}|\quad z\tau_{0}(z) = 1 \}$. For any $\tau\in \Gam_{F}$ and $y\in T_{0}(\wb{F})^{\Gam_{F}}$,
$$
\tau(y)(1) = \tau(y(\tau^{-1})) = y(1),
$$
hence $y(\tau) = \tau(y(1))$. In particular if $\tau\in \Gam_{F_{0}}$, then $\tau 1_{F_{0}} = 1_{F_{0}}$. Hence $y(1)\in \wb{F}^{\times,\Gam_{F_{0}}} = F^{\times}_{0}$. It follows that 
$$
T(\wb{F})^{\Gam_{F}} = \{ y:\Sig(F_{0})\to F_{0}^{\times}|\quad y(1)\tau_{0}(y(1)) = 1, \sig(y)(1) = y(\sig) \},
$$
which can exactly be identified as the set $\{z\in F^{\times}_{0}|\quad z\tau_{0}(z) = 1 \}$ via the morphism $y\to y(1) = z$.
\end{proof}

Suppose that $\tau\to y_{\tau}$ is a $1$-cocycle of $\Gam_{F}$ valued in $T_{0}(\wb{F})$. Then for any $\sig,\tau\in \Gam_{F}$, we have $y_{\sig\tau} = y_{\sig}\sig(y_{\tau})$. Hence $y_{\sig\tau}(1) = y_{\sig}(1)\sig(y_{\tau})(1) = y_{\sig}(1)\sig(y_{\tau}(\sig^{-1}))$. In particular, when $\sig,\tau\in \Gam_{F_{0}}$, we have $y_{\sig\tau}(1) = y_{\sig}(1)\sig(y_{\tau}(1))$. Hence the morphism $\tau\to y_{\tau}(1)$ is a $1$-cocycle of $\Gam_{F_{0}}$ valued in $\wb{F}^{\times}$. By Hilbert 90 \cite[2.7]{MR0225922}, there exists $z\in \wb{F}^{\times}$ such that $y_{\tau}(1) = \tau(z)z^{-1}$ for any $\tau\in \Gam_{F_{0}}$. We fix such a $z$. For $\theta\in \Gam_{F^{\#}_{0}}$ such that $\theta|_{F_{0}}$ is nontrivial, we consider the element $y_{\theta}(1)z\theta(z)\in \wb{F}^{\times}$.

\begin{lem}\label{lem:germcal:indeptheta}
The element $y_{\theta}(1)z\theta(z)\in \wb{F}^{\times}$ is independent of $\theta$.
\end{lem}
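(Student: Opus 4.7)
The plan is to compare two admissible choices $\theta, \theta' \in \Gam_{F^{\#}_{0}}$ (both restricting to the nontrivial element of $\Gal(F_0/F^{\#}_0)$) by writing $\theta' = \theta \rho$ with $\rho \in \Gam_{F_0}$, and then to trace through the cocycle and compatibility relations until the two expressions visibly coincide.

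First I would use the cocycle relation for $\tau \to y_\tau$ to rewrite $y_{\theta'} = y_{\theta\rho} = y_\theta \cdot \theta(y_\rho)$. Evaluating at the distinguished embedding $1 = 1_{F_0}\in \Sig(F_0)$ gives
\[
y_{\theta'}(1) \;=\; y_\theta(1)\cdot \theta(y_\rho)(1).
\]
Next I would unpack $\theta(y_\rho)(1)$ using the action described in Lemma~\ref{lem:germcal:torusidentification}: $\theta(y_\rho)(1) = \theta\bigl(y_\rho(\theta^{-1}\cdot 1)\bigr)$. Since $\theta|_{F_0} = \tau_0$, the embedding $\theta^{-1}\cdot 1$ coincides with $\tau_0 \in \Sig(F_0)$, and the defining relation $y(\sig) y(\sig \tau_0) = 1$ applied at $\sig = 1$ forces $y_\rho(\tau_0) = y_\rho(1)^{-1}$.

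Then I would plug in $y_\rho(1) = \rho(z) z^{-1}$, which holds by the choice of $z$ because $\rho \in \Gam_{F_0}$. This yields
\[
\theta(y_\rho)(1) \;=\; \theta\bigl(y_\rho(1)^{-1}\bigr) \;=\; \theta\bigl(z\,\rho(z)^{-1}\bigr) \;=\; \theta(z)\,\theta'(z)^{-1},
\]
using $\theta \rho = \theta'$ in the last step. Finally, multiplying by $z\,\theta'(z)$ I would observe the telescoping
\[
y_{\theta'}(1)\,z\,\theta'(z) \;=\; y_\theta(1)\,\theta(z)\,\theta'(z)^{-1}\,z\,\theta'(z) \;=\; y_\theta(1)\,z\,\theta(z),
\]
which is the desired independence.

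There is essentially no hard step: the whole argument is a direct manipulation of the explicit description of $T_0(\wb{F})$ and the cocycle identity. The only place requiring a bit of care is the interplay between the contravariant-looking action $\tau(y)(\sig)=\tau(y(\tau^{-1}\sig))$ and the constraint $y(\sig)y(\sig\tau_0)=1$; one must identify $\theta^{-1}\cdot 1$ with $\tau_0 \in \Sig(F_0)$ and then reduce $y_\rho(\tau_0)$ back to $y_\rho(1)$ before applying Hilbert~90. Once these identifications are made, the telescoping cancellation is immediate.
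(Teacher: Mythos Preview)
Your proof is correct and follows essentially the same approach as the paper's own argument: both factor one admissible $\theta'$ as the other $\theta$ times an element of $\Gam_{F_0}$, apply the cocycle relation and the identification $\theta^{-1}\cdot 1 = \tau_0$ together with $y(\tau_0)=y(1)^{-1}$, and then use the Hilbert~90 expression $y_\rho(1)=\rho(z)z^{-1}$ to obtain the telescoping cancellation. The only difference is cosmetic---the paper writes $\theta_1=\theta_2(\theta_2^{-1}\theta_1)$ where you write $\theta'=\theta\rho$.
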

\begin{proof}
For $\theta_{1},\theta_{2}\in \Gam_{F^{\#}_{0}}$ whose restriction to $F_{0}$ are nontrivial, we are going to establish the identity 
$y_{\theta}(1)\theta_{1}(z)z=  y_{\theta_{2}}(1)\theta_{2}(z)z$. Equivalently, we only need to show 
$$
\frac{y_{\theta_{1}}(1)}{y_{\theta_{2}}(1)} = 
\frac{\theta_{2}(z)}{\theta_{1}(z)}.
$$
By the cocycle condition on $y_{\tau}$, we have 
$$
y_{\theta_{1}}(1) = y_{\theta_{2}}(1)
\theta_{2}(y_{\theta_{2}^{-1}\theta_{1}}(\theta_{2}^{-1})).
$$
We notice that $\theta_{2}^{-1}\theta_{1}\in \Gam_{F_{0}}$, hence 
$$
y_{\theta^{-1}_{2}\theta_{1}}(\theta^{-1}_{2}) = y_{\theta_{2}^{-1}\theta_{1}}(1)^{-1} = 
z(\theta^{-1}_{2}\theta_{1})(z)^{-1}.
$$
It follows that 
$$
\frac{y_{\theta_{1}(1)}}{y_{\theta_{2}}(1)} = 
\theta_{2}(z(\theta^{-1}_{2}\theta_{1})(z^{-1})) =
\theta_{2}(z)\theta_{1}(z^{-1})
$$
and we have proved the lemma.
\end{proof}

We note the element $y_{\theta}(1)z\theta(z)\in \wb{F}^{\times}$ by $\wt{y}$.

\begin{lem}\label{lem:germcal:fixedfield}
The element $\wt{y}$ lies in $F^{\# \times}_{0}$.
\end{lem}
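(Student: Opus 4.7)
\textbf{Proof proposal for Lemma \ref{lem:germcal:fixedfield}.}

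The plan is to show directly that $\rho(\wt{y}) = \wt{y}$ for every $\rho \in \Gam_{F_{0}^{\#}}$, which by Galois descent forces $\wt{y} \in F_{0}^{\#\times}$. I split the argument according to whether $\rho$ lies in $\Gam_{F_{0}}$ or not, in each case exploiting a different piece of the structure already set up: for the first case the independence of $\theta$ established in Lemma~\ref{lem:germcal:indeptheta}, and for the second case the explicit defining condition $y(\sig)y(\sig\tau_{0}) = 1$ of $T_{0}(\wb{F})$ from Lemma~\ref{lem:germcal:torusidentification}.

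Suppose first that $\rho \in \Gam_{F_{0}}$. Then $\rho\theta \in \Gam_{F_{0}^{\#}}$ and its restriction to $F_{0}$ is still nontrivial, so by Lemma~\ref{lem:germcal:indeptheta} we may compute $\wt{y}$ using $\rho\theta$ in place of $\theta$, giving the identity $y_{\rho\theta}(1)\,z\,\rho\theta(z) = y_{\theta}(1)\,z\,\theta(z)$. On the other hand, the cocycle relation yields $y_{\rho\theta}(1) = y_{\rho}(1)\,\rho(y_{\theta}(\rho^{-1}))$, and since $\rho \in \Gam_{F_{0}}$ acts trivially on $F_{0}$ the evaluation $y_{\theta}(\rho^{-1})$ reduces to $y_{\theta}(1)$; combined with $y_{\rho}(1) = \rho(z)z^{-1}$, this rearranges precisely to $\rho(y_{\theta}(1)\,z\,\theta(z)) = y_{\theta}(1)\,z\,\theta(z)$, i.e.\ $\rho(\wt{y}) = \wt{y}$.

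Suppose now that $\rho \in \Gam_{F_{0}^{\#}}$ with $\rho|_{F_{0}} = \tau_{0}$ nontrivial. By Lemma~\ref{lem:germcal:indeptheta} we are free to take $\theta = \rho$, so $\wt{y} = y_{\rho}(1)\,z\,\rho(z)$. The key input is then that $\rho^{2} \in \Gam_{F_{0}}$, so on the one hand $y_{\rho^{2}}(1) = \rho^{2}(z)z^{-1}$, while on the other hand the cocycle relation gives $y_{\rho^{2}}(1) = y_{\rho}(1)\,\rho(y_{\rho}(\rho^{-1}))$. Since $\rho^{-1}|_{F_{0}} = \tau_{0}$, the $T_{0}(\wb{F})$-condition from Lemma~\ref{lem:germcal:torusidentification} forces $y_{\rho}(\rho^{-1}) = y_{\rho}(\tau_{0}) = y_{\rho}(1)^{-1}$. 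Solving for $\rho(y_{\rho}(1))$ yields $\rho(y_{\rho}(1)) = y_{\rho}(1)\,z\,\rho^{2}(z)^{-1}$; substituting into $\rho(\wt{y}) = \rho(y_{\rho}(1))\,\rho(z)\,\rho^{2}(z)$ collapses to $y_{\rho}(1)\,z\,\rho(z) = \wt{y}$.

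The two cases together exhaust $\Gam_{F_{0}^{\#}}$, since $\Gam_{F_{0}}$ is an index-two subgroup, so $\wt{y}$ is fixed by all of $\Gam_{F_{0}^{\#}}$ and hence lies in $F_{0}^{\#\times}$. I do not expect any serious obstacle here; the only delicate bookkeeping is being careful in the identification of $\Sig(F_{0})$-valued functions with their values at $1_{F_{0}}$ and consistently using the convention $y(\sig)y(\sig\tau_{0}) = 1$, but both issues are already packaged cleanly by Lemma~\ref{lem:germcal:torusidentification}.
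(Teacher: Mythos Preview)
Your proof is correct and follows essentially the same approach as the paper: a case split on whether the Galois element fixes $F_{0}$ or not, combining the cocycle relation with the $T_{0}(\wb{F})$-constraint and Lemma~\ref{lem:germcal:indeptheta}. The only cosmetic difference is that in the second case you specialize $\theta = \rho$ at the outset, whereas the paper keeps a general $\sig$ and $\theta$ (both with nontrivial restriction) and uses that $\sig\theta \in \Gam_{F_{0}}$; the computations are otherwise identical.
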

\begin{proof}
To show $\wt{y} = y_{\theta}(1)z\theta(z)$ lies in $F^{\#\times}_{0}$, equivalently we are going to show that 
$$
\sig(\wt{y}) = \sig(y_{\theta}(1))(\sig\theta)(z)\sig(z)= \wt{y} = y_{\theta}(1)\theta(z)z, \quad \forall \sig\in \Gam_{F^{\#}_{0}}.
$$ 
By $1$-cocycle condition, 
$$
y_{\sig\theta}(1) = y_{\sig}(1)\sig(y_{\theta}(\sig^{-1})).
$$

When $\sig\in \Gam_{F_{0}}$, we have $\sig(y_{\theta}(\sig^{-1})) = \sig(y_{\theta}(1))$, and $y_{\sig}(1) = \sig(z)z^{-1}$. Hence 
$$
\sig(\wt{y}) = 
\frac{y_{\sig\theta}(1)}{\sig(z)z^{-1}}
(\sig\theta)(z)\sig(z) =
y_{\sig\theta}(1)(\sig\theta)(z)z = \wt{y}.
$$

When $\sig\in \Gam_{F^{\#}_{0}}$ and the restriction to $F_{0}$ is nontrivial, we have $\sig(y_{\theta}(\sig^{-1})) = \sig(y_{\theta}(1)^{-1})$, and $y_{\sig\theta}(1) = (\sig\theta)(z)z^{-1}$. Hence 
$$
\sig(\wt{y}) = 
\frac{y_{\sig}(1)}{(\sig\theta)(z)z^{-1}}
(\sig\theta)(z)\sig(z) = 
y_{\sig}(1)\sig(z)z = \wt{y}.
$$

It follows that $\wt{y}\in F^{\#\times}_{0}$.
\end{proof}

We consider the element $\wt{y}\RN_{F_{0}/F^{\#}_{0}}(F^{\times}_{0})$ in $F^{\#\times}_{0}/\RN_{F_{0}/F^{\#}_{0}}(F^{\times}_{0})$.

\begin{lem}\label{lem:germcal:ind}
The element $\wt{y}\RN_{F_{0}/F^{\#}_{0}}(F^{\times}_{0})$ is independent of the choice of $z$, $1_{F_{0}}$, and the cocycle class $\tau\to y_{\tau}$ in $H^{1}(F,T_{0}(\wb{F}))$.
\end{lem}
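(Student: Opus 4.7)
The plan is to verify each of the three independences in Lemma \ref{lem:germcal:ind} in turn, all by direct cocycle manipulation using the defining constraint $y(\sig)y(\sig\tau_0)=1$ from Lemma \ref{lem:germcal:torusidentification} together with the $\Gam_F$-action formula $(\tau(y))(\sig)=\tau(y(\tau^{-1}\sig))$. First I would handle independence of $z$: if $z, z' \in \wb{F}^\times$ both satisfy $\tau(z)/z = \tau(z')/z' = y_\tau(1_{F_0})$ for $\tau \in \Gam_{F_0}$, then $z'/z$ is $\Gam_{F_0}$-fixed, hence lies in $F_0^\times$. Substituting directly into the definition of $\wt{y}$ gives
$$\wt{y}' = y_\theta(1_{F_0})\, z'\, \theta(z') = \wt{y} \cdot (z'/z)\theta(z'/z) = \wt{y} \cdot \RN_{F_0/F^\#_0}(z'/z),$$
which lies in $\wt{y} \cdot \RN_{F_0/F^\#_0}(F_0^\times)$.

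Next I would check independence under a change of cocycle representative. Suppose $y'_\tau = y_\tau \cdot (\tau(a) \cdot a^{-1})$ for some $a \in T_0(\wb{F})$, so that $[y] = [y']$ in $H^1(F, T_0(\wb{F}))$. Using the $\Gam_F$-action, $y'_\tau(1_{F_0}) = y_\tau(1_{F_0}) \cdot \tau(a(\tau^{-1} \cdot 1_{F_0}))/a(1_{F_0})$. For $\tau \in \Gam_{F_0}$ the base point is fixed, $\tau^{-1} \cdot 1_{F_0} = 1_{F_0}$, so one may take $z' = z \cdot a(1_{F_0})$. For $\theta \in \Gam_{F^\#_0}$ with $\theta|_{F_0} = \tau_0$, one has $\theta^{-1} \cdot 1_{F_0} = 1_{F_0} \circ \tau_0$, and the $T_0$-constraint yields $a(1_{F_0} \circ \tau_0) = a(1_{F_0})^{-1}$. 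Plugging these into $\wt{y}' = y'_\theta(1_{F_0}) z' \theta(z')$ and simplifying, all the extra factors $a(1_{F_0})^{\pm 1}$ and $\theta(a(1_{F_0}))^{\pm 1}$ cancel, leaving $\wt{y}' = \wt{y}$ on the nose (not merely modulo norms).

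For independence of $1_{F_0}$, by transitivity of the $\Gam_F$-action on $\Sig(F_0)$ any alternative base may be written as $1' = \gam \cdot 1_{F_0}$ with $\gam \in \Gam_F$; combined with Step~2 it suffices to analyze the representative case $\gam \in \Gam_{F^\#_0}$ (the case $\gam \in \Gam_{F_0}$ leaves $1_{F_0}$ fixed). When $\gam|_{F_0} = \tau_0$, i.e.\ $1' = 1_{F_0} \circ \tau_0$, the torus constraint forces $y_\tau(1_{F_0} \circ \tau_0) = y_\tau(1_{F_0})^{-1}$ for every $\tau \in \Gam_F$; in particular $z' = z^{-1}$ works, and so $\wt{y}' = y_\theta(1_{F_0})^{-1} z^{-1} \theta(z)^{-1} = \wt{y}^{-1}$. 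Since the target group $F^{\#\times}_0/\RN_{F_0/F^\#_0}(F_0^\times) \simeq \{\pm 1\}$ is $2$-torsion, the sign flip is absorbed and $\wt{y}' \equiv \wt{y}$ modulo norms.

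The hard part will be Step~3, because the $\Gam_F$-action on $\Sig(F_0)$ can a priori move between distinct conjugate copies of $F_0$ (and of $F^\#_0$) inside $\wb{F}$, and one must carefully verify that the restriction to embeddings extending the fixed $F^\#_0 \hookrightarrow \wb{F}$ is the correct framework (the other conjugates produce the analogous invariant for a conjugate pair $(F_0', F^{\#\prime}_0)$ and are not comparable on the nose). The reduction to $\gam \in \Gam_{F^\#_0}$ via Step~2 is what renders the computation tractable, and the $2$-torsion of the target absorbs the residual sign ambiguity.
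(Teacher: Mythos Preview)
Your Steps 1 and 2 (independence of $z$ and of the cocycle representative) are correct and essentially identical to the paper's argument, down to the observation that in Step 2 one gets exact equality $\wt{y}' = \wt{y}$, not merely equality modulo norms.

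For Step 3 (independence of $1_{F_0}$) you diverge from the paper. The paper argues in one line that a change of $1_{F_0}$ amounts to an automorphism of $T_0(\wb{F})$ which does not alter the cohomology class of $\tau\mapsto y_\tau$, and then simply invokes Step 2. Concretely, writing $1' = \gam\cdot 1_{F_0}$, one checks that evaluating $y_\tau$ at $1'$ coincides (after applying $\gam$) with evaluating at $1_{F_0}$ the cohomologous cocycle $y_\tau\cdot(\tau(a)a^{-1})$ with $a = \gam^{-1}(y_\gam)^{-1}$; the pulled-back invariant in the abstract quotient $F^{\#\times}_0/\RN_{F_0/F^\#_0}(F^\times_0)$ is then unchanged by Step 2. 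This handles arbitrary $\gam\in\Gam_F$ uniformly. Your approach is instead to compute explicitly for $\gam\in\Gam_{F^\#_0}$ with $\gam|_{F_0}=\tau_0$, obtaining $\wt{y}'=\wt{y}^{-1}$ and absorbing the sign via $2$-torsion; this computation is correct. However, your reduction of the general case to this one ``combined with Step 2'' is the same mechanism the paper uses, and once that reduction is granted there is in fact nothing left to compute: Step 2 already gives equality. So your explicit $\wt{y}'=\wt{y}^{-1}$ calculation, while correct, is redundant with the reduction you invoke, and your closing paragraph is right that the real content of Step 3 lies in justifying that reduction across arbitrary $\gam$ --- which you flag but do not carry out.
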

\begin{proof}
We first show the element is independent of $z$.
If $\tau(z)z^{-1} = \tau(w)w^{-1}$ for any $\tau\in \Gam_{F_{0}}$, then $zw^{-1}\in F_{0}$, and 
$$
y_{\theta}(1)\theta(z)z = 
y_{\theta}(1)\theta(w)w
(\theta(w^{-1}z)w^{-1}z).
$$
Hence the claim follows.

Then  we show that the element is independent of the cocycle class of $\tau\to y_{\tau}$ in $H^{1}(F,T_{0}(\wb{F}))$. We let $\tau\to z_{\tau}$ be a $1$-coboundary. In other words, there exists $y\in T_{0}(\wb{F})$ such that $z_{\tau} = \tau(y)y^{-1}.$ In particular, $z_{\tau}(1) = (\tau(y))(1)y^{-1}(1) = \tau(y(\tau^{-1}))y^{-1}(1)$. When $\tau\in \Gam_{F_{0}}$, we have $z_{\tau}(1) = \tau(y(1))y^{-1}(1)$.
Hence we notice that the element associated to $y_{\tau}z_{\tau}$ with $\tau\in \Gam_{F_{0}^{\#}}$ whose restriction to $F_{0}$ is nontrivial is given by
$$
y_{\tau}(1)z_{\tau}(1)zy(1)\tau(zy(1)).
$$
Therefore we have
$$
\frac{y_{\tau}(1)z_{\tau}(1)zy(1)\tau(zy(1))}{y_{\tau}(1)z\tau(z)} = 
z_{\tau}(1)y(1)\tau(y(1))
=\tau(y(\tau^{-1}))\tau(y(1)) = 
\tau(y(1)^{-1}y(1)) = 1.
$$
Hence the claim follows.

Finally we show the element is independent of $1_{F_{0}}$. Different choices of $1_{F_{0}}$ yield an automorphism of $T_{0}(\wb{F})$, which, after applying the automorphism, do not affect the cocycle class of $\tau\to y_{\tau}$ in $H^{1}(F,T_{0}(\wb{F}))$. Hence the claim follows as well.

It follows that the lemma holds.
\end{proof}

From the discussion above we realize that we have a well-defined homomorphism from $H^{1}(F,T_{0}(\wb{F}))$ to $F^{\#,\times}_{0}/\RN_{F_{0}/F^{\#}_{0}}(F^{\times}_{0})\simeq \{\pm 1 \}$.

\begin{lem}\label{lem:germcal:isocoh}
The above construction gives an explicit isomorphism between $H^{1}(F,T_{0}(\wb{F}))$ and $F^{\#, \times}_{0}/\RN_{F_{0}/F^{\#}_{0}}(F^{\times}_{0})$.
\end{lem}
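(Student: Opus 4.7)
\medskip

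\noindent\textbf{Proof proposal for Lemma \ref{lem:germcal:isocoh}.} The plan is to recognize the construction preceding the lemma as the inverse of the connecting homomorphism in the long exact sequence of Galois cohomology attached to the short exact sequence of $F$-tori
\begin{equation*}
1 \to T_0 \to \mathrm{Res}_{F_0/F}\mathbb{G}_m \xrightarrow{\mathrm{N}} \mathrm{Res}_{F^{\#}_0/F}\mathbb{G}_m \to 1,
\end{equation*}
where the second map is the relative norm $\mathrm{N}_{F_0/F^{\#}_0}$. Shapiro's lemma identifies $H^i(F, \mathrm{Res}_{F_0/F}\mathbb{G}_m)$ with $H^i(F_0, \mathbb{G}_m)$, which vanishes for $i=1$ by Hilbert 90, and similarly for $F^\#_0$. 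The resulting long exact sequence therefore collapses to
\begin{equation*}
F_0^\times \xrightarrow{\mathrm{N}_*} F^{\#\times}_0 \xrightarrow{\delta} H^1(F, T_0(\overline F)) \to 0,
\end{equation*}
so that $\delta$ induces an isomorphism $F^{\#\times}_0/\mathrm{N}_{F_0/F^{\#}_0}(F_0^\times) \xrightarrow{\sim} H^1(F, T_0(\overline F))$.

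The key step is then to match this $\delta$ (or rather its inverse) with the explicit recipe $[y_\tau] \mapsto \tilde y \bmod \mathrm{N}_{F_0/F^{\#}_0}(F_0^\times)$ constructed above the lemma. For this, start from $a \in F^{\#\times}_0$ and choose a lift $\tilde a \in \mathrm{Res}_{F_0/F}\mathbb{G}_m(\overline F)$, which under the identification of Lemma \ref{lem:germcal:torusidentification} is a map $\Sigma(F_0) \to \overline F^\times$, with $\mathrm{N}(\tilde a) = a$. The norm condition at $\sigma^\# = 1_{F^\#_0} \in \Sigma(F^\#_0)$ reads
\begin{equation*}
\tilde a(1_{F_0})\,\tilde a(\tau_0) \;=\; a.
\end{equation*}
Set $z := \tilde a(1_{F_0})$, so that $\tilde a(\tau_0) = a z^{-1}$. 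Then $\delta(a)$ is represented by the cocycle $y_\tau := \tau(\tilde a)\tilde a^{-1}$, and evaluating at $1_{F_0}$ one computes
\begin{equation*}
y_\tau(1_{F_0}) \;=\; \tau\bigl(\tilde a(\tau^{-1} 1_{F_0})\bigr)\,z^{-1}.
\end{equation*}
For $\tau \in \Gamma_{F_0}$ this equals $\tau(z)z^{-1}$, which is precisely the coboundary produced by the Hilbert 90 step in the paper's construction. For $\theta \in \Gamma_{F^{\#}_0}$ with $\theta|_{F_0}$ nontrivial, one has $\theta^{-1}|_{F_0} = \tau_0$, hence
\begin{equation*}
y_\theta(1_{F_0}) \;=\; \theta(\tilde a(\tau_0))\,z^{-1} \;=\; \theta(a z^{-1})\,z^{-1},
\end{equation*}
which rearranges to $y_\theta(1_{F_0})\, z\, \theta(z) = \theta(a) = a$ since $a \in F^{\#\times}_0$ is fixed by $\theta \in \Gamma_{F^{\#}_0}$. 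Thus the invariant $\tilde y$ of Lemma \ref{lem:germcal:indeptheta} applied to $\delta(a)$ is $a$ itself, proving that the recipe $[y_\tau] \mapsto \tilde y \bmod \mathrm{N}_{F_0/F^{\#}_0}(F_0^\times)$ coincides with $\delta^{-1}$ and is therefore an isomorphism.

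The only genuinely delicate point is bookkeeping between the two models of $\mathrm{Res}_{F_0/F}\mathbb{G}_m(\overline F)$: the scheme-theoretic one (used implicitly when forming $\tilde a$ and applying the norm) and the concrete functional model from Lemma \ref{lem:germcal:torusidentification} (used to define $y_\tau(\sigma)$ and to extract $\tilde y$); once those identifications are spelled out consistently, the rest is routine. No additional arguments are needed: surjectivity of the paper's map follows from surjectivity of $\delta$, injectivity from exactness at $H^1(F, T_0(\overline F))$ combined with the vanishing of $H^1(F, \mathrm{Res}_{F_0/F}\mathbb{G}_m)$.
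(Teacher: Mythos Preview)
Your proof is correct, and it takes a genuinely different route from the paper's own argument.

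The paper proves the lemma by a direct injectivity check: given a cocycle $\tau \mapsto y_\tau$ with $\tilde y \in \RN_{F_0/F_0^\#}(F_0^\times)$, it normalizes the Hilbert~90 element $z$ so that $\tilde y = 1$, builds an explicit coboundary killing $y_\tau(1)$ for every $\tau \in \Gamma_{F_0^\#}$, and then invokes injectivity of the restriction map $H^1(F,T_0) \to H^1(F_0^\#,T_0)$ to conclude that the cocycle is trivial. Surjectivity is not argued separately; it follows because the paper has already recorded (just before the cohomology-class construction) that $H^1(F,T_0(\overline F)) \simeq \{\pm 1\}$ by local class field theory, so an injective map between two groups of order two is automatically an isomorphism.

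Your approach instead realizes the paper's recipe as the inverse of the connecting homomorphism $\delta$ in the long exact sequence attached to $1 \to T_0 \to \Res_{F_0/F}\BG_m \to \Res_{F_0^\#/F}\BG_m \to 1$, with Shapiro's lemma and Hilbert~90 supplying the vanishing that makes $\delta$ an isomorphism onto the norm quotient. The verification that the paper's $\tilde y$ applied to $\delta(a)$ returns $a$ is clean and correct. This is more conceptual and has the advantage of being self-contained: you do not need to import the fact $H^1(F,T_0) \simeq \{\pm 1\}$ from elsewhere, since the exact sequence yields both the size of the target and the isomorphism in one stroke. The paper's argument, by contrast, stays entirely at the level of explicit cocycle identities, which matches the hands-on style of the surrounding computations in Section~\ref{sub:germcal:computeendoscopic} and avoids any appeal to Shapiro's lemma.
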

\begin{proof}
Following the above notations, we only need to show that if $\wt{y}\in \RN_{F_{0}/F^{\#}_{0}}(F^{\times}_{0})$, then the $1$-cocycle $\tau\to y_{\tau}$ is actually a $1$-cobundary. Now for $\theta\in \Gam_{F_{0}}$ whose restriction to $F_{0}$ is trivial, we have 
$$
\wt{y} = y_{\theta}(1)\theta(z)z,
$$
and by assumption $\wt{y} = \theta(w)w$ for some $w\in F_{0}^{\times}$. On the other hand, since $y_{\tau}(1) = \tau(z)z^{-1}$ for any $\tau\in \Gam_{F_{0}},$ up to replacing $z$ by $zw^{-1}$, we may assume that $w=1$. Hence 
$$
\wt{y} = y_{\theta}(1)\theta(z)z = 1.
$$
In other words, we have 
\begin{align*}
y_{\theta}(1) = 
\bigg\{
\begin{matrix}
\theta(z^{-1})z^{-1}, & \theta\in \Gam_{F_{0}^{\#}} \text{ and } \theta_{|F_{0}} \text{ is nontrivial}.
\\
\theta(z)z^{-1}, & \theta\in \Gam_{F_{0}}.
\end{matrix}
\end{align*}
We define a $1$-coboundary $\tau\to z_{\tau}$ as follows. We let $z_{\tau} = \tau(\xi)\xi^{-1}$, where for any $\theta\in \Sig(F_{0})$,
$$
\xi(\theta) = 
\bigg\{
\begin{matrix}	
z,	& \theta = 1,\\
z^{-1}, & \theta = \tau_{0},\\
1,	& \text{otherwise}.
\end{matrix}.
$$
Then we can verify that 
$y_{\tau}(z_{\tau})^{-1}(1) = 1$. Up to the $1$-coboundary, we may assume that $y_{\tau}(1) = 1$ for any $\tau\in \Gam_{F_{0}^{\#}}$. Similarly, since for any $\alp\in \Sig(F_{0})$, $\tau\to y_{\tau}(\alp)$ is a $1$-coboundary of $\Gam_{F_{0}}$ in $\wb{F}^{\times}$, we can run the same procedure as above, and up to multiplying $1$-coboundaries of the above form, we may assume that $\tau\to y_{\tau}$ is a trivial $1$-cocycle from $\Gam_{F_{0}^{\#}}$ in $T_{0}(\wb{F})$, i.e. $y_{\tau}(\alp) = 1$ for any $\alp\in \Sig(F_{0})$ and $\tau\in \Gam_{F_{0}^{\#}}$. Now since we are in characteristic zero situation, we know that the restriction morphism
$$
H^{1}(F,T_{0}(\wb{F}))\to H^{1}(F_{0}^{\#},T_{0}(\wb{F}))
$$
is an injection \cite[p.15]{serregaloiscoh}, whose morphism is exactly given by restricting cocycles in $H^{1}(F,T_{0}(\wb{F}))$ to $H^{1}(F_{0}^{\#},T_{0}(\wb{F}))$. Since $\tau\to y_{\tau}$ is trivial in $H^{1}(\Gam_{F_{0}^{\#}},T_{0}(\wb{F}))$, it follows that $\tau\to y_{\tau}$ is trivial in $H^{1}(F,T_{0}(\wb{F}))$.

It follows that the lemma has been proved.
\end{proof}

\subsubsection{The computation}
Now we are ready to compute the transfer factors defined in the beginning of the section. We let $P_{X}$ be the characteristic polynomial of $X$ acting on $\End_{\wb{F}}(V)$, and we let $P^{\p}(X)$ be its derivative. For any $i\in I^{*}$, we set 
$$
C_{i} = \eta[F_{i}:F]^{-1}c_{i}^{-1}a^{-1}_{i}
P^{\p}_{X}(a_{i}).
$$

From Lemma \ref{lem:germcal:centralizer}, we know that $\inv(X)\inv(T_{G})\in H^{1}(T_{G})\simeq \{ \pm 1\}^{I^{*}}.$

\begin{pro}\label{pro:germcal:computation}
We have 
$$
\inv(X)\inv(T_{G}) = (\sgn_{F_{i}/F_{i}^{\#}}(C_{i}))_{i\in I^{*}}.
$$
\end{pro}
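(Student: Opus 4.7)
The plan is to compute $\inv(X)\inv(T_G)$ component by component, one factor $F_i$ (with $i \in I^*$) at a time, and then apply the explicit description of $H^1(F, T_0(\bar F)) \simeq F_0^{\#,\times}/\RN_{F_0/F_0^{\#}}(F_0^\times)$ given in Lemmas \ref{lem:germcal:torusidentification}--\ref{lem:germcal:isocoh}. Since the centralizer $T_G$ decomposes as
$$
T_G \simeq \prod_{i \in I \setminus I^*} F_i^{\#,\times} \times \prod_{i \in I^*} \ker(\RN_{F_i/F_i^{\#}})
$$
by Lemma \ref{lem:germcal:centralizer}, and the first factors contribute trivially to cohomology, the cohomology class $\inv(X)\inv(T_G) \in H^1(F, T_G(\bar F))$ is determined by its projection to each $H^1(F, \ker(\RN_{F_i/F_i^{\#}})) \simeq \{\pm 1\}$ for $i \in I^*$. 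Fix such an $i$.

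First I would compute $\inv(T_G)(\sigma) = x\, n(w_\sigma)\, \sigma(x^{-1})$ in coordinates. Using the explicit formula $x(e_j) = \mu(j) f_{\delta^{-1}(j)}$ and Corollary \ref{cor:germcal:galoispermute} (which says $\sigma(f_{i,\tau}) = f_{i,\sigma\tau}$), together with Lemma \ref{lem:germcal:weylsign} giving $n(w)(e_j) = (-1)^{r(j,w)} e_{w(j)}$, the element $x n(w_\sigma)\sigma(x^{-1})$ acts diagonally on each $f_{i,\tau}$ by a scalar which factors as a sign $(-1)^{r(\delta(i,\tau),w_\sigma)}$ times a ratio $\mu(\delta(i,\tau))/\sigma(\mu(\delta(i,\sigma^{-1}\tau)))$. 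Second, I would compute $\inv(X)(\sigma) = \prod_{\alpha>0,\,\sigma^{-1}\alpha<0} \check\alpha\circ\alpha(X)$ using that on the eigenvector $f_{i,\tau}$, $X$ acts by $\tau(a_i)$, so each root $\alpha$ contributes $\tau(a_i) - \tau'(a_{i'})$ for appropriate pairs.

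Third, using the identification of Lemma \ref{lem:germcal:torusidentification} to view the cocycle as a function $y_\sigma \colon \Sigma(F_i) \to \bar F^\times$, I would extract $y_\sigma(1_{F_i})$ and follow the procedure of Lemma \ref{lem:germcal:isocoh}: find $z \in \bar F^\times$ with $y_\sigma(1) = \sigma(z)z^{-1}$ for $\sigma \in \Gamma_{F_i}$, then evaluate $\tilde y = y_\theta(1)\, z\, \theta(z) \in F_i^{\#,\times}$ for any $\theta \in \Gamma_{F_i^{\#}}$ with $\theta|_{F_i}$ nontrivial. The signs $(-1)^{r(\cdot,w_\sigma)}$ and the $\mu$-factors should combine into a well-chosen $z$ — indeed, the normalization $\mu(j)\mu(d+1-j) = \eta/2\, (-1)^{j+1}[F_i:F]\sigma(c_i)^{-1}$ was set up exactly so that these cohomological ambiguities cancel predictably — leaving $\tilde y$ as a product over all roots $\alpha$ pairing $(i, 1_{F_i})$ with the other eigenvectors, which is precisely the evaluation of $P'_X$ at $a_i$ (up to the factors $\eta$, $c_i$, $a_i$, $[F_i:F]$ packaged into $C_i$).

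The main obstacle will be bookkeeping: juggling three layers of signs — the Weyl-group sign $(-1)^{r(j,w_\sigma)}$ from Lemma \ref{lem:germcal:weylsign}, the sign coming from the $a$-data normalization $\check\alpha \otimes a_\alpha = 1$, and the sign hidden in Hilbert 90's choice of $z$ — and showing that all extraneous factors land in $\RN_{F_i/F_i^{\#}}(F_i^\times)$ and hence vanish modulo norms. This is exactly the calculation Waldspurger carries out in \cite[Chapitre~X]{wald01nilpotent} for unitary groups; the orthogonal case is technically cleaner because the torus $T_G$ splits as an honest product over $I$, so the computation can be localized to each $F_i$ independently. Once the dust settles, the product of the eigenvalue differences $a_i - \tau(a_{i'})$ collapses to $P'_X(a_i)$, and combining this with $\eta$, $c_i$, $a_i$, $[F_i:F]$ gives the stated formula $\sgn_{F_i/F_i^{\#}}(C_i)$.
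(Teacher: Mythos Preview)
Your proposal is correct and follows essentially the same approach as the paper: decompose the cocycle over the factors $i\in I^{*}$, compute the scalars by which $\inv(T_G)(\tau)$ and $\inv(X)(\tau)$ act on $f_{i,1_{F_i}}$ using the explicit $x(e_j)=\mu(j)f_{\del^{-1}(j)}$ together with Lemma~\ref{lem:germcal:weylsign}, then feed the resulting cocycle into the Hilbert~90 recipe of Lemmas~\ref{lem:germcal:torusidentification}--\ref{lem:germcal:isocoh} and collapse the product of eigenvalue differences to $P'_X(a_i)$. The paper carries out exactly this bookkeeping, with the explicit choice $z_i=\mu(b)^{-1}\prod_{(j,\sig)\in S}(a_i-\sig(a_j))^{-1}$ for a suitable index set $S$, and verifies that the residual sign is governed by $|S'(\tau)|$ and the $\mu(b)\mu(d+1-b)$ normalization, yielding $C_i$ modulo norms.
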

\begin{proof}
We will use the explicit construction of cohomological invariants above to compute $\inv(X)\inv(T_{G})$. 

We fix an element $1_{F_{i}}$ of $\Sig(F_{i})$, which we use to identify a canonical $F$-embedding of $F_{i}$ in $\wb{F}$. For any $1$-cocycle $\lam$ of $\Gam_{F}$ valued in $T_{G}(\wb{F})$, we define a function
$$
\lam(i,\cdot):\Gam_{F_{i}^{\#}}\to \wb{F}^{\times}
$$
by 
$$
\lam(\tau)f_{i,1_{F_{i}}} = \lam(i,\tau)f_{i,1_{F_{i}}}
$$
for any $\tau\in \Gam_{F_{i}^{\#}}$. Then following the discussion above Lemma \ref{lem:germcal:indeptheta}, there exists $z_{i}\in \wb{F}^{\times}$ such that 
$$
\lam(i,\tau) = \tau(z_{i})z_{i}^{-1}
$$
for any $\tau\in \Gam_{F_{i}}$. Fix such a $z_{i}$ and $\theta_{i}\in \Gam_{F_{i}^{\#}}$ whose restriction to $F_{i}$ is nontrivial. Then from Lemma \ref{lem:germcal:isocoh} we have the following equality 
$$
\lam = (\sgn_{F_{i}/F^{\#}_{i}}(\lam(i,\theta_{i})z_{i}\theta_{i}(z_{i})))_{i\in I^{*}}.
$$ 
In the following, we are going to compute the terms for $\inv(X)$ and $\inv(T_{G})$ explicitly.

For any $i\in I$, we let $b= \del(i,1)$. We do computation for $\inv(T_{G})$ and $\inv(X)$ separately.

We first treat the computation for $\inv(T_{G})$. By definition, for any $\tau\in \Gam_{F_{i}^{\#}}$, 
$$
\inv(T_{G})(\tau)f_{i,1} = (xn(w_{\tau})\tau(x^{-1}))f_{i,1} = (xn(w_{\tau})\tau x^{-1}\tau^{-1})f_{i,1}.
$$
\begin{enumerate}
\item When $\tau\in \Gam_{F_{i}}$, 
\begin{align*}
(xn(w_{\tau})\tau x^{-1}\tau^{-1})f_{i,1} &=
(xn(w_{\tau})\tau x^{-1})f_{i,1} = (xn(w_{\tau})\tau)(\mu(b)^{-1}e_{b})
\\
& = (\tau\circ \mu(b)^{-1})
(xn(w_{\tau}))e_{b}.
\end{align*}
Here we recall that $w_{\tau}$ is the image of $x^{-1}\tau(x)$ under the natural projection from $\mathrm{Norm}_{T}(\SO(V))\to W$. By definition, for any $\alp\in \{1,...,d \}$,
$$
x^{-1}\tau(x)e_{\alp} = 
x^{-1}\tau x\tau^{-1}e_{\alp} = 
x^{-1}\tau xe_{\alp}= x^{-1}\tau(\mu(\alp)f_{\del^{-1}(\alp)}).
$$
When $\del^{-1}(\alp) = (j,\sig)\in \Sig(V)$, we have 
$$
x^{-1}(\tau(\mu(\alp)f_{j,\sig})) = 
\tau\circ \mu(\alp) x^{-1}f_{j,\tau\sig}= 
\tau\circ \mu(\alp)
\mu(\del(j,\tau\sig))^{-1}e_{\del(j,\tau\sig)}.
$$
Hence, if we let $w^{\p}_{\tau}$ be the bijection of $\Sig(V)$ sending $(j,\sig)$ to $(j,\tau\sig)$, then the image of $x^{-1}\tau(x)$ in $W$ is given by $\del\circ w^{\p}_{\tau}\circ \del^{-1}$. In particular $w_{\tau}(b) = b$. By Lemma \ref{lem:germcal:weylsign}, we have 
$$
n(w_{\tau})(e_{b}) = 
(-1)^{r(\tau)}e_{b},
$$
where 
$$
r(\tau) = |\{
(j,\sig)\in \Sig(V)|\quad
\del(j,\tau\sig)<b<\del(j,\sig), \del(j,\sig)+b\neq d+1
\}|.
$$
Therefore finally we get the following identity
$$
\inv(T_{G})f_{i,1} = \mu(b)\tau\circ \mu(b)^{-1}
(-1)^{r(\tau)}f_{i,1},\quad \tau\in \Gam_{F_{i}}.
$$

\item
Similarly, when $\tau\in \Gam_{F_{i}^{\#}}$ and the restriction of $\tau$ to $F_{i}$ is nontrivial, we have 
$$
\inv(T_{G})(\tau)f_{i,1} = 
(xn(w_{\tau})\tau x^{-1})f_{i,\tau^{-1}} = 
(xn(w_{\tau})\tau)(\mu(d+1-b)^{-1}e_{d+1-b}).
$$
By parallel argument as above situation, we have 
$$
n(w_{\tau})(e_{d+1-b}) = (-1)^{r(\tau)}e_{b},
$$
where 
$$
r(\tau) =
|\{
(j,\sig)\in \Sig(V)|\quad
d+1-b<\del(j,\sig), \del(j,\tau\sig)<b, \del(j,b)\neq b	
\}|.
$$
Therefore we get
$$
\inv(T_{G})(\tau)f_{i,1} = 
\mu(b)\tau\circ \mu(d+1-b)^{-1}(-1)^{r(\tau)}f_{i,1}.
$$
\end{enumerate}

Then we turn to the computation for $\inv(X)$. By definition, for any $\tau\in \Gam_{F_{i}^{\#}}$, 
$$
\inv(X)(\tau)f_{i,1} = 
(\prod_{\alp\in R, \alp>0, \tau^{-1}(\alp)<0}\check{\alp}\circ \alp(X))f_{i,1}.
$$
The set of roots $R=R(T_{G},G)$ has the following description. From Lemma \ref{lem:germcal:basisproperty}, any $y\in T_{G}$ acts via scalar on the basis $\{ f_{i,\sig}\}_{(i,\sig)\in \Sig(V)}$. In other words, there exists $y_{i,\sig}\in \wb{F}^{\times}$ such that the following identity holds,
$$
y(f_{i,\sig}) = y_{i,\sig}f_{i,\sig}, \quad (i,\sig)\in \Sig(V).
$$
Moreover we have $y_{i,\sig}y_{i,\sig\tau_{i}} = 1$. For any pair $((j,\sig),(j^{\p},\sig^{\p}))\in \Sig(V)^{2}$ we let $\alp_{((j,\sig),(j^{\p},\sig^{\p}))}$ be the homomorphism from $T_{G}$ to $\wb{F}^{\times}$ sending $y\in T_{G}$ to $y_{j,\sig}y^{-1}_{j^{\p},\sig^{\p}}$. In particular, the pairs $((j,\sig),(j^{\p},\sig^{\p}))$ and $((j^{\p},\sig^{\p}\tau_{j^{\p}}),(j,\sig\tau_{j}))$ give rise to the same homomorphism from $T_{G}$ to $\wb{F}^{\times}$. Under the identification, we notice that $R$ has the following description. We consider the set of pairs $((j,\sig),(j^{\p},\sig^{\p}))\in \Sig(V)^{2}$ such that $(j,\sig)\neq (j^{\p},\sig^{\p})$ and $\del(j,\sig)+ \del(j^{\p},\sig^{\p})\neq d+1$. We equip the set with the following equivalent relation
$$
((j,\sig),(j^{\p},\sig^{\p})) \sim ((j^{\p},\sig^{\p}\tau_{j^{\p}}),(j,\sig\tau_{j})).
$$
Then the set $R$ can be identified with the set of equivalence classes of such pairs. Suppose that $((j,\sig),(j^{\p},\sig^{\p}))$ is a pair associated to a root $\alp\in R$. Then we have the following properties.
\begin{itemize}
\item $\alp(X) = \sig(a_{j}) -\sig^{\p}(a_{j^{\p}})$;

\item $\alp>0 \Leftrightarrow \del(j,\sig)<\del(j^{\p},\sig^{\p})$;

\item For any $z\in \wb{F}^{\times}$ and $(j^{\p\p},\sig^{\p\p})\in \Sig(V)$,
$$
\check{\alp}(z)f_{j^{\p\p},\sig^{\p\p}} = 
\bigg\{
\begin{matrix}
zf_{j^{\p\p},\sig^{\p\p}} ,& \text{ if $(j^{\p\p},\sig^{\p\p}) = (j,\sig)$ or $(j^{\p},\sig^{\p}\tau_{j^{\p}})$},\\
z^{-1}f_{j^{\p\p},\sig^{\p\p}}, & \text{ if $(j^{\p\p},\sig^{\p\p}) = (j,\sig\tau_{j})$ or $(j^{\p},\sig^{\p})$},\\
f_{j^{\p\p},\sig^{\p\p}}, & \text{ otherwise}.
\end{matrix}
$$
\end{itemize}

By the description we find that for any $\tau\in \Gam_{F_{i}^{\#}}$ and $\alp\in \{\alp\in R|\quad \alp>0, \tau^{-1}(\alp)<0 \}$ represented by $((j,\sig), (j^{\p},\sig^{\p}))\in \Sig(V)^{2}$, the relation
$$
\alp>0, \tau^{-1}(\alp)<0
$$
is equivalent to 
$$
\del(j,\sig)<\del(j^{\p},\sig^{\p}), \quad 
\del(j,\tau^{-1}\sig)>\del(j^{\p},\tau^{-1}\sig^{\p}).
$$
Moreover, since
$$
\alp(X)=  \sig(a_{j}) -\sig^{\p}(a_{j^{\p}}),
$$
we find that the inequality
$$
\check{\alp}\circ \alp(X)f_{i,1} \neq f_{i,1}
$$
happens only when $(i,1)\in \{ (j,\sig), (j,\sig\tau_{j}), (j^{\p},\sig^{\p}\tau_{j^{\p}}) , (j^{\p},\sig^{\p})\}$. Modulo the equivalence, we may assume that $(i,1) = (j,\sig)$ or $(j^{\p},\sig^{\p}\tau_{j^{\p}})$.

When $(i,1) = (j,\sig)$, the root associated to the pair $((i,1), (j^{\p},\sig^{\p}))$ gives
$$
\check{\alp}\circ \alp(X)f_{i,1} = 
(a_{i} - \sig^{\p}(a_{j^{\p}}))f_{i,1},
$$
and the possible pairs of $(j^{\p},\sig^{\p})$ are parametrized by the set
$$
R^{+}(\tau) = 
\{
(j,\sig)\in \Sig(V)|\quad 	
\del(i,1)<\del(j,\sig), \del(i,\tau^{-1})>\del(j,\tau^{-1}\sig), \del(j,\sig)+\del(i,1)\neq d+1
\}.
$$

When $(i,1) = (j^{\p},\sig^{\p})$ the root associated to the pair $((j,\sig), (i,1))$ gives 
$$
\check{\alp}\circ \alp(X)f_{i,1} = (\sig(a_{j}) - a_{i})^{-1}f_{i,1},
$$
and the possible pairs of $(j,\sig)$ are parametrized by the set
$$
R^{-}(\tau) = \{
(j,\sig)\in\Sig(V)
|\quad 
\del(j,\sig)<\del(i,1), \del(j,\tau^{-1}\sig)>\del(i,\tau^{-1}),
\del(j,\sig)+\del(i,1)\neq d+1
\}.
$$
It follows that for any $\tau\in \Gam_{F_{i}^{\#}}$, we have 
$$
\inv(X)(\tau)f_{i,1} = 
\prod_{(j,\sig)\in R^{+}(\tau)}
(a_{i} - \sig(a_{j}))
\prod_{(j,\sig)\in R^{-}(\tau)}
(\sig(a_{j}) - a_{i})^{-1}f_{i,1}.
$$

Based on above calculations, we find that whenever $\tau\in \Gam_{F_{i}}$, we have 
\begin{align*}
&\inv(T_{G})(\tau)\inv(X)(\tau)f_{i,1} 
\\
&= 
\mu(b)\tau\circ \mu(b)^{-1}
(-1)^{r(\tau)}
\prod_{(j,\sig)\in R^{+}(\tau)}
(a_{i} - \sig(a_{j}))
\prod_{(j,\sig)\in R^{-}(\tau)}(\sig(a_{j})-a_{i})^{-1}f_{i,1},
\end{align*}
where 
\begin{align*}
&r(\tau) = |\{(j,\sig)\in \Sig(V)|\quad \del(j,\tau\sig)<b<\del(j,\sig), \del(j,\sig)+\del(i,1)\neq d+1 \}|,
\\
&R^{+}(\tau) = \{(j,\sig)\in \Sig(V)|\quad \del(j,\tau^{-1}\sig)<\del(i,1)<\del(j,\sig), \del(j,\sig)+\del(i,1)\neq d+1 \},
\\
&R^{-}(\tau) = 
\{(j,\sig)\in \Sig(V)|\quad \del(j,\sig)<\del(i,1)<\del(j,\tau^{-1}\sig), \del(j,\sig)+\del(i,1)\neq d+1 \}.
\end{align*}

In the following we are going to find an element $z_{i}\in \wb{F}^{\times}$ such that for any $\tau\in \Gam_{F_{i}}$, we have 
$$
\tau(z_{i})z^{-1}_{i} = 
\inv(X)(\tau)\inv(T_{G})(\tau)f_{i,1}.
$$
We let 
\begin{align*}
&S = \{ (j,\sig)\in \Sig(V)|\quad \del(i,1)<\del(j,\sig), \del(j,\sig)+\del(i,1)\neq d+1\},
\\
&z_{i} = \mu(b)^{-1}\prod_{(j,\sig)\in S}(a_{i}-\sig(a_{j}))^{-1}.
\end{align*}
Then by definition, we have
$$
\tau(z_{i})z_{i}^{-1}
=\mu(b)\tau\circ \mu(b)^{-1}
\prod_{(j,\sig)\in S}(a_{i}-\sig(a_{j}))
\prod_{(j,\sig)\in S(\tau)}(a_{i}-\sig(a_{j}))^{-1},
$$
where 
$$
S(\tau) = \{(j,\sig)\in \Sig(V)|\quad \del(i,1)<\del(j,\tau^{-1}\sig), \del(j,\tau^{-1}\sig)+\del(i,1)\neq d+1 \}.
$$

We notice that by definition,
\begin{align*}
&S-S\cap S(\tau) 
= 
\{ (j,\sig)\in \Sig(V)|\quad \del(j,\tau^{-1}\sig)\leq \del(i,1)<\del(j,\sig),\del(j,\sig)+\del(i,1)\neq d+1 \}
\\
&\bigsqcup
\{ (j,\sig)\in \Sig(V)|\quad \del(i,1)<\del(j,\sig), \del(j,\sig)+\del(i,1)\neq d+1, 
\del(j,\tau^{-1}\sig)+\del(i,1) =d+1 \}.
\end{align*}
But when $\del(i,1) = \del(j,\tau^{-1}\sig)$ we have $j=i, \sig = 1$, which contradicts the fact that $\del(i,1)<\del(j,\sig)$, and when $\del(j,\tau^{-1}\sig)+\del(i,1) = d+1$ we have $(j,\sig) = (i,\tau_{i})$, which also contradicts the fact that $\del(j,\sig)+\del(i,1)\neq d+1$. It follows that
$$
S-S\cap S(\tau) = R^{+}(\tau).
$$
Similarly, we find that
\begin{align*}
&S(\tau)-S\cap S(\tau) 
= 
\{ (j,\sig)\in \Sig(V)|\quad \del(j,\sig)\leq \del(i,1)<\del(j,\tau^{-1}\sig),\del(j,\tau^{-1}\sig)+\del(i,1)\neq d+1 \}
\\
&\bigsqcup
\{ (j,\sig)\in \Sig(V)|\quad \del(i,1)<\del(j,\tau^{-1}\sig), \del(j,\tau^{-1}\sig)+\del(i,1)\neq d+1, 
\del(j,\sig)+\del(i,1) =d+1 \}.
\end{align*}
When $\del(i,1) = \del(j,\sig)$, we have $(j,\sig) = (i,1) = (j,\tau^{-1}\sig)$, hence we also have a contradiction. Similarly, when $\del(j,\sig)+\del(i,1) = d+1$, we have $(j,\sig) = (i,\tau_{i})$ which also contradicts the fact that $\del(j,\tau^{-1}\sig)+\del(i,1)\neq d+1$. Hence 
$$
S(\tau) - S\cap S(\tau) = \{
(j,\sig)\in \Sig(V)|\quad 
\del(j,\sig)<\del(i,1)<\del(j,\tau^{-1}\sig), \del(j, \tau^{-1}\sig)+\del(i,1)\neq d+1
\}.
$$
But the condition $\del(j,\tau^{-1}\sig)+\del(i,1)\neq d+1$ is equivalent to $(j,\tau^{-1}\sig)\neq (i,\tau_{i})$, which in turn is equivalent to $(j,\sig)\neq (i,\tau_{i})$, hence we realize that 
$$
S(\tau) - S\cap S(\tau) =R^{-}(\tau).
$$
It follows that
$$
\tau(z_{i})z_{i}^{-1} 
=\mu(b)\tau\circ \mu(b)^{-1}(-1)^{|R^{-}(\tau)|}
\prod_{(j,\sig)\in R^{+}(\tau)}
(a_{i}-\sig(a_{j}))
\prod_{(j,\sig)\in R^{-}(\tau)}
(\sig(a_{j})-a_{i})^{-1}.
$$
Finally by the bijective map $(j,\sig)\to (j,\tau\sig)$ and the fact that $\del(j,\sig)+\del(i,1)\neq d+1$ is equivalent to $\del(j,\tau\sig)+\del(i,1) \neq d+1$, we arrive at the equality
$$
r(\tau) = |R^{-}(\tau)|.
$$ 
It follows that
$$
\tau(z_{i})z_{i}^{-1}f_{i,1} = \inv(T_{G})(\tau)\inv(X)(\tau)f_{i,1}, \quad \forall \tau\in \Gam_{F_{i}}.
$$

Now by Lemma \ref{lem:germcal:isocoh}, we are going to compute the following element
$$
(\inv(T_{G})(\tau)\inv(X)(\tau)f_{i,1})z_{i}\tau(z_{i})
$$
for some $\tau\in \Gam_{F_{i}^{\#}}$ whose restriction to $F_{i}$ is nontrivial.

By the computation that we have done above, whenever $\tau\in \Gam_{F_{i}^{\#}}$ whose restriction to $F_{i}$ is nontrivial, we have
$$
\inv(T_{G})(\tau)f_{i,1} = 
\mu(b)\tau\circ \mu(d+1-b)^{-1}(-1)^{r(\tau)}f_{i,1},
$$
where 
$$
r(\tau) = |\{(j,\sig)\in \Sig(V)|\quad \del(i,\tau_{i})<\del(j,\sig), \del(j,\tau \sig)<b,\del(j,\sig)\neq \del(i,1)\}|,
$$
and 
$$
\inv(X)(\tau)f_{i,1} = 
\prod_{(j,\sig)\in R^{+}(\tau)}
(a_{i}-\sig(a_{j}))\prod_{(j,\sig)\in R^{-}(\tau)}
(\sig(a_{j})-a_{i})^{-1}f_{i,1},
$$
where 
\begin{align*}
&R^{+}(\tau) = \{
(j,\sig)\in \Sig(V)|\quad 
\del(i,1)<\del(j,\sig), \del(j,\tau^{-1}\sig)<\del(i,\tau_{i}), \del(j,\sig)+\del(i,1)\neq d+1
\},
\\
&R^{-}(\tau) = \{
(j,\sig)\in \Sig(V)|\quad
\del(j,\sig)<\del(i,1), \del(j,\tau^{-1}\sig)>\del(i,\tau_{i}), \del(j,\sig)+\del(i,1)\neq d+1
\}.
\end{align*}
Moreover, by definition,
$$
\tau(z_{i}) = 
\tau\circ \mu(b)^{-1}
\prod_{(j,\sig)\in S(\tau)}
(-a_{i}-\sig(a_{j}))^{-1}.
$$
We set 
$$
S^{\p}(\tau) = \{
(j,\sig)\in \Sig(V)|\quad b<\del(j,\tau^{-1}\sig\tau_{j}), \del(j,\tau^{-1}\sig \tau_{j})+\del(i,1)\neq d+1
 \}.
$$
Using the fact that $\tau_{j}(a_{j}) = -a_{j}$, we realize that 
$$
\tau(z_{i}) = 
(-1)^{|S^{\p}(\tau)|}
\tau\circ \mu(b)^{-1}
\prod_{(j,\sig)\in S^{\p}(\tau)}
(a_{i}-\sig(a_{j}))^{-1}.
$$
It follows that 
\begin{align*}
&(\inv(T_{G})(\tau)\inv(X)(\tau)f_{i,1})z_{i}\tau(z_{i}) 
\\
&= 
\tau(\mu(b)\mu(d+1-b))^{-1}
(-1)^{r(\tau)+|R^{-}(\tau)|+|S^{\p}(\tau)|}
\prod_{(j,\sig)\in \Sig(V)}(a_{i}-\sig(a_{j}))^{-m(j,\sig)}f_{i,1}.
\end{align*}
Here we set 
$$
m= 1_{R^{-}(\tau)}+1_{S^{\p}(\tau)}+1_{S}-1_{R^{+}(\tau)},
$$
where for a subset $U$ of $\Sig(V)$, $1_{U}$ is the characteristic function of $U$.

By definition, $R^{+}(\tau)\subset S$, and 
$$
S-R^{+}(\tau) = \{(j,\sig)\in \Sig(V)|\quad 
\del(i,1)<\del(j,\sig), \del(i,\tau_{i})
\leq \del(j,\tau^{-1}\sig), \del(j,\sig)+\del(i,1)\neq d+1
\}.
$$
Now since 
$$
R^{-}(\tau)= \{
(j,\sig)\in \Sig(V)|\quad
\del(j,\sig)<\del(i,1), \del(j,\tau^{-1}\sig)>\del(i,\tau_{i}), \del(j,\sig)+\del(i,1)\neq d+1
\},
$$
we notice that when $\del(j,\tau^{-1}\sig)=\del(i,\tau_{i})$, we have $(j,\sig) = (i,1)$. Hence the set $R^{-}(\tau)$ can also be written as
$$
R^{-}(\tau)= \{
(j,\sig)\in \Sig(V)|\quad
\del(j,\sig)<\del(i,1), \del(j,\tau^{-1}\sig)\geq \del(i,\tau_{i}), \del(j,\sig)+\del(i,1)\neq d+1
\}.
$$
It follows that the sets $S-R^{+}(\tau)$ and $R^{-}(\tau)$ are disjoint, and their union gives 
$$
\{(j,\sig)\in \Sig(V)|\quad \del(i,1)\neq \del(j,\sig), \del(j,\tau^{-1}\sig)\geq \del(i,\tau_{i}), \del(j,\sig)+\del(i,1)\neq d+1\}.
$$
On the other hand, 
$$
S^{\p}(\tau) = \{
(j,\sig)\in \Sig(V)|\quad 
b<\del(j,\tau^{-1}\sig \tau_{j}), \del(j,\tau^{-1}\sig\tau_{j})+\del(i,1)\neq d+1
\}
$$
can also be written as 
$$
S^{\p}(\tau) = \{
(j,\sig)\in \Sig(V)|\quad 
\del(j,\tau^{-1}\sig)<\del(i,\tau_{i}), (j,\tau^{-1}\sig \tau_{j})\neq (i,\tau_{i})
\}.
$$
Moreover, the relation $(j, \tau^{-1}\sig\tau_{j})\neq (i,\tau_{i})$ is equivalent to $(j,\sig)\neq(i, \tau_{i})$. Hence 
$$
S^{\p}(\tau) = \{
(j,\sig)\in \Sig(V)|\quad 
\del(j,\tau^{-1}\sig)<\del(i,\tau_{i}), \del(j,\sig)+\del(i,1)\neq d+1
\}.
$$

It follows that the sets $S-R^{+}(\tau)$, $R^{-}$ and $S^{\p}(\tau)$ are disjoint to each other, and their union can be written as $\Sig(V)-\{(i,1),(i,\tau_{i})\}$.

It follows that the product is equal to 
$$
\prod_{(j,\sig)\in \Sig(V)-\{ (i,1), (i,\tau_{i}) \}}
(a_{i}-\sig(a_{j}))^{-1},
$$
which is equal to $(P_{X}^{\p}(a_{i})/2a_{i})^{-1}$. 

Using the bijection $(j,\sig)\to (j,\tau^{-1}\sig)$ and the same argument as before we have that $|R^{-}(\tau)| = r(\tau)$. From the construction of the element $x\in T_{G}$, we have 
\begin{align*}
&\mu(b)\mu(d+1-b) =\eta/2(-1)^{b+1}[F_{i}:F]\sig(c_{i})^{-1} = \eta/2(-1)^{b+1}[F_{i}:F]c_{i}^{-1},\quad \text{ if $b\leq d/2$},
\\
&\mu(b)\mu(d+1-b) =\eta/2(-1)^{b}[F_{i}:F]\sig(c_{i})^{-1} = \eta/2(-1)^{b}[F_{i}:F]c_{i}^{-1},\quad \text{ if $b> d/2$}.
\end{align*}

We also have $\tau(c_{i}) = c_{i}$. It follows that the desired invariant is equal to 
\begin{align*}
&\tau(\frac{\eta}{2}(-1)^{b+1}[F_{i}:F]c_{i}^{-1})^{-1}
(-1)^{|S^{\p}(\tau)|}2a_{i}P^{\p}_{X}(a_{i})^{-1}
\\
&=
\bigg\{
\begin{matrix}
(\eta/2(-1)^{b+1}
[F_{i}:F]c_{i}^{-1})^{-1}
(-1)^{|S^{\p}(\tau)|}
2a_{i}P^{\p}_{X}(a_{i})^{-1}, & \text{ if $b\leq d/2$}, \\
(\eta/2(-1)^{b}
[F_{i}:F]c_{i}^{-1})^{-1}
(-1)^{|S^{\p}(\tau)|}
2a_{i}P^{\p}_{X}(a_{i})^{-1}, & \text{ if $b> d/2$},
\end{matrix}
\end{align*}
Finally by the fact that $|S^{\p}(\tau)| = d-b-1$ whenever $b\leq d/2$, and $d-b$ whenever $b>d/2$, it follows that the desired invariant is equal to
$$
4\eta^{-1}[F_{i}:F]^{-1}c_{i}a_{i}P^{\p}_{X}(a_{i})^{-1}.
$$
Up to square class we may ignore the $4$ part. Through taking the inverse and we obtain the eventual formula
$$
\eta[F_{i}:F]c_{i}^{-1}a_{i}^{-1}P_{X}^{\p}(a_{i}).
$$
\end{proof}

\subsection{Explicit examples}\label{sub:germcal:explicitexample}

In this section, following \cite[11.4]{waldspurger10}, we are going to calculate the regular germs for explicit regular semi-simple elements using Theorem \ref{thm:germformula}. We compute the invariants in Theorem \ref{thm:germformula} explicitly via Proposition \ref{pro:germcal:computation}.

We focus on the case when $d$ is even and $d\geq 4$, since only in this case will we have $|\nil_{\reg}(\Fs\Fo(V))|>1$.

In the following we consider the situation when $(V,q)$ is split and quasi-split but not split separately. We are going to pick up particular regular semi-simple elements following the description in Section \ref{sub:germcal:conjugacy}.

\begin{itemize}
\item When $(V,q)$ is split. We fix two quadratic extensions $F_{1},F_{2}$ of $F$ such that the product of the two quadratic characters $\sgn_{F_{1}/F}\sgn_{F_{2}/F}$ is the trivial character on $F^{\times}$, i.e. $F_{1}\simeq F_{2}$ as a field extension of $F$. For any $i=1,2$, we fix $a_{i}\in F^{\times}_{i}$ such that $\tau_{i} (a_{i})= -a_{i}$ and $a_{1}\neq \pm a_{2}$, where $\tau_{i}$ is the nontrivial element in $\Gal(F_{i}/F)$. For any $c\in F^{\times}$, we consider $F_{1}$ (resp. $F_{2}$) equipped with quadratic form $c\RN_{F_{1}/F}$ (resp. $-c\RN_{F_{2}/F}$). Let $\wt{Z}$ be a hyperbolic space of dimension $d-4$. Then we can find that $F_{1}\oplus F_{2}\oplus \wt{Z} \simeq V$ as split quadratic space.  We fix a maximal split subtorus $\wt{T}$ of the special orthogonal group associated to $\wt{Z}$. Let $\wt{\Ft}(F)$ be the $F$-points of the Lie algebra of $\wt{T}$. We fix a regular semi-simple element $\wt{S}$ in $\wt{\Ft}(F)$. 

We consider the regular semi-simple element $X_{F_{1}}\in \Fs\Fo(V)(F)$ that acts by multiplication by $a_{1}$ (resp. $a_{2}$) on $F_{1}$ (resp. $F_{2}$) and by $\wt{S}$ on $\wt{Z}$. From Lemma \ref{lem:germcal:stableconjugate} we know that the conjugacy classes of $X_{F_{1}}$ within the stable conjugacy class is determined by $c\in F^{\times}/\RN_{F_{1}/F}(F^{\times})$. In the following we will note $X^{+}_{F_{1}}$ to be the element corresponding to $c=c^{+}$ such that $\sgn_{F_{1}/F}(2c) = \sgn_{F_{1}/F}(\RN_{F_{1}/F}(a_{1}) - \RN_{F_{2}/F}(a_{2}))$ and $X^{-}_{F_{1}}$ corresponds to $c=c^{-}$ such that $\sgn_{F_{1}/F}(c^{-}) = -\sgn_{F_{1}/F}(c^{+})$.

To keep the notation consistent with the following quasi-split but not split case, we let $\eta = 1$ in our situation.

\item When $(V,q)$ is quasi-split but not split. There exists a quadratic extension $E/F$ and an element $\eta\in F^{\times}$ such that the anisotropic kernel of $q$ is of the form $\eta\cdot\RN_{E/F}$. We fix two quadratic extensions $F_{1},F_{2}$ of $F$ such that the product of two quadratic characters $\sgn_{F_{1}/F}\sgn_{F_{2}/F}$ is equal to $\sgn_{E/F}$. For $i=1,2$, suppose that $a_{i}\in F^{\times}_{i}$ such that $\tau_{F_{i}}(a_{i}) = -a_{i}$ and $a_{1}\neq \pm a_{2}$. We fix $c\in F^{\times}$ such that $\sgn_{E/F}(2\eta c \RN_{F_{1}/F}(a_{1})) = 1$. Then we can find that $F_{1}\oplus F_{2}\oplus \wt{Z}\simeq V$ as quadratic space where we consider $F_{1}$ (resp. $F_{2}$) equipped with quadratic form $c\RN_{F_{1}/F}$ (resp. $-c\RN_{F_{2}/F}$).

Parallel with the split situation, we consider the regular semi-simple element $X_{F_{1}}\in \Fs\Fo(V)(F)$ that acts by multiplication by $a_{1}$ (resp. $a_{2}$) on $F_{1}$ (resp. $F_{2}$) and by $\wt{S}$ on $\wt{Z}$. Again from Lemma \ref{lem:germcal:stableconjugate} we note $X^{+}_{F_{1}}$ the element corresponding to $c=c^{+}$ such that $\sgn_{F_{1}/F}(2c) = \sgn_{F_{1}/F}(\eta) \sgn_{F_{1}/F}(\RN_{F_{1}/F}(a_{1}) - \RN_{F_{2}/F}(a_{2}))$
and $X^{-}_{F_{1}}$ corresponds to $c=c^{-}$ such that $\sgn_{F_{1}/F}(c^{-}) = -\sgn_{F_{1}/F}(c^{+})$.
\end{itemize}

Our goal is to compute the regular germ associated to $X^{\pm}_{F_{1}}$. We follow the proof of \cite[11.4]{waldspurger10}.

\begin{lem}\label{lem:germcal:Xpm}
Suppose that $\CO\in \mathrm{Nil}_{\reg}(\Fs\Fo(V)(F))$.
\begin{enumerate}
\item We have 
$\Gam_{\CO}(X_{\qd}) = 1$ for any $X_{\qd}\in \Ft_{\qd}$.

\item Suppose that $d$ is even and $d\geq 4$. We have the equality
$$
\Gam_{\CO}(X^{+}_{F_{1}}) - \Gam_{\CO}(X^{-}_{F_{1}}) = 
\sgn_{F_{1}/F}(\nu \eta)
$$
if $\CO=\CO_{\nu}$ with $\nu\in \CN^{V}$.
\end{enumerate}
\end{lem}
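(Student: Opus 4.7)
The common strategy is to apply Theorem \ref{thm:germformula}, which reduces each claim $\Gam_\CO(X) = 1$ to verifying the cohomological equality $\inv(X)\inv(T_G) = \inv_{T_G}(\CO)$ in $H^1(F, T_G)$, and then to compute the left-hand side explicitly using Proposition \ref{pro:germcal:computation}.

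For part (1), I would describe $X_\qd \in \Ft_\qd$ in the parametrization of Section \ref{sub:germcal:conjugacy} with all factors $F_i = F \oplus F$ except, in the quasi-split-but-not-split case, one factor $F_1 = E$ (the quadratic extension controlling the anisotropic kernel of $q_V$). In the split case this forces $I^* = \emptyset$, so $H^1(F, T_\qd) = 0$ and the desired identity is trivial. In the non-split case $I^* = \{1\}$, and Proposition \ref{pro:germcal:computation} reduces the problem to verifying $\sgn_{E/F}(C_1) = \inv_{T_\qd}(\CO)$; expanding $P_X(T) = (T^2 - a_1^2)\prod_{i \geq 2}(T^2 - z_i^2)$ and applying $a_1^2 - z_i^2 = -\RN_{E/F}(a_1 - z_i)$ shows $\sgn_{E/F}(C_1)$ is independent of $X_\qd$, after which one matches it against $\inv_{T_\qd}(\CO)$ via the $F$-splitting $\textbf{spl}(\CO)$ recalled in Section \ref{sub:ggptriples:definitions}.

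For part (2), applying Proposition \ref{pro:germcal:computation} to $X^\pm_{F_1}$ with $T_G = T_{F_1} \times T_{F_2} \times \wt{T}$ and $I^* = \{1, 2\}$ gives
$$\inv(X^\pm_{F_1})\inv(T_G) = (\sgn_{F_1/F}(C_1^\pm),\ \sgn_{F_2/F}(C_2^\pm)) \in \{\pm 1\}^2.$$
Since $X^+_{F_1}$ and $X^-_{F_1}$ differ only through the scaling $c^+ \leftrightarrow c^-$ with $c^+/c^- \notin \RN_{F_1/F}(F_1^\times)$, the first coordinate flips sign, while the second coordinate is controlled by the identity $\sgn_{F_1/F}\sgn_{F_2/F} = \sgn_{E/F}$ (where $\sgn_{E/F}$ is trivial in the split case). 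Theorem \ref{thm:germformula} then forces at most one of $\Gam_{\CO_\nu}(X^+_{F_1})$ and $\Gam_{\CO_\nu}(X^-_{F_1})$ to be nonzero; enumerating the two cases of $\inv_{T_G}(\CO_\nu)$ against the candidate invariants $\inv(X^\pm_{F_1})\inv(T_G)$ should yield the difference $\sgn_{F_1/F}(\nu\eta)$.

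The main technical obstacle is the explicit identification of $\inv_{T_G}(\CO_\nu)$ for each $\nu \in \CN^V$ and the careful bookkeeping of the various quadratic characters — $\sgn_{F_i/F_i^\#}$, $\sgn_{E/F}$, powers of $\sgn(-1)$ depending on the parity of $d/2$, and the roles of $\eta$ and $c^\pm$ — required to distill the clean answer $\sgn_{F_1/F}(\nu\eta)$. Following the template of \cite[Chapitre~X]{wald01nilpotent} and \cite[11.4]{waldspurger10}, I would carry out this bookkeeping by applying the cocycle construction of Lemmas \ref{lem:germcal:indeptheta}--\ref{lem:germcal:isocoh} to the $1$-cocycle attached to $\textbf{spl}(\CO_\nu)$ via the explicit description of the regular nilpotent orbits in Section \ref{sub:ggptriples:definitions}.
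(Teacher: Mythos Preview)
Your approach to part (2) is essentially the same as the paper's: apply Theorem \ref{thm:germformula} together with Proposition \ref{pro:germcal:computation}, compute $\inv(X^{\zet}_{F_1})\inv(T^{\zet})$ via the explicit formula for $C_i$ (the paper simplifies $a_1^{-1}P'(a_1)$ using $\RN_{F_1/F}(a_1)+\wt{s}_j^2 = \RN_{F_1/F}(a_1+\wt{s}_j)$ exactly as you suggest), identify $\inv_{T^{\zet}}(\CO_{\nu})$ from the explicit cocycle $d_N$ attached to $\textbf{spl}(\CO_\nu)$, and read off $\Gam_{\CO_\nu}(X^{\zet}_{F_1}) = 1$ iff $\sgn_{F_1/F}(\nu\eta) = \zet$. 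The paper carries out precisely this bookkeeping and arrives at $\inv(X^{\zet}_{F_1})\inv(T^{\zet}) = (\zet,\zet)$ and $\inv_{T^{\zet}}(\CO_\nu) = (\sgn_{F_1/F}(\nu/\nu^*),\sgn_{F_1/F}(\nu/\nu^*))$ with $\nu^* = \eta$.

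For part (1), however, the paper takes a shorter route than you do: it simply cites \cite[Section~3.4]{beuzart2015local} rather than invoking the machinery of Appendix~\ref{sec:germcal}. Your proposed argument via Proposition \ref{pro:germcal:computation} is viable --- in the split case $I^* = \emptyset$ forces everything trivially, and in the non-split case one checks that $\inv_{T_\qd}(\CO_\nu) = \sgn_{E/F}(\nu/\nu^*)$ is always $1$ since $\nu, \nu^* \in \CN^V$ differ by a norm from $E$ --- but this is more work than needed. The citation to \cite{beuzart2015local} presumably packages a general limit formula for $\wh{j}(X,\cdot)$ at quasi-split torus elements, which gives $\Gam_\CO(X_\qd) = 1$ directly without any endoscopic computation.
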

\begin{proof}
The first statement follows from \cite[Section~3.4]{beuzart2015local}. The second statement follows from Theorem \ref{thm:germformula} and the explicit computation of the invariants following Proposition \ref{pro:germcal:computation} . We will go over the proof of \cite[11.4]{waldspurger10} in the following.

For the sign $\zet = \pm$, we note $T^{\zet}(F)$ the maximal subtorus of $\SO(V)(F)$ such that $X^{\zet}_{F_{1}}\in \Ft^{\zet}(F)$. Suppose that $\nu\in \CN^{V}$ and $N\in \CO_{\nu}$. We are going to evaluate the invariant $\mathrm{inv}(X^{\zet}_{F_{1}})\mathrm{inv}(T^{\zet})/\mathrm{inv}_{T^{\zet}}(N)$. All the elements belong to the cohomology group $H^{1}(T^{\zet})$. From Lemma \ref{lem:germcal:centralizer} we have $H^{1}(T^{\zet}) = \{ \pm 1\} \times \{\pm 1\}$. The invariants depend on the choice of pining. Following the normalization of the basis for quadratic space in the beginning of Section \ref{sub:germcal:computeendoscopic} we make choice following above discussion by making the element $\eta$ as above and multiplying by $2(-1)^{d/2-1}$. Then from Proposition \ref{pro:germcal:computation} we have the following explicit formula for $\mathrm{inv}(X^{\zet}_{F_{1}})\mathrm{inv}(T^{\zet})$. We have
\begin{align*}
\mathrm{inv}(X^{\zet}_{F_{1}})\mathrm{inv}(T^{\zet}) =
& 
(\sgn_{F_{1}/F}((-1)^{d/2-1}\eta (c^{\zet})^{-1}a^{-1}_{1}P^{\p}(a_{1}))
,
\\
&\sgn_{F_{2}/F}((-1)^{d/2-1}\eta (c^{\zet})^{-1}a^{-1}_{2}P^{\p}(a_{2}))
)
\end{align*}
where $P$ is the characteristic polynomial of $X_{F_{1}}$ acting on $V$ and $P^{\p}$ is the derivative. We let $(\pm \wt{s}_{j})_{j=3,...,d/2}$ be the eigenvalues of $\wt{S}$ acting on $\wt{Z}$. They belong to $F^{\times}$ since $\wt{T}$ is split. Then we have
$$
P(T) = (T^{2}+\mathrm{N}_{F_{1}/F}(a_{1}))
(T^{2}+\mathrm{N}_{F_{2}/F}(a_{2}))
\prod_{j=3,...,d/2}(T^{2}- \wt{s}^{2}_{j}).
$$
Then
\begin{align*}
a_{1}^{-1}P^{\p}(a_{1}) 
&= 
2(-\mathrm{N}_{F_{1}/F}(a_{1})+\mathrm{N}_{F_{2}/F}(a_{2}))
\prod_{j=3,...,d/2}
(-\mathrm{N}_{F_{1}/F}(a_{1}) - \wt{s}^{2}_{j})
\\
&=
2(-1)^{d/2-1}
(\mathrm{N}_{F_{1}/F}(a_{1}) - \mathrm{N}_{F_{2}/F}(a_{2}))
\prod_{j=3,...,d/2}
(\mathrm{N}_{F_{1}/F}(a_{1})+\wt{s}_{j}^{2})
\end{align*}
Note here we have $\wt{s}^{2}_{j}+\mathrm{N}_{F_{1}/F}(a_{1}) = \mathrm{N}_{F_{1}/F}(\wt{s}_{j}+a_{1})$, therefore $\sgn_{F_{1}/F}(\wt{s}^{2}_{j}+\mathrm{N}_{F_{1}/F}(a_{1}))= 1$. Similar calculation can be obtained when one switch $F_{1}$ to $F_{2}$. It turns out that we have 
\begin{align*}
\mathrm{inv}(X^{\zet}_{F_{1}})
\mathrm{inv}(T^{\zet}) 
&= 
(\sgn_{F_{1}/F}(2\eta(c^{\zet})^{-1}(\mathrm{N}_{F_{1}/F}(a_{1})- \mathrm{N}_{F_{2}/F}(a_{2}))), 
\\
&
\sgn_{F_{2}/F}
(2\eta(c^{\zet})^{-1}(\mathrm{N}_{F_{1}/F}(a_{1})-\mathrm{N}_{F_{2}/F}(a_{2})))
).
\end{align*}
By the definition of $c^{\zet}$, we get 
$$
\mathrm{inv}(X^{\zet}_{F_{1}})
\mathrm{inv}(T^{\zet}) = (\zet,\zet),
$$
where one can identify $\zet$ as an element of $\{ \pm 1\}$. 

The pining is determined by the standard fixed regular element $N^{*}$, where from the notation in Section \ref{sub:germcal:computeendoscopic}, we have $N^{*} = \sum_{j=1,...,d/2}X_{\alp_{j}}$. Note $\nu^{*}$ the element of $\CN^{V}$ such that $N^{*}\in \CO_{\nu^{*}}$. We can define a cocycle $d_{N}$ of $\Gal(\wb{F}/F)$ in $T^{\zet}$ as follows. If $\nu=\nu^{*}$, $d_{N} = 1$. If $\nu\neq \nu^{*}$ we set $E_{N} = F(\sqrt{\frac{\nu}{\nu^{*}}})$. Then, for $\sig\in \Gal(\wb{F}/F)$, we have $d_{N}(\sig) = 1$ if $\sig\in \Gal(\wb{F}/E_{N})$ and $d_{N}(\sig) = -1$ otherwise. The invariant $\mathrm{inv}_{T^{\zet}}(N)$ is the image in $H^{1}(T)$ of the cocycle $d_{N}$. We can calculate the image 
$$
\mathrm{inv}_{T^{\zet}}(N) = (\sgn_{F_{1}/F}(\nu/\nu^{*}), \sgn_{F_{2}/F}(\nu/\nu^{*})).
$$
In fact, we can force that $\sgn_{E}(\nu/\nu^{*})=1$ following the parametrization of $\nil_{\reg}(\Fs\Fo(V))$ in Section \ref{sub:ggptriples:definitions}. Then
$$
\mathrm{inv}_{T^{\zet}}(N) = (\sgn_{F_{1}/F}(\nu/\nu^{*}), \sgn_{F_{1}/F}(\nu/\nu^{*})).
$$
The element $N^{*}$ stabilize the hyperplane generated by $e_{1},...,e_{d/2-1}, e_{d/2}+e_{d/2+1},e_{d/2+2},...,e_{d}$.  The anisotropic kernel of the restriction of the form $q$ to the hyperplane is the restriction of $q$ to the line $e_{d/2}+e_{d/2+1}$, or say $q(e_{d/2}+e_{d/2+1}) = \eta$, therefore we have $\nu^{*} = \eta$. Finally
$$
\mathrm{inv}(X^{\zet}_{F_{1}})
\mathrm{inv}(T^{\zet})/\mathrm{inv}_{T^{\zet}}(N) = (\zet \sgn_{F_{1}/F}(\nu \eta), \zet \sgn_{F_{1}/F}(\nu \eta)).
$$
It follows that we have 
$$
\Gam_{\CO_{\nu}}
(X^{\zet}_{F_{1}}) = 
\bigg\{
	\begin{matrix}
	1, & \sgn_{F_{1}/F}(\nu \eta) = \zet,\\
	0, & \sgn_{F_{1}/F}(\nu \eta) = -\zet.
	\end{matrix}
$$
This is exactly the identity that we want.
\end{proof}

\bibliographystyle{amsalpha}
\bibliography{zhilin}

\end{document}